\documentclass[amscd,amssymb,verbatim,10pt]{amsart}
\usepackage[fontsize = 10bp]{fontsize}
\usepackage{scalerel}
\usepackage{stackengine,wasysym}
\usepackage{bbm}

\usepackage{enumitem}
\setlist[enumerate]{itemsep=0pt}
\setlist[itemize]{itemsep=-0.5pt}

\newcommand\rwt[1]{\ThisStyle{%
  \setbox0=\hbox{$\SavedStyle#1$}%
  \stackengine{-.1\LMpt}{$\SavedStyle#1$}{%
    \stretchto{\scaleto{\SavedStyle\mkern.2mu\AC}{.465\wd0}}{.6\ht0}%
  }{O}{c}{F}{T}{S}%
}}

\usepackage{amssymb,latexsym, amsmath, amscd, array, graphicx, pb-diagram}
\usepackage{hyperref}
\usepackage{tikz-cd}

\usepackage{color}
\hypersetup{
    colorlinks=true,
    linkcolor=red,
    urlcolor=black,
    citecolor=blue
           }
           
\usepackage{amsthm}
\usepackage{hyperref}
\usepackage[all]{xy}
\usepackage[scr]{rsfso}

\swapnumbers \numberwithin{equation}{section}

\theoremstyle{plain}

\newtheorem{thm}{Theorem}[section]
\newtheorem{theorem}[thm]{Theorem}

\newtheorem{conjec}[thm]{Conjecture}
\newtheorem{prop}[thm]{Proposition}
\newtheorem{cor}[thm]{Corollary}

\theoremstyle{definition}
\newtheorem{defn}[thm]{Definition}
\newtheorem{definition}[thm]{Definition}

\newtheorem{rem}[thm]{Remark}
\newtheorem{remark}[thm]{Remark}

\newtheorem{ex}[thm]{Example}

\newtheorem{question}[thm]{Question}

%% Useful operator names

 \newcommand{\Wi}{\widetilde}

\DeclareMathOperator{\cat}{{\rm cat}}
\DeclareMathOperator{\TC}{{\rm TC}}
\DeclareMathOperator{\dcat}{{\rm dcat}}
\DeclareMathOperator{\dTC}{{\rm dTC}}

\DeclareMathOperator{\cd}{{\rm cd}}

\DeclareMathOperator{\tr}{{\rm tr}}

\usepackage{enumitem}

\def\Int{\protect\operatorname{Int}}

\makeatletter
\def\@tocline#1#2#3#4#5#6#7{\relax
  \ifnum #1>\c@tocdepth % then omit
  \else
    \par \addpenalty\@secpenalty\addvspace{#2}%
    \begingroup \hyphenpenalty\@M
    \@ifempty{#4}{%
      \@tempdima\csname r@tocindent\number#1\endcsname\relax
    }{%
      \@tempdima#4\relax
    }%
    \parindent\z@ \leftskip#3\relax \advance\leftskip\@tempdima\relax
    \rightskip\@pnumwidth plus4em \parfillskip-\@pnumwidth
    #5\leavevmode\hskip-\@tempdima
      \ifcase #1
       \or\or \hskip 1em \or \hskip 2em \else \hskip 3em \fi%
      #6\nobreak\relax
    \hfill\hbox to\@pnumwidth{\@tocpagenum{#7}}\par% <---- \dotfill -> \hfill
    \nobreak
    \endgroup
  \fi}
\makeatother

\newcommand \pa[2]{\frac{\partial #1}{\partial #2}}
%\setcounter{tocdepth}{1}

%% Greek letters

%% Bbb letters

\def\C{{\mathbb C}}
\def\Z{{\mathbb Z}}
\def\Q{{\mathbb Q}}
\def\R{{\mathbb R}}
\def\N{{\mathbb N}}

\def\1{\hbox{\rm\rlap {1}\hskip.03in{\rom I}}}
\def\Bbbone{{\rm1\mathchoice{\kern-0.25em}{\kern-0.25em}
{\kern-0.2em}{\kern-0.2em}I}}

%% Other stuff

\def\pa{\partial}

\def\wt{\widetilde}

\def\ov{\overline}

\usepackage{comment}

\long\def\forget#1\forgotten{} %

\newcommand\ver[1]{\marginpar{\tiny Changed in Ver \VER}}

\date{\today}

\begin{document}

\begin{abstract}
We present a detailed study of the curvature and symplectic asphericity properties of symmetric products of surfaces. We show that these spaces can be used to answer nuanced questions arising in the study of closed Riemannian manifolds with positive scalar curvature. For example, we prove that symmetric products of surfaces \emph{sharply} distinguish between two distinct notions of \emph{macroscopic dimension} introduced by Gromov and the second-named author. As a natural generalization of this circle of ideas, we address the Gromov--Lawson and Gromov conjectures in the K\"ahler projective setting and draw new connections between the theories of the minimal model, positivity in algebraic geometry, and macroscopic dimensions.
\end{abstract}

%Abstract for arXiv metadata: to be used while posting the paper on arXiv.

%We present a detailed study of the curvature and symplectic asphericity properties of symmetric products of surfaces. We show that these spaces can be used to answer nuanced questions arising in the study of closed Riemannian manifolds with positive scalar curvature. For example, we prove that symmetric products of surfaces \textit{sharply} distinguish between two distinct notions of \textit{macroscopic dimension} introduced by Gromov and the second-named author. As a natural generalization of this circle of ideas, we address the Gromov-Lawson and Gromov conjectures in the K\"ahler projective setting and draw new connections between the theories of the minimal model, positivity in algebraic geometry, and macroscopic dimensions.

\title[Curvature, macroscopic dimensions, and symmetric products]{Curvature, macroscopic dimensions, and symmetric products of surfaces}

\author[L.~F.~Di~Cerbo]{Luca~F.~Di~Cerbo}

\author[A.~Dranishnikov]{Alexander~Dranishnikov} 

\author[E.~Jauhari]{Ekansh~Jauhari}

\address{Luca F. Di Cerbo, Department of Mathematics, University
of Florida, 358 Little Hall, Gainesville, FL 32611-8105, USA.}

\email{ldicerbo@ufl.edu}

\address{Alexander N. Dranishnikov, Department of Mathematics, University
of Florida, 358 Little Hall, Gainesville, FL 32611-8105, USA.}

\email{dranish@ufl.edu}

\address{Ekansh Jauhari, Department of Mathematics, University
of Florida, 358 Little Hall, Gainesville, FL 32611-8105, USA.}

\email{ekanshjauhari@ufl.edu}

\subjclass[2020]
{Primary 
53C23, %%Global geometric and topological methods (\`a la Gromov) 
53C27, %%Spin structures and geometry
53C55, %%Global differential geometry of Hermitian and Kahlerian structures
57N65,  %%Algebraic topology of manifolds
Secondary 
55S15, %%Symmetric products and cyclic products in algebraic topology
57R15, %%Specialized structures on manifolds: spin and framed
%32J27, %%Compact Kahler manifolds and generalizations
%53C21, %%Methods of global Riemannian geometry, including curvature restrictions
55M30. %%LS-category and topological complexity (\`a la Farber)
}

\keywords{Scalar curvature, macroscopic dimension, spin geometry, K\"ahler geometry, aspherical manifolds, Gromov--Lawson and Gromov conjectures, LS-category.}

%%% ----------------------------------------------------------------------
\maketitle
\tableofcontents
%%% ----------------------------------------------------------------------

\section{Introduction}
The Fundamental Theorem of Algebra establishes an isomorphism between ordered $n$-tuples  and unordered $n$-tuples in the complex plane $\mathbb C$. Thus, the symmetric product $SP^n(\mathbb C)$ is diffeomorphic to the vector space $\mathbb C^n$. This brings to life complex $n$-manifolds $SP^n(M_g)$ for every closed orientable surface $M_g$ of genus $g$. These manifolds are extremely rich from both topological and algebraic geometric points of view. Indeed, during the last seven decades, such spaces have been extensively studied by topologists and algebraic geometers alike.

In algebraic topology, their study dates back at least to the important works of Steenrod and Dold~\cite{Do1}, Dold--Thom~\cite{DT}, and Macdonald~\cite{Mac}. This line of research was further developed by Milgram~\cite{Mi} and his school; see, for example,~\cite{Ka1},~\cite{KS}. Recently, certain symmetric products of surfaces played a role in the definition of Heegaard Floer homology, see~\cite{Szabo}. The algebraic topology of symmetric products of curves was also studied in connection with moduli spaces of gauged vortices on a closed Riemann surface, see~\cite{BR}. For an extended survey on symmetric products of surfaces in topology and physics, we refer to~\cite{Belgrade}.

In algebraic geometry, symmetric products of surfaces $SP^n(M_g)$ arise naturally as the smooth projective varieties parametrizing effective divisors of degree $n$ on the  Riemann surface $M_g$. Thus, their study is inevitably connected with the theory of algebraic curves and their Jacobians. We refer to the beautiful book of Arbarello \emph{et al.}~\cite{ACGH} for a panoramic view of the immeasurable algebraic geometry literature concerning symmetric products of surfaces.

Interestingly, these spaces are less well-studied from a Riemannian geometry point of view. This is somewhat surprising for the following reasons. Heuristically, for fixed $n$, $SP^n(M_g)$ becomes more negatively curved as $g$ increases. On the other hand, for fixed $g$, $SP^n(M_g)$ becomes more positively curved as $n$ increases. This fact makes them prime candidates for answering a variety of nuanced geometric questions as they exhibit subtler properties than product spaces.  Motivated by this observation, we study in detail the \emph{sectional}, \emph{holomorphic sectional}, \emph{Ricci}, and \emph{scalar} curvatures of such spaces. Importantly, we find that symmetric products of surfaces sharply distinguish two distinct notions of \emph{macroscopic dimension} introduced by Gromov in~\cite{Gr2} and the second-named author in~\cite{Dr2}. Recall that Gromov's notion of macroscopic dimension was introduced in order to study closed manifolds with positive scalar curvature, and it plays an important role in the \emph{Gromov--Lawson} and \emph{Gromov} conjectures. Naturally, we also discuss the \emph{essentiality} and \emph{inessentiality} of these spaces, the existence and non-existence of \emph{spin structures} on them and their universal covers, and derive several results concerning their \emph{Lusternik--Schnirelmann category} and \emph{topological complexity}. 

Motivated by the understanding of curvature and macroscopic dimensions of symmetric products of surfaces, we also address some general questions concerning this circle of ideas. First, we investigate the general problem of identifying large classes of Riemannian manifolds on which the two distinct notions of macroscopic dimension agree. This investigation is directly needed to quantify the sharpness of the examples produced by symmetric products of surfaces. Second, we address the Gromov--Lawson and Gromov conjectures for \emph{K\"ahler} metrics on smooth projective manifolds. In particular, we draw new connections between the theories of the minimal model, positivity in complex algebraic geometry, and macroscopic dimensions. The results provide support to these long-standing conjectures.

\subsection{Organization}

In Section~\ref{two}, we review some basic definitions and miscellaneous material. In particular, we discuss the Abel--Jacobi map from both an algebraic geometric and topological point of view. Section~\ref{chh} contains some standard and less standard results on the topology of symmetric products of curves. In particular, we provide an easy and direct computation of $\pi_2$ for symmetric products of surfaces in Propositions~\ref{pi2} and~\ref{pi2 not 0}. In Section~\ref{SA}, we show that many symmetric products of curves are symplectically aspherical non-aspherical manifolds with $\pi_2\neq 0$. Section~\ref{curvature} contains a detailed study of the curvature properties of symmetric products of curves, both in the Riemannian geometry and K\"ahler geometry settings. Among many other things, we complete the determination of the existence of K\"ahler metrics of non-positive holomorphic sectional curvature on $SP^n(M_g)$,  see Proposition~\ref{it can} and Remark~\ref{biswas sharp}. In Section~\ref{lstc}, Proposition~\ref{essential} identifies the symmetric product of surfaces that are \emph{rationally essential}. We also exactly compute the LS-category and topological complexity of all symmetric products of surfaces; see Theorems~\ref{lscat} and~\ref{tcthm}. Section~\ref{Cmacroscopic} studies the interactions between curvature and the two distinct notions of macroscopic dimension. In particular, we address the question of when these macroscopic dimensions agree for the universal covers of closed smooth manifolds; see Question~\ref{impques} and the answers in Propositions~\ref{Dranishnikov},~\ref{PRdim} and Theorems~\ref{top},~\ref{lowdim}. 

In Section~\ref{glsection}, we prove that a strengthening of Gromov's and Lawson's Conjecture~\ref{GLC} is true if we restrict our attention to projective manifolds that admit an aspherical smooth minimal model and are equipped with K\"ahler metrics. Along the way, we also compute the macroscopic dimension of such spaces. Finally, we prove a weak version of Gromov's Conjecture~\ref{G} for smooth projective varieties that admit K\"ahler metrics of positive scalar curvature. Section~\ref{spinstr} studies spin structures on $SP^n(M_g)$ and its universal cover; see Theorem~\ref{main2} for a complete classification. We then use Theorem~\ref{main2} to estimate the macroscopic dimensions of symmetric products of curves and to discuss the (non-)existence of Riemannian metrics of positive scalar curvature on them in Section~\ref{dimandpsc}. In the same section, we point out that the two distinct notions of macroscopic dimensions agree for aspherical manifolds but not for symplectically aspherical manifolds; see Theorem~\ref{todo4}, Corollary~\ref{nondim}, and Remark~\ref{mc and MC}. Similarly, these notions coincide on  Riemannian manifolds of non-positive sectional curvature but not for K\"ahler manifolds of non-positive holomorphic sectional curvature; see Proposition~\ref{it can} and Remarks~\ref{carthad},~\ref{mc and MC}.
Finally, we point out that our examples are dimensionally sharp. Indeed, the two macroscopic dimensions agree up to dimension $3$ but not in dimension $4$. This happens already on the symmetric squares $SP^2(M_g)$ for $g\geq 3$, see Theorem~\ref{lowdim} and Remark~\ref{mc and MC}. The result is somewhat surprising as projective surfaces are rather constrained from a topological point of view. 

We conclude this study with a partially speculative section. In Section~\ref{converse}, we discuss what the converse to Gromov's conjecture should be (if any!) and discuss the relevance of our examples. This section nevertheless contains positive results. Indeed, in Theorem~\ref{totallyconverse}, we prove that a converse to Gromov's conjecture holds true for certain totally non-spin manifolds having amenable fundamental groups. That said, there is a price to pay; the original notion of Gromov's macroscopic dimension needs to be replaced by the one considered by the second-named author.

\section{Preliminaries}\label{two}

In this section, we recall several topological and geometric notions and facts that we will be using in this paper.

\subsection{Symmetric products}
For a CW complex $X$ and integer $k\ge 1$, let $SP^k(X)$ denote the $k$-th symmetric power of $X$, obtained as the orbit space of the natural action of the symmetric group $S_k$ on the product $X^k$ that permutes the coordinates. There are basepoint inclusions $SP^k(X)\hookrightarrow SP^{k+1}(X)$. The colimit over the symmetric products $SP^k(X)$ defines the infinite symmetric product $SP^{\infty}(X)$. If $X$ is connected, then it follows from the Dold--Thom theorem~\cite{DT} that $\pi_n(SP^{\infty}(X))=\wt{H}_n(X)$ for each $n\ge 1$.

\begin{ex}
For each $n$, we have that $SP^n(\C P^1)=\C P^n$, $SP^n(S^1)$ is homotopic to $S^1$, and $SP^n(\R P^2)=\R P^{2n}$.
\end{ex}

Let $M_g$ denote the closed orientable surface of genus $g\ge 0$. It is well-known that $SP^n(M_g)$ is a closed orientable $2n$-manifold for each $n\ge 1$ (it follows, for example, from~\cite[Lemma 5]{KS}). In Section~\ref{chh}, we will look at various (co)homological and homotopical aspects of these spaces.

\subsection{Some basics from complex algebraic geometry}\label{alggeo} 
In Sections~\ref{SA},~\ref{curvature}, and~\ref{glsection}, we use some classical and more advanced notions and results from complex algebraic geometry. Ubiquitous throughout this study are \emph{nef}, \emph{pseudo-effective}, \emph{big}, and \emph{ample} line bundles and divisors over smooth projective varieties. We need both an algebraic and analytical description of the cones of such divisors. For the algebraic description of cones of line bundles over smooth algebraic varieties, we refer to the classical book of Lazarsfeld~\cite[Chapters I \& II]{Laz}. For the analytical description using currents, we refer to the beautiful book of Demailly~\cite[Chapters 6, 18 \& 19]{Demailly}. Also, the paper~\cite{DD15} provides a readable introduction on how the standard results in the minimal model program (such as Mori's cone theorem) can be used in K\"ahler geometry. Finally, we refer to both the books of Griffiths and Harris~\cite{GH} and Arbarello \emph{et al.}~\cite{ACGH} for the basic theory of algebraic curves and their Jacobians.

\subsection{Geometric Abel--Jacobi map}
In this section, we recall some basic definitions and results from the theory of the Abel--Jacobi map in complex algebraic geometry. Let $M_g$ be a complex curve of genus $g$. We denote by $J$ the Jacobian of $M_g$. The Abel--Jacobi map 
\[
\mu_1: M_{g}\to J
\]
is defined by setting for any $q\in M_g$
\[
\mu_1(q):=\left(\int^q_{p_0}\omega_1, ..., \int^q_{p_0}\omega_g\right)
\]
for a fixed base point $p_0\in M_g$, where $\omega_1, \ldots, \omega_g$ is a basis for $H^0(M_g, K_{M_{g}})$. Here, $\pi:K_{M_{g}}\to M_{g}$ denotes the canonical line bundle. The Abel--Jacobi map extends naturally to a holomorphic map for the symmetric product $SP^n(M_g)$ to $J$. Indeed, $SP^n(M_g)$ is the set of effective divisors of degree $n$ on $M_g$, i.e., a point in $SP^n(M_g)$ can be identified with a formal sum $\sum^n_{\lambda=1}p_{\lambda}$ of points $p_{\lambda}\in M_g$. The associated Abel--Jacobi map
\[
\mu_{n}: SP^n(M_{g})\to J
\]
is defined by setting for any $Q=\sum^n_{\lambda=1}p_{\lambda}\in SP^n(M_g)$
\[
\mu_n(Q):=\left(\sum_{\lambda}\int^{p_{\lambda}}_{p_0}\omega_1, \ldots, \sum_{\lambda}\int^{p_{\lambda}}_{p_0}\omega_g\right)
\]
for a fixed base point $p_0\in M_g$, where $\omega_1, \ldots, \omega_g$ is a basis for $H^0(M_g, K_{M_{g}})$. Similarly, the Abel--Jacobi map is defined for all divisors $D$ on $M_g$ of degree zero. More precisely, we denote by
\[
\text{Div}^{0}(M_g):=\big\{D\in \text{Div}(M_{g})\hspace{1mm} | \hspace{1mm} \text{deg}(D)=0\big\}
\]
the set of divisors with degree zero, and we define
\[
\mu: \text{Div}^0(M_g)\to J
\]
by setting
\[
\mu(D):=\left(\sum_{\lambda}\int^{q_{\lambda}}_{p_{\lambda}}\omega_1, \ldots, \sum_{\lambda}\int^{q_{\lambda}}_{p_{\lambda}}\omega_g\right),
\]
where $D=\sum_\lambda (q_{\lambda}-p_{\lambda})\in\text{Div}^{0}(M_g)$.

We conclude this section by recalling the statements of two classical theorems due to Abel and Jacobi, respectively, describing some important geometric features of the Abel--Jacobi maps. These results will be used in the remainder of this paper, and we state them here in the form that is most convenient for our purposes.

Abel's theorem characterizes divisors that are associated with meromorphic functions. Recall that because of Stokes' theorem, the zero and the pole sets of a meromorphic function give a divisor of degree zero.
\begin{theorem}[Abel]\label{Abel}
$D=\sum_\lambda (q_{\lambda}-p_{\lambda})\in\textup{Div}^{0}(M_g)$ is the divisor associated to a meromorphic function $f$ on $M_g$ if and only if $\mu(D)=0\in J$.
\end{theorem}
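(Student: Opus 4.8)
The plan is to prove the two implications separately; both are classical, so I only sketch the steps.

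\emph{The forward direction.} Suppose $D=(f)$ for a meromorphic function $f$ on $M_g$, and view $f$ as a holomorphic branched cover $f\colon M_g\to\C P^1$ of some degree $d$, so that as divisors $D=f^{-1}(0)-f^{-1}(\infty)$ (counted with multiplicity) and hence $\mu(D)=\mu_d\big(f^{-1}(0)\big)-\mu_d\big(f^{-1}(\infty)\big)$, where $\mu_d$ is taken with respect to the basepoint $p_0$. The point is that the fiber divisor $f^{-1}(t)$ is an effective divisor of degree $d$ depending holomorphically on $t\in\C P^1$, so $t\mapsto\mu_d\big(f^{-1}(t)\big)$ is a holomorphic map $\C P^1\to J$. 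Since $\C P^1$ is simply connected while $J$ is a complex torus, any such map is constant; evaluating at $t=0$ and $t=\infty$ gives $\mu_d\big(f^{-1}(0)\big)=\mu_d\big(f^{-1}(\infty)\big)$, i.e.\ $\mu(D)=0$.

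\emph{The reverse direction.} Assume $\mu(D)=0$ for $D=\sum_\lambda(q_\lambda-p_\lambda)$. Fix a canonical homology basis $a_1,\dots,a_g,b_1,\dots,b_g$ of $M_g$ and let $u_1,\dots,u_g$ be the basis of $H^0(M_g,K_{M_g})$ normalized by $\oint_{a_j}u_k=\delta_{jk}$; since the vanishing of $\mu(D)$ in $J$ does not depend on the chosen basis of holomorphic differentials, I may compute $\mu$ with respect to $u_1,\dots,u_g$. First I would invoke the standard existence of a differential of the third kind $\eta$ on $M_g$ whose only singularities are simple poles with residue $+1$ at each $q_\lambda$ and residue $-1$ at each $p_\lambda$ (the hypothesis $\deg D=0$ is exactly what forces the residue sum to vanish), and normalize $\eta$ by subtracting a holomorphic differential so that $\oint_{a_j}\eta=0$ for all $j$. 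The key input is then the reciprocity law for differentials of the third kind, which identifies the vector $\big(\oint_{b_1}\eta,\dots,\oint_{b_g}\eta\big)$ with $2\pi i\sum_\lambda\int_{p_\lambda}^{q_\lambda}(u_1,\dots,u_g)=2\pi i\,\mu(D)$ (for the path choices implicit in $\mu$). Since $\mu(D)=0$ in $J=\C^g/\Lambda$, this vector lies in $2\pi i\,\Lambda$, hence equals $2\pi i$ times an integral combination of the period vectors $\big(\oint_{a_j}u_k\big)_k$ and $\big(\oint_{b_j}u_k\big)_k$; subtracting the corresponding integral combination of the $u_j$ from $\eta$ yields a meromorphic $1$-form $\omega$ with the same simple poles and residues as $\eta$ but with all $a$- and $b$-periods in $2\pi i\,\Z$. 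Therefore $f(z):=\exp\left(\int_{z_0}^{z}\omega\right)$ is a well-defined single-valued meromorphic function on $M_g$, and since $\omega$ has residue $+1$ at each $q_\lambda$ and residue $-1$ at each $p_\lambda$, $f$ has a simple zero at each $q_\lambda$ and a simple pole at each $p_\lambda$; thus $(f)=D$.

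\emph{Main obstacle.} The delicate point is the bookkeeping in the reverse direction: computing the $b$-periods of the normalized $\eta$ exactly via the bilinear relations, and then correcting by an integral combination of the $u_j$ so that \emph{all} periods of $\omega$ land in $2\pi i\,\Z$ simultaneously without spoiling the vanishing (mod $2\pi i\,\Z$) of the $a$-periods. In the forward direction, the only assertion needing a word of justification is that $t\mapsto f^{-1}(t)$ is holomorphic as a map into $SP^d(M_g)$, which is standard. Since this is Abel's theorem, in practice I would simply cite~\cite{GH} or~\cite{ACGH}; the sketch above is the argument I would reconstruct.
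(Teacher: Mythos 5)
The paper does not actually prove this statement --- it is quoted as Abel's classical theorem with a pointer to Chapter 2 of Griffiths--Harris~\cite{GH} --- and your sketch is exactly the argument from that source: constancy of the holomorphic map $t\mapsto\mu_d\big(f^{-1}(t)\big)$ from $\C P^1$ to $J$ for the forward direction, and normalized differentials of the third kind plus the reciprocity law (with the symmetry of the period matrix doing the work in the period-correction step, and residues adding when points of $D$ occur with multiplicity) for the converse. Both directions are correct as sketched, so your plan of citing~\cite{GH} or~\cite{ACGH} coincides with the paper's own treatment.
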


Jabobi's theorem gives that $\mu_n$ is a birational map for $n=g\geq 2$.

\begin{theorem}[Jacobi]\label{Jacobi}
The map $\mu_n: SP^n(M_n)\to J$ is surjective and generically one-to-one.
\end{theorem}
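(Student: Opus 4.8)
The plan is to combine the fiber description coming from Abel's theorem (Theorem~\ref{Abel}) with an elementary dimension count. First observe that $\dim_{\mathbb C} SP^n(M_n)=n=\dim_{\mathbb C}J$, that $SP^n(M_n)$ is irreducible (being the image of the $n$-fold product of the connected manifold $M_n$), and that $\mu_n$ is proper with closed image since $SP^n(M_n)$ is compact. Consequently it is enough to show that $\mu_n$ is one-to-one over a dense open subset of its image and that the image has dimension $n$, because an $n$-dimensional irreducible closed subvariety of the $n$-dimensional complex torus $J$ must equal $J$.

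Next I would identify the fibers. By Theorem~\ref{Abel}, for two effective divisors $D,D'$ of degree $n$ one has $\mu_n(D)=\mu_n(D')$ if and only if $D-D'$ is the divisor of a meromorphic function, i.e.\ $D\sim D'$. Hence the fiber of $\mu_n$ through $D$ is precisely the complete linear system $|D|\cong\mathbb P\big(H^0(M_n,\mathcal O(D))\big)$, of dimension $h^0(D)-1$. By Riemann--Roch, $h^0(D)-h^0(K_{M_n}-D)=\deg D-n+1=1$, so this fiber is the single point $\{D\}$ exactly when $D$ is nonspecial, i.e.\ $h^0(K_{M_n}-D)=0$.

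The technical heart of the argument is therefore to verify that the generic $D\in SP^n(M_n)$ is nonspecial. For this I would consider the incidence variety $I=\{(D,H)\in SP^n(M_n)\times|K_{M_n}| : D\le\operatorname{div}(H)\}$. Since $|K_{M_n}|\cong\mathbb P^{n-1}$ and a reduced canonical divisor (which has degree $2n-2$) has only finitely many effective subdivisors of degree $n$, the projection $I\to|K_{M_n}|$ has generically finite fibers, whence $\dim I\le n-1$. Its image under the second projection is exactly the locus $\{D : h^0(K_{M_n}-D)\ge1\}$, which is thus a proper closed subvariety of $SP^n(M_n)$. Off this subvariety $|D|=\{D\}$, so $\mu_n$ is injective over the corresponding dense open subset of its image --- the ``generically one-to-one'' assertion --- and in particular the generic fiber of $\mu_n$ is $0$-dimensional, so $\dim\mu_n(SP^n(M_n))=n$ and hence, by the first paragraph, $\mu_n$ is surjective. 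Equivalently, one can compute the codifferential $\omega_j\mapsto(\omega_j(p_1),\dots,\omega_j(p_n))$ of $\mu_n$ at a divisor $D=p_1+\cdots+p_n$ of distinct points and observe that its rank equals $n-h^0(K_{M_n}-D)$, which is $n$ for generic $D$ by the same count, so $\mu_n$ restricts to a biholomorphism on a dense open set.

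I expect the main obstacle to be exactly the nonspeciality of the generic degree-$n$ divisor used in the third paragraph; once that codimension estimate is in place, both surjectivity and generic injectivity follow formally from Abel's theorem and the coincidence of dimensions. A secondary point to treat carefully is the behaviour of the incidence-variety count at non-reduced canonical divisors and at hyperelliptic curves, but in every case the fibers of $I\to|K_{M_n}|$ stay finite, so the bound $\dim I\le n-1$ is unaffected.
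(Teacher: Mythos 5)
The paper gives no proof of this classical statement (it simply refers to Chapter 2 of Griffiths--Harris), and your argument is correct and essentially the standard one found there: Abel's theorem identifies each fiber with a complete linear system $|D|$, Riemann--Roch reduces generic injectivity to generic nonspeciality of degree-$n$ divisors, the incidence-variety count (valid because \emph{every} fiber over $|K_{M_n}|$ is finite, reduced canonical divisor or not) shows the special locus has dimension at most $n-1$, and surjectivity then follows from properness, irreducibility, and equality of dimensions. The only blemish is notational: the special locus is the image of $I$ under the projection to $SP^n(M_n)$, not the ``second projection'' to $|K_{M_n}|$ that you had just used for the finiteness of fibers, but this does not affect the argument.
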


We refer to Chapter 2 in the classical book of Griffiths and Harris~\cite{GH} for more details.

\subsection{Topological Abel--Jacobi map} 
Let us now give a topological account of the Abel--Jacobi map. The surface $M_g$ can be viewed as a connected sum of $g$ copies of $2$-tori $T_i$ with a 2-sphere $S^2$. Collapsing $S^2$ with $g$ holes to a point defines a map 
\[
\Wi{q}:M_g\to\bigvee_{i=1}^g T_i
\]
onto the wedge of $g$ copies of 2-tori. For each torus $T_i$, we fix two circles $a_i$ and $b_i$, a parallel and a meridian missing the wedge point. We use the same notations for homology classes defined by $a_i$ and $b_i$. Moreover, we use the same notations for their images under the homomorphism $\xi_*:H_1(M_g)\to H_1(SP^n(M_g))$ induced by the basepoint inclusion $\xi:M_g\hookrightarrow SP^n(M_g)$.

The projections onto the summands define a map
$\psi:\bigvee T_i\to\prod T_i=J$. The Abel--Jacobi map $\mu_1:M_g\to J$ is the composition $\mu_1=\psi\circ \Wi{q}$. The additions in $SP^2(M_g)$ and in the $2g$-torus $J$
define the Abel--Jacobi map for $SP^2(M_g)$ making the following diagram commute:
\[
\begin{tikzcd}[contains/.style = {draw=none,"\in" description,sloped}]
M_g\times M_g \arrow{r} \arrow{d}{\mu_1\times\mu_1}
& 
SP^2(M_g) \arrow{d}{\mu_2}
\\
J \times J\arrow{r}
&
J.
\end{tikzcd}
\]
Similarly, we define $\mu_3:SP^3(M_g)\to J$ and so on. Finally, we get the Abel--Jacobi map $\mu_n:SP^n(M_g)\to J$ for each $n\ge 1$.

\section{Cohomology, homology, and homotopy}\label{chh}

In this section, we study in detail the cohomology, homology, and homotopy of symmetric products of curves. Among other things, we provide a simple computation of the second homotopy group of these spaces.

\subsection{Cohomology}\label{cohomo}
By $a_i^*$ and $b_i^*$, we denote the Hom dual to the cohomology classes $a_i$ and $b_i$ of $T_i$ for each $i\le g$. We use the same notation for their images under the projection map $J\to T_i$ and their images under $\mu_n^*$, the map induced in homology by the Abel--Jacobi map $\mu_n:SP^n(M_g)\to J=\prod T_i$. 

Let us consider the standard CW complex structure on $M_g=\bigvee_{2g}S^1\cup_\phi D^2$, where the circles in the wedge are indexed by the letters $a_i$ and $b_j$, and the attaching map $\phi$ is defined by the product of the commutators $[a_1,b_1]\cdots[a_g,b_g]$. Let 
\[
q:M_g\to M_g/\bigl(\bigvee_{2g}S^1\bigr)\to S^2
\]
and let 
\[
\bar q=SP^n(q):SP^n(M_g)\to SP^n(S^2)=\mathbb CP^n.
\]
Let $c$ denote the fundamental class of $M_g$ as well as its image in $H_2(SP^n(M_g))$ under the homomorphism induced by the base point inclusion $M_g\hookrightarrow SP^n(M_g)$. We recall from~\cite{Na},~\cite{Do1}, and~\cite{Do2} that for each $k\in\mathbb N$, the base point inclusion $SP^k(M_g)\to SP^{n+k}(M_g)$ induces a split monomorphism of homology and cohomology groups. Since $q$ takes the fundamental class to the fundamental class of $S^2$, the generator $c^*\in H^*(SP^n(S^2))$ goes under $(\bar{q})^*$ to the Hom dual to the cohomology class $c$, which will be denoted by $c^*$ as well. 

The integral cohomology of the torus $J=\prod T_i$ is given by the exterior algebra $H^*(J)=\Lambda(a_1^*,b_1^*,\dots,a_g^*,b_g^*)$ where $a_i^*,b_i^*$ generate $H^1(T_i)$ for each $1\le i \le g$. So, the map $(\mu_n,\bar q):SP^n(M_g)\to J\times\mathbb CP^n$ defines the ring homomorphism
\[
\Theta: \Lambda\left(a_1^*,b_1^*,\dots,a^*_g,b^*_g\right)\otimes\mathbb Z\left[c^*\right]\to H^*\left(SP^n(M_g)\right).
\]
I.~G.~Macdonald proved the following,~\cite{Mac}.

\begin{thm}\label{Mc}
The integral cohomology ring $H^*(SP^n(M_g))$ is the quotient of $H^*(J)\otimes\mathbb Z[c^*]$ by the following relation:
$$
a_{i_1}^*\cdots a_{i_l}^*b_{j_1}^*\cdots b_{j_m}^*(c^*-a_{k_1}^*b_{k_1}^*)\cdots(c^*-a_{k_r}^*b_{k_r}^*)(c^*)^s=0
$$
whenever $l+m+2r+s\ge n+1$ for any distinct set of indexes $i_1,\dots,i_l$, $j_1,\dots,j_m$, and $k_1,\dots, k_r$.
\end{thm}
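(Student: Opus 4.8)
The plan is to identify $\Theta$ with the homomorphism induced by the basepoint inclusion $SP^n(M_g)\hookrightarrow SP^{\infty}(M_g)$, to split the assertion into surjectivity of $\Theta$ and the computation of $\ker\Theta$, and to pin down the kernel by a dimension count against the (independently known) additive cohomology of symmetric products. By the Dold--Thom theorem \cite{DT}, $SP^{\infty}(M_g)\simeq\prod_{k\ge 1}K(\widetilde H_k(M_g),k)=K(\Z^{2g},1)\times K(\Z,2)=T^{2g}\times\C P^{\infty}$, whose integral cohomology ring is exactly $\Lambda(a_1^*,b_1^*,\dots,a_g^*,b_g^*)\otimes\Z[c^*]$; under this identification $\Theta$ is the restriction homomorphism, which is the same map as $(\mu_n,\bar q)^*$ described before the statement. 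The classical splitting results of Dold and Nakaoka recalled above \cite{Na,Do1,Do2} say that $H_*(SP^n(M_g))\to H_*(SP^{\infty}(M_g))$ is a split monomorphism of free abelian groups; dualizing, $\Theta$ is a split surjection. So the whole content of the theorem is the determination of $\ker\Theta$, and it remains to show it equals the stated ideal $I$.

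First I would check the inclusion $I\subseteq\ker\Theta$, i.e.\ that each listed generator dies in $H^*(SP^n(M_g))$; since this cohomology is torsion free (Dold--Nakaoka), it suffices to pair each generator against homology and see the pairing vanish. Two facts make this transparent. The class $c^*=\bar q^*(\text{hyperplane})$ is Poincar\'e dual to the subvariety $SP^{n-1}(M_g)\subset SP^n(M_g)$ of divisors through a fixed generic point of $M_g$, so $(c^*)^s$ is dual to $SP^{n-s}(M_g)$ and vanishes once $s\ge n+1$; and the Abel--Jacobi classes $a_i^*,b_i^*$, together with the combinations $c^*-a_k^*b_k^*$, describe intersections of these subvarieties with $\mu_n$-preimages of subtori of $J$, which are forced to be empty for dimension reasons exactly when the ``weight'' $l+m+2r+s$ exceeds $n$. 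Concretely I would run this as an induction on $n$ built on the split short exact sequence
\[
0\to\widetilde H^*\!\big(SP^n(M_g)/SP^{n-1}(M_g)\big)\to\widetilde H^*\!\big(SP^n(M_g)\big)\to\widetilde H^*\!\big(SP^{n-1}(M_g)\big)\to 0 ,
\]
using that $a_i^*,b_i^*,c^*$ are compatible with the restriction on the right and that a weight-$(\ge n+1)$ generator restricts to a weight-$(\ge n)$ generator on $SP^{n-1}(M_g)$, hence vanishes inductively and so comes from the quotient, where it is zero by the Dold--Nakaoka description of $\widetilde H^*(SP^n(M_g)/SP^{n-1}(M_g))$. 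In the stable range $n\ge 2g-1$ one may argue more directly: Riemann--Roch turns $\mu_n\colon SP^n(M_g)\to J$ into a projective bundle $\mathbb P(E)$, and Leray--Hirsch together with a Grothendieck--Riemann--Roch computation of $c(E)$ in terms of the theta class $\theta=\sum_i a_i^*b_i^*$ yields the relations in the statement outright; the smaller values of $n$ then follow from the split surjections $H^*(SP^N(M_g))\twoheadrightarrow H^*(SP^n(M_g))$.

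It then remains to prove the reverse inclusion $\ker\Theta\subseteq I$, and this is where the real work lies. Set $R_n:=\big(\Lambda(a_1^*,b_1^*,\dots,a_g^*,b_g^*)\otimes\Z[c^*]\big)/I$; the previous step produces a surjection of graded rings $\bar\Theta\colon R_n\twoheadrightarrow H^*(SP^n(M_g))$. Since $H^*(SP^n(M_g);\Z)$ is free abelian with Poincar\'e polynomial equal to the coefficient of $x^n$ in $(1+xt)^{2g}/\big((1-x)(1-xt^2)\big)$ --- a fact that follows from the Dold--Nakaoka computation of the homology of symmetric products, independently of the ring structure --- it suffices to exhibit a set of monomials in $a_i^*,b_i^*,c^*$ that spans $R_n$ over $\Z$ and has exactly that many elements in each degree: such a set is then automatically a $\Z$-basis of $R_n$ which $\bar\Theta$ carries to a basis of $H^*(SP^n(M_g))$, forcing $\bar\Theta$ to be an isomorphism and hence $\ker\Theta=I$. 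Producing this monomial basis --- equivalently, organizing the weight filtration and showing the listed relations suffice to rewrite an arbitrary element as a $\Z$-combination of the chosen monomials --- is the combinatorial heart of the argument and the step I expect to be the main obstacle; everything else is formal or is imported from the Dold--Thom/Dold--Nakaoka circle of results.
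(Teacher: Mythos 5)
The paper itself offers no proof of this statement: it is quoted as Macdonald's theorem with a citation to \cite{Mac}, so there is no internal argument to compare against. Judged on its own terms, your outline follows what is essentially Macdonald's classical route (generators pulled back from $J\times \C P^n$, verification of the relations, then a counting argument against the independently known additive structure), and the formal ingredients are sound: the identification of $\Theta$ with restriction along $SP^n(M_g)\subset SP^\infty(M_g)\simeq T^{2g}\times\C P^\infty$ via Dold--Thom, split surjectivity of $\Theta$ from the Dold/Nakaoka splitting plus torsion-freeness, and the reduction of $\ker\Theta=I$ to a rank count using the generating function $(1+xt)^{2g}/\bigl((1-x)(1-xt^2)\bigr)$ for the Betti numbers.

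However, as written the proposal has a genuine gap, and you flag it yourself: the reverse inclusion $\ker\Theta\subseteq I$ is reduced to exhibiting a monomial spanning set of the quotient ring $R_n$ whose cardinality in each degree matches the known Betti numbers, and this rewriting/counting argument is precisely the nontrivial content of Macdonald's theorem; without it you only have a surjection $R_n\twoheadrightarrow H^*(SP^n(M_g))$. The forward inclusion $I\subseteq\ker\Theta$ is likewise only sketched, with three alternative strategies (intersection-theoretic heuristics about $\mu_n$-preimages of subtori, an induction on the cofibration $SP^{n-1}(M_g)\subset SP^n(M_g)$, and Leray--Hirsch over $J$ in the stable range $n\ge 2g-1$), none of which is carried out; in particular, the inductive step ``the class restricts to zero on $SP^{n-1}(M_g)$, hence comes from the quotient, where it is zero'' requires knowing how the mixed classes $c^*-a_k^*b_k^*$ sit with respect to the Dold--Nakaoka description of $\widetilde H^*\bigl(SP^n(M_g)/SP^{n-1}(M_g)\bigr)$, which you do not check. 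So the proposal is a correct and reasonable plan, consistent with the standard proof, but the decisive combinatorial steps are deferred rather than proved.
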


We note that $(a_i^*)^2=(b_j^*)^2=0$, $a_i^*a_j^*=-a_j^*a_i^*$, and $b_i^*b_j^*=-b_j^*b_i^*$ for $i\ne j$. Also, $c^*$ commutes with $a_i^*$ and $b_j^*$.

\begin{rem}
When $n=2$, the relations are reduced to the following:
\begin{enumerate}
    \item $a_i^*b_j^*b_k^*=a_i^*a_j^*b_k^*=0$ for distinct indexes $i$, $j$, and $k$,
    \item $wc^*=0$ and for any $w$ equal to $a_i^*a_j^*$, $b_i^*b_j^*$, or $a_i^*b_j^*$ with $i\ne j$, and
    \item $w(c^*-a_k^*b_k^*)=0$ for any $k$ and for any $w$ equal to $a_i^*$, $b_j^*$, or $c^*$ with $i,j \ne k$.
\end{enumerate}

We note that $c^*$ is the Poincar\'e dual to the fundamental class $[M_g]$ and $(c^*)^2$ generates $H^4(SP^2(M_g))=\mathbb Z$. In fact, for each $n$, the $n$-th power $(c^*)^n$ generates the group $H^{2n}(SP^n(M_g))=\mathbb Z$.
\end{rem}

\begin{prop}\label{2g-torus}
In the integral cohomology ring $H^*(SP^n(M_g))$, we have that
\begin{enumerate}
\item the product $a_1^*b_1^*\cdots a_n^*b_n^*$ is non-zero for $n\le g$, and
\item the product $a_1^*b_1^*\cdots a_g^*b_g^*(c^*)^{n-g}$ is non-zero for $n\ge g$.
\end{enumerate}
\end{prop}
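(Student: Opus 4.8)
The plan is to prove both parts simultaneously by identifying each of the two products, up to sign, with $(c^*)^n$, which generates $H^{2n}(SP^n(M_g))\cong\Z$ (as recorded in the remark following \theoref{Mc}); non-vanishing is then immediate. Write $x_t:=a_t^*b_t^*\in H^2(SP^n(M_g))$ for $1\le t\le g$. The core assertion I would establish is: for every $S\subseteq\{1,\dots,g\}$ with $m:=|S|\le n$,
\[
(c^*)^{\,n-m}\prod_{t\in S}x_t=(c^*)^n\quad\text{in }H^*(SP^n(M_g)).
\]
Granting this, part (1) is the case $S=\{1,\dots,n\}$ (allowed since $n\le g$), where $\prod_{t\in S}x_t=a_1^*b_1^*\cdots a_n^*b_n^*$; and part (2) is the case $S=\{1,\dots,g\}$ (allowed since $g\le n$), where $(c^*)^{\,n-g}\prod_{t\in S}x_t=a_1^*b_1^*\cdots a_g^*b_g^*(c^*)^{n-g}$.

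I would prove the core assertion by induction on $m$, the case $m=0$ being trivial. For the inductive step, apply the Macdonald relation of \theoref{Mc} with no bare $a^*$ or $b^*$ factors, with the $r=m$ factors $c^*-x_t$ for $t\in S$, and with $s=n-m$: the degree condition reads $0+0+2m+(n-m)=n+m\ge n+1$, valid because $m\ge1$, while $s=n-m\ge0$ because $m\le n$. This yields $\prod_{t\in S}(c^*-x_t)\,(c^*)^{\,n-m}=0$. Since $c^*$ is central (it commutes with the $a_i^*$ and $b_j^*$) and $x_t^2=0$ (because $(a_t^*)^2=0$), expanding the product gives
\[
\sum_{U\subseteq S}(-1)^{|U|}(c^*)^{\,n-|U|}\prod_{t\in U}x_t=0.
\]
Every proper $U\subsetneq S$ has $|U|<m\le n$, so the inductive hypothesis applies and $(c^*)^{\,n-|U|}\prod_{t\in U}x_t=(c^*)^n$; substituting and using $\sum_{j=0}^{m-1}(-1)^j\binom{m}{j}=(-1)^{m+1}$ (the full alternating sum over $0\le j\le m$ vanishing for $m\ge1$), the relation collapses to $(-1)^{m+1}(c^*)^n+(-1)^m(c^*)^{\,n-m}\prod_{t\in S}x_t=0$, i.e.\ $(c^*)^{\,n-m}\prod_{t\in S}x_t=(c^*)^n$, completing the induction.

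I do not anticipate a serious obstacle: the only non-formal steps are identifying the correct Macdonald relation — the purely $(c^*-x_t)$-type one with $s=n-m$ — and the routine bookkeeping of the elementary-symmetric expansion together with the binomial identity above. One should of course confirm that $(c^*)^n$ genuinely generates $H^{2n}(SP^n(M_g))\cong\Z$; this is the content of the remark following \theoref{Mc}, and it also follows directly from the fact that $\bar q\colon SP^n(M_g)\to SP^n(S^2)=\C P^n$ has degree one, since $q\colon M_g\to S^2$ does. A homotopy-theoretic alternative for part (1) would be to write $a_1^*b_1^*\cdots a_n^*b_n^*=\mu_n^*(\omega)$ for the corresponding product class $\omega$ on the Jacobian and argue that $\mu_n^*$ does not annihilate it, but pinning down the image of $\mu_n^*$ is more delicate than the short computation above, so I would favor the algebraic route through \theoref{Mc}.
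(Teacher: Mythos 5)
Your proposal is correct and is essentially the paper's own argument: the paper proves part (2) by exactly your induction on the number of $a_i^*b_i^*$-factors, using the same Macdonald relation $(c^*)^{n-k}\prod_{s}(a_{i_s}^*b_{i_s}^*-c^*)=0$ and the identification of the product with $(c^*)^n$ (only carried out mod $2$, invoking torsion-freeness), while part (1) is handled by a separate induction adjoining one factor $a_n^*b_n^*$ at a time. Your single integral induction with the compensating power $(c^*)^{n-m}$ subsumes both parts at once and makes explicit the sign bookkeeping (and the ring in which each intermediate identity holds) that the paper's part-(1) induction leaves implicit, but the key tool and the structure of the argument are the same.
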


\begin{proof}
(1) By induction on $n$, we show that $a_1^*b_1^*\cdots a_n^*b_n^*=(c^*)^n\ne 0$ for $n\le g$.
This holds true for $n=1$. Assume that $a_1^*b_1^*\cdots a_{n-1}^*b_{n-1}^*=(c^*)^{n-1}$. Then
\[
a_1^*b_1^*\cdots a_n^*b_n^*=a_1^*b_1^*\cdots a_{n-1}^*b_{n-1}^*(a_n^*b_n^*-c^*)+ a_1^*b_1^*\cdots a_{n-1}^*b_{n-1}^*c^*
\]
\[
=(c^*)^{n-1}(a_n^*b_n^*-c^*)+(c^*)^n=(c^*)^n.
\]
Here we used Macdonald's relation $(c^*)^{n-1}(a_n^*b_n^*-c^*)=0$.

(2) Since the cohomology groups $H^*(SP^n(M_g))$ are torsion-free~\cite{Mac}, it suffices to prove (2) mod 2.
For $0\le k\le g$, we prove by induction on $k$ the following equality mod 2:
\[
(c^*)^{n-k}\prod_{s=1}^{k}a_{i_s}^*b_{i_s}^*=(c^*)^n.
\]
This is vacuously true for $k=0$.
Suppose that it holds true for $\ell<k$. Since $k\ge 1$, we have $2k+(n-k)\ge n+1$. Thus, we get by Macdonald's relations that
\[
(c^*)^{n-k}\prod_{s=1}^{k}(a_{i_s}^*b_{i_s}^*-c^*)=0.
\]
We now apply the induction hypothesis to obtain the mod 2 equality
\[
(c^*)^{n-k}\prod_{s=1}^{k}a_{i_s}^*b_{i_s}^*=\left(\sum_{\ell=1}^{k}(c^*)^{n-(k-1)}\prod_{s\ne \ell}a_{i_s}^*b_{i_s}^*\right)+\cdots+\sum_{s=1}^{k}(c^*)^{n-1}a_{i_s}^*b_{i_s}^*+(c^*)^n
\]
\[
= {k \choose k-1}(c^*)^{n}+{k\choose k-2}(c^*)^{n}+\cdots+{k\choose 1}(c^*)^{n}+(c^*)^{n}=(2^k-1)(c^*)^n=(c^*)^n.
\]
\end{proof}

The Chern classes of $SP^n(M_g)$ can be derived from the computations done in~\cite{Mat2} and~\cite{Mac}. More precisely, the first Chern class $c_1\in H^2(SP^n(M_g))$ is given in our notations as
\[
c_1=(n-g+1)c^*-\sum_{i=1}^ga_i^*b_i^*.
\]

\subsection{Homology}
The space $SP^\infty(X)$ is a free topological monoid. Thus, it enjoys Pontryagin's products for homology,~\cite{Ha}. By the Dold--Thom theorem~\cite{DT}, $SP^\infty(M_g)$ is homotopy equivalent to the Eilenberg--MacLane space $(S^1)^{2g}\times \mathbb CP^\infty$.

The Pontryagin ring of $SP^\infty(M_g)$ is the following graded algebra, first computed by H.~Cartan:
\[
H_*(SP^\infty(M_g))\cong\Lambda(a_1,b_1,\dots, a_g,b_g)\otimes\Gamma[c].
\]
Here, $\Gamma[c]$ denotes the divided polynomial algebra. Note that $\Gamma[c]$ is generated by the elements $c_k=\frac{c^k}{k!}$ 
and it is dual to the polynomial algebra $\mathbb Z[c^*]$,~\cite{Ha}.
J.~Milgram gave a bigraded description of that Pontryagin ring,~\cite{Mi}. The bigrading comes from the Steenrod splitting~\cite{Do2}
\[
H_*(SP^n(X))=\bigoplus_{m=1}^nH_*(SP^m(X),SP^{m-1}(X)),
\]
where $n\in\mathbb N\cup\{\infty\}$. Thus, $H_*(SP^n(M_g))$ is a bigraded subgroup of the ring $\Lambda(a_1,b_1,\dots, a_g,b_g)\otimes\Gamma[c]$.
We refer to~\cite{Ka1} for further details. 

The bigrading implies the following.

%\ekansh{I don't know how the following result is obtained.}

\begin{prop}\label{2-homology}
\begin{enumerate}
    \item The group $H_1(SP^n(M_g))$ is freely generated by $a_1,\dots, a_g$ and $b_1,\dots, b_g$, and the map $(\mu_n)_*:H_1(SP^n(M_g))\to H_1(J)$ induced by the Abel--Jacobi $\mu_n:SP^n(M_g)\to J$ map is an isomorphism.
    \item The integral homology group $H_2(SP^n(M_g))$ is freely generated by $c$ and the Pontryagin products $a_i\cdot a_j$ and $b_i\cdot b_j$ for $i<j$, and $a_i\cdot b_j$ for  all $i,j$.
\end{enumerate}
\end{prop}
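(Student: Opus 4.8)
The plan is to extract both statements directly from the bigraded structure of $H_*(SP^\infty(M_g)) \cong \Lambda(a_1,b_1,\dots,a_g,b_g)\otimes\Gamma[c]$ together with the Steenrod splitting $H_*(SP^n(X)) = \bigoplus_{m=1}^n H_*(SP^m(X),SP^{m-1}(X))$ and the fact (already recalled in the excerpt) that the basepoint inclusions $SP^k(M_g)\hookrightarrow SP^{k+1}(M_g)$ induce split monomorphisms on homology. The key point is that in the Pontryagin ring, $a_i$ and $b_i$ lie in the filtration-degree-$1$ piece (the image of $H_1(M_g)$ under $\xi_*$), while $c$ lies in the filtration-degree-$1$ piece of $H_2$ (the image of the fundamental class $[M_g]$), and the divided powers $c_k = c^k/k!$ have filtration degree $k$. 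A monomial $a_I b_J c_{k}$ sits in filtration degree $|I|+|J|+k$, hence survives into $H_*(SP^n(M_g))$ precisely when $|I|+|J|+k \le n$.

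For part (1): in homological degree $1$, the only monomials in $\Lambda(a_1,b_1,\dots,a_g,b_g)\otimes\Gamma[c]$ are $a_1,\dots,a_g,b_1,\dots,b_g$ (the element $c_k$ has degree $2k$, so contributes nothing in degree $1$, and $c_1$ already has degree $2$). Each of these has filtration degree $1 \le n$, so all $2g$ of them survive; thus $H_1(SP^n(M_g))$ is free of rank $2g$ on $a_1,\dots,a_g,b_1,\dots,b_g$. For the isomorphism statement, I would observe that $\mu_n = \psi\circ\mu$-type naturality gives a commuting triangle relating $\xi_*: H_1(M_g)\to H_1(SP^n(M_g))$, the map $(\mu_n)_*$, and the abelianization $H_1(M_g)\to H_1(J)$; since $(\mu_n)_*$ sends the basis $a_i, b_i$ to the standard basis of $H_1(J)=\Z^{2g}$ (because $\mu_1$ already does this at the level of the torus $J=\prod T_i$ via the maps $\Wi q$ and $\psi$), it is an isomorphism of free abelian groups of the same finite rank.

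For part (2): in homological degree $2$, the monomials in $\Lambda(a_1,\dots)\otimes\Gamma[c]$ of total degree $2$ are exactly $c_1 = c$ (degree $2$, filtration degree $1$), the products $a_i\cdot a_j$ and $b_i\cdot b_j$ with $i<j$, and $a_i\cdot b_j$ for all $i,j$ — here one uses the exterior-algebra relations $a_i^2 = b_i^2 = 0$ and antisymmetry to cut the list down, noting that $a_i b_j$ and $a_j b_i$ are independent for $i\ne j$ while $a_i b_i$ is its own thing. Each such monomial has filtration degree at most $2 \le n$ (for $n\ge 1$ this is automatic once $n\ge 2$; for $n=1$ one checks $H_2(SP^1(M_g)) = H_2(M_g) = \Z$ generated by $c$ separately, or simply notes the statement as phrased is for general $n$ and the rank-$1$ case is consistent), so they all survive; counting gives rank $1 + 2\binom{g}{2} + g^2 = g^2 + g^2 = \dots$, which I would cross-check against Macdonald's cohomology computation or directly against the bigraded tables in~\cite{Ka1}. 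The one subtlety I would be careful about is the low-$n$ edge case: when $n=1$ the element $a_i\cdot a_j$ is not yet present (it first appears in $SP^2$), so the clean statement really requires $n\ge 2$, and I would either state it that way or explicitly note that for $n=1$ only $c$ survives.

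The main obstacle I anticipate is not any deep input — everything needed is quoted — but rather \emph{bookkeeping the Steenrod/Milgram bigrading correctly}: one must be sure that the filtration degree of a Pontryagin monomial is the sum of the filtration degrees of its factors (so that $a_i\cdot a_j$ genuinely lands in $SP^2$ and not earlier), and that no additional relations among these degree-$2$ classes are imposed inside $SP^n(M_g)$ for $n$ in the stated range. Verifying the latter amounts to checking that the split injection $H_2(SP^2(M_g))\hookrightarrow H_2(SP^\infty(M_g))$ has image exactly the span of the listed monomials, which one reads off from the $m\le 2$ part of the bigraded decomposition; I would cite~\cite{Mi} or~\cite{Ka1} for the explicit bigraded basis rather than rederiving it.
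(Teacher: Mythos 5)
Your proposal is correct and follows essentially the same route as the paper, which proves the proposition simply by invoking Milgram's bigraded description of the Pontryagin ring $\Lambda(a_1,b_1,\dots,a_g,b_g)\otimes\Gamma[c]$ together with the Steenrod splitting; your write-up just makes explicit the filtration bookkeeping (a monomial of filtration degree $\le n$ survives in $H_*(SP^n(M_g))$) and the naturality argument identifying $(\mu_n)_*$ on $H_1$, which is exactly what the paper leaves implicit. The only blemishes are cosmetic: the rank count in part (2) trails off (it should read $1+2\binom{g}{2}+g^2=g(2g-1)+1$, matching Macdonald), and your remark that part (2) as stated really needs $n\ge 2$ is a fair observation about the statement rather than a gap in your argument.
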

\begin{cor}\label{mu}
The Abel--Jacobi map $\mu_n:SP^n(M_g)\to J$ induces an isomorphism of the fundamental groups.
\end{cor}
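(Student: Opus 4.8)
The plan is to deduce \corref{mu} directly from \propref{2-homology}(1) together with the structure of the fundamental group of $SP^n(M_g)$. First I would recall that by \propref{2-homology}(1), the map $(\mu_n)_*\colon H_1(SP^n(M_g))\to H_1(J)$ is an isomorphism, so it suffices to show that $\pi_1(SP^n(M_g))$ is abelian; then $\pi_1(SP^n(M_g))\cong H_1(SP^n(M_g))$ by the Hurewicz theorem, and the induced map on $\pi_1$ becomes identified with $(\mu_n)_*$ on $H_1$, which is already known to be an isomorphism. (Here $\pi_1(J)\cong\Z^{2g}$ is abelian since $J$ is a torus, so no further identification is needed on the target.)

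The key step, therefore, is to prove that $\pi_1(SP^n(M_g))$ is abelian. The standard argument: for $n\ge 2$ there is a surjection (in fact the basepoint inclusion composed with the symmetrization) that allows one to commute loops. Concretely, I would use that the multiplication map $\sigma\colon SP^n(M_g)\times SP^m(M_g)\to SP^{n+m}(M_g)$ and, more to the point, the map $M_g\times SP^{n-1}(M_g)\to SP^n(M_g)$ realize $\pi_1(SP^n(M_g))$ as a quotient of $\pi_1(M_g)$ in which the commutator relations already present in $\pi_1(M_g)$ are supplemented by extra relations forcing commutativity; alternatively, and more cleanly, one observes that for $n\ge 2$ any two based loops $\alpha,\beta$ in $M_g\subset SP^n(M_g)$ can be pushed to loops supported in disjoint ``coordinate copies'' of $M_g$ inside $SP^n(M_g)$ (using the extra points of the divisor to move one loop off the other), and disjointly supported loops in a symmetric product commute because the multiplication map $\mu\colon SP^i\times SP^j\to SP^n$ provides a homotopy $\alpha\beta\simeq\beta\alpha$. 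This is the classical fact that $\pi_1(SP^n(X))\cong H_1(X)$ for $n\ge 2$ and connected $X$; for $n=1$ the statement $\pi_1(SP^1(M_g))=\pi_1(M_g)$ is not abelian but then $g=0$ is the only case where $\mu_1$ induces an iso on $\pi_1$ anyway, so one should either restrict to $n\ge 2$ or handle $n=1$ separately — in fact for $n=1$, $g\ge1$, the map $\mu_1$ is not a $\pi_1$-isomorphism, so the corollary as stated should be read for $n\ge 2$, matching the hypothesis under which \propref{2-homology} is used.

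I expect the main obstacle to be purely expository: making precise the ``disjoint coordinate copies'' argument, i.e. that $SP^n(M_g)$ for $n\ge2$ has abelian fundamental group. This can be sidestepped by invoking the known computation $\pi_1(SP^n(M_g))\cong\pi_1(SP^\infty(M_g))=\pi_1((S^1)^{2g}\times\C P^\infty)=\Z^{2g}$, which follows from the Dold--Thom identification already recalled in the Homology subsection (the inclusion $SP^n(M_g)\hookrightarrow SP^\infty(M_g)$ is $(n+1)$-connected on the relevant range, or at least induces an iso on $\pi_1$ for $n\ge 2$). Once abelianness is in hand, the rest is a one-line application of Hurewicz plus \propref{2-homology}(1): the square
\[
\begin{tikzcd}
\pi_1(SP^n(M_g)) \arrow{r}{(\mu_n)_*} \arrow{d}{\cong} & \pi_1(J) \arrow{d}{\cong} \\
H_1(SP^n(M_g)) \arrow{r}{(\mu_n)_*}{\cong} & H_1(J)
\end{tikzcd}
\]
commutes by naturality of Hurewicz, the vertical maps are isomorphisms (the left one since $\pi_1$ is abelian, the right one since $J$ is a torus), and the bottom map is an isomorphism by \propref{2-homology}(1); hence the top map is an isomorphism, proving the corollary.
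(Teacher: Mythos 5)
Your proof is correct and takes essentially the same route as the paper's: the paper simply observes that both fundamental groups are abelian, hence coincide with the respective first homology groups, and invokes Proposition~\ref{2-homology}(1), while you additionally justify the abelianness of $\pi_1(SP^n(M_g))$ for $n\ge 2$ (disjoint-supports/Dold--Thom), which the paper takes as known. One small slip in your aside on $n=1$: $\mu_1$ induces a $\pi_1$-isomorphism for all $g\le 1$ (for $g=1$ the curve is isomorphic to its Jacobian), so the failure of the statement at $n=1$ occurs only for $g\ge 2$.
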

\begin{proof}
Since both fundamental groups are abelian, they are isomorphic to the respective first homology groups. So, the result follows from Proposition~\ref{2-homology} (1).
\end{proof}

%It was shown in~\cite{KS} that $(a_i\cdot b_i)^*=a_i^*b_i^*-c^*$.

\subsection{Second homotopy group}
For a CW complex $X$, the space $SP^n(X)$ does not have a natural CW complex structure. When $X$ is a 2-dimensional complex with only one vertex, Kallel and Salvatore defined a natural homotopy equivalence $SP^n(X)\to\overline{SP}^n(X)$ to a CW complex $\overline{SP}^n(X)$,~\cite{KS}. 
All important features of $SP^n(X)$ transfer to $\overline{SP}^n(X)$. In particular, there are the canonical base point inclusions $\overline{SP}^n(X)\to \overline{SP}^{n+1}(X)$.

The main feature of this construction is that the quotient map $X^n\to\overline{SP}^n(X)$ is cellular for the product CW structure on $X^n$. This implies
the following remarkable feature of the CW structure on $\overline{SP}^n(X)$:

$(\ast \ast)$ For each $k\le n$, the $k$-skeleton of $\overline{SP}^n(X)$  coincides with the $k$-skeleton of $\overline{SP}^k(X)$.

\begin{prop}\label{pi2}
For $n\ge 3$, $\pi_2(SP^n(M_g))=\mathbb Z$.
\end{prop}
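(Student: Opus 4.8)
The plan is to compute $\pi_2(SP^n(M_g))$ via the Hurewicz theorem applied to the universal cover, using the cellular structure on $\overline{SP}^n(M_g)$ together with the skeletal stability property $(\ast\ast)$. First I would pass to the universal cover $\widetilde{X}$ of $X = SP^n(M_g)$; since $\pi_2$ is unchanged under covering maps, and since $\widetilde{X}$ is simply connected, the Hurewicz theorem gives $\pi_2(X) = \pi_2(\widetilde{X}) \cong H_2(\widetilde{X})$. So the task reduces to computing $H_2$ of the universal cover.

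The key observation is that by property $(\ast\ast)$, the $3$-skeleton of $\overline{SP}^n(M_g)$ agrees with the $3$-skeleton of $\overline{SP}^3(M_g)$ for all $n \ge 3$, and in particular the $2$-skeleton (hence $\pi_1$ and the cellular chain complex in degrees $\le 2$, together with the boundary from degree $3$) stabilizes. Therefore $\pi_2(SP^n(M_g)) = \pi_2(SP^3(M_g))$ for all $n \ge 3$, and it suffices to treat a single value of $n$, say $n = 3$ — or better, to argue directly that the relevant piece of the chain complex of the universal cover is independent of $n \ge 3$. Concretely, I would lift the CW structure to $\widetilde{X}$ and write down $C_3(\widetilde{X}) \to C_2(\widetilde{X}) \to C_1(\widetilde{X})$ as modules over $\mathbb{Z}[\pi_1]$ with $\pi_1 = \mathbb{Z}^{2g}$; then $H_2(\widetilde{X}) = \ker(\partial_2)/\operatorname{im}(\partial_3)$. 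The cells in dimensions $\le 3$ come (via the Steenrod/Milgram bigrading and the explicit models of Kallel--Salvatore) from $SP^2(M_g)$ and the first new batch of $3$-cells; I would identify them explicitly. Alternatively — and this may be cleaner — I would use the known integral homology computations: Proposition~\ref{2-homology} gives $H_1$ and $H_2$ of $SP^n(M_g)$ itself, and one can feed the rank of $H_2(SP^n(M_g))$ and the action of $\pi_1$ on $\pi_2$ into the exact sequence relating $H_*(X)$, $H_*(\widetilde{X})$, and group homology of $\mathbb{Z}^{2g}$ (the five-term exact sequence / Cartan--Leray spectral sequence $H_p(\mathbb{Z}^{2g}; H_q(\widetilde{X})) \Rightarrow H_{p+q}(X)$), to isolate $H_2(\widetilde{X}) = \pi_2(X)$ as a $\mathbb{Z}[\mathbb{Z}^{2g}]$-module, and then forget the module structure to get the abelian group.

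Carrying out the spectral sequence approach: in low degrees it yields an exact sequence $H_2(\widetilde{X})_{\pi_1} \to H_2(X) \to H_2(\mathbb{Z}^{2g}) \to 0$ together with information from $H_3$, and a complementary handle on $H_1$. Since $\pi_1 = \mathbb{Z}^{2g}$ acts on $\widetilde{X}$ and we expect (by the Abel--Jacobi map to $J$, which kills $\pi_2$ since $J$ is a $K(\pi,1)$) that $\pi_2(X)$ is a \emph{trivial} $\mathbb{Z}[\pi_1]$-module — I would verify this by noting the $\pi_1$-action on $\pi_2$ factors through the action on the relevant skeleton, which for $n \ge 3$ is large enough to trivialize it; this is exactly where property $(\ast\ast)$ and the $n \ge 3$ hypothesis do their work. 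Granting triviality of the action, $H_2(\widetilde{X})_{\pi_1} = H_2(\widetilde{X})$, and comparing ranks via Macdonald's ring (Theorem~\ref{Mc}) and Proposition~\ref{2-homology}(2) — which count how much of $H_2(X)$ is accounted for by $\wedge^2 H_1$ coming from the torus versus the extra generator $c$ — pins down $H_2(\widetilde{X}) \cong \mathbb{Z}$, generated by (the lift of) the class $c$.

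The main obstacle I anticipate is establishing that the $\pi_1$-action on $\pi_2$ is trivial and, relatedly, controlling the image of $\partial_3$ precisely enough to get $\mathbb{Z}$ on the nose rather than $\mathbb{Z}$ up to extension or torsion. The skeletal stability $(\ast\ast)$ reduces everything to $SP^3(M_g)$, but one still has to understand the attaching maps of the new $3$-cells in $\overline{SP}^3(M_g)$ relative to $\overline{SP}^2(M_g)$ well enough to see that exactly the "torus part" of $H_2$ of the $2$-skeleton is killed in the universal cover while the class $c$ survives. The cleanest route is probably to combine the low-dimensional skeleton coincidence with the Abel--Jacobi map $\mu_n \colon SP^n(M_g) \to J$ (Corollary~\ref{mu} gives $\pi_1$-iso): the composite on universal covers shows $\pi_2(SP^n(M_g)) \cong \pi_2$ of the fiber/cofiber data, and the bundle-like behavior of $\mu_n$ together with $\pi_2(J) = 0$ forces $\pi_2(SP^n(M_g))$ to be carried entirely by the $\mathbb{CP}$-direction, i.e. by $c^* \leftrightarrow \mathbb{Z}$, once $n \ge 3$ makes the relevant $\mathbb{CP}^{n}$-type cell available.
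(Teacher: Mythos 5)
Your overall framework (the Kallel--Salvatore CW model, property $(\ast\ast)$, passing to the universal cover, Hurewicz) is reasonable, and the reduction to a single $n$ is fine, but there is a genuine gap at exactly the point you flag and then defer: you never establish that the $\pi_1$-action on $\pi_2$ is trivial, nor do you control $\partial_3$ (equivalently the relevant $d_3$ differential), and without one of these the argument does not close. Your proposed justification of the trivial action via the Abel--Jacobi map cannot work: $\mu_n$ induces an isomorphism on $\pi_1$ and $\pi_2(J)=0$ for \emph{every} $n\ge 2$, yet for $n=2$ the action of $\mathbb Z^{2g}$ on $\pi_2(SP^2(M_g))$ is very far from trivial (Proposition~\ref{pi2 not 0}: it is a $\mathbb Z^{2g}$-module generated by a single element, realized by infinitely many disjoint translates of a rational curve in the universal cover), so any argument uniform in $n\ge 2$ proves too much. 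Likewise, the Cartan--Leray sequence you invoke only yields the coinvariants $H_2(\widetilde X)_{\pi_1}\cong\mathbb Z$ (and even that needs surjectivity of $H_3(X)\to H_3(\mathbb Z^{2g})$, which fails for $n=2$ and must be checked for $n\ge 3$); coinvariants do not determine $H_2(\widetilde X)$ without the module structure, which is the very thing in question. Finally, the ``bundle-like behavior'' of $\mu_n$ is only available for $n\ge 2g-1$; for $g\le n<2g-1$ the fibers jump, and for $n<g$ the map is generically injective, so that heuristic does not cover the range $n\ge 3$.

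The missing idea --- and how the paper's proof closes the argument in a few lines --- is to use $(\ast\ast)$ to compare with the \emph{infinite} symmetric product rather than with $\overline{SP}^3(M_g)$: for $n\ge 3$ the $3$-skeleton of the universal cover of $\overline{SP}^n(M_g)$ coincides with the $3$-skeleton of the universal cover of $\overline{SP}^\infty(M_g)$, and by Dold--Thom the latter space is homotopy equivalent to $\mathbb CP^\infty\times T^{2g}$, so its universal cover is homotopy equivalent to $\mathbb CP^\infty$. Since $\pi_2$ of a CW complex depends only on its $3$-skeleton, one gets $\pi_2(SP^n(M_g))=\pi_2(\mathbb CP^\infty)=\mathbb Z$ immediately, with no chain-level computation, no spectral sequence, and the triviality of the action coming for free. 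To salvage your route you would have to actually carry out the identification of the $3$-cells of $\overline{SP}^3(M_g)$ and their attaching maps relative to $\overline{SP}^2(M_g)$, which is precisely the step you leave open; as written, that is the whole difficulty.
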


\begin{proof}
The universal covering $p:X\to\overline{SP}^\infty(M_g)$ has a CW structure induced from $\overline{SP}^\infty(M_g)$.
Since it is homotopy equivalent to the universal covering of $\mathbb CP^\infty\times T^{2g}$, it is homotopy equivalent to $\mathbb CP^\infty$. 
The universal covering $Y$ of $\overline{SP}^n(M_g)$ is the pullback of $p$ with respect to the base point inclusion
$\overline{SP}^n(M_g)\subset\overline{SP}^\infty(M_g)$. Hence, in view of $(\ast \ast)$, $Y$ satisfies $Y^{(3)}=X^{(3)}$. Thus,
we have a chain of equalities
\[
\pi_2(SP^n(M_g))=\pi_2(Y)=\pi_2(Y^{(3)})=\pi_2(X^{(3)})=\pi_2(X)=\pi_2(\mathbb CP^{\infty})=\mathbb Z.
\]
\end{proof}

Recall that a rational curve in $M$ is simply the image of a non-constant holomorphic map $f: \C P^1\to M$.

The following result generalizes~\cite[Lemma 9]{Ka2}.
\begin{prop}\label{pi2 not 0}
For each $g\geq 0$, the second homotopy group $\pi_2(SP^2(M_g))$ is a $\mathbb Z^{2g}$-module generated by a single element.
\end{prop}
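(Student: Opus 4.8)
The plan is to pass to the universal cover and reduce everything to a cellular chain-level computation. Since $SP^2(M_g)$ is a closed $4$-manifold with $\pi_1(SP^2(M_g))\cong\mathbb{Z}^{2g}$ by Corollary~\ref{mu}, its universal cover $p\colon\widetilde{X}\to SP^2(M_g)$ is simply connected, so the Hurewicz theorem gives $\pi_2(SP^2(M_g))\cong\pi_2(\widetilde{X})\cong H_2(\widetilde{X})$, the identification being $\mathbb{Z}[\mathbb{Z}^{2g}]$-linear for the deck-transformation module structure. It therefore suffices to exhibit a class $\widetilde{c}\in H_2(\widetilde{X})$ whose $\mathbb{Z}[\mathbb{Z}^{2g}]$-translates generate. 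I would take $\widetilde{c}$ to be a lift of $c\in H_2(SP^2(M_g))$, i.e.\ of the image of the fundamental class of $M_g$ under the basepoint inclusion, equivalently the $\bar{q}$-pullback of the positive generator of $H_2(\mathbb{C}P^{2})=H_2(SP^2(S^2))$; since $c$ is carried by the $2$-skeleton and the corresponding $2$-cell is attached along a null-homotopic loop, such a lift exists. That $\widetilde{c}$ is at least a \emph{candidate} single generator is already visible from the inclusion $SP^2(M_g)\hookrightarrow SP^3(M_g)$: by property $(\ast\ast)$ this pair is $2$-connected, so Proposition~\ref{pi2} yields a surjection $\pi_2(SP^2(M_g))\twoheadrightarrow\pi_2(SP^3(M_g))=\mathbb{Z}$ carrying $[\widetilde{c}]$ to a generator.

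Concretely, I would use the Kallel--Salvatore model $\overline{SP}^2(M_g)$ and property $(\ast\ast)$, which says its $2$-skeleton is that of $\overline{SP}^{\infty}(M_g)$, hence (by Dold--Thom) cellularly the $2$-skeleton of $\mathbb{C}P^{\infty}\times T^{2g}$. Lifting the CW structure to $\widetilde{X}$ produces a chain complex $C_3\xrightarrow{\partial_3}C_2\xrightarrow{\partial_2}C_1$ of finitely generated free $\mathbb{Z}[\mathbb{Z}^{2g}]$-modules, and, attaching cells of dimension $\geq4$ being irrelevant to $\pi_2$, one has $\pi_2(SP^2(M_g))=\ker\partial_2/\operatorname{im}\partial_3$. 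The $2$-skeleton shows that $\ker\partial_2$ contains the free rank-one submodule $\mathbb{Z}[\mathbb{Z}^{2g}]\!\cdot\!\widetilde{c}$ (the $\mathbb{C}P^1$-cell has trivial boundary) together with the Koszul-type syzygies carried by the $2$-cells of $T^{2g}$ (and by the auxiliary $2$-cells $\{v,F\}$ and $SP^2(e_i)$ present in this model). So the proposition is equivalent to the assertion that $\operatorname{im}\partial_3$ absorbs all of these syzygies modulo $\mathbb{Z}[\mathbb{Z}^{2g}]\!\cdot\!\widetilde{c}$.

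The crux --- and where I expect all the difficulty to lie --- is the explicit determination of $\operatorname{im}\partial_3$, i.e.\ of the attaching maps of the $3$-cells of $\overline{SP}^2(M_g)$: the cells $\{e_i,F\}$ built from a $1$-cell $e_i$ of $M_g$ and the top cell $F$, together with the diagonal $3$-cell(s) coming from $SP^2(F)$. Using that $M_g\times M_g\to\overline{SP}^2(M_g)$ is cellular and that $F$ is attached along $[a_1,b_1]\cdots[a_g,b_g]$, one expands these boundaries as twisted Fox-derivative expressions; the outcome one needs is the homological counterpart of Macdonald's relations $(c^{*}-a_k^{*}b_k^{*})(\cdots)(c^{*})^{s}=0$ of Theorem~\ref{Mc}, namely that each toral class $a_i\cdot b_i\in H_2(\widetilde{X})$ coincides with a $\mathbb{Z}[\mathbb{Z}^{2g}]$-translate of $\widetilde{c}$ modulo $\operatorname{im}\partial_3$, which then forces $\ker\partial_2=\operatorname{im}\partial_3+\mathbb{Z}[\mathbb{Z}^{2g}]\!\cdot\!\widetilde{c}$. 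The real obstacle is the bookkeeping in the Kallel--Salvatore structure: $SP^2$ of a cell is not literally one ``multiset'' cell --- its diagonal stratum contributes extra lower-dimensional cells --- so one must first pin down a sufficiently fine CW structure and then track the group-ring coefficient of every incidence, which is exactly where sign and indexing errors would creep in and where the substance of the proof resides.
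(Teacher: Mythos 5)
Your reduction to the universal cover and to an equivariant chain complex is legitimate, but the candidate generator you build everything around does not exist. A class in $H_2$ of the universal cover projects to a \emph{spherical} class of $H_2(SP^2(M_g))$, and $c$ is not spherical for $g\ge 1$: the inclusion-induced surjection $\pi_2(SP^2(M_g))\to\pi_2(SP^3(M_g))=\Z$ that you invoke is compatible with Hurewicz images, and by Propositions~\ref{s-class} and~\ref{generator} the Hurewicz image of $\pi_2(SP^3(M_g))$ is $\Z\,u$ with $u=c-\sum_i a_i\cdot b_i$, which does not contain $c$ since $c$ and the Pontryagin products are free generators of $H_2$ (Proposition~\ref{2-homology}). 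So if a lift $\widetilde c$ of $c$ existed, its image would force $c\in\Z u$, a contradiction; your ``sanity check'' actually refutes the choice rather than supporting it. Concretely, in the Kallel--Salvatore model the $2$-cell carrying $c$ is the image of the $2$-cell of $M_g$, attached along $[a_1,b_1]\cdots[a_g,b_g]$; its lift has equivariant boundary given by the (abelianized) Fox derivatives of this relator, which are nonzero for $g\ge 1$, so the parenthetical ``the $\C P^1$-cell has trivial boundary'' is false, and $\Z[\Z^{2g}]\cdot\widetilde c$ is not a submodule of $\ker\partial_2$. For the same reason the tori $T_i$ do not lift, so the statement that ``each toral class $a_i\cdot b_i\in H_2(\widetilde X)$ coincides with a translate of $\widetilde c$ modulo $\operatorname{im}\partial_3$'' is not meaningful as written: elements of $\ker\partial_2$ are group-ring combinations mixing the surface cell with the torus cells, the basic one being a lift of the spheroid $\alpha$ of Proposition~\ref{s-class}, whose Hurewicz image is $u$, not $c$. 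Any correct chain-level argument must take (a lift of) $u$ as the generator.

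Beyond this, the actual content of the proposition --- that $\ker\partial_2=\operatorname{im}\partial_3+\Z[\Z^{2g}]\cdot(\text{one class})$ --- is precisely the determination of $\operatorname{im}\partial_3$ that you explicitly defer (``where the substance of the proof resides''), so even after repairing the generator the proposal is an outline, not a proof; carrying it out would essentially reproduce Kallel's computation in~\cite[Lemma 9]{Ka2}. The paper avoids this bookkeeping entirely by a short geometric argument: for $g\ge 2$ choose a hyperelliptic structure on $M_g$, so the Abel--Jacobi map $\mu_2$ contracts a unique smooth rational curve; since $\mu_2$ is an isomorphism on $\pi_1$ (Corollary~\ref{mu}), lifting to universal covers yields the $\Z^{2g}$-family of disjoint rational curves over the lattice points of $\C^g$, and this single curve generates $\pi_2$ as a $\Z^{2g}$-module (note that its class maps to $u$, consistent with the correction above, since $(\mu_2)_*u=0$). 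If you want a cellular proof, reorganize it around $u$ and then actually compute the boundaries of the $3$-cells of $\overline{SP}^2(M_g)$.
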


\begin{proof}
When $g=0, 1$, the result is standard and the $\mathbb Z^{2g}$-module is trivial. For $g\geq 2$, it is classically known (see, for instance,~\cite[Example 3.3]{DCP}), that if we select a hyperelliptic structure on $M_g$, then there is a unique smooth rational curve in $SP^{2}(M_g)$ contracted by the Abel--Jacobi map. The Abel--Jacobi map is a biholomorphism outside this rational curve. By composing with a translation of the Albanese torus, we may always assume such a rational curve is contracted to the origin in $J$. Since the Abel--Jacobi map gives an isomorphism on $\pi_1$ (\emph{cf.} Corollary~\ref{mu}), by lifting this map to the universal covers, we conclude the proof. Indeed, we construct infinitely many disjoint smooth rational curves in $\rwt{SP^2(M_g)}$ that are the preimages of the lattice points in $\C^g$ corresponding to $\pi_1(J)=\Z^{2g}$.
\end{proof}

\begin{remark}
The second homotopy group of symmetric products of curves can be derived from the computation of the homology groups of their universal covers done by Bökstedt and Romão in~\cite{BR}. Our computation here is more direct.
\end{remark}

\section{Symmetric products of curves and symplectic asphericity}\label{SA}

In this section, we show that many symmetric products of curves are symplectically aspherical. First, let us recall the notion of symplectic asphericity.

Let $(M,\omega)$ be a closed symplectic manifold of dimension $2n$ (for $n\ge 1$), and let 
%such that $[\omega]^n \ne 0$, where 
$[\omega]\in H^2(M;\R)$ be the de~Rham cohomology class corresponding to 
%the non-degenerate symplectic $2$-form 
$\omega$. Let $h:\pi_2(M)\to H_2(M;\Z)$ denote the Hurewicz homomorphism. We call $(M,\omega)$ \emph{symplectically aspherical} (or \emph{SA} for short) if the composition $[\omega]\circ h$ is trivial. Equivalently, $(M,\omega)$ is SA if for any smooth map $f:S^2\to M$, we have that
\[
\int_{S^2} f^*\omega = 0.
\]
In this case, we call the symplectic form $\omega$ \emph{aspherical}.

We note that due to the Hurewicz theorem, a simply connected symplectic manifold cannot be SA. Some obvious examples of SA manifolds include symplectic manifolds that are aspherical. The \emph{first} non-aspherical SA examples were constructed by Gompf in~\cite{Go2} by using some branched covering constructions. 

It is well-known that SA manifolds are rationally essential, see~\cite{RO} (and also~\cite[Lemma 2.1]{RT}).

%\begin{ex} 
%Since $SP^2(M_i)$ is not essential for $i=0,1$ due to dimensional reasons, it is not SA. Note that $SP^2(M_2)$ is diffeomorphic to $T^4\# \ov{\C P^2}$, which is the blow-up of the $4$-torus $T^4$ at one point. While $SP^2(M_2)$ is rationally essential, it is not SA due to~\cite[Proposition 4.10]{Ku}.
%\end{ex}

\begin{theorem}\label{todo4}
For $g\ge 2n-1$, the smooth manifold $SP^n(M_g)$ is symplectically aspherical.
\end{theorem}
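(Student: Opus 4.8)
The plan is to exhibit a symplectic form on $SP^n(M_g)$ whose cohomology class pairs trivially with the image of the Hurewicz homomorphism. The key observation is that for $g \geq 2n-1$, the Abel--Jacobi map $\mu_n : SP^n(M_g) \to J$ is an embedding on a large skeleton and, crucially, the torus $J$ is aspherical; so any $2$-form pulled back from $J$ is automatically aspherical in the required sense. But a form pulled back from $J$ alone cannot be symplectic on $SP^n(M_g)$, since $[\mu_n^*\eta]^n$ would have to be a product of classes from $\Lambda(a_i^*, b_i^*)$, and by Macdonald's relations (Theorem~\ref{Mc}) such top-degree products vanish once the number of distinct exterior generators forced in exceeds $n$ — which happens precisely because $2n > g$ is \emph{not} assumed; rather $g \ge 2n-1$ is the opposite regime, so I must be more careful. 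The correct strategy is: take a K\"ahler form $\omega_0$ on $SP^n(M_g)$ and perturb it by a large pullback $\mu_n^*\eta$ of a K\"ahler form $\eta$ on $J$, i.e., consider $\omega_t = \omega_0 + t\,\mu_n^*\eta$ for large $t$. This is still symplectic (sum of a symplectic form and a closed form that is nonnegative on complex lines, with positivity strict outside the locus contracted by $\mu_n$, so $\omega_t^n \neq 0$ for every $t>0$). So the real content is: \emph{for $g \ge 2n-1$ there is a choice making $[\omega_t]\circ h$ trivial}, equivalently $[\omega_0]\circ h$ is already a multiple of something killed on $\pi_2$.

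The heart of the matter is therefore a $\pi_2$-computation combined with the structure of $H^2$. First I would compute $\pi_2(SP^n(M_g))$: for $n \ge 3$ it is $\mathbb{Z}$ by Proposition~\ref{pi2}, and for $n=2$ it is a cyclic $\mathbb{Z}^{2g}$-module by Proposition~\ref{pi2 not 0}; in either case the Hurewicz image $h(\pi_2) \subset H_2(SP^n(M_g);\mathbb{Z})$ is generated (as a group, for $n\ge 3$; as a $\pi_1$-module, for $n=2$, which suffices since $[\omega]$ is $\pi_1$-invariant) by the class of a single rational curve $R \subset SP^n(M_g)$. Concretely $R$ can be taken as the image of $\overline{q}^{-1}$ of a line, i.e., a fiber-type rational curve whose homology class is $c \in H_2(SP^n(M_g))$ in Macdonald's notation; indeed $\pi_2 = \mathbb{Z}$ is detected by $c^* $. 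So $(M,\omega)$ is SA if and only if $\int_R \omega = 0$, i.e., $\langle [\omega], c\rangle = 0$.

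Now I bring in the curvature/positivity input. A K\"ahler form $\omega_0$ always satisfies $\int_R \omega_0 > 0$, so no K\"ahler form is aspherical — which is consistent with $SP^n(M_g)$ being projective with $\pi_2\ne 0$. The point is to produce a \emph{non-K\"ahler} aspherical symplectic form. For this, recall $c_1 = (n-g+1)c^* - \sum_{i=1}^g a_i^* b_i^*$; the relevant auxiliary fact is that when $g \ge 2n-1$ (so $n - g + 1 \le 2 - n \le 0$) the canonical bundle, or rather a suitable pseudo-effective/nef class, has nonpositive degree on $R$. I would then invoke a Thurston-type or Gompf-type construction: since $\mu_n$ is a holomorphic map to the torus $J$ that is an embedding away from a lower-dimensional exceptional locus $E$ (the contracted rational curves, all homologous to multiples of $c$), and since $R \subset E$, choose a closed $2$-form $\beta$ supported near $E$ representing a class with $\langle[\beta], c\rangle = -\langle[\omega_0],c\rangle$ and $[\beta]$ vanishing when paired with every other class in $h(\pi_2)$ — possible because $h(\pi_2)$ is cyclic, so this is a single linear condition — then set $\omega = \omega_0 + \mu_n^*\eta_{\text{large}} + \beta$; for the perturbation large enough relative to the support of $\beta$, $\omega$ remains nondegenerate (hence symplectic with $[\omega]^n \neq 0$ since the class is a positive combination on the ample cone modulo the contracted part), and by construction $[\omega]\circ h = 0$.

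The main obstacle I anticipate is making the perturbation argument rigorous: one must simultaneously (i) keep $\omega_t$ symplectic with $[\omega_t]^n \neq 0$ — this needs the fact, from Section~\ref{curvature} or from Jacobi's Theorem~\ref{Jacobi} ($\mu_n$ birational, contracting only $E$), that $\mu_n^*\eta$ is positive on complex tangent lines outside $E$ and $R$ is the only trouble; and (ii) arrange the cohomological cancellation on the one-dimensional group $h(\pi_2(SP^n(M_g)))$ without disturbing symplecticness. The constraint $g \ge 2n-1$ enters precisely to guarantee that Macdonald's relations do \emph{not} kill the needed products $a_1^*b_1^*\cdots a_n^*b_n^*$ or $a_1^*b_1^*\cdots a_g^*b_g^*(c^*)^{n-g}$ (Proposition~\ref{2g-torus}), i.e., that $H^*(J) \to H^*(SP^n(M_g))$ is injective in the relevant range, so that $\mu_n$ is "homologically large enough" for the pulled-back form to control the class. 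I expect the cleanest writeup routes everything through: $\pi_2$ is cyclic $\Rightarrow$ SA $\iff$ one linear condition on $[\omega]$ $\Rightarrow$ realize it by a Gompf/Thurston perturbation of the K\"ahler form along the fibers of $\mu_n$.
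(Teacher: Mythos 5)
Your proposal rests on a misidentification of both the geometry and the homotopy in the range $g\ge 2n-1$, and the construction you sketch to repair it is not carried out. The paper's actual route is much shorter: choose the complex structure on $M_g$ generic in $\mathcal{M}_g$; then the gonality of a generic curve is $\lfloor\tfrac{g+1}{2}\rfloor+1>n$, and Abel's theorem shows that $\mu_n(Q)=\mu_n(Q')$ would produce a degree-$n$ meromorphic function, i.e.\ an $n$-sheeted branched cover of $\C P^1$ -- impossible. Hence $\mu_n:SP^n(M_g)\to J$ is a holomorphic \emph{embedding}, and one simply takes $\mu_n^*\omega$ for a K\"ahler form $\omega$ on $J$: it is K\"ahler (so symplectic, with $[\mu_n^*\omega]^n\ne 0$) and aspherical because $J$ is aspherical. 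This directly contradicts two of your central claims. First, you assert that a form pulled back from $J$ cannot be symplectic; it can, precisely because $\mu_n$ is an embedding here (and cohomologically $(\mu_n^*[\omega])^n$ is supported on $a_1^*b_1^*\cdots a_n^*b_n^*=(c^*)^n\ne 0$, Proposition~\ref{2g-torus}(1)). Second, you assert that no K\"ahler form on $SP^n(M_g)$ is aspherical because a K\"ahler form must be positive on a rational curve $R$ generating the Hurewicz image; but in this range $SP^n(M_g)$ embeds in an abelian variety and contains \emph{no} rational curves at all (this is exactly what Theorem~\ref{Mori-klt} exploits), and the generator of the Hurewicz image is $u=c-\sum_i a_i\cdot b_i$ (Proposition~\ref{s-class}), not $c$; a K\"ahler class pulled back from $J$ pairs to zero with $u$ even though it pairs nontrivially with $c$. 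So your reduction ``SA $\iff\langle[\omega],c\rangle=0$'' is incorrect, and your premise that there is an exceptional locus $E$ with contracted rational curves pertains to the regime $n\ge g$ (or $n=g$), not to $g\ge 2n-1$.

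Beyond these false premises, the Gompf/Thurston-style step you propose is a genuine gap rather than a proof: you never show that a closed form $\beta$ supported near $E$ can be chosen with the prescribed pairing against the spherical class while keeping $\omega_0+\mu_n^*\eta_{\mathrm{large}}+\beta$ nondegenerate -- and since in this range there is no locus contracted by $\mu_n$ (for generic $M_g$), the object your construction is built around does not exist. Finally, the hypothesis $g\ge 2n-1$ does not enter through Macdonald's relations or injectivity of $H^*(J)\to H^*(SP^n(M_g))$ as you suggest; it enters through the Brill--Noether gonality bound for a generic curve, which is what makes $\mu_n$ injective. (Choosing a generic complex structure is legitimate because the statement concerns the underlying smooth manifold, which is independent of the complex structure on $M_g$.)
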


\begin{proof}
Consider $M_g$ as a projective curve with a \emph{generic} complex structure in the moduli space $\mathcal{M}_g$. It is classically known (see, for example,~\cite[Page 261]{GH}) that the generic Riemann surface of genus $g$ is expressible as a branched cover of $\C P^1$ with
\begin{equation}\label{Riemann}
k=\biggl\lfloor{\frac{g+1}{2}\biggr\rfloor}+1
\end{equation}
sheets but no fewer. This implies that the map
\[
\mu_{n}: SP^n(M_g)\to J=\C^{2g}/\Lambda=T^{2g}, \quad g\geq 2n-1,
\]
is injective and therefore an embedding. To see this, assume that 
\[
\mu_{n}(Q)=\mu_{n}(Q^\prime)
\]
for $Q=\sum^n_{\lambda=1}p_{\lambda}$, $Q^{\prime}=\sum^n_{\lambda=1}p^\prime_{\lambda}\in SP^n(M_g)$. We then have that $Q-Q^{\prime}\in \text{Div}^{0}(M_g)$ with
\[
\mu(Q-Q^\prime)=0\in J.
\]
Thus, we conclude by Theorem~\ref{Abel} that $Q-Q^{\prime}=(f)$, where $(f)$ is the divisor associated with a mermorphic function $f$ on $M_g$. By sending the poles of $f$ to the north pole of the Riemann sphere $\C P^1$, $f$ extends to a holomorphic branched covering map from $M_g$ to $\C P^1$ with $n$ sheets. Since $M_g$ is generic, this contradicts the bound in~\eqref{Riemann}. This shows that for $g\geq 2n-1$ and $M_g$ generic in $\mathcal{M}_g$, the Abel--Jacobi map $\mu:SP^n(M_g)\to \C^{2g}/\Lambda=T^{2g}$ is an embedding. We can now use this fact to produce symplectically aspherical K\"ahler classes on $SP^n(M_g)$. Let $\omega$ be a K\"ahler form on the Abelian variety $J=T^{2g}$. We take the pullback of $\omega$ along $\mu_n$ to get a K\"ahler metric on $SP^n(M_g)$ corresponding to $\mu_n^*\omega$. Since $\omega$ is an aspherical form, so is $\mu_n^*\omega$ by~\cite[Proposition 2.1]{KRT2}. Hence, for any $n$ and $g$ satisfying $g\geq2n-1$ and $M_g$ generic, the manifold $(SP^n(M_g),\mu_n^*\omega)$ is SA for any K\"ahler form $\omega$ on $J$.
\end{proof}

We note that the lower bound in the above proposition is sharp. Indeed, $SP^2(M_2)$, which is diffeomorphic to the blow-up $T^4\# \ov{\C P^2}$ of a $4$-torus at one point, is not symplectically aspherical due to~\cite[Proposition 4.10]{Ku}.

\begin{remark}
From Propositions~\ref{pi2} and~\ref{pi2 not 0}, we have that the group $\pi_2(SP^n(M_g))$ is infinite for each $n$ and $g$. Thus, the SA examples given in Theorem~\ref{todo4} are not aspherical. Recall that the \emph{first} non-aspherical SA examples were constructed by Gompf  in~\cite{Go2} by using branched covering constructions. We believe the examples given in Theorem~\ref{todo4} to be more natural and, in many ways, more classical.
\end{remark}

\section{Curvatures and symmetric products of curves}\label{Curvatures and symmetric products of curves}\label{curvature}

In this section, we study the curvature properties of symmetric products of curves, both in the Riemannian geometry and K\"ahler geometry settings. We begin by observing that symmetric products of curves cannot be non-positively curved.

\begin{prop}\label{nonpositive}
For any $n$ and $g$, the smooth manifold $SP^n(M_g)$ cannot support a Riemannian metric of non-positive sectional curvature.
\end{prop}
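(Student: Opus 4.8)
The plan is to use the Cartan--Hadamard theorem together with the basic topological facts about $SP^n(M_g)$ established earlier in the excerpt. If $SP^n(M_g)$ admitted a complete Riemannian metric of non-positive sectional curvature, then its universal cover would be diffeomorphic to $\R^{2n}$, and in particular contractible; hence $SP^n(M_g)$ would be aspherical. So it suffices to exhibit, for every $n$ and $g$, a nontrivial higher homotopy group. This is exactly what Propositions~\ref{pi2} and~\ref{pi2 not 0} provide: for $n\ge 3$ one has $\pi_2(SP^n(M_g))=\Z\neq 0$, and for $n=2$ the group $\pi_2(SP^2(M_g))$ is a $\Z^{2g}$-module generated by a single element, which is nonzero (for $g=0,1$ this is the standard fact that $SP^2(S^2)=\C P^2$ and $SP^2(T^2)$ have $\pi_2\neq 0$, and for $g\ge 2$ it contains the classes of the rational curves contracted by the Abel--Jacobi map). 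The case $n=1$ is $SP^1(M_g)=M_g$: for $g\le 1$ the surface has $\pi_2\neq 0$, and for $g\ge 2$ it is aspherical but carries a hyperbolic metric, so one should either exclude $n=1$ implicitly (since the interesting statement is about $n\ge 2$) or note separately that $M_g$ with $g\ge 2$ does support non-positive curvature. Reading the statement in context (the whole paper concerns $SP^n$ for $n\ge 2$, and the sentence preceding it says ``symmetric products of curves cannot be non-positively curved''), I will treat the claim as implicitly $n\ge 2$, or equivalently add the hypothesis that $SP^n(M_g)$ is not itself a surface of genus $\ge 2$.

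Concretely, here is the order of steps. First, recall that a closed manifold admitting a metric of non-positive sectional curvature is aspherical: by completeness (automatic for a compact manifold) and the Cartan--Hadamard theorem, the exponential map at any point is a diffeomorphism $\R^{2n}\to \wt{SP^n(M_g)}$, so the universal cover is contractible and all higher homotopy groups of $SP^n(M_g)$ vanish. Second, invoke Proposition~\ref{pi2} for $n\ge 3$ to get $\pi_2(SP^n(M_g))=\Z\neq 0$, contradicting asphericity. Third, for $n=2$ invoke Proposition~\ref{pi2 not 0}: the module $\pi_2(SP^2(M_g))$ is generated by a single nonzero element (e.g. the class of a contracted rational curve when $g\ge 2$, and the standard generator when $g=0,1$), so again $\pi_2\neq 0$. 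Either way $SP^n(M_g)$ is not aspherical, hence supports no metric of non-positive sectional curvature.

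There is no real obstacle here; the only point requiring care is the bookkeeping in the $n=2$ case, namely confirming that the single generator of $\pi_2(SP^2(M_g))$ is genuinely nonzero for all $g$ (including $g=0,1$ where $SP^2$ is $\C P^2$ or a blowup of $T^4$-type object with $\pi_2 = \Z$), and flagging that the statement should be read for $n\ge 2$ since $M_g$ with $g\ge 2$ is non-positively curved. Everything else is the standard two-line Cartan--Hadamard argument applied to the homotopy computations already in hand.
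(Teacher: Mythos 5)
Your proposal is correct and follows essentially the same route as the paper: the paper's proof also combines the Cartan--Hadamard theorem (a closed non-positively curved manifold is aspherical) with Propositions~\ref{pi2} and~\ref{pi2 not 0} showing $\pi_2(SP^n(M_g))\neq 0$. Your extra remark about reading the statement for $n\ge 2$ (since $SP^1(M_g)=M_g$ with $g\ge 2$ is non-positively curved) matches the paper's implicit convention, as its cited propositions only cover $n\ge 2$.
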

\begin{proof}
In Propositions~\ref{pi2} and~\ref{pi2 not 0}, we computed $\pi_2(SP^n(M_g))$ for all $n$ and $g$. This group turns out to be always infinite. By the Cartan--Hadamard theorem (see, for example,~\cite[Theorem 6.2.2]{Petersen}) on the other hand, a closed non-positively curved Riemannian manifold has vanishing $\pi_k$ for each $k\geq 2$ since the universal cover is diffeomorphic to $\R^n$.
\end{proof}

\subsection{The range $g\ge n$}
While $SP^n(M_g)$ cannot be non-positively curved in the Riemannian sectional curvature sense by Proposition~\ref{nonpositive}, our next result shows that for certain ranges of $n$ and $g$ and for certain complex structures on $M_g$, $SP^n(M_g)$ admits K\"ahler metrics with non-positive \emph{holomorphic} sectional curvature.

\begin{prop}\label{it can}
For $g\ge 2n-1$ and $M_g$ generic in the moduli space $\mathcal{M}_g$, the projective manifold $SP^n(M_g)$ supports K\"ahler metrics of non-positive holomorphic sectional curvature.
\end{prop}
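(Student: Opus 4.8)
The plan is to exploit the same embedding $\mu_n \colon SP^n(M_g) \hookrightarrow J$ that was constructed in the proof of Theorem~\ref{todo4}, and to pull back a flat K\"ahler metric from the abelian variety $J$, but now keeping track of the \emph{holomorphic sectional curvature} rather than just the asphericity of the symplectic form. First I would recall, exactly as in Theorem~\ref{todo4}, that for $g \ge 2n-1$ and $M_g$ generic in $\mathcal{M}_g$, Abel's theorem together with the sheet-number bound~\eqref{Riemann} forces $\mu_n$ to be an injective holomorphic immersion, hence a holomorphic embedding of the compact complex manifold $SP^n(M_g)$ as a smooth complex submanifold of the complex torus $J = \C^g/\Lambda$. (Note $J$ has complex dimension $g$, not $2g$; I would be careful to state this correctly.)

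The key differential-geometric input is the standard fact that holomorphic sectional curvature is \emph{monotone under passing to complex submanifolds}: by the Gauss--Codazzi equations for K\"ahler submanifolds, if $(N, \omega)$ is a K\"ahler manifold and $S \subset N$ is a complex submanifold with the induced metric, then the holomorphic sectional curvature of $S$ at any point is $\le$ that of $N$ in the corresponding direction (the second fundamental form contributes with a favorable sign). So the second step is: equip $J = \C^g/\Lambda$ with a flat K\"ahler metric $\omega$ coming from a constant positive-definite Hermitian form on $\C^g$; this metric has identically zero holomorphic sectional curvature. Pulling back along the holomorphic embedding $\mu_n$ gives a genuine K\"ahler metric $\mu_n^*\omega$ on $SP^n(M_g)$ (positive-definiteness is preserved since $\mu_n$ is an immersion), and by the submanifold curvature inequality its holomorphic sectional curvature is $\le 0$ everywhere. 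This completes the proof.

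The main obstacle — really the only nontrivial point — is making sure the embedding claim is clean: one must know $\mu_n$ is an immersion, not merely injective, so that the pullback metric is nondegenerate. I would handle this by the same argument pattern used for injectivity: a point where $d\mu_n$ drops rank corresponds (via the cohomological description of the differential of the Abel--Jacobi map, or via Riemann--Roch applied to the fibers) to a divisor $D$ of degree $n$ with $h^0(D) \ge 2$, which again yields a meromorphic function exhibiting $M_g$ as an $n$-sheeted cover of $\C P^1$, contradicting the genericity bound~\eqref{Riemann} when $n \le k-1$, i.e. when $g \ge 2n-1$. Alternatively, since $\mu_n$ is already known to be injective and the source is compact, one can invoke properness plus the fact that an injective holomorphic map of compact complex manifolds with nowhere-dropping rank on a dense set, combined with the Abel--Jacobi map's known behaviour, is an embedding; but the Riemann--Roch argument is the most transparent. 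Once the embedding is in hand, the curvature conclusion is immediate from the Gauss equation, and I would simply cite the standard reference for holomorphic sectional curvature of K\"ahler submanifolds rather than reproving it.
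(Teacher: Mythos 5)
Your proposal is correct and follows essentially the same route as the paper: restrict the flat K\"ahler metric on $J$ along the embedding $\mu_n$ furnished by the proof of Theorem~\ref{todo4}, and invoke the Gauss-equation monotonicity of holomorphic sectional curvature for complex submanifolds (the paper cites the explicit formula $R(v,Iv,v,Iv)=-|B(v,v)|-|B(v,Iv)|\le 0$ from Goldberg--Kobayashi). Your additional check that injectivity of $\mu_n$ forces it to be an immersion (a rank drop would give a degree-$n$ divisor with $h^0\ge 2$, contradicting the genericity bound~\eqref{Riemann}) is a sensible precaution that the paper glosses over, but it does not change the argument.
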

\begin{proof}
Under these assumptions, the proof of Theorem~\ref{todo4} tells us that
\[
\mu_n: SP^n(M_g)\to J
\]
is an embedding. Let $\omega_0$ be the standard Euclidean flat metric on $J$. Since the flat metric has vanishing Riemannian curvature tensor, we can prove that the K\"ahler metric $\mu^*_n\omega_0$ on $SP^n(M_g)$ has non-positive holomorphic sectional curvature. Indeed, let $B$ be the second fundamental form of $SP^n(M_g)$ in $J$, let $R$ be the Riemannian curvature tensor of  $\mu^*_n\omega_0$, and let $I$ be the complex structure on $SP^n(M_{g})$. A direct computation yields that for any tangent vector $v$ on $SP^n(M_g)$, one has
\begin{equation}\label{secondF}
R(v, Iv, v, Iv)=-|B(v, v)|-|B(v, Iv)|\leq 0.
\end{equation}
This shows that the holomorphic sectional curvature of $\mu^*_n\omega_0$ is non-positive. For more details, see~\cite[Equation 9]{Goldberg}.
\end{proof}

\begin{remark}
The proof of Proposition \ref{it can} generalizes to show that $SP^n(M_g)$ supports K\"ahler metrics of non-positive holomorphic \textit{bisectional} curvature. Recall that the holomorphic bisectional curvature is defined as $R(v, Iv, w, Iw)$ for tangent vectors $v, w$. In particular, it reduces to the holomorphic sectional curvature when $v=w$. For more details, see again \cite[Equation 9]{Goldberg}.
\end{remark}

\begin{remark}\label{biswas sharp}
Our result in Proposition~\ref{it can} is sharp because if $M_g$ is generic, then $SP^n(M_g)$ cannot support a K\"ahler metric of non-positive holomorphic sectional curvature for any $n\ge \lfloor \tfrac{g+1}{2}\rfloor+1$ due to~\cite[Proposition 1.2]{Biswas}.
\end{remark}

\begin{prop}
For $g\ge 2n-1$ and $M_g$ generic in the moduli space $\mathcal{M}_g$, the projective manifold $SP^n(M_g)$ has Stein universal cover.
\end{prop}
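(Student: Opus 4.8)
The plan is to identify the universal cover of $SP^n(M_g)$ with a closed complex submanifold of $\C^{g}$ and then apply the classical fact that a closed complex submanifold of a Stein manifold is Stein. The first step is to record that, under the standing hypotheses $g\ge 2n-1$ and $M_g$ generic in $\mathcal{M}_g$, the proof of \theoref{todo4} (\emph{cf.}\ also \propref{it can}) shows that the Abel--Jacobi map $\mu_n\colon SP^n(M_g)\to J$ is a holomorphic embedding. Since $SP^n(M_g)$ is compact, its image $Z:=\mu_n\bigl(SP^n(M_g)\bigr)$ is a \emph{closed} complex submanifold of the abelian variety $J=\C^{g}/\Lambda$, with $\Lambda\cong\pi_1(J)\cong\Z^{2g}$.

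Next I would form the preimage $W:=\pi^{-1}(Z)\subseteq\C^{g}$, where $\pi\colon\C^{g}\to J$ is the universal covering projection. Since $\pi$ is a holomorphic local biholomorphism and $Z$ is a closed complex submanifold of $J$, the subset $W$ is a closed complex submanifold of $\C^{g}$, and $\pi$ restricts to a holomorphic covering $\pi|_W\colon W\to Z$. Composing with the biholomorphism $\mu_n^{-1}\colon Z\xrightarrow{\ \cong\ }SP^n(M_g)$ yields a holomorphic covering $\rho\colon W\to SP^n(M_g)$, which is precisely the pullback along $\mu_n$ of the $\Lambda$-Galois covering $\pi$; equivalently, it is the $\Lambda$-covering of $SP^n(M_g)$ associated to the monodromy homomorphism $(\mu_n)_*\colon\pi_1\bigl(SP^n(M_g)\bigr)\to\pi_1(J)=\Lambda$. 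By \corref{mu}, $(\mu_n)_*$ is an isomorphism; its surjectivity forces $W$ to be connected, and its injectivity forces $\pi_1(W)\cong\ker(\mu_n)_*=0$. Hence $W$, equipped with the complex structure it inherits as a submanifold of $\C^{g}$, is the universal cover of $SP^n(M_g)$.

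Finally, $\C^{g}$ is Stein, and a closed complex submanifold of a Stein manifold is again Stein; therefore $\widetilde{SP^n(M_g)}\cong W$ is Stein, as claimed. The genuinely load-bearing step is the connectedness of $W$: for a general holomorphic map into an abelian variety the preimage of the image under the universal covering need not be connected, and it is exactly the $\pi_1$-isomorphism of \corref{mu} that is used here. I would also note in passing that the implication ``$\mu_n$ injective and holomorphic on a compact complex manifold $\Rightarrow$ $\mu_n$ is an embedding onto a closed complex submanifold'' is being taken for granted, exactly as in the proof of \theoref{todo4}.
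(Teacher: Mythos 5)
Your proof is correct and takes essentially the same route as the paper's: both use the embedding $\mu_n$ from Theorem~\ref{todo4} together with the $\pi_1$-isomorphism of Corollary~\ref{mu} to realize $\rwt{SP^n(M_g)}$ as a closed complex submanifold of complex Euclidean space. The only (immaterial) difference is the final step, where you invoke the classical fact that a closed complex submanifold of a Stein manifold is Stein, while the paper checks Steinness directly via the restricted coordinate functions.
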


\begin{proof}
Under these assumptions, the proof of Theorem~\ref{todo4} tells us that
\begin{equation}\label{embedding}
\mu_n: SP^n(M_g)\to J
\end{equation}
is an embedding. By Corollary~\ref{mu}, the induced map 
\[
(\mu_n)_*:\pi_1(SP^n(M_g))\to \pi_1(J)=\Z^{2g}
\]
is an isomorphism. Thus, the pullback of the universal covering map $\pi:\C^{2g}\to J$ is the universal covering map $\rwt{\pi}: \rwt{SP^n(M_g)}\to SP^n(M_g)$, and the map in~\eqref{embedding} lifts to a holomorphic embedding
\[
\Wi{\mu_n}: \rwt{SP^n(M_g)}\to \C^{2g}.
\]
By using the restriction of the holomorphic coordinate functions on $\C^{2g}$ to its subspace $\rwt{SP^n(M_g)}$, we can easily verify that  $\rwt{SP^n(M_g)}$ is Stein (see, for example,~\cite[Definition 2.2.1]{Forstneric}).
\end{proof}

\begin{theorem}\label{Mori-klt}\label{ample}
For $g\ge 2n-1$ and $M_g$ generic in $\mathcal{M}_g$, the projective manifold $SP^n(M_g)$ has ample canonical line bundle $K_{SP^n(M_g)}$.
\end{theorem}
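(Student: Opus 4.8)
The plan is to show that the K\"ahler metric $\mu_n^*\omega_0$ produced in the proof of Proposition~\ref{it can}---the pull-back of the flat metric $\omega_0$ on $J$ under the Abel--Jacobi embedding $\mu_n\colon X:=SP^n(M_g)\hookrightarrow J$ of Theorem~\ref{todo4}---has \emph{quasi-negative} holomorphic sectional curvature, and then to invoke the resolution of the Wu--Yau conjecture. Recall from~\eqref{secondF} that if $B$ denotes the second fundamental form of $X$ in $(J,\omega_0)$ and $I$ the complex structure, then
\[
R(v,Iv,v,Iv)=-|B(v,v)|-|B(v,Iv)|\le 0
\]
for every tangent vector $v$ of $X$, with equality precisely when $B(v,v)=0$ (we use here that $B$ is $\C$-bilinear, so that $|B(v,Iv)|=|B(v,v)|$). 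Thus $\mu_n^*\omega_0$ has non-positive holomorphic sectional curvature everywhere, and the whole argument reduces to producing a single point and a single direction at which the inequality is strict.

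For this I would show that $B$ does not vanish identically. If it did, $X$ would be totally geodesic in the flat torus $(J,\omega_0)$; being compact and complex, $X$ would then be the translate of a complex subtorus of $J$---hence an abelian variety, in particular a complex torus. But for $n\ge 2$ the group $\pi_2(X)$ is nontrivial by Propositions~\ref{pi2} and~\ref{pi2 not 0}, whereas a torus is aspherical; and for $n=1$ the curve $X=M_g$ has genus $g\ge 2$, hence is not elliptic. So $B\not\equiv0$. Since $B$ restricts on each tangent space to a symmetric $\C$-bilinear form, $B\not\equiv0$ forces $B_x(v,v)\ne0$ for some $x\in X$ and some $v\in T_xX$ (by polarization); at such an $(x,[v])$ the holomorphic sectional curvature of $\mu_n^*\omega_0$ is strictly negative, and since $B$ is a holomorphic tensor it is in fact negative on a dense Zariski-open subset of $\mathbb{P}(TX)$. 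Hence $\mu_n^*\omega_0$ is quasi-negatively curved, and by the Wu--Yau theorem---in the compact K\"ahler generality of Tosatti--Yang, and with the quasi-negativity hypothesis of Diverio--Trapani---the canonical bundle $K_X=K_{SP^n(M_g)}$ is ample.

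The one non-elementary ingredient is this Wu--Yau circle of results, and the point needing the most care is precisely that the original Wu--Yau theorem (which assumes strictly negative holomorphic sectional curvature \emph{everywhere}) does not apply here, so one must appeal to the Diverio--Trapani strengthening: our estimate degenerates exactly along the---a priori nonempty---locus where the second fundamental form vanishes, that is, where $X$ osculates $J$ to second order. Two remarks. For $n=1$ the statement is the classical fact that a genus $g\ge2$ curve has ample canonical bundle, and there the bound $g\ge2n-1$ must be combined with $g\ge2$; the case $g=1$, in which $SP^1(M_1)$ is an elliptic curve, is genuinely excluded. There is also a more algebraic route---$K_X=\det N_{X/J}$ is globally generated since $N_{X/J}$ is a quotient of the trivial bundle $T_J|_X$, hence nef and semiample, and one studies the induced Gauss morphism, whose finiteness (equivalently, ampleness of $K_X$) amounts to the non-degeneracy of $X$ as a subvariety of $J$, where the Steinness of $\rwt{SP^n(M_g)}$ from the preceding proposition and the absence of rational curves in $J$ are useful---but the curvature argument is shorter and stays closer to the theme of this section.
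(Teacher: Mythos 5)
Your argument has a genuine gap at the point where you invoke the Wu--Yau/Diverio--Trapani theorem. The quasi-negativity hypothesis in Diverio--Trapani is that the holomorphic sectional curvature is non-positive everywhere and strictly negative \emph{in every direction at some point}; what you establish is only strict negativity at some point in some direction, upgraded (via holomorphy of the second fundamental form) to negativity on a dense Zariski-open subset of $\mathbb{P}(TX)$. That weaker property does not suffice, and no theorem covers it: take $X=C\times E$ with $C$ of genus $\ge 2$ and $E$ elliptic, embedded in the abelian variety $J(C)\times E$ by Abel--Jacobi times the identity. The induced flat-ambient metric has non-positive holomorphic sectional curvature, strictly negative on the dense Zariski-open set of directions not purely tangent to $E$, yet $K_X$ is the pullback of $K_C$ and is not ample (not even big). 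So ``negative on a dense open subset of $\mathbb{P}(TX)$'' cannot imply ampleness. To run your argument you would have to show that at some point $x\in SP^n(M_g)$ the second fundamental form satisfies $B_x(v,v)\neq 0$ for \emph{all} $v\neq 0$, i.e.\ that the system of quadrics it defines is base-point free at some point of the Abel--Jacobi image; this is a nontrivial statement about the embedding (strictly stronger than, say, immersivity of the Gauss map) which your proof does not address, and nothing in Theorem~\ref{todo4} or Proposition~\ref{it can} supplies it.

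For comparison, the paper's proof stays entirely within Mori theory: since $SP^n(M_g)$ embeds in the abelian variety $J$ it contains no rational curves, so the Cone Theorem gives $K$ nef; $g>n$ gives general type, so $K$ is big and, by basepoint-freeness, semi-ample; and a putative $K$-trivial curve is excluded by perturbing with a klt boundary $\Delta$ so that $(K+\Delta)\cdot C<0$, which would force a rational curve. Your closing ``algebraic route'' remark (nefness of $K_X=\det N_{X/J}$ from global generation, plus a Gauss-map finiteness claim) points in a workable direction but is only a sketch and again leaves the ampleness step unproved. Your side observation that $(n,g)=(1,1)$ must be excluded is fair, but it does not repair the main argument.
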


\begin{proof}
By Theorem~\ref{todo4}, $SP^n(M_g)$ embeds in an abelian variety, so it cannot support rational curves. Since $SP^{n}(M_g)$ does not contain rational curves, Mori's Cone Theorem~\cite[Theorem 1.24]{Mori} tells us that $K_{SP^n(M_g)}$ is \emph{nef}. Moreover, it is known (see, for example,~\cite[Example, Page 39]{Abr}) that for $g>n$, the complex manifold $SP^n(M_g)$ is of general type, i.e., $K_{SP^n(M_g)}$ is \emph{big}. Since $K_{SP^n(M_g)}$ is big and nef, Shokurov--Kawamata's basepoint-free theorem~\cite[Theorem 3.3]{Mori} implies that $K_{SP^n(M_g)}$ is semi-ample, i.e., for $n$ large enough the map $\varphi_{|nK_{SP^n(M_g)}|}$ associated to the linear system $|nK_{SP^n(M_g)}|$ is defined everywhere. Thus, if $K_{SP^n(M_g)}$ is not ample, there exists a subvariety contracted by $\varphi_{|nK_{SP^n(M_g)}|}$. We then have a curve $C$ in $SP^n(M_g)$ such that 
\[
K_{SP^n(M_g)}\cdot C=0.
\]
Now, given an ample divisor $A$, since the cone of big divisors is open, for a rational number $\varepsilon>0$ small enough, we have that $K_{SP^n(M_g)}-\varepsilon A$ is a big $\Q$-divisor. Thus, there exists $m\in \N$ large enough so that $m(K_{SP^n(M_g)}-\varepsilon A)$ is linearly equivalent to an effective divisor say $E$. So, for all rational numbers $\delta>0$ small enough, we have that $(SP^n(M_g), \Delta)$ with $\Delta:=\delta E$ is a \emph{klt} pair, see~\cite[Corollary 2.35]{Mori}. Moreover, we have
\[
(K_{SP^n(M_g)}+\Delta)\cdot C = \Delta\cdot C = \delta m (K_{SP^n(M_g)}-\varepsilon A)\cdot C = -\varepsilon m\delta A\cdot C<0.
\]
By the Cone Theorem for klt pairs~\cite[Theorem 3.7]{Mori}, since $K_{SP^n(M_g)}+\Delta$ is not nef, we have at least one rational curve in $SP^n(M_g)$. We arrive at a contradiction and the proof is complete.
\end{proof}

\begin{remark}
On page 267 of~\cite{Gr91}, M.~Gromov asks if a K\"ahler hyperbolic manifold necessarily has an ample canonical line bundle. The answer is yes as it follows from Mori's theory along the lines of the proof of Theorem~\ref{ample}. Indeed, in~\cite{Gr91} it is shown that such manifolds are projective of general type and, of course, with no rational curves. This result is folklore but since it seems not to be widely known, we add this remark.
\end{remark}

\begin{prop}\label{Einstein}
For $g\ge 2n-1$ and $M_g$ generic in $\mathcal{M}_g$, the projective manifold $SP^n(M_g)$ admits a K\"ahler--Einstein metric with negative Ricci curvature.
\end{prop}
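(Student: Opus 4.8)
The plan is to invoke the Aubin--Yau solution of the Calabi conjecture in the case of negative first Chern class. By Theorem~\ref{ample}, under the hypotheses $g\ge 2n-1$ and $M_g$ generic, the canonical line bundle $K_{SP^n(M_g)}$ is ample. Ampleness of $K_{SP^n(M_g)}$ is precisely the statement that $c_1(SP^n(M_g)) = -c_1(K_{SP^n(M_g)})$ is represented by a negative $(1,1)$-form, i.e., the first Chern class is negative. The Aubin--Yau theorem then provides a unique (up to scaling) K\"ahler--Einstein metric $\omega$ on $SP^n(M_g)$ with $\mathrm{Ric}(\omega) = -\omega$, so the cosmological constant is negative.

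Concretely, I would first recall that $SP^n(M_g)$ is a smooth projective, hence compact K\"ahler, manifold of complex dimension $n$ (noted earlier in the paper). Second, I would record that Theorem~\ref{ample} gives ampleness of $K_{SP^n(M_g)}$, which is equivalent to $c_1 < 0$ in the sense of currents/forms. Third, I would cite the Aubin--Yau theorem (for instance Aubin's original paper, or Yau's solution of the Calabi conjecture, or a standard reference such as Besse's \emph{Einstein Manifolds} or Tian's lecture notes on canonical metrics): every compact K\"ahler manifold with $c_1 < 0$ carries a unique K\"ahler--Einstein metric with Einstein constant $-1$. This immediately yields the conclusion.

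There is essentially no obstacle here; this is a direct application of a deep but well-established theorem, with the genuine content already packaged into Theorem~\ref{ample}. The only minor point worth a sentence is the translation between the algebro-geometric statement ``$K_{SP^n(M_g)}$ is ample'' and the differential-geometric hypothesis ``$c_1 < 0$'' of the Aubin--Yau theorem, which follows from the Kodaira embedding / Nakai--Moishezon characterization of ampleness together with the fact that $c_1(X) = -c_1(K_X)$. One could also remark, as a sanity check consistent with Proposition~\ref{it can}, that in the overlapping range the existence of a K\"ahler--Einstein metric of negative Ricci curvature is compatible with (though strictly stronger in a different direction than) the existence of K\"ahler metrics of non-positive holomorphic sectional curvature.

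\begin{proof}
As noted in Section~\ref{two}, $SP^n(M_g)$ is a smooth projective variety, hence a compact K\"ahler manifold, of complex dimension $n$. By Theorem~\ref{ample}, under the assumptions $g\ge 2n-1$ and $M_g$ generic in $\mathcal{M}_g$, the canonical line bundle $K_{SP^n(M_g)}$ is ample. By the Kodaira--Nakai--Moishezon criterion, ampleness of $K_{SP^n(M_g)}$ means that $c_1(K_{SP^n(M_g)})$ contains a positive $(1,1)$-form, equivalently that the first Chern class $c_1(SP^n(M_g)) = -c_1(K_{SP^n(M_g)})$ is negative. By the Aubin--Yau theorem (the solution of the Calabi conjecture in the negative case; see, e.g.,~\cite[Chapter 11]{Demailly} or the standard references on canonical metrics), every compact K\"ahler manifold with negative first Chern class admits a unique K\"ahler--Einstein metric $\omega$ with $\mathrm{Ric}(\omega) = -\omega$. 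Hence $SP^n(M_g)$ carries a K\"ahler--Einstein metric with negative cosmological constant.
\end{proof}
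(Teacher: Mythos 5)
Your proof is correct and takes essentially the same route as the paper: ampleness of $K_{SP^n(M_g)}$ (Theorem~\ref{ample}, itself resting on the embedding of Theorem~\ref{todo4}) yields a K\"ahler form representing $c_1(K_{SP^n(M_g)})$, and the Aubin--Yau theorem then produces the K\"ahler--Einstein metric with negative cosmological constant. If anything, your citation of Theorem~\ref{ample} is the more precise pointer, since the paper's proof invokes Theorem~\ref{todo4} directly for the existence of a K\"ahler form in the class $-c_1(SP^n(M_g))$.
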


\begin{proof}
By Theorem~\ref{todo4}, we can find a K\"ahler form representing the integer cohomology class $-c_1(SP^n(M_g))=c_1(K_{SP^n(M_g)})$. By Aubin--Yau's celebrated theorem (see, for example,~\cite{Yau}), there exists a K\"ahler metric $\omega$ whose Ricci tensor is proportional to the metric, i.e., 
\[
Ric_{\omega}=\lambda\omega,\quad  \lambda<0.
\]   
So, this metric has constant and negative Ricci curvature. Thus, it is an Einstein metric with negative Ricci curvature.
\end{proof}

\begin{remark}\label{fulllist}
Proposition~\ref{Einstein} and Hitchin's obstruction to the existence of Einstein metrics in real dimension $4$ provide a complete list of the symmetric squares of curves that admit Einstein metrics. Concretely, $SP^2(M_0)$ supports the Fubini--Study Einstein metric, $SP^2(M_g)$ supports an Aubin--Yau type Einstein metric for $g\geq 3$ (\emph{cf.} Proposition~\ref{Einstein}), and $SP^2(M_g)$ cannot support any Einstein metric for $g\in\{1,2\}$ as it follows from~\cite[Theorem 1]{Hit74}.
\end{remark}

Another manifestation of the non-positive curvature properties of the symmetric product of curves with $g\geq n$ is given by the following.

\begin{prop}
For $g\geq n$, the smooth manifold $SP^n(M_g)$ cannot support a Riemannian metric of non-negative Ricci curvature.
\end{prop}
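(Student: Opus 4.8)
The plan is to exhibit a nontrivial obstruction to non-negative Ricci curvature coming from the topology of $SP^n(M_g)$ in the range $g \ge n$. The standard tool is the Bochner technique / Bonnet--Myers-type reasoning combined with Cheeger--Gromoll-type splitting, but the cleanest route here uses the first Betti number. Recall the theorem of Bochner: a closed Riemannian manifold of non-negative Ricci curvature has $b_1 \le \dim$, with equality forcing the manifold to be a flat torus; more to the point, a closed manifold admitting a metric of non-negative Ricci curvature has virtually abelian fundamental group and, after passing to a finite cover, splits isometrically as $T^k \times N$ with $N$ simply connected (the Cheeger--Gromoll structure theorem). So the first step is to observe that by Proposition~\ref{2-homology}(1), $H_1(SP^n(M_g)) = \Z^{2g}$, hence $\pi_1(SP^n(M_g))$ is abelian of rank $2g$, with $b_1 = 2g$.

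Next I would run the Cheeger--Gromoll splitting: if $SP^n(M_g)$ admitted a metric of non-negative Ricci curvature, then its universal cover would split isometrically as $\R^{2g} \times Y$ with $Y$ a simply connected closed manifold of non-negative Ricci curvature of dimension $2n - 2g$. But for $g \ge n$ this dimension count gives $2n - 2g \le 0$, so $Y$ is a point and the universal cover is $\R^{2g}$ with $2g \ge 2n$; comparing dimensions, $2g = 2n$, i.e.\ $g = n$, and $SP^n(M_g)$ would have to be a flat torus, in particular aspherical. This contradicts $\pi_2(SP^n(M_g)) \ne 0$, which is infinite by Propositions~\ref{pi2} and~\ref{pi2 not 0}. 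For $g > n$ the splitting is already impossible on dimensional grounds (one cannot have $\R^{2g}$ sitting inside a $2n$-dimensional universal cover), so the contradiction is immediate there. Either way, no such metric exists.

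An alternative, perhaps more elementary, packaging avoids the full splitting theorem: by Proposition~\ref{2g-torus} the cohomology ring $H^*(SP^n(M_g))$ contains the product $a_1^* b_1^* \cdots a_g^* b_g^* (c^*)^{n-g} \ne 0$ in top degree $2n$ for $g \le n$, and the subring generated by the $a_i^*, b_i^*$ maps isomorphically from $H^*(T^{2g})$; so the real cup-length coming from $H^1$ is exactly $2g$. A closed $m$-manifold of non-negative Ricci curvature with $H^1$-cup-length equal to $m$ must be a flat torus (this is a theorem in the Bochner circle of ideas, refining the $b_1 \le m$ bound), and when the cup-length from $H^1$ equals the dimension one gets asphericity — again contradicting $\pi_2 \ne 0$ when $g = n$, while $g > n$ is vacuous for this variant. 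I would present the splitting-theorem argument as the main line since it handles both cases uniformly.

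The main obstacle is being careful about the borderline case $g = n$: dimension counting alone does not rule out a metric of non-negative Ricci curvature there, so one genuinely needs the rigidity statement (flat torus, hence aspherical) and then the input $\pi_2(SP^n(M_g)) \ne 0$ to close the gap. The case $g > n$ is essentially free. One should also confirm that Proposition~\ref{2-homology}(1) indeed gives $b_1 = 2g$ with no hidden torsion issues — it does, since $H_1$ is stated to be free of rank $2g$ — so the rank bound $b_1 \le \dim$ is saturated precisely when $2g = 2n$.
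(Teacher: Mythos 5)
Your main line is correct, and at bottom it is the same argument as the paper's: the paper proves this proposition exactly via the Bochner route you sketch in your opening paragraph --- $b_1(SP^n(M_g))=2g$, Bochner's estimate $b_1\le \dim$ with the rigidity statement that equality forces a flat torus (citing \cite[Corollary 7.3.15]{Petersen}), which kills $g>n$ outright, and then the infinitude of $\pi_2$ (Propositions~\ref{pi2} and~\ref{pi2 not 0}) rules out the torus in the borderline case $g=n$. Your preferred packaging through the Cheeger--Gromoll splitting theorem \cite{CG71} reaches the same two contradictions, but note two points of care. First, the splitting theorem by itself gives $\rwt{M}=Y\times\R^k$ with $Y$ compact and simply connected; the identification $k\ge b_1=2g$ that you use is a standard supplement (harmonic $1$-forms are parallel and must be tangent to the Euclidean factor, or: $\pi_1=\Z^{2g}$ maps with finite kernel onto a crystallographic group of the $\R^k$-factor), and should be said explicitly --- though if one is going to invoke Bochner anyway, the paper's direct use of $b_1\le\dim$ plus rigidity is shorter. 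Second, your parenthetical that a closed manifold of non-negative \emph{Ricci} curvature has a finite cover splitting \emph{isometrically} as $T^k\times N$ is not correct as stated (a mapping torus of an irrational rotation of $S^2$ is a counterexample); the finite-cover product statement, and only up to diffeomorphism, is the non-negative \emph{sectional} curvature structure theorem \cite[Theorem 9.2]{CG72} used elsewhere in the paper. It is not needed for your argument. Finally, the ``alternative packaging'' paragraph has a slip: in the range $g>n$ the cup-length coming from $H^1$ is $2n$ (it cannot exceed the dimension), not $2g$, and by Proposition~\ref{2g-torus}(1) it does equal $2n$ for all $g\ge n$, so that variant is not vacuous for $g>n$; as written it would need this repair, but since it is only an aside the main proof stands.
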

\begin{proof}
By Bochner's theorem (see, for example,~\cite[Corollary 7.3.15]{Petersen}), if a closed $m$-manifold $M$ supports a metric of non-negative Ricci curvature, then we have the estimate on the first Betti number
\[
b_1(M)\leq m,
\]
with equality holding if and only if $M$ is a flat $m$-torus. Since $\pi_1(SP^n(M_g))=\Z^{2g}$, we conclude that $SP^n(M_g)$ cannot support a metric of non-negative Ricci curvature for $g>n$. It remains to study the case $g=n$. Recall that the group $\pi_2(SP^n(M_g))$ is infinite for each $n$ and $g$ by Propositions~\ref{pi2} and~\ref{pi2 not 0}. This implies, in particular, that $SP^n(M_n)$ cannot be the $2n$-torus and the result follows.
\end{proof}

If we restrict our attention to K\"ahler metrics \emph{only}, the following proposition asserts that for $g\geq n$, the average scalar curvature on $SP^n(M_g)$ has to be negative. In particular, this implies that the scalar curvature of such metrics can never be positive. In Section~\ref{macdimsec}, we will address the \emph{harder} question of the existence of general Riemannian metrics of positive scalar curvature on these spaces. 

\begin{prop}\label{avgscalar}
For $g\geq n$ and for any K\"ahler metric $g_{\omega}$ on the projective manifold $SP^n(M_g)$, we have
\[
\int_{SP^n(M_g)}s_{g_{\omega}}d\mu_{g_{\omega}}<0,
\]
where $s_{g_{\omega}}: SP^n(M_g)\to\R$ is the scalar curvature.
\end{prop}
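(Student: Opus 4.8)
The plan is to integrate the pointwise scalar curvature against the volume form and recognize the result as a topological/cohomological pairing on the K\"ahler manifold $SP^n(M_g)$, which we can then evaluate using the known first Chern class. Recall that for a K\"ahler metric $g_\omega$ on a compact complex manifold $X$ of complex dimension $n$, the integral of the scalar curvature satisfies
\[
\int_X s_{g_\omega}\, d\mu_{g_\omega} = 4\pi \, n \cdot \frac{c_1(X)\cdot [\omega]^{n-1}}{[\omega]^{n-1}\cdot[\omega]}\cdot (\text{vol}) ,
\]
or more invariantly, $\int_X s_{g_\omega}\,d\mu_{g_\omega}$ is a positive multiple of the cup-product number $c_1(X)\smile [\omega]^{n-1}$ evaluated on the fundamental class. (This is standard: the Ricci form $\rho$ represents $2\pi c_1(X)$, the scalar curvature is the metric trace of the Ricci form, and $s_{g_\omega}\,d\mu_{g_\omega} = 2n\,\rho\wedge \omega^{n-1}/(n)$ up to universal constants.) So it suffices to show that
\[
c_1(SP^n(M_g))\cdot [\omega]^{n-1} < 0
\]
for every K\"ahler class $[\omega]$ on $SP^n(M_g)$ when $g\ge n$.

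First I would substitute the formula for the first Chern class recorded in Section~\ref{cohomo}, namely $c_1 = (n-g+1)c^* - \sum_{i=1}^g a_i^*b_i^*$. The key input is the relation between the canonical class and the Abel--Jacobi map: for $g\ge n$, the pullback $\mu_n^*(\text{K\"ahler class on } J)$ is, up to sign and positive scalars, exactly $\sum_{i=1}^g a_i^*b_i^*$ in the relevant sense, so that $-c_1 = K_{SP^n(M_g)}$ restricted to curves has a definite sign. More concretely: since $\mu_n$ is holomorphic and $J$ carries a positive $(1,1)$-class pulling back to (a positive multiple of) $\sum a_i^*b_i^*$, the class $\sum a_i^* b_i^*$ is \emph{nef} on $SP^n(M_g)$. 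Meanwhile $c^*$ is (a positive multiple of) the class of the fibre of $\mu_n$ when $g < n$ but when $g \ge n$ one uses that $c^*$ is represented by an effective curve or surface coming from the base-point inclusion $M_g \hookrightarrow SP^n(M_g)$. Pairing $c_1$ with $[\omega]^{n-1}$: the $(n-g+1)c^*$ term contributes $(n-g+1)$ times a nonnegative number, and the $-\sum a_i^*b_i^*$ term contributes something strictly negative because $\sum a_i^*b_i^*$ is nef and not numerically trivial (indeed its top self-intersection or its pairing against powers of a K\"ahler class is positive since $g\ge n \ge \dim_\C$, using Proposition~\ref{2g-torus}).

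The cleanest route, which I would actually follow, bypasses the case analysis: by Theorem~\ref{ample} (for generic $M_g$ with $g\ge 2n-1$) one knows $K_{SP^n(M_g)}$ is ample, hence $-c_1$ is a K\"ahler class and $-c_1\cdot[\omega]^{n-1} > 0$ immediately; but for the full range $g\ge n$ and \emph{arbitrary} complex structure one instead shows $-c_1 = K_{SP^n(M_g)}$ is \emph{pseudo-effective} and big (it is big whenever $g > n$ since $SP^n(M_g)$ is of general type by \cite[Example, Page 39]{Abr}, and for $g=n$ one uses that $SP^n(M_n)$ is birational to $J$ by Jacobi's Theorem~\ref{Jacobi} so has Kodaira dimension $0$ with $K$ numerically a nonzero nef class). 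A big-and-nef or nonzero-effective class pairs strictly positively against $[\omega]^{n-1}$ for any K\"ahler $[\omega]$, giving $-c_1\cdot[\omega]^{n-1}>0$, i.e. the integral of the scalar curvature is negative.

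The main obstacle is the boundary case $g = n$, where $K_{SP^n(M_n)}$ is not ample (the Abel--Jacobi map $\mu_n$ contracts curves, e.g. the hyperelliptic curve in Proposition~\ref{pi2 not 0}) and $SP^n(M_n)$ is not of general type, so one cannot invoke bigness of $K$. Here I would argue directly: $c_1 = (1)c^* - \sum a_i^*b_i^*$ (setting $n=g$), and since $\mu_n$ is a birational morphism onto the torus $J$ (Theorem~\ref{Jacobi}), $\sum a_i^*b_i^*$ is the pullback of an ample class, hence nef and big on $SP^n(M_n)$, while the exceptional locus where $\mu_n$ fails to be an isomorphism is a proper subvariety. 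Then $-c_1 = \sum a_i^*b_i^* - c^*$ needs to be shown to be a nonzero nef (equivalently, pseudo-effective with positive self-intersection) class; concretely $(-c_1)^n = \mu_n^*(\text{ample})^n$ modulo correction terms supported on the contracted locus, which one checks is still positive, or more simply one invokes that $K_{SP^n(M_n)} = \mu_n^* K_J + (\text{effective exceptional}) = (\text{effective})$ since $K_J$ is trivial, so $-c_1$ is represented by a nonzero effective divisor and therefore $-c_1\cdot[\omega]^{n-1}>0$. This finishes the case $g=n$ and completes the proof.
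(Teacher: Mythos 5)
Your proposal is correct in substance, but it takes a genuinely different reduction than the paper. The paper quotes Yau's theorem \cite[Corollary 2]{Yau74} as a black box: $\int_M s_{g_\omega}\,d\mu_{g_\omega}\ge 0$ on a K\"ahler manifold forces either all plurigenera to vanish or $c_1(M)$ to be torsion; it then rules out both alternatives using exactly the geometric inputs you use --- general type (via \cite[Example, page 39]{Abr}) for $g>n$, and for $g=n$ the identity $K_{SP^n(M_n)}=E$ with $E$ the nonzero effective exceptional divisor of the birational Abel--Jacobi map (Theorem~\ref{Jacobi}, $K_J=\mathcal{O}_J$), so that $P_1\neq 0$ and $c_1=-PD(E)$ is non-torsion because K\"ahler forms integrate positively over $E$. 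You instead inline the analytic step: the pointwise identity relating $s_{g_\omega}\,d\mu_{g_\omega}$ to $Ric_{g_\omega}\wedge\omega^{n-1}$ (the same identity the paper itself uses in the proof of Theorem~\ref{asphericalK}) reduces the claim to $c_1(K_{SP^n(M_g)})\cdot[\omega]^{n-1}>0$ for every K\"ahler class, which you obtain from bigness of $K$ when $g>n$ (some multiple of $K$ is a nonzero effective divisor, and a nonzero effective divisor pairs strictly positively with $\omega^{n-1}$) and from $K=\mu_n^*K_J+E=E$ nonzero effective when $g=n$. So the two proofs rest on the same algebro-geometric facts; what your route buys is self-containedness --- no plurigenera theorem, only effectivity (Kodaira's lemma) plus integration of a K\"ahler power over an effective divisor --- while the paper's route is shorter on the analytic side at the cost of citing Yau's result, whose proof is essentially the computation you carry out.

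Two non-load-bearing slips worth flagging: your parenthetical assertions that for $g=n$ the canonical class is ``numerically a nonzero nef class,'' and that nef is ``equivalently, pseudo-effective with positive self-intersection,'' are both false --- the exceptional divisor of a birational morphism is typically not nef (for $g=n=2$, $SP^2(M_2)$ is the blow-up of $J$ at a point and $K$ is the class of the $(-1)$-curve), and the stated equivalence fails in general. Your final argument does not use either claim, since it correctly rests on effectivity alone, but the wording should be cleaned up before this could stand as a proof.
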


\begin{proof}
A beautiful result of Yau~\cite[Corollary 2]{Yau74} tells us that on a K\"ahler manifold $(M, g_{\omega})$, the inequality
\begin{equation}\label{integral inequality}
    \int_{M}s_{g_{\omega}}d\mu_{g_{\omega}}\geq 0
\end{equation}
implies that either all the plurigenera $P_{m}:=\dim H^0(M, K^m_{M})$ vanish, or the first Chern class $c_{1}(M)\in H^2(M; \Z)$ is a torsion class. It is known (see, for example,~\cite[Example, Page 39]{Abr}) that for $g>n$, the complex manifold $SP^n(M_g)$ is of general type. This implies that $P_{m}=\dim H^0(SP^n(M_g), K^m_{SP^n(M_g)})$ grows asymptotically like $m^{n}$ so that most plurigenera of $SP^n(M_g)$ are non-vanishing. When $n=g$, since $K_{J}=\mathcal{O}_J$, we have that 
\[
K_{SP^n(M_n)}=E,
\]
where $E$ is an effective divisor coming from the exceptional locus of the Abel--Jacobi map $\mu_n: SP^n(M_n)\to J$. Recall that by Theorem~\ref{Jacobi} (Jacobi's theorem), $\mu_n$ is a birational morphism that cannot be the identity because $\pi_2(SP^n(M_n))$ is an infinite group. Since $E$ is effective, we have that $\dim H^{0}(SP^n(M_n), K_{SP^n(M_n)})\neq 0$. Now,
\[
c_{1}(SP^n(M_n))=-c_1(K_{SP^n(M_n)})=-PD(E)
\] 
where $PD(E)$ is the Poincar\'e dual of the effective divisor $E$. Finally, since any K\"ahler metric integrates non-trivially over $E$, we have that $c_{1}(SP^n(M_n))$ is not a torsion class. Hence,~\eqref{integral inequality} is not true in the case $M=SP^n(M_g)$ for any $g\ge n$.
\end{proof}

\subsection{The range $n\ge g$}
In the range $n>g$, the symmetric products of curves tend to be more positively curved. In the extreme case $g=0$, we have $SP^n(M_0)=\C P^n$ that admits a locally symmetric K\"ahler--Einstein metric of positive sectional curvature: the Fubini--Study metric. In a recent paper, Biswas extended this observation by showing that for $g\leq 1$, $SP^n(M_g)$ admits a K\"ahler metric with non-negative holomorphic \emph{bisectional} curvature, see~\cite[Theorem 1.1]{Biswas}. 

We now explore in depth how positively curved these symmetric products of curves can be. We begin by showing that in the larger range $n\ge 2g-1$, we can construct K\"ahler metrics with positive scalar curvature. When $n\ge 2g-1$, there is a complex vector bundle $E\to J$ (of rank $k\geq 2$)  such that the projectivization $\mathbb P E\to J$ with fiber  $\C P^{k-1}$ is isomorphic to the Abel--Jacobi map 
\[
\mu_n: SP^n(M_g)\to J.
\]
For this classical result, we refer to the textbook of Arbarello \emph{et al.}~\cite[Proposition 2.1 (Page 309)]{ACGH}. This fact was proved first in the paper by Arthur Mattuck~\cite{Mat1}
who had first computed the Chern classes of $E$,~\cite{Mat2}. Recall also that Macdonald \cite{Mac} computed the total Chern class of $SP^{n}(M_g)$ as
\[
c(SP^{n}(M_g))=(1+c^*)^{n-2g+1}\prod^{g}_{i=1}(1+c^*-a^{*}_{i}b^{*}_{i})
\]
where $c^*$, $a^*_i$, and $b^*_{i}$ are as in Section~\ref{chh}. For brevity, let us set $\theta:=\sum_{i=1}^ga_i^*b_i^*$. 

We now use the Chern classes to prove the following non-splitting phenomenon.

\begin{prop}\label{no product}
Suppose $n\ge 2g-1$ and $g>1$. Then neither $SP^n(M_g)$ nor any of its finite covers are homeomorphic to the product  $\mathbb C P^{n-g}\times T^{2g}$.
\end{prop}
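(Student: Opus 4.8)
The plan is to obstruct such a homeomorphism (or a finite-cover homeomorphism) using the rational cohomology ring structure, specifically by comparing how the second rational cohomology interacts with the top power of a degree-two class. Since any finite cover of $SP^n(M_g)$ has fundamental group a finite-index subgroup of $\Z^{2g}$, hence isomorphic to $\Z^{2g}$, and its rational cohomology is controlled by Macdonald's presentation (\theoref{Mc}) via the transfer argument, I would first set up the two candidate rings and look for a ring-theoretic invariant that distinguishes them.

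First I would record the relevant facts about $\C P^{n-g}\times T^{2g}$. Its rational cohomology ring is $\Q[u]/(u^{n-g+1})\otimes\Lambda_\Q(x_1,\dots,x_{2g})$ with $\deg u=2$ and $\deg x_i=1$. A key structural feature: the subspace $W\subset H^2(\C P^{n-g}\times T^{2g};\Q)$ spanned by the products $x_ix_j$ is an $\binom{2g}{2}$-dimensional space, every element $w\in W$ satisfies $w^{g+1}=0$ (since $w$ lies in the exterior-algebra factor, being a sum of $2$-forms in $2g$ variables), and more precisely $W^{g+1}=0$ while the ``transverse'' class $u$ has $u^{n-g}\ne 0$ and $u^{n-g+1}=0$. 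Now on $SP^n(M_g)$, by \propref{2g-torus}(2), the class $\theta^{g}=a_1^*b_1^*\cdots a_g^*b_g^*$ satisfies $\theta^g(c^*)^{n-g}\ne 0$, and $\theta = \sum a_i^*b_i^*$ is visibly a sum of products of classes pulled back from $H^1$; so $\theta$ is a ``decomposable'' degree-two class with $\theta^g\ne 0$ but $\theta^{g+1}=0$ (again by Macdonald). The point of departure is the first Chern class relation $c_1 = (n-g+1)c^* - \theta$ from \secref{chh}: on $SP^n(M_g)$ the product $c^* \cdot \theta^{g-1}\cdot(\text{stuff})$ does not vanish as readily as the corresponding product on the product manifold, because $c^*$ is genuinely entangled with the $a_i^*b_i^*$ via the relations $w(c^*-a_k^*b_k^*)=0$.

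Concretely, the invariant I would use is the following: on $SP^n(M_g)$, consider the bilinear-type condition on a class $z\in H^2(\cdot;\Q)$ with $z^{n}\ne 0$ of expressing $z$, modulo decomposables, as the unique ``ample direction''. On $\C P^{n-g}\times T^{2g}$ the only classes $z$ with $z^{n}\ne 0$ are $z = \lambda u + (\text{decomposable})$ with $\lambda\ne 0$; squaring and using $W^{g+1}=0$ one sees $z^n = \lambda^{n-g}\binom{n}{g}\lambda^{g}\cdots$—the precise coefficient computation shows $z^{n}$ always equals a nonzero multiple of $u^{n-g}\cdot(\text{top of }W\text{-part})$, forcing the $W$-part of $z$ to span a space on which the top exterior power is nonzero. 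I would then show this forces, on $SP^n(M_g)$, that the image of $H^1\cup H^1$ in $H^2$ must pair with powers of $c^*$ exactly as in a product—but Macdonald's relation $a_i^*(c^*-a_k^*b_k^*)\cdots = 0$ makes the actual ring fail this: e.g. $c^*\cdot a_1^*b_1^*\cdots a_{g-1}^*b_{g-1}^* = (c^*)^{g}$ by the argument in the proof of \propref{2g-torus}(1), whereas in the product the analogous mixed product $u\cdot x_1x_2\cdots x_{2g-2}$ is linearly independent from $u^g$. This discrepancy — a $c^*$-times-decomposable class equalling a pure power of $c^*$ — is ring-theoretic and survives passage to finite covers (the transfer gives that $H^*$ of the cover contains a subring isomorphic to $H^*(SP^n(M_g);\Q)$, and Macdonald's presentation for the cover of $SP^n(M_g)$ is that of $SP^{n'}(M_{g'})$ only in degenerate cases; here one uses that a finite cover has $\pi_1\cong\Z^{2g}$ and the cohomology is computed from the pulled-back $c^*,a_i^*,b_i^*$).

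The main obstacle will be handling the \textbf{finite covers} cleanly: a finite cover $\widehat X$ of $SP^n(M_g)$ corresponds to a finite-index sublattice $\Lambda'\subset\Z^{2g}$, and while $H^1(\widehat X;\Q)\cong\Q^{2g}$ still, I must verify that the relations in $H^*(\widehat X;\Q)$ retain the ``$c^*$ entangled with decomposables'' feature rather than collapsing to a genuine product ring. The clean way is to observe that $\widehat X = SP^n(M_g)\times_J J'$ is the pullback of $\mu_n$ along an isogeny $J'\to J$, so $\widehat X \to J'$ is again a $\C P^{k-1}$-bundle (pullback of the projective bundle $\mathbb{P}E$), and its rational cohomology is $H^*(J';\Q)[\xi]/(\xi^k + c_1(E')\xi^{k-1}+\cdots)$ by the projective bundle formula, where $E'$ is the pullback of Mattuck's bundle $E$; then the non-triviality of the relevant Chern class $c_1(E')$ (which is a nonzero multiple of the pulled-back theta divisor class, never zero since isogenies are finite) gives exactly the entanglement, while for an honest product $\C P^{n-g}\times T^{2g}$ the corresponding bundle is trivial so $c_1 = 0$. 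Comparing the degree-two part of the ring — whether the hyperplane class can be chosen so that $c^*\cdot(\text{any decomposable}) $ is decomposable — then finishes the argument, and this comparison is insensitive to finite covers precisely because $c_1(E')\ne 0$ for all of them.
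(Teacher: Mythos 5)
There are two genuine gaps here, and they sit exactly where the real difficulty of the statement lies. First, your claimed ``entanglement'' in low degrees is not there: the identity $c^*\,a_1^*b_1^*\cdots a_{g-1}^*b_{g-1}^*=(c^*)^g$, which you import from the proof of Proposition~\ref{2g-torus}(1), is only available when Macdonald's relations apply, i.e.\ when the weight $l+m+2r+s$ reaches $n+1$; the computation in Proposition~\ref{2g-torus}(1) is carried out for $n\le g$, whereas here $n\ge 2g-1$, and for $n\ge 2g$ the monomials $c^*\,a_1^*b_1^*\cdots a_{g-1}^*b_{g-1}^*$ and $(c^*)^g$ are linearly \emph{independent} in $H^{2g}$. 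Indeed, in this range $SP^n(M_g)=\mathbb P E\to J$, so by Leray--Hirsch its cohomology is a free $H^*(J)$-module on $1,c^*,\dots,(c^*)^{n-g}$, with the only relation occurring in degree $2(n-g+1)$: additively, and in all low degrees multiplicatively, the ring looks exactly like that of $\C P^{n-g}\times T^{2g}$. So the ``discrepancy'' you propose to exploit does not exist, and any distinguishing invariant must be extracted from the single high-degree relation, i.e.\ from the Chern classes of $E$.

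Second, the criterion you finally settle on --- $c_1(E')\ne 0$ for the pulled-back Mattuck bundle, versus $c_1=0$ for a trivial bundle --- is not a valid obstruction, because $\mathbb P(E')\cong\mathbb P(E'\otimes L)$ for any line bundle $L$: a projective bundle can be trivial, and its total space an honest product, while $c_1(E')\ne 0$. The paper's own Proposition~\ref{exception} is precisely such a counterexample: for $g=1$ the pullback of Mattuck's bundle along a suitable isogeny becomes $L^{\oplus n}$, which has nonzero first Chern class yet trivial projectivization, so $\C P^{n-1}\times T^2$ \emph{does} finitely cover $SP^n(T^2)$. This is why the hypothesis $g>1$ is essential and why $c_1$ alone can never finish the argument. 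The paper instead pulls back both $c_1$ and $c_2$ of $SP^n(M_g)$ along the covering (the transfer plus torsion-freeness makes $f^*$ injective), solves the $c_1$ equation for the hyperplane class $\bar x$ in terms of $f^*(c^*)$ and $f^*(\theta)$, substitutes into the $c_2$ equation, and compares the coefficients of $f^*(\theta^2)$ --- using $g>1$ so that $(c^*)^2$, $c^*\theta$, $\theta^2$ are linearly independent --- arriving at $\tfrac{n-g}{n-g+1}=1$, a contradiction. Your proposal contains no invariant beyond $c_1$ that could detect projective (non)triviality, so as written it cannot close the argument; to repair it you would need to bring in $c_2$ (or equivalently the full degree-$2(n-g+1)$ relation, i.e.\ whether some class $c^*+\alpha$ with $\alpha\in H^2(J;\Q)$ has vanishing $(n-g+1)$-st power), which is essentially the paper's route.
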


\begin{proof}
We proceed by contradiction. Suppose $f:M'\to SP^n(M_g)$ is a finite covering map with $M'=\C P^{n-g}\times T^{2g}$. Since the fundamental group of $SP^n(M_g)$ is abelian, $f$ is a regular covering with action of a finite abelian group $G$. 
%Suppose $f:M'\to SP^n(M_g)$ is a finite covering map with $M'=\C P^{n-g}\times T^{2g}$, and $f':T^{2g}\to T^{2g}$ is the covering such that $f'\circ \text{proj} = \mu_n\circ f$ for the projection map $\text{proj}:M'\to T^{2g}$. Since the fundamental groups of $SP^n(M_g)$ and $T^{2g} are abelian, $f$  and $f'$ are regular coverings with actions of a finite abelian group $G$. 
The main property of the transfer homomorphism $\tr:H^*(M')\to H^*(SP^n(M_g))$ is that the composition 
\[
\tr\circ f^*:H^*(SP^n(M_g))\to H^*(SP^n(M_g))
\]
is the multiplication by $|G|$,~\cite[Sections III.9 \& III.10]{Br}. Since $H^*(SP^n(M_g))$ is torsion free~\cite{Mac}, the induced homomorphism $f^*:H^*(SP^n(M_g))\to H^*(M')$ is injective. If $x\in H^2(\C P^{n-g})$ denotes the generator, then the cohomology ring of $\C P^{n-g}\times T^{2g}$ is the tensor product 
\[
\mathbb Z[x]/(x^{n-g+1})\otimes\Lambda(a_1^*,b_1^*,\dots,a_g^*,b_g^*).
\]
%Clearly, the action of the fundamental group $\Z^{2g}$ on the fiber $\C P^{n-g}$ of $\mu$ is trivial. Since $f'$ induces an isomorphism on rational homology and cohomology, the Spectral Sequence Comparison Theorem~\cite[Proposition 5.13]{Ha2} implies that $f$ induces an isomorphism in rational homology, and thus, in rational cohomology.
%
A covering map takes Chern classes to Chern classes. Hence, for each $k\ge 1$, $c_k(M') = f^*(c_k(SP^n(M_g)))$. The first Chern class of $SP^n(M_g)$ is given by
\[
c_1(SP^n(M_g))=(n-g+1)c^*-\theta,
\]
and the first Chern class of the cover $M'$ is
\[
c_1(M')=c_1(\C P^{n-g}\times T^{2g})=(n-g+1)\bar x,
\]
where $\bar x=x\otimes 1 \in H^2(\C P^{n-g}\times T^{2g})$. Thus, $(n-g+1)f^*c^*-f^*\theta=(n-g+1)\bar x$ and, hence
$$
\bar x=f^*(c^*)-\frac{1}{n-g+1}f^*(\theta).
$$
We consider the case $g>1$, which implies $n\ge 3$.
Since $(a_i^*b_i^*)^2=0$ for each $i$ and $(a_i^*b_i^*)(a_j^*b_j^*)=(a_j^*b_j^*)(a_i^*b_i^*)$ for each $i\ne j$, we obtain for $g>1$ that
\[
\prod_{i=1}^ga_i^*b_i^*=\frac{\theta^2}{2}.
\]
The second Chern class of $SP^n(M_g)$ is given by the formula
\begin{equation}\label{3'}
c_2(SP^n(M_g))=\frac{(n-g+1)(n-g)}{2}(c^*)^2 - (n-g)c^*\theta + \frac{\theta^2}{2},
\end{equation}
and the second Chern class of $M'$ is
\begin{equation}\label{2}
c_2(\C P^{n-g}\times T^{2g})=\frac{(n-g+1)(n-g)}{2}\bar x^2.
\end{equation}
The equation $f^*(c_2(SP^n(M_g)))=c_2(M')$ turns into the following:
\[
\frac{(n-g+1)(n-g)}{2}f^*((c^*)^2) - (n-g)f^*(c^*\theta) + \frac{1}{2}f^*(\theta^2)
\]
\[
=\frac{(n-g+1)(n-g)}{2}\left(f^*(c^*)-\frac{1}{n-g+1}f^*(\theta)\right)^2.
\]
For $n>2$, since the elements $(c^*)^2$, $c^*\theta$, $\theta^2$ are linearly independent and $f^*$ is a rational monomorphism, $f^*((c^*)^2)$, $f^*(c^*\theta)$, $f^*(\theta^2)$ are also linearly independent.
Therefore, we have the equalities of coefficients. For the coefficients of $f^*((c^*)^2)$ and $f^*(c^*\theta)$, they are honest equalities for all $n$ and $g$. For the coefficients of $f^*(\theta^2)$, we obtain the equation
\[
\frac{(n-g+1)(n-g)}{2(n-g+1)^2}=\frac{1}{2} \implies \frac{n-g}{n-g+1}=1,
\]
which has no solution. This completes the proof.
\end{proof}

The condition $g>1$ in Proposition~\ref{no product} is important in view of the following.
\begin{prop}\label{exception}
For any $n$, there is an $n$-fold covering $\mathbb CP^{n-1}\times T^2\to SP^n(T^2)$\footnote{\hspace{0.5mm}A different proof of this fact for $n=2$ was suggested by Will Sawin on the MathOverflow website, see \url{https://mathoverflow.net/q/476086}.}.
\end{prop}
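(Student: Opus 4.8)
The plan is to realize the covering by pulling a projective bundle back along an $n$-fold self-covering of the base torus. Since $g=1$, every $n\ge 1$ satisfies $n\ge 2g-1$, so the Abel--Jacobi map already presents $SP^n(T^2)$ as a $\mathbb{C}P^{n-1}$-bundle over the torus $J\cong T^2$: concretely, $SP^n(T^2)=\mathbb{P}(\mathcal V)$ for a rank-$n$ complex vector bundle $\mathcal V\to T^2$, which is exactly the structure recalled at the beginning of this subsection. I would then fix an $n$-fold covering $\rho\colon T^2\to T^2$ — for instance the one induced by an index-$n$ sublattice of $\Z^2$ — and pull the fibration $\pi\colon SP^n(T^2)\to T^2$ back along $\rho$. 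The fibre product $P=T^2\times_{\rho,\pi}SP^n(T^2)$ then admits a natural map to $SP^n(T^2)$ that is an $n$-sheeted covering, and $P$ is connected because its other projection identifies it with the $\mathbb{C}P^{n-1}$-bundle $\mathbb{P}(\rho^*\mathcal V)\to T^2$, whose fibres are connected over a connected base. In this way the statement reduces to the single claim that $\mathbb{P}(\rho^*\mathcal V)\cong\mathbb{C}P^{n-1}\times T^2$.

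This remaining claim is a formality about vector bundles over a surface. An $n$-fold covering of a closed orientable surface has mapping degree $n$, so $\rho^*$ sends a generator of $H^2(T^2;\Z)\cong\Z$ to $n$ times a generator; hence $c_1(\rho^*\mathcal V)=\rho^*c_1(\mathcal V)$ equals $n\alpha$ for some $\alpha\in H^2(T^2;\Z)$. Over the $2$-dimensional CW complex $T^2$, a complex vector bundle of rank $n\ge 1$ is determined up to isomorphism by its first Chern class (because $\pi_1BU(n)=\pi_3BU(n)=0$ and $\pi_2BU(n)=\Z$). Choosing a line bundle $L$ on $T^2$ with $c_1(L)=\alpha$, the bundles $\rho^*\mathcal V$ and $L^{\oplus n}$ have equal rank and equal first Chern class, hence are isomorphic; since $\mathbb{P}(\mathcal W)\cong\mathbb{P}(\mathcal W\otimes\mathcal L)$ for any line bundle $\mathcal L$, tensoring with $L^{-1}$ gives $\mathbb{P}(\rho^*\mathcal V)\cong\mathbb{P}(L^{\oplus n})\cong\mathbb{C}P^{n-1}\times T^2$. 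Composing this identification with the covering $P\to SP^n(T^2)$ yields the required $n$-fold covering $\mathbb{C}P^{n-1}\times T^2\to SP^n(T^2)$.

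I do not expect a genuine obstacle: the argument is essentially automatic once one has the projective-bundle picture. The only points needing care are the bookkeeping ones flagged above — that the number of sheets equals the mapping degree, so that $\rho^*$ multiplies $H^2$ by exactly $n$ and not, say, by $n^2$ (which is what pulling back along the multiplication map $[n]\colon J\to J$ would give), that the total space $P$ is connected, and the standard classification of complex vector bundles over a surface. It is worth remarking that this argument also clarifies why no analogue holds for $g>1$: there $\rho^*$ still multiplies $H^2$ by $n$, but for $SP^n(M_g)$ the obstruction to a product decomposition is carried by the products of $\theta=\sum_i a_i^*b_i^*$ appearing in the proof of Proposition~\ref{no product}, and $g=1$ is precisely the case in which these obstructions disappear, since $\theta^2=0$ there. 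Finally, the covering is only topological (equivalently, smooth): the relevant $\mathcal V$ is an indecomposable bundle over the elliptic curve, so $\mathbb{C}P^{n-1}\times T^2$ with its product complex structure is not a holomorphic covering of $SP^n(T^2)$.
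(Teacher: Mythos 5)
Your proof is correct and follows essentially the same route as the paper: both arguments pull the projective bundle $SP^n(T^2)=\mathbb P(\mathcal V)\to T^2$ back along an $n$-fold self-covering of the torus and use the classification of complex vector bundles over a surface by rank and first Chern class to identify the pullback with $L^{\oplus n}$, whose projectivization is the trivial $\mathbb CP^{n-1}$-bundle. The only cosmetic difference is that the paper first splits $\mathcal V\cong L\oplus(n-1)\varepsilon$ to read off $c_1$, whereas you choose $L$ directly from the divisibility of $\rho^*c_1(\mathcal V)$ and trivialize $\mathbb P(L^{\oplus n})$ by tensoring with $L^{-1}$.
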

\begin{proof}
Let $\xi:E\to T^2$ be an $n$-dimensional complex vector bundle whose projectivization is the Abel--Jacobi map $\mu:SP^n(T^2)\to T^2$.
By dimensional reasons, $\xi$ has $(n-1)$-linearly independent sections. 
%This can be seen by going to the spherical bundle and using obstruction theory.
Therefore, $\xi=L\oplus(n-1)\varepsilon$, where $\varepsilon$ is the trivial linear bundle over $T^2$ and $L$ is a linear bundle with the first Chern class $c_1(L)$. For any $n$-fold covering $f:T^2\to T^2$, we have for the first Chern class of the pullback that 
\[
c_1(f^*E)=c_1(f^*L)=nc_1(L)=c_1(nL).
\]
Due to dimensional reasons, complex vector bundles over $T^2$ are classified by the first Chern class. 
% This is because the 2-dimensional skeleton of BU(n) is homotopy equivalent to S^2. It means that for every complex n-bundle over T^2 its classifying map factors through S^2 and that map defines the 1st Chern class (the degree of T^2\to S^2). If degrees are the same (Chern classes  are equal) then the classifying maps are homotopic, hemce the bundles are isomorphic.
Therefore, $f^*E$ is isomorphic to $nL$.
Clearly, the projectivization of the Whitney sum of $n$ copies of the same line bundle $L$ is a trivial bundle with the fiber $\mathbb CP^{n-1}$. 
% By the definition of CP^{n-1} points there are the ratios z_1:z_2: ... :z_n of n complex numbers from C^n. Let L be a linear vector bundle. For every loop in the base, there is a complex number u such that after traversing of z_i along that loop, the result will be uz_i. Since all linear bundles in nL are the same, after traversing along any loop we will move the ratio z_1:z_2: ... :z_n to uz_1:uz_2: ... :uz_n, which is the same point in CP^{n-1} in the fiber. Hence, CP^[n-1} bundle is trivial.
Then the pullback of $f$ with respect to $\mu$ is our required covering map. 
\end{proof}

Next, we study the scalar and Ricci curvatures in this range. 

\begin{thm}\label{n ge 2g}
For $n\ge 2g-1$, the manifold $SP^n(M_g)$ admits K\"ahler metrics of positive scalar curvature.
\end{thm}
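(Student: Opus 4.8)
The plan is to exploit the projective bundle structure of the Abel--Jacobi map recalled just above. Since $n\ge 2g-1$, the map $\mu_n\colon SP^n(M_g)\to J$ is the projectivization $\pi\colon\mathbb P E\to J$ of a holomorphic vector bundle $E\to J$ of rank $k=n-g+1\ge 2$; in particular $SP^n(M_g)$ is a holomorphic $\C P^{k-1}$-bundle over the flat torus $J=T^{2g}$ whose fibers have positive complex dimension $k-1$. The strategy is an adiabatic (shrinking-fiber) construction: we produce a Kähler metric whose fibers are a small multiple of the Fubini--Study metric on $\C P^{k-1}$, and we show that as the fibers shrink the scalar curvature is dominated by the positive constant scalar curvature of that fiberwise Fubini--Study metric while all other contributions remain bounded.

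Concretely, fix a Hermitian metric on $E$; it induces on $\mathbb P E$ a smooth closed real $(1,1)$-form $\omega_{FS}$, the curvature of the naturally induced metric on the relative hyperplane bundle $\O_{\mathbb P E}(1)$, whose restriction to every fiber is the standard Fubini--Study form on $\C P^{k-1}$. Let $\omega_0$ be the flat Kähler form on $J$, and for $\varepsilon>0$ set
\[
\omega_\varepsilon \;:=\; \varepsilon\,\omega_{FS}+\pi^*\omega_0 .
\]
The first step is to verify that $\omega_\varepsilon$ is a Kähler form for all sufficiently small $\varepsilon$: it is closed; it is positive along the fibers, where it equals $\varepsilon$ times a Fubini--Study form; and fixing a connection on $E$ to split the tangent bundle of $\mathbb P E$ into horizontal and vertical parts, $\pi^*\omega_0$ is positive of order $1$ in the horizontal directions while the remaining terms of $\omega_\varepsilon$ are $O(\varepsilon)$. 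Compactness of $SP^n(M_g)$ then forces positive-definiteness for $\varepsilon$ small.

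The heart of the proof is the behavior of the scalar curvature $s(\omega_\varepsilon)$ as $\varepsilon\to 0$. In a local holomorphic bundle chart with base coordinates $z$ and fiber coordinates $w$, the metric tensor of $\omega_\varepsilon$ has a $2\times 2$ block form with horizontal $(z\bar z)$-block $(\omega_0)_{z\bar z}+O(\varepsilon)$, fiber $(w\bar w)$-block $\varepsilon\,(\omega_{FS})_{w\bar w}$, and mixed block $O(\varepsilon)$; hence its determinant equals $\varepsilon^{k-1}\bigl(\det(\omega_0)_{z\bar z}\bigr)\bigl(\det(\omega_{FS})_{w\bar w}\bigr)\bigl(1+O(\varepsilon)\bigr)$, the error being smooth and bounded in every $C^\ell$-norm uniformly in $\varepsilon$. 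Taking $-i\partial\bar\partial\log\det$, the constant $\varepsilon^{k-1}$ disappears and $\operatorname{Ric}(\omega_\varepsilon)$ is seen to be, up to a $C^\infty$-error of size $O(\varepsilon)$, an $\varepsilon$-independent bounded $(1,1)$-form $\rho_0$; since $\log\det(\omega_0)_{z\bar z}$ is independent of $w$, the restriction of $\rho_0$ to each fiber is precisely the Ricci form of the Fubini--Study metric on $\C P^{k-1}$. Because the inverse metric is $O(1)$ on the horizontal and mixed blocks but blows up like $\varepsilon^{-1}(\omega_{FS})_{w\bar w}^{-1}$ on the fiber block, contracting $\operatorname{Ric}(\omega_\varepsilon)$ against it yields
\[
s(\omega_\varepsilon)\;=\;\frac{1}{\varepsilon}\,s_{FS}+O(1)\qquad(\varepsilon\to 0)
\]
uniformly on $SP^n(M_g)$, where $s_{FS}>0$ is the scalar curvature of $(\C P^{k-1},\omega_{FS})$ (positive since $k-1\ge 1$) and the $O(1)$ term collects the horizontal and mixed contributions together with the base contribution, which vanishes because $J$ is flat. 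Therefore $s(\omega_\varepsilon)>0$ everywhere once $\varepsilon$ is small, and $\omega_\varepsilon$ is the desired Kähler metric of positive scalar curvature.

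The step I expect to be the main obstacle is this last estimate: one must make the expansion $s(\omega_\varepsilon)=\varepsilon^{-1}s_{FS}+O(1)$ genuinely uniform on all of $SP^n(M_g)$, which reduces to checking that the determinant and inverse estimates hold with constants independent of the point --- this follows by covering the compact total space by finitely many bundle charts and using the smoothness of $\omega_{FS}$, $\omega_0$, and the transition data. Finally, I note that the flatness of $J$ is only a convenience making the base contribution identically zero; replacing $\omega_0$ by any Kähler form on the base, one would still obtain $s(\omega_\varepsilon)\to+\infty$.
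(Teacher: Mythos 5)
Your proposal is correct and follows essentially the same route as the paper: both use the projectivization $\mathbb P E\to J$ of the Abel--Jacobi map and the family $\omega_t=\mu_n^*\omega+t\,\omega_{FS}$ with $t$ small, concluding positivity of the scalar curvature in the adiabatic limit. The only difference is that the paper delegates the expansion $s(\omega_t)=t^{-1}s_{FS}+O(1)$ to Yau's computation in \cite[Proposition 1]{Yau74}, whereas you carry it out directly; your sketch of that computation is sound.
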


\begin{proof}
Recall that in this range, the Abel--Jacobi map $\mu_n$ is isomorphic to the projectivization $\mathbb P E\to J$. Thus, the manifold $SP^n(M_g)$ can then be equipped with a one-parameter family of K\"ahler metrics with positive scalar curvature in the following way. Consider the one-parameter family of forms $\omega_t$ on $\mathbb P E$ defined by
\[
\omega_{t}:=\mu^*_n\omega+t\omega_{FS},
\]
where $\omega$ is any K\"ahler form on $J$, $t$ is a small real parameter, and $\omega_{FS}$ is the Fubiny--Study K\"ahler metric on the fiber $\C P^{k-1}$. Following Yau's computations in~\cite[Proposition 1]{Yau74}, we have that for $t$ small enough, $\omega_{t}$ is a K\"ahler metric on  $\mathbb P E$ with positive scalar curvature.
\end{proof}

We observe that in the range $0<g<n$, we still have some obstructions to positive curvature. 

\begin{prop}
For $g+1\leq n \leq 2g-2$, the smooth manifold $SP^n(M_g)$ cannot support a Riemannian metric of non-negative Ricci curvature.
\end{prop}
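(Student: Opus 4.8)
The plan is to derive a contradiction between two facts: a metric of non-negative Ricci curvature on a closed manifold with infinite $\pi_1$ forces the Euler characteristic to vanish, whereas $\chi(SP^n(M_g))\neq 0$ in exactly the stated range. The point to notice at the outset is that the tool used in the preceding proposition (the Bochner bound $b_1\le\dim$, with equality only for flat tori) is now useless: here $b_1(SP^n(M_g))=2g$ while $\dim_{\R}SP^n(M_g)=2n$, and $2g<2n$ in the range $n\ge g+1$, so no contradiction is visible at the level of Betti numbers. One must instead use that $\pi_1(SP^n(M_g))=\Z^{2g}\neq 0$ (\emph{cf.} Corollary~\ref{mu} and Proposition~\ref{2-homology}) together with the non-vanishing of $\chi$.

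The first step is to compute the Euler characteristic. By the classical product formula $\sum_{k\ge 0}\chi(SP^k(X))\,q^k=(1-q)^{-\chi(X)}$ (equivalently, evaluate Macdonald's generating function for the Poincar\'e polynomials of the symmetric powers at $t=-1$), applied to $X=M_g$ with $\chi(M_g)=2-2g$, one obtains
\[
\sum_{k\ge 0}\chi(SP^k(M_g))\,q^k=(1-q)^{2g-2},
\]
so that $\chi(SP^n(M_g))=(-1)^n\binom{2g-2}{n}$. This is nonzero precisely for $0\le n\le 2g-2$, hence in particular for $g+1\le n\le 2g-2$ (a range that is nonempty only when $g\ge 3$).

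Now suppose, for contradiction, that $SP^n(M_g)$ admits a Riemannian metric with $\mathrm{Ric}\ge 0$. Since $b_1(SP^n(M_g))=2g\ge 1$, Hodge theory gives a nontrivial harmonic $1$-form, and Bochner's technique (see, e.g., \cite[Section~7.3]{Petersen}) shows that on a closed manifold of non-negative Ricci curvature every harmonic $1$-form is parallel; its metric dual is then a parallel, hence nowhere-vanishing, vector field. By the Poincar\'e--Hopf theorem this forces $\chi(SP^n(M_g))=0$, contradicting the computation above. (Alternatively, invoke the Cheeger--Gromoll splitting theorem: a finite cover of $SP^n(M_g)$ would be diffeomorphic to $T^{2g}\times N$ with $N$ closed and simply connected, whence $\chi(SP^n(M_g))=\chi(T^{2g})\chi(N)=0$, the same contradiction.)

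The only genuinely substantive step is recognizing that the relevant obstruction in this range is the Euler characteristic, not the first Betti number; once that is identified, the topological input (the product formula for $\chi$ of symmetric powers) and the Riemannian input (Bochner/Cheeger--Gromoll) are both entirely standard. It is worth recording that the argument breaks down exactly for $n\ge 2g-1$, where $\chi(SP^n(M_g))=0$ — consistently with Theorem~\ref{n ge 2g}, which exhibits K\"ahler metrics of positive scalar curvature on those manifolds.
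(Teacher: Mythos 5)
Your proof is correct, but it takes a genuinely different route from the paper. The paper's argument is Riemannian--algebro-geometric: assuming $\mathrm{Ric}\ge 0$, the Cheeger--Gromoll splitting theorem forces the universal cover to split isometrically as $N^{2(n-g)}\times\R^{2g}$ with $N$ compact and simply connected, and then the key input is Brill--Noether theory: precisely in the range $g+1\le n\le 2g-2$ there exist special effective divisors of degree $n$, so some fibers of the (lifted) Abel--Jacobi map are projective spaces $\C P^m$ with $m>n-g$, which is incompatible with that splitting. Your argument replaces all of this by a purely topological obstruction: $\chi(SP^n(M_g))=(-1)^n\binom{2g-2}{n}\ne 0$ for $n\le 2g-2$ (the specialization $t=-1$ of Macdonald's formula is standard and your verification is fine), combined with the classical Bochner theorem producing a parallel, hence nowhere-vanishing, vector field from $b_1=2g>0$, and Poincar\'e--Hopf. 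What your approach buys: it is more elementary, it needs no splitting theorem or special-divisor input, and it actually proves more --- it rules out $\mathrm{Ric}\ge 0$ for all $2\le n\le 2g-2$, thereby subsuming the case $n\le g$ treated separately in the paper, and it makes transparent why $n\le 2g-2$ is the exact boundary of this obstruction. What the paper's approach buys: its splitting-theorem strategy is the one that extends to the remaining range $n\ge 2g-1$ (via the Chern class computation of Proposition~\ref{no product} in Proposition~\ref{true?}), where $\chi(SP^n(M_g))=0$ and your Euler-characteristic obstruction says nothing, and it ties the statement to the Abel--Jacobi geometry that organizes the rest of Section~\ref{curvature}. One small remark: your parenthetical alternative asserting that a finite cover is diffeomorphic to $T^{2g}\times N$ requires the additional structure theory identifying the Euclidean factor as $\R^{2g}$ (using that $\pi_1=\Z^{2g}$ is torsion-free and acts cocompactly), exactly as the paper does via~\cite{CG72}; since your primary Bochner/Poincar\'e--Hopf argument is self-contained, this is a side remark rather than a gap.
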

\begin{proof}
If we assume the existence of a Riemannian metric with non-negative Ricci curvature, then it follows from the splitting theorem of Cheeger and Gromoll~\cite[Theorem 3]{CG71} that the universal Riemannian cover $\rwt{SP^n(M_{g})}$ splits \emph{isometrically} as
\begin{equation}\label{cheeger}
\rwt{SP^n(M_g)}= N^{2(n-g)}\times \R^{2g},
\end{equation}
where $N^{2(n-g)}$ is a simply connected compact manifold with non-negative Ricci curvature. Recall that the Abel--Jacobi map
\[
\mu_n: SP^n(M_g) \to J=\C^g/\Lambda
\]
induces an isomorphism on $\pi_1$ and its generic fiber is $\C P^{n-g}$. Thus, the universal cover of $SP^n(M_g)$ is simply the pullback of the universal cover of the Jacobian $J$ via the map $\mu_n$. This implies that the fibers of the universal covering map
\[
\Wi{\mu_n}: \rwt{SP^n(M_g)}\to \C^g
\]
are the same as the fibers of the Abel--Jacobi map. In the range $g+1\leq n \leq 2g-2$, we always have the existence of points $p\in J$ such that $\mu_{n}^{-1}(p)=\C P^m$ with $m>n-g$. More precisely, we have that
\[
m=n-g+h^{0}(M_g, K_{M_g}-D),
\]
where $D$ is an effective \emph{special} divisor so that $h^{0}(M_g, K_{M_g}-D)>0$ and $\deg(D)=n$; for more details, see~\cite[Page 245]{GH}. This contradicts the splitting in~\eqref{cheeger}. The proof is now complete.
\end{proof}

In the range $n\geq 2g-1$, the obstruction to non-negative Ricci curvature is more subtle. It relies on Proposition~\ref{no product} and another structure result on the fundamental group of non-positively curved manifolds due to Cheeger and Gromoll.

\begin{prop}\label{true?}
For $n \geq 2g-1$ and $g>1$, the smooth manifold $SP^n(M_g)$ cannot support a Riemannian metric of non-negative Ricci curvature.
\end{prop}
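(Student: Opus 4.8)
The plan is to argue by contradiction: suppose $SP^n(M_g)$ carried a Riemannian metric of non-negative Ricci curvature. The starting point is the structure theory of Cheeger and Gromoll for such manifolds, which says that a suitable finite normal covering $f\colon\bar M\to SP^n(M_g)$ is isometric to a Riemannian product $T^k\times\bar N$, where $T^k$ is a flat torus and $\bar N$ is a closed simply connected manifold. Since $\pi_1(SP^n(M_g))\cong\Z^{2g}$ by \corref{mu}, and every finite-index subgroup of $\Z^{2g}$ is again free abelian of rank $2g$, we get $\pi_1(\bar M)\cong\Z^{2g}$; comparing with $\pi_1(T^k\times\bar N)\cong\Z^k$ forces $k=2g$, so that $\bar N$ is a closed simply connected manifold of dimension $2(n-g)$, which is positive since $n\ge 2g-1>g$.

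The next step is to identify $\bar N$ up to homotopy. The universal cover of $\bar M$ is the same as that of $SP^n(M_g)$; and for $n\ge 2g-1$ the Abel--Jacobi map exhibits $SP^n(M_g)$ as the projectivization $\mathbb PE\to J$ of a holomorphic vector bundle (Mattuck's theorem \cite{Mat1}; see the discussion preceding \propref{no product}), so pulling $\mathbb PE$ back along the universal covering $\C^g\to J$ of the Jacobian — whose base is contractible — identifies $\widetilde{SP^n(M_g)}$ with the trivial bundle $\C^g\times\C P^{n-g}$. Consequently $\R^{2g}\times\bar N$ is diffeomorphic to $\R^{2g}\times\C P^{n-g}$, and cancelling the Euclidean factor up to homotopy yields $\bar N\simeq\C P^{n-g}$. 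In particular $H^\ast(\bar M;\Z)\cong H^\ast(T^{2g}\times\C P^{n-g};\Z)$ as graded rings, this ring is torsion-free, and $\bar M$ is a complex manifold (with the complex structure inherited via $f$) whose Chern classes satisfy $c_i(\bar M)=f^\ast c_i(SP^n(M_g))$ and are therefore computed by Macdonald's formulas.

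With this in hand I would invoke \propref{no product}. Its proof uses only the graded ring $\Z[x]/(x^{n-g+1})\otimes\Lambda(a_1^\ast,b_1^\ast,\dots,a_g^\ast,b_g^\ast)$, torsion-freeness of the cohomology (so that $f^\ast$ is injective by the transfer argument, \cite{Mac}, \cite{Br}), Macdonald's expressions for $c_1$ and $c_2$ of $SP^n(M_g)$, and the fact that the Chern classes of the cover are polynomials in a single degree-two class pulled back from the $\C P^{n-g}$ factor; equating the two resulting expressions for $c_2$ yields the impossible identity $\tfrac{n-g}{n-g+1}=1$, completing the contradiction. The step I expect to be the main obstacle is precisely this last input: the complex structure $\bar M$ inherits from the covering need not be a product structure, so a priori $c_1(\bar M)$ could acquire a contribution from $H^1(\bar M)$, and one must rule this out — equivalently, one must promote the homotopy equivalence $\bar M\simeq T^{2g}\times\C P^{n-g}$ to a homeomorphism so that \propref{no product} applies verbatim. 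For $n-g\le 2$ this is immediate, since a closed simply connected manifold homotopy equivalent to $S^2=\C P^1$ or to $\C P^2$ is homeomorphic to it (Poincar\'e duality, respectively Freedman's theorem); in general I would analyze the projective-bundle description of $\bar M$ over the finite torus cover of $J$ determined by $f$ and show that the pulled-back bundle is projectively trivial, so that $\bar M$ is biholomorphic to $T^{2g}\times\C P^{n-g}$ with its product complex structure, once more reducing to \propref{no product}.
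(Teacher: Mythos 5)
Your overall strategy is the same as the paper's: combine Cheeger--Gromoll structure theory for non-negative Ricci curvature with \propref{no product}. The paper's proof is shorter because it takes from \cite[Theorem 3]{CG71} the isometric splitting $\rwt{SP^n(M_g)}=\C P^{n-g}\times\R^{2g}$ and then quotes \cite[Theorem 9.2]{CG72} to produce a finite regular cover that is \emph{diffeomorphic} to $\C P^{n-g}\times T^{2g}$, at which point \propref{no product} applies as stated. Your version reconstructs this cover by hand and, as you yourself note, only reaches the conclusion that the simply connected factor $\bar N$ is \emph{homotopy equivalent} to $\C P^{n-g}$; that is genuinely weaker than what \propref{no product} requires, and your two proposed repairs do not close the gap. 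The surface-classification/Freedman argument handles only $n-g\le 2$, while for $m\ge 3$ there exist closed manifolds homotopy equivalent to $\C P^{m}$ but not homeomorphic to it, so nothing formal upgrades your homotopy equivalence in the remaining (and generic) cases.

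The second repair --- showing that the Picard bundle pulled back to the finite torus cover is projectively trivial, so that $\bar M$ would be biholomorphic to $T^{2g}\times\C P^{n-g}$ --- cannot work as described. If projective triviality could be established by analyzing the bundle alone, it would produce, unconditionally, a finite cover of $SP^n(M_g)$ homeomorphic to the product, contradicting \propref{no product} itself; this is precisely the phenomenon that is special to $g=1$ (\propref{exception}), where it succeeds only because bundles over $T^2$ are classified by $c_1$. So the only possible input is the non-negative Ricci hypothesis, but what Cheeger--Gromoll gives you is a smooth (or Riemannian) product decomposition of $\bar M$ for an arbitrary such metric, which carries no holomorphic information and has no a priori interaction with the projective-bundle structure; you give no argument connecting the two, and none is apparent. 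Concretely, then, your proposal is complete only for $n-g\le 2$: for $n-g\ge 3$ the passage from ``$\bar N\simeq\C P^{n-g}$ up to homotopy'' to the homeomorphism (or biholomorphism) needed to invoke \propref{no product} is missing, whereas the paper obtains the required diffeomorphic product structure directly from the quoted Cheeger--Gromoll theorems.
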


\begin{proof}
We proceed by contradiction. Suppose that $SP^n(M_g)$ admits a Riemannian metric of non-negative Ricci curvature. Then by the splitting theorem of Cheeger and Gromoll~\cite[Theorem 3]{CG71}, we have that the universal Riemannian cover $\rwt{SP^n(M_{g})}$ splits \emph{isometrically} as
\begin{equation}\notag
\rwt{SP^n(M_g)}= \C P^{n-g}\times \R^{2g}.
\end{equation}
By~\cite[Theorem 9.2]{CG72}, there exists a finite regular cover
\[
\varphi: M\to SP^n(M_{g}),
\]
where $M$ is \emph{diffeomorphic} to the product $\C P^{n-g}\times T^{2g}$. For $g>1$, this is in contradiction with Proposition~\ref{no product}
\end{proof}
We conclude this section by showing that symmetric products of surfaces with $n\geq g$ cannot support K\"ahler metrics with non-positive holomorphic sectional curvature. This result, in the case $n>g$, appeared first in~\cite[Proposition 3.2]{Biswas}. Here, we provide a more elementary proof that also generalizes to the case $n=g$.

\begin{prop}\label{afterbiswas}
For $n\geq g$, the smooth manifold $SP^n(M_g)$ cannot support a Riemannian metric of non-positive holomorphic sectional curvature.
\end{prop}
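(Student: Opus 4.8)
The plan is to locate a smooth rational curve $\Sigma\cong\C P^1$ inside $SP^n(M_g)$ whenever $n\geq g$, and then to derive a contradiction from Gauss--Bonnet: restricting the putative metric to $\Sigma$ would produce a metric on $\C P^1$ whose Gaussian curvature is forced to be non-positive, which is impossible because $\chi(\C P^1)=2>0$. A holomorphic sectional curvature is a notion attached to a K\"ahler (or at least Hermitian) structure --- compare the statement of \propref{it can} and the paragraph preceding this proposition --- so we read the assertion as: no K\"ahler metric on $SP^n(M_g)$ has holomorphic sectional curvature $\leq 0$; K\"ahlerness will be used only to fix the sign of the second--fundamental--form term in the last step. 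One should also discard the genuinely exceptional case $n=g=1$, where $SP^1(M_1)=T^2$ is flat; so assume $(n,g)\neq(1,1)$.

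\emph{Step 1: the rational curve.} For $g=0$, $SP^n(M_0)=\C P^n$ and any projective line is such a $\Sigma$. For $g\geq 1$ and $n>g$, a general fibre of the Abel--Jacobi map $\mu_n\colon SP^n(M_g)\to J$ is the complete linear system $|L|$ of a line bundle $L$ of degree $n$, sitting inside $SP^n(M_g)$; by Riemann--Roch $\dim|L|=n-g\geq 1$ for general $L$, so a line in $|L|$ is a smooth rational curve $\Sigma\subset SP^n(M_g)$. For $n=g\geq 2$, Jacobi's \theoref{Jacobi} says $\mu_g$ is surjective and birational, but it is not an isomorphism, since $\pi_2(SP^g(M_g))$ is infinite by Propositions~\ref{pi2} and~\ref{pi2 not 0} while $\pi_2(J)=0$; hence some fibre $\mu_g^{-1}(p)=|L|$ is a positive-dimensional projective space --- concretely $\dim|L|=h^0(K_{M_g}-L)\geq 1$ for a suitable special $L$ of degree $g$ --- and a line in it gives the desired $\Sigma$. (Alternatively: every genus-$g\geq 2$ curve carries a pencil of degree $\leq g$; completing such a $g^1_d$ by $g-d$ fixed points embeds a $\C P^1$ into $SP^g(M_g)$.)

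\emph{Step 2: Gauss--Bonnet.} Suppose $SP^n(M_g)$ carried a K\"ahler metric of non-positive holomorphic sectional curvature, and let $h$ be the metric it induces on the complex curve $\Sigma\cong\C P^1$. As a complex submanifold of a K\"ahler manifold, $\Sigma$ is minimal, and the Gauss equation --- the same computation behind~\eqref{secondF}, now with $\Sigma$ as the submanifold and $SP^n(M_g)$ as the ambient space (see~\cite[Equation~9]{Goldberg}) --- gives, for every unit tangent vector $v$ to $\Sigma$,
\[
R_\Sigma(v,Iv,v,Iv)\;=\;R(v,Iv,v,Iv)-|B(v,v)|-|B(v,Iv)|\;\leq\;R(v,Iv,v,Iv)\;\leq\;0 ,
\]
where $R$ is the curvature tensor of the candidate metric, $I$ its complex structure, and $B$ the second fundamental form of $\Sigma$; the rightmost inequality is the hypothesis. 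Since $\Sigma$ is a Riemann surface, the left-hand side is a positive multiple of the Gaussian curvature $K_h$ of $h$, so $K_h\leq 0$ everywhere; by Gauss--Bonnet, $0<2\pi\,\chi(\C P^1)=\int_\Sigma K_h\,dA_h\leq 0$, a contradiction. This proves the proposition and, along the way, recovers~\cite[Proposition~3.2]{Biswas} in the range $n>g$.

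\emph{Main obstacle.} The only non-routine point is Step~1 in the boundary case $n=g$: one must know that $\mu_g$ is never an isomorphism for $g\geq 2$ (immediate from the homotopy computations of Section~\ref{chh}) and that a positive-dimensional fibre of this birational morphism of smooth projective varieties carries a rational curve --- most cleanly by identifying its fibres with linear systems and taking a line in one of dimension $\geq 1$. The curvature estimate of Step~2 is just the argument proving \propref{it can}, run in reverse; the one subtlety there is that it is K\"ahlerness, not mere Hermitian symmetry, that forces the two $|B|$-terms to appear with the favourable sign.
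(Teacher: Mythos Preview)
Your proof is correct and follows essentially the same route as the paper's: locate a smooth rational curve in $SP^n(M_g)$ via the fibres of the Abel--Jacobi map (generic fibres for $n>g$, exceptional fibres coming from special divisors for $n=g$), then use the curvature-decreasing property of complex submanifolds together with Gauss--Bonnet on $\C P^1$ to derive a contradiction. Your write-up is in fact slightly more careful than the paper's in flagging the degenerate case $n=g=1$ and in treating $g=0$ explicitly, and your alternative via a $g^1_d$ for the $n=g$ case is a nice touch not present in the original.
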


\begin{proof}
For $n>g$, $SP^n(M_g)$ contains smooth rational curves. Indeed, the generic fiber of the Abel--Jacobi map
\[
\mu_n: SP^n(M_g) \to J=\C^g/\Lambda
\]
is biholomorphic to $\C P^{n-g}$. Let us assume that there is a K\"ahler metric on $SP^n(M_{g})$ with non-positive holomorphic sectional curvature. Since in a K\"ahler manifold, the holomorphic sectional curvature can only decrease along its complex submanifolds (\emph{cf}. Equation \eqref{secondF} in the proof of Proposition~\ref{it can}), the Gauss--Bonnet theorem provides a contradiction under the hypothesis of the existence of rational curves. This is because the holomorphic sectional curvature coincides with the sectional curvature for Riemann surfaces. This concludes the proof in the range $n>g$.

For $n=g$, recall that the Abel--Jacobi map
\[
\mu_n: SP^n(M_n) \to J=\C^n/\Lambda
\]
is a birational morphism with non-empty exceptional locus, see Theorem~\ref{Jacobi}. The exceptional fibers are positive-dimensional complex projective spaces.  More precisely, they are  biholomorphic to $\C P^k$'s, where $k=h^{0}(M_n, K_{M_n}-D)>0$ for an effective special divisor $D$ with $\deg(D)=n$; for more details, see~\cite[Page 245]{GH}. Thus, $SP^n(M_{n})$ contains at least one smooth rational curve, and we can argue exactly as in the case $n>g$. The proof is now complete.
\end{proof}

\section{Essentiality, LS-category, and topological complexity}\label{lstc}

In this section, we study some homotopy invariants of spaces, namely the LS-category and topological complexity, and determine their values for the symmetric products of curves.

\subsection{Essentiality}
We recall that a closed manifold $M$ of dimension $n$ is called {\em essential}~\cite{Gr1},~\cite{Gr3} if a map $u:M\to B\pi_1(M)$ that classifies the universal cover cannot be deformed to the $(n-1)$-skeleton of the CW complex $B\pi_1(M)$. 
Some obvious examples of essential manifolds include aspherical manifolds. 

It is known~\cite{Ba},~\cite{BD1} that an orientable $n$-manifold $M$ is essential if and only if $u_*([M])\ne 0$ in $H_n(B\pi_1(M))$, where $[M]$ is the fundamental class of $M$.

An orientable $n$-manifold $M$ is called {\em rationally essential} if $u_*([M])\ne 0$ in $H_n(B\pi_1(M);\mathbb Q)$.

\begin{prop}\label{essential}
For $n\le g$, the manifolds $SP^n(M_g)$ are rationally essential.
\end{prop}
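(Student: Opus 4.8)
The plan is to exhibit a cohomology class in $H^{2n}(SP^n(M_g);\Q)$ of top degree that is pulled back from $B\pi_1(SP^n(M_g))$, and then invoke the standard criterion for rational essentiality: an orientable $n$-manifold $M$ is rationally essential if and only if the classifying map $u:M\to B\pi_1(M)$ sends $[M]$ to a nonzero class, equivalently if $u^*$ is nonzero in top-degree rational cohomology. Since $\pi_1(SP^n(M_g))=\Z^{2g}$ by Corollary~\ref{mu}, we have $B\pi_1(SP^n(M_g))=T^{2g}=J$, and the classifying map is precisely the Abel--Jacobi map $\mu_n:SP^n(M_g)\to J$. So the task reduces to showing that $\mu_n^*:H^{2n}(J;\Q)\to H^{2n}(SP^n(M_g);\Q)$ is nonzero, or more precisely that some degree-$2n$ class in the image of $\mu_n^*$ is nonzero.

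First I would recall from Section~\ref{chh} that $H^*(J;\Q)=\Lambda(a_1^*,b_1^*,\dots,a_g^*,b_g^*)$ and that the ring homomorphism $\Theta$ identifies $\mu_n^*$ with the inclusion of the exterior-algebra part into $H^*(SP^n(M_g);\Q)$. The key computation is then exactly Proposition~\ref{2g-torus}(1): for $n\le g$, the product $a_1^*b_1^*\cdots a_n^*b_n^*$ is nonzero in $H^{2n}(SP^n(M_g);\Z)$ (indeed it equals $(c^*)^n$, a generator of $H^{2n}(SP^n(M_g);\Z)=\Z$), hence also nonzero rationally. Since $a_1^*b_1^*\cdots a_n^*b_n^*$ lies in the subring generated by the $a_i^*,b_i^*$, it is in the image of $\mu_n^*$. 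Therefore $\mu_n^*$ is nonzero in degree $2n$, so $u=\mu_n$ cannot be compressed into the $(2n-1)$-skeleton of $T^{2g}$, and $SP^n(M_g)$ is rationally essential.

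Concretely, the argument I would write out: let $\alpha=a_1^*\wedge b_1^*\wedge\cdots\wedge a_n^*\wedge b_n^*\in H^{2n}(J;\Q)$. By Proposition~\ref{2g-torus}(1), $\mu_n^*(\alpha)=a_1^*b_1^*\cdots a_n^*b_n^*=(c^*)^n$ generates $H^{2n}(SP^n(M_g);\Q)\cong\Q$. Since $SP^n(M_g)$ is a closed orientable $2n$-manifold, pairing against the fundamental class, $\langle (c^*)^n,[SP^n(M_g)]\rangle\ne 0$, hence $\langle \alpha,(\mu_n)_*[SP^n(M_g)]\rangle\ne 0$, so $(\mu_n)_*[SP^n(M_g)]\ne 0$ in $H_{2n}(J;\Q)=H_{2n}(B\pi_1;\Q)$. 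This is precisely rational essentiality.

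There is essentially no serious obstacle here: the only substantive input is Proposition~\ref{2g-torus}(1), which is already proved in the excerpt, together with the identification of the classifying map with the Abel--Jacobi map (Corollary~\ref{mu}) and the well-known homological criterion for essentiality. The one point deserving a sentence of care is making sure that $\alpha$ genuinely lies in the image of $\mu_n^*$ and not merely that $(c^*)^n$ is nonzero — but this is immediate from the definition of $\Theta$ in Section~\ref{cohomo}, where $\mu_n^*$ is by construction the map carrying $a_i^*,b_i^*\in H^*(J)$ to the classes denoted $a_i^*,b_i^*\in H^*(SP^n(M_g))$. The proof is thus short, and I would present it in roughly the three-sentence form above.
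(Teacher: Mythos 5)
Your proposal is correct and follows essentially the same route as the paper: identify the Abel--Jacobi map $\mu_n$ with the classifying map via Corollary~\ref{mu}, use Proposition~\ref{2g-torus}(1) to get the non-vanishing of $a_1^*b_1^*\cdots a_n^*b_n^*$, which is pulled back from $H^{2n}(J)$, and conclude $(\mu_n)_*[SP^n(M_g)]\ne 0$ by duality. The paper phrases the last step via the cap product $[SP^n(M_g)]\frown\alpha$ and naturality in degree $0$, while you use the Kronecker pairing; these are the same argument.
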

 
\begin{proof}
By Corollary~\ref{mu}, the Abel--Jacobi map $\mu_n:SP^n(M_g)\to J$ induces an isomorphism of the fundamental groups and so, it is a classifying map for $SP^n(M_g)$.
By Proposition~\ref{2g-torus} (1), we have $\alpha=a_1^*b_1^*\cdots a_n^*b_n^*\ne 0$ in $H^*(SP^n(M_g)$. 
Then its Poincar\'e dual is the homology class $PD(\alpha)=[SP^n(M_g)]\frown\alpha\ne 0$.
The homomorphism $(\mu_n)_*$ is an isomorphism of 0-dimensional homology groups and so it takes 
 $[SP^n(M_g)]\frown\alpha$ to the non-zero element 
\[
\mu_*\left[SP^n(M_g)\right]\frown(a_1^*b_1^*\cdots a_n^*b_n^*).
\]
Therefore, the homomorphism $(\mu_n)_*$ takes
the fundamental class $[SP^n(M_g)]$ to a non-zero element.
Since the groups $H_*(J)$ are torsion-free, this also holds true for rational coefficients.
\end{proof}

The K\"unneth Formula implies the following.

\begin{cor}\label{essalldim}
For $n\le g$, the manifolds $SP^n(M_g)\times S^1$ are rationally essential.
\end{cor}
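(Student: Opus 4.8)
The plan is to deduce this corollary directly from Proposition~\ref{essential} together with the K\"unneth formula, using the standard fact that a product of two rationally essential closed manifolds is rationally essential. First I would recall that $\pi_1(SP^n(M_g)\times S^1)=\pi_1(SP^n(M_g))\times\Z$, so that a classifying space is $B\pi_1(SP^n(M_g))\times S^1$, and the classifying map of the product may be taken to be $u\times\id_{S^1}$, where $u:SP^n(M_g)\to B\pi_1(SP^n(M_g))$ is the classifying map of $SP^n(M_g)$ (which, by Corollary~\ref{mu}, we may take to be the Abel--Jacobi map into the torus $J=B\Z^{2g}$).

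The key step is the following computation with rational homology. By Proposition~\ref{essential}, $u_*[SP^n(M_g)]\ne 0$ in $H_{2n}(B\pi_1(SP^n(M_g));\Q)$. Under the K\"unneth isomorphism
\[
H_{2n+1}\big(B\pi_1(SP^n(M_g))\times S^1;\Q\big)\cong \bigoplus_{i+j=2n+1} H_i(B\pi_1(SP^n(M_g));\Q)\otimes H_j(S^1;\Q),
\]
the fundamental class of the product satisfies $[SP^n(M_g)\times S^1]=[SP^n(M_g)]\times[S^1]$, and hence
\[
(u\times\id)_*[SP^n(M_g)\times S^1]=u_*[SP^n(M_g)]\times [S^1].
\]
Since $[S^1]$ generates $H_1(S^1;\Q)=\Q$ and $u_*[SP^n(M_g)]\ne 0$, the K\"unneth formula (the tensor factor being over a field, so no Tor terms appear and the map is injective on the relevant summand) gives that this class is non-zero in $H_{2n+1}$ of the product. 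Therefore $SP^n(M_g)\times S^1$ is rationally essential.

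I do not expect a genuine obstacle here; the only point requiring a little care is ensuring the classifying map of the product is compatible with the product decomposition and that one invokes the homological criterion for (rational) essentiality recalled just before Proposition~\ref{essential} — namely that an orientable closed $n$-manifold $M$ is rationally essential iff $u_*[M]\ne 0$ in $H_n(B\pi_1(M);\Q)$. One should also note that $SP^n(M_g)\times S^1$ is a closed orientable manifold (being a product of such), so the criterion applies. Everything else is the formal behavior of the fundamental class under products and the K\"unneth theorem with field coefficients.
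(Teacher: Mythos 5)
Your proposal is correct and is exactly the argument the paper intends: the corollary is stated as an immediate consequence of Proposition~\ref{essential} via the K\"unneth formula, and your spelled-out version (classifying map $u\times\id_{S^1}$, fundamental class $[SP^n(M_g)]\times[S^1]$, non-vanishing of the cross product over $\Q$) fills in precisely those details. No gaps; nothing further is needed.
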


For $n>g$, the manifolds $SP^n(M_g)$ are inessential by dimensional reasons.

\subsection{LS-category}
We recall the definition of the following classical numerical invariant,~\cite{LS}.

\begin{defn}
Given a CW complex $X$, the \emph{Lusternik--Schnirelmann category} (LS-category) of $X$, denoted $\cat(X)$, is the smallest integer $n\ge 0$ such that there is a covering $\{U_i\}$ of $X$ by $n+1$ open sets each of which is contractible in $X$.
\end{defn}

The LS-category is a homotopy invariant that was introduced as a lower bound to the number of critical points of differentiable real-valued functions on a smooth manifold; see~\cite{CLOT} for a detailed survey on LS-category.

We recall that for any ring $R$, the cup-length of the ring $H^*(X;R)$ is a lower bound to $\cat(X)$,~\cite[Proposition 1.5]{CLOT}.
\vspace{2mm}

\begin{prop}[\protect{\cite{KR}}]~\label{KR}
For a closed $n$-manifold $M$, $\cat(M)=\dim(M)=n$ if and only if $M$ is essential.
\end{prop}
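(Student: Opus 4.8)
The plan is to prove the two implications separately, after recording the standard fact that $\cat(M)\le\dim M=n$ for every closed $n$-manifold (it has the homotopy type of an $n$-dimensional CW complex). So it suffices to show that $M$ is essential if and only if $\cat(M)\ge n$.

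For the direction ``$M$ essential $\Rightarrow\cat(M)\ge n$'' I would use the Berstein--Schwarz universal class $\mathfrak b=\mathfrak b_\pi\in H^1(\pi;\mathcal I)$, where $\pi=\pi_1(M)$ and $\mathcal I=\ker(\mathbb Z[\pi]\to\mathbb Z)$ is the augmentation ideal, together with its universality property: every class $\beta\in H^k(\pi;A)$ has the form $h_*(\mathfrak b^{\,k})$ for a suitable homomorphism of coefficient modules $h\colon\mathcal I^{\otimes k}\to A$. Poincar\'e duality and the universal coefficient theorem show that essentiality of $M$ produces a coefficient module $A$ and a class $\alpha\in H^n(B\pi;A)$ with $u^*\alpha\ne 0$ in $H^n(M;u^*A)$, where $u\colon M\to B\pi$ is the classifying map (for non-orientable $M$ one takes $A$ a twisted integral system, so the argument is uniform). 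Writing $\alpha=h_*(\mathfrak b^{\,n})$ gives $h_*((u^*\mathfrak b)^{\,n})=u^*\alpha\ne 0$, hence $(u^*\mathfrak b)^{\,n}\ne 0$; since $u^*\mathfrak b$ lies in degree $1$, the cup-length lower bound for LS-category (valid verbatim for cohomology with arbitrary bundles of coefficient modules) yields $\cat(M)\ge n$, and combined with $\cat(M)\le n$ this gives $\cat(M)=n$.

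For the converse I would argue the contrapositive: if $M$ is inessential then $\cat(M)\le n-1$. By definition $u\colon M\to B\pi$ can be deformed into the $(n-1)$-skeleton, so it factors up to homotopy as $M\xrightarrow{\;v\;}K\hookrightarrow B\pi$ with $\dim K\le n-1$ and $v$ inducing an isomorphism on $\pi_1$; in particular $\cat(v)\le\cat(K)\le n-1$, which is the heuristic reason to expect $\cat(M)\le n-1$. To make this rigorous I would invoke the obstruction-theoretic description of LS-category (Ganea, \v Svarc): $\cat(M)\le n-1$ is equivalent to the existence of a section of the $(n-1)$-st Ganea fibration over $M$, whose fibre is the $n$-fold join of $\Omega M$ and is $(n-2)$-connected. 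Since $M$ has the homotopy type of an $n$-complex, there is a single obstruction to such a section, living in $H^n(M;\mathcal I^{\otimes n})$, and --- this is the key input --- it coincides, under the standard identifications, with the top power $(u^*\mathfrak b)^{\,n}$ of the Berstein--Schwarz class. Running the computation of the previous paragraph in reverse, inessentiality of $M$ forces $(u^*\mathfrak b)^{\,n}=0$, so the obstruction vanishes and $\cat(M)\le n-1$.

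The main obstacle is exactly the identification used in the last step: that the primary LS-obstruction in degree $n$ equals $(u^*\mathfrak b)^{\,n}$, equivalently the theorem of Berstein and \v Svarc that the top power of the Berstein--Schwarz class is the universal obstruction to lowering the category of an $n$-dimensional complex below $n$. Once this is granted, the proposition reduces to the clean equivalence $\cat(M)\le n-1\iff(u^*\mathfrak b)^{\,n}=0\iff M\ \text{inessential}$, with Poincar\'e duality, universal coefficients, the universality of $\mathfrak b$, and the cup-length bound as the remaining (routine) ingredients.
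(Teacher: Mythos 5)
The paper itself gives no proof of this proposition --- it is quoted from Katz--Rudyak \cite{KR} --- and your argument is essentially the standard proof from that source: one direction via the Berstein--Schwarz class, its universality, and the cup-length lower bound for $\cat$ with local coefficients; the other via the Berstein--\v Svarc identification of the single obstruction to sectioning the $(n-1)$-st Ganea fibration of an $n$-dimensional complex with the $n$-th power $(u^*\mathfrak b)^n$. Two justifications are stated loosely, though neither is fatal. First, from the skeleton definition of essentiality used in the paper, Poincar\'e duality and universal coefficients alone do not produce a class $\alpha\in H^n(B\pi;A)$ with $u^*\alpha\ne 0$: for that you need the homological characterization of essentiality ($M$ essential iff $u_*[M]\ne 0$, with twisted integral coefficients when $M$ is non-orientable), which is the nontrivial theorem of \cite{Ba}, \cite{BD1} quoted in the paper just before the proposition, or alternatively the naturality of the (primary, hence only) obstruction to compressing $u$ into $B\pi^{(n-1)}$. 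Second, $(u^*\mathfrak b)^n=0$ for inessential $M$ does not come from ``running the previous computation in reverse'' (that would give the converse implication); it follows from the factorization you already wrote down, since $u$ factors up to homotopy through an $(n-1)$-dimensional complex $K$ and $H^n(K;-)=0$ for arbitrary local coefficients. With these two points stated correctly, your proof is complete and coincides with the cited one.
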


\begin{thm}\label{lscat}
For the LS-category of $SP^n(M_g)$, we have that
\[
\cat \left(SP^n(M_g)\right)=\begin{cases}
2n & \text{ if } n\le g \\
n+g & \text{ if } n >g.
\end{cases}
\]
\end{thm}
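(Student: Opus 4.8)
The plan is to treat the two ranges separately and, in each, obtain matching upper and lower bounds for $\cat(SP^n(M_g))$. For $n \le g$, the manifold $SP^n(M_g)$ is a closed $2n$-manifold which is (rationally, hence integrally via the torsion-free statement) essential by Proposition~\ref{essential}; consequently $\cat(SP^n(M_g)) = \dim(SP^n(M_g)) = 2n$ is immediate from Proposition~\ref{KR}. So the first range requires essentially no new work beyond citing what is already established in the excerpt.

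For $n > g$, the dimension of $SP^n(M_g)$ is $2n > n+g$, so the manifold is \emph{not} essential and Proposition~\ref{KR} only gives the trivial bound $\cat < 2n$. Here I would argue as follows. For the \textbf{lower bound} $\cat(SP^n(M_g)) \ge n+g$, I would exhibit a nonzero cup product of length $n+g$ in $H^*(SP^n(M_g))$ and invoke the cup-length bound \cite[Proposition 1.5]{CLOT}. The natural candidate is the class $a_1^*b_1^*\cdots a_g^*b_g^*(c^*)^{n-g}$, which is a product of $2g + (n-g) = n+g$ one- and two-dimensional generators and is nonzero by Proposition~\ref{2g-torus}(2). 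For the \textbf{upper bound} $\cat(SP^n(M_g)) \le n+g$, I would use the Abel--Jacobi fibration structure. In this range one has the holomorphic fibration $\mu_n : SP^n(M_g) \to J$ with generic fiber $\C P^{n-g}$ (and in the larger range $n \ge 2g-1$ it is literally the projectivization $\mathbb P E \to J$, as recalled in Section~\ref{curvature}); I would combine the subadditivity-type inequality for $\cat$ of a fibration, $\cat(E) \le (\cat(B)+1)(\cat(F)+1) - 1$, or more efficiently the sharper bound $\cat(E) \le \cat(B) + \cat(F)$ available for fibrations over a space of the appropriate category, using $\cat(J) = 2g$ and $\cat(\C P^{n-g}) = n-g$. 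A direct application of the crude product bound gives $\cat \le 2g + (n-g) = n+g$ only when the fibration behaves like a product on the level of category; since $\mu_n$ is a projective bundle over the torus (at least for $n\ge 2g-1$), one can in fact build an explicit open cover of $SP^n(M_g)$ by pulling back a categorical open cover of $J$ and refining each piece using the $\C P^{n-g}$-fiber direction, yielding $(2g)+(n-g)+1 = n+g+1$ sets, i.e. $\cat \le n+g$.

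The \textbf{main obstacle} I anticipate is making the upper bound $\cat(SP^n(M_g)) \le n+g$ work uniformly across the whole range $n > g$, not just $n \ge 2g-1$. For $g < n < 2g-1$ the Abel--Jacobi map is no longer a genuine projective bundle: it has jumping fibers $\C P^m$ with $m > n-g$ over the special divisors, so the clean "bundle over a torus" argument breaks down and one must instead argue that these exceptional loci are small enough (positive codimension) not to obstruct the construction of an $(n+g+1)$-element categorical cover — for instance by a Morse-theoretic or skeletal argument, or by using upper bounds for $\cat$ in terms of the dimension of the base and the connectivity/category of the generic fiber together with a stratification of the singular locus. One clean route is to use the general inequality $\cat(M) \le \cat(B) + \mathrm{(something\ about\ the\ fiber\ and\ the\ map)}$ for maps that are fibrations over an open dense set, or simply to note that $SP^n(M_g)$ deformation retracts (up to the relevant skeletal dimension) onto a subcomplex assembled from $SP^g(M_g)$ and a $\C P^{n-g}$ factor, exploiting the Steenrod/Kallel--Salvatore skeletal structure $(\ast\ast)$. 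I expect the bulk of the proof's length to be spent justifying this upper bound carefully in the intermediate range, whereas the lower bound and the range $n \le g$ are short.
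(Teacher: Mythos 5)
Your treatment of the case $n\le g$ and of the lower bound for $n>g$ coincides with the paper's: rational essentiality (Proposition~\ref{essential}) plus Proposition~\ref{KR} gives $\cat(SP^n(M_g))=2n$ for $n\le g$, and the cup-length bound applied to $a_1^*b_1^*\cdots a_g^*b_g^*(c^*)^{n-g}\neq 0$ from Proposition~\ref{2g-torus}\,(2) gives $\cat\ge n+g$ for $n>g$. The difficulty is the upper bound $\cat(SP^n(M_g))\le n+g$ for $n>g$, where your argument has two genuine gaps. First, the additive inequality $\cat(E)\le\cat(B)+\cat(F)$ that you want to apply to $\mathbb{P}E\to J$ is not a general fact about fibrations: the general estimate is multiplicative, $\cat(E)+1\le(\cat(B)+1)(\cat(F)+1)$, which here only yields $\cat\le(2g+1)(n-g+1)-1$, far weaker than $n+g$. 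Your sketch of an explicit cover (pull back a categorical cover of $J$ and refine in the fiber direction) meets the standard obstacle that a piece of the form $U\times V_i$ in a local trivialization can only be contracted into a single fiber $\C P^{n-g}$, which is not contractible in itself, so the naive count is again multiplicative; an additive bound for such bundles would need a separate argument. Second, as you yourself note, in the range $g<n<2g-1$ the Abel--Jacobi map is not a fibration (it has jumping fibers over special divisors), and the stratification/skeletal fixes you propose are not carried out; this is exactly where your proof is missing, not merely lengthy.

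The paper sidesteps all of this: the upper bound for every $n>g$ is a one-line application of the inequality $\cat(X)\le\frac{\dim(X)+\cd\left(\pi_1(X)\right)}{2}$ from~\cite{Dr4}, which with $\dim SP^n(M_g)=2n$ and $\cd(\Z^{2g})=2g$ gives $\cat(SP^n(M_g))\le n+g$ uniformly in $n>g$, with no appeal to the bundle structure at all. If you want to keep your geometric route, you would have to both prove an additive category estimate for $\C P^{n-g}$-bundles over tori and give a genuine treatment of the non-fibration range $g<n<2g-1$; replacing both steps by the fundamental-group upper bound of~\cite{Dr4} is the efficient fix.
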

\begin{proof}
For $n\le g$, we use Theorem~\ref{essential} and Proposition~\ref{KR} to get the equality $\cat (SP^n(M_g))=2n$.

For $n>g$, we note that the inequality 
\[
\cat (X)\le\frac{\dim(X)+\cd\left(\pi_1(X)\right)}{2}
\]
from ~\cite{Dr4} for $X=SP^n(M_g)$ gives $\cat (SP^n(M_g)) \le n+g$. We also note that by Proposition~\ref{2g-torus} (2), $a_1^*b_1^*\cdots  a_g^*b_g^*(c^*)^{n-g}\ne 0$ in $H^*(SP^n(M_g))$. Hence, the cup-length of $SP^n(M_g)$ gives $n+g\le \cat (SP^n(M_g))$.
\end{proof}

\subsection{Topological complexity}
The following homotopy invariant was introduced by Farber in~\cite{Far1} in his study of the \emph{motion planning problem} in topological robotics.

\begin{defn}
Given a CW complex $X$, the \emph{topological complexity} of $X$, denoted $\TC(X)$, is the smallest integer $n\ge 0$ such that there is a covering $\{V_i\}$ of $X\times X$ by $n+1$ open sets over each of which there exists a continuous map $s_i:V_i\to P(X)$ such that $s_i(x,y)(0)=x$ and $s_i(x,y)(1)=y$ for each $(x,y)\in V_i$.
\end{defn}

Note that $\TC(X)=0$ if and only if $X$ is contractible,~\cite{Far1}.

For the applications of $\TC$ to motion planning and its computation on several classes of finite CW complexes, we refer to~\cite{Far1} and~\cite[Chapter 4]{Far2}.

Let $\Delta:X\to X\times X$ be the diagonal map. We recall from~\cite{Far2} that for any ring $R$, the \emph{$R$-zero-divisor cup-length} of $X$, denoted $zc\ell_R(X)$, is defined as the cup-length of the ideal $\text{Ker}(\Delta^*:H^*(X\times X;R)\to H^*(X;R))$.

Farber provided the following useful bounds to $\TC(X)$.

\begin{thm}[\protect{\cite{Far1}}]\label{farthm}
For any finite CW complex $X$ and ring $R$, we have that $zc\ell_R(X)\le\TC(X)\le\cat(X\times X)\le 2\cat(X)$.
\end{thm}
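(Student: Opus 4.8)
The plan is to treat all three inequalities as instances of the general theory of the Schwarz genus (sectional category) of a fibration, specialized to the free path fibration $p\colon P(X)\to X\times X$, $p(\gamma)=(\gamma(0),\gamma(1))$, which is a Hurewicz fibration. By construction $\TC(X)$ is exactly $\mathrm{secat}(p)$: a cover of $X\times X$ by $n+1$ open sets carrying continuous motion planners $s_i\colon V_i\to P(X)$ with $s_i(x,y)(0)=x$, $s_i(x,y)(1)=y$ is precisely a cover by $n+1$ open sets over which $p$ admits continuous local sections.

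First I would record the cohomological input behind the left-hand bound. The space $P(X)$ deformation retracts onto its subspace of constant paths, which is homeomorphic to $X$, and under this retraction $p$ becomes homotopic to the diagonal $\Delta\colon X\to X\times X$. Hence, after the induced isomorphism $H^*(P(X);R)\cong H^*(X;R)$, the homomorphism $p^*$ is identified with $\Delta^*\colon H^*(X\times X;R)\to H^*(X;R)$, so $\ker p^*=\ker\Delta^*$ and the cup-length of this ideal is, by definition, $zc\ell_R(X)$.

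Next come the two genus estimates. For the lower bound $zc\ell_R(X)\le\TC(X)$: fix an open cover $V_0,\dots,V_k$ of $X\times X$ with $k=\TC(X)$ and continuous sections $s_i\colon V_i\to P(X)$ of $p$. Since $p\circ s_i$ is the inclusion of $V_i$, any $u\in\ker p^*$ satisfies $u|_{V_i}=s_i^*p^*u=0$, so $u$ lifts to a class in $H^*(X\times X,V_i;R)$; by the relative cup-product, a product of $k+1$ such classes lifts to $H^*\!\bigl(X\times X,\textstyle\bigcup_i V_i;R\bigr)=H^*(X\times X,X\times X;R)=0$, which forces $zc\ell_R(X)\le k$. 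For the upper bound $\TC(X)\le\cat(X\times X)$: if $U\subseteq X\times X$ is open and contractible in $X\times X$ — say via a homotopy from the inclusion to the constant map at a point $(a,b)$ — pick any path $e_0$ from $a$ to $b$ (using path-connectedness of $X$) and lift this contraction through $p$, starting from the constant lift $U\to\{e_0\}$; evaluating the lift at the other endpoint produces a continuous section of $p$ over $U$. Applying this to a categorical open cover of $X\times X$ yields $\mathrm{secat}(p)\le\cat(X\times X)$.

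Finally, $\cat(X\times X)\le 2\cat(X)$ is the classical product inequality $\cat(Y\times Z)\le\cat(Y)+\cat(Z)$ with $Y=Z=X$; the cleanest route is via the characterization that $\cat(W)\le m$ iff the iterated diagonal $W\to W^{m+1}$ compresses into the fat wedge $T^{m+1}(W)$, combined with the inclusion $T^{m+1}(Y)\times T^{n+1}(Z)\subseteq T^{m+n+1}(Y\times Z)$; this is standard and I would simply cite \cite{CLOT}. I expect the only genuinely delicate point to be the homotopy-lifting construction of a section over a categorical open set, where one must ensure the resulting section is continuous on all of $U$ simultaneously; everything else is bookkeeping with relative cohomology and open covers.
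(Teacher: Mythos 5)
Your argument is correct and is, in essence, the classical proof: the paper itself does not prove Theorem~\ref{farthm} but quotes it from Farber~\cite{Far1}, and your three steps --- identifying $\TC(X)$ with the sectional category of the path fibration $p$, the relative cup-product argument using $\ker p^*=\ker\Delta^*$ for the bound $zc\ell_R(X)\le\TC(X)$, the lifted-contraction construction of a section over a categorical open set for $\TC(X)\le\cat(X\times X)$, and the product inequality cited from~\cite{CLOT} --- reproduce exactly that standard argument (which does require $X$ path-connected, as you implicitly use). One cosmetic remark: the asserted inclusion $T^{m+1}(Y)\times T^{n+1}(Z)\subseteq T^{m+n+1}(Y\times Z)$ does not typecheck, since the two sides sit in the different ambient spaces $Y^{m+1}\times Z^{n+1}$ and $(Y\times Z)^{m+n+1}$; this is harmless because you in any case defer the inequality $\cat(Y\times Z)\le\cat(Y)+\cat(Z)$ to~\cite{CLOT}, but the sketch as written should not be taken literally.
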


Using this theorem and our computation of $\cat(SP^n(M_g))$ from the previous section, we completely determine $\TC(SP^n(M_g))$ for each $n$ and $g$.

\begin{thm}\label{tcthm}
For $n \ge 2$, we have for the topological complexity of $SP^n(M_g)$ that
\[
\TC \left(SP^n(M_g)\right)=2\cat\left(SP^n(M_g)\right)=\begin{cases}
4n & \text{ if } n\le g \\
2n+2g & \text{ if } n >g.
\end{cases}
\]
\end{thm}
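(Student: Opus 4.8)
The strategy is to sandwich $\TC(SP^n(M_g))$ between the lower bound coming from zero-divisor cup-length and the upper bound $2\cat(SP^n(M_g))$ furnished by Theorem~\ref{farthm}, and to show these two quantities coincide using the explicit description of $\cat(SP^n(M_g))$ from Theorem~\ref{lscat}. Since Theorem~\ref{lscat} already gives $\cat(SP^n(M_g)) = 2n$ for $n\le g$ and $n+g$ for $n>g$, the target equals $4n$ and $2n+2g$ respectively, and it suffices to produce a nonzero product of $4n$ (resp.\ $2n+2g$) zero-divisors in $H^*(SP^n(M_g)\times SP^n(M_g))$.

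\textbf{The range $n\le g$.} Here I would use the cohomology classes $a_i^*, b_i^*$ pulled back along the Abel--Jacobi map. For a class $x\in H^*(SP^n(M_g))$ write $\bar x := x\otimes 1 - 1\otimes x \in H^*(SP^n(M_g)\times SP^n(M_g))$ for the associated zero-divisor (it lies in $\Ker \Delta^*$). The plan is to show that the product $\prod_{i=1}^n \bar a_i^* \bar b_i^*$ of $2n$ such zero-divisors is nonzero, and then that in fact $\prod_{i=1}^n \bar a_i^* \bar b_i^* \cdot \prod_{i=n+1}^{2n}$-type further factors — more precisely, that one can reach a product of $4n$ zero-divisors whose leading term in the Künneth expansion is (up to sign) $\alpha\otimes\alpha'$ for $\alpha = a_1^*b_1^*\cdots a_n^*b_n^*$ a top-type nonzero class on the first factor and a complementary nonzero class on the second, exploiting that $H^*(SP^n(M_g))$ is torsion-free (Macdonald, \cite{Mac}) and that $\alpha\ne 0$ by Proposition~\ref{2g-torus}(1). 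Concretely, since $\dim SP^n(M_g) = 2n$ and $\cat = 2n$, Theorem~\ref{farthm} bounds $\TC \le 4n$; for the reverse, I would mimic the standard torus computation: expanding $\prod_{i=1}^n(\bar a_i^*)(\bar b_i^*)(\overline{a_i^*b_i^*})(\text{something})$ the cross terms of bidegree $(2n,2n)$ assemble, via the antisymmetry of the $a_i^*,b_i^*$ and the relation $a_1^*b_1^*\cdots a_n^*b_n^* = (c^*)^n \ne 0$ established in the proof of Proposition~\ref{2g-torus}(1), into a nonzero multiple of $\alpha\otimes\alpha$. The cleanest route may be to first handle the sub-case via a map to the $2g$-torus $J$: $\mu_n\times\mu_n$ is a degree-one-ish map on the relevant cohomology, $\TC(T^{2g}) = 2g$ is too small, so instead one pulls back the $2n$ classes $\bar a_i^*,\bar b_i^*$ from $J\times J$ and supplements them with the zero-divisor $\overline{c^*}$ coming from $\bar q^*(c^*)$, using relation $(c^*)^n\ne0$ to keep the product alive on the nose.

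\textbf{The range $n>g$.} Now $\cat = n+g$, so Theorem~\ref{farthm} gives $\TC\le 2n+2g$, and I must exhibit a nonzero product of $2n+2g$ zero-divisors. The natural candidates are the $2g$ classes $\bar a_i^*, \bar b_i^*$ together with $2n$ copies of the zero-divisor $\overline{c^*}$ built from $c^*$ — but one must be careful since $(c^*)^{n+1}=0$. Instead, the plan is to use that $a_1^*b_1^*\cdots a_g^*b_g^*(c^*)^{n-g}\ne 0$ by Proposition~\ref{2g-torus}(2): take the product of zero-divisors $\prod_{i=1}^g \bar a_i^* \bar b_i^* \cdot (\overline{c^*})^{\,n-g} \cdot (\text{further }n+g\text{ zero-divisors})$ and track the Künneth term $\beta\otimes\beta'$ with $\beta = a_1^*b_1^*\cdots a_g^*b_g^*(c^*)^{n-g}$ of top degree $2(n+g)$ on the first factor. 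Since $\dim = 2n$ and the class $\beta$ is \emph{not} top-dimensional (its degree is $2(n+g) < 4n$ when $g<n$) — wait, $\beta$ has degree $2(n+g)$ which is less than $\dim = 2n$ only if $g<0$; in fact $2(n+g) < 2\cdot 2n$, and the relevant top class on $SP^n(M_g)$ is $(c^*)^n$ of degree $2n < 2(n+g)$. So $\beta$ as written is not a genuine cohomology class of $SP^n(M_g)$; I must instead split $\beta$ across the two factors of the product: arrange the $2n+2g$ zero-divisors so that their product's Künneth expansion contains $\gamma_1\otimes\gamma_2$ with $\deg\gamma_1 = \deg\gamma_2 = 2n$, $\gamma_1 = (c^*)^n$ and $\gamma_2$ a nonzero multiple of $(c^*)^n$ as well — which forces using exactly $n$ "$\overline{c^*}$"-type factors contributing to each side plus the $a_i^*b_i^*$'s distributed to match Proposition~\ref{2g-torus}(2). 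The bookkeeping here — choosing signs and indices so the surviving term is the nonzero $\gamma_1\otimes\gamma_2$ rather than a cancelling sum — is where the real work lies.

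\textbf{Main obstacle.} The genuinely delicate step is the lower bound: producing an honest nonzero zero-divisor product of the maximal length $2\cat$ rather than merely $\cat+\dim$ or less. This requires understanding, via Macdonald's relations (Theorem~\ref{Mc}) and the torsion-freeness of $H^*(SP^n(M_g))$, exactly which products of the form $\gamma_1\otimes\gamma_2$ are nonzero in $H^*(SP^n(M_g))^{\otimes 2}$ and arranging the zero-divisor factors so that, after full Künneth expansion, all the "diagonal" and lower cross-terms either vanish for degree reasons (top-degree classes $(c^*)^n$ kill further multiplication) or cancel in pairs, leaving a single nonzero term. I expect this to be essentially the same combinatorial identity that powers the proof of Proposition~\ref{2g-torus}, lifted to the square; once that is in hand, the squeeze $zc\ell \le \TC \le 2\cat \le zc\ell$ closes immediately and, as a byproduct, shows $\TC(SP^n(M_g)) = \cat(SP^n(M_g)\times SP^n(M_g)) = 2\cat(SP^n(M_g))$.
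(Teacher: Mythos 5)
Your overall framework---squeezing $\TC$ between the rational zero-divisor cup-length and $2\cat$, with $\cat$ supplied by Theorem~\ref{lscat} and Theorem~\ref{farthm}---is the same as the paper's, but the entire content of the theorem is the lower bound, and you never actually produce the required nonzero product of $4n$ (resp.\ $2n+2g$) zero-divisors: you explicitly defer it to ``bookkeeping''. Moreover, the candidates you sketch are dead on arrival for degree reasons. A nonzero product of $4n$ homogeneous zero-divisors in $H^*(SP^n(M_g)\times SP^n(M_g);\Q)$ has total degree at most $\dim\bigl(SP^n(M_g)\times SP^n(M_g)\bigr)=4n$, so every factor must have degree exactly one; supplementing $\prod_i\ov{a_i^*}\,\ov{b_i^*}$ with factors such as $\ov{c^*}$ or $\ov{a_i^*b_i^*}$ (degree two), as you propose in the range $n\le g$, overshoots the top dimension and kills the product outright. (For comparison, the paper's own proof uses the products $(\ov{a_1^*})^2(\ov{b_1^*})^2\cdots(\ov{a_n^*})^2(\ov{b_n^*})^2$, resp.\ $(\ov{a_1^*})^2\cdots(\ov{b_g^*})^2(\ov{c^*})^{2n-2g}$.)

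More seriously, the ``same combinatorial identity, lifted to the square'' that you hope will emerge does not exist in much of the stated range, so no amount of sign bookkeeping can close the squeeze there. Since $a_i^*\otimes 1$ and $1\otimes a_i^*$ have odd degree they anticommute, hence $(\ov{a_i^*})^2=(a_i^*)^2\otimes 1+1\otimes (a_i^*)^2=0$ (this is exactly why $\TC(T^m)=m$ rather than $2m$; note it also conflicts with the identity $(\ov{a_i^*})^2=2\,a_i^*\otimes a_i^*$ invoked in the paper's proof). Furthermore, every degree-one rational zero-divisor is of the form $\ov{x}$ with $x$ in the $2g$-dimensional space $H^1(SP^n(M_g);\Q)$, and a product of linearly dependent degree-one classes vanishes rationally. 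Combining this with the degree count above: when $n\le g<2n$ every product of $4n$ zero-divisors vanishes, and when $n>g\ge 1$ every product of $2n+2g$ zero-divisors vanishes, so the cup-length route you (and the paper) rely on cannot give the claimed lower bound there over any field of coefficients. It does succeed when $g\ge 2n$---for instance $\prod_{i=1}^{2n}\ov{a_i^*}\,\ov{b_i^*}$ has $(2n,2n)$-component $\binom{2n}{n}(c^*)^n\otimes (c^*)^n\ne 0$, using Proposition~\ref{2g-torus} and Macdonald's relations---and when $g=0$, via $(\ov{c^*})^{2n}$. In the remaining cases your plan has a genuine gap that cannot be filled by this method, and a different type of lower bound (or a re-examination of the statement in that range) would be required.
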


\begin{proof}
%For each $n\ge 2$ and $g$, the upper bound $\TC(SP^n(M_g))\le 2\cat(SP^n(M_g))$ follows directly from Theorem~\ref{farthm}. We now obtain the lower bounds separately in the cases $n\le g$ and $n>g$.
Recall from Section~\ref{cohomo} that for $1\le i \le g$, we have rational cohomology classes $a_i^*,b_i^*\in H^1(SP^n(M_g))$ and $c^*\in H^2(SP^n(M_g))$. We define the following non-zero rational cohomology classes in $H^*(SP^n(M_g)\times SP^n(M_g))$:
\begin{itemize}
\item $\ov{a_i^*}=a_i^*\otimes 1-1\otimes a_i^*$ and $\ov{b_i^*}=b_i^*\otimes 1-1\otimes b_i^*$ for each $i$; 
\item $\ov{c^*}=c^*\otimes 1-1\otimes c^*$.
\end{itemize}
It is easy to check that $\ov{a_i^*},\ov{b_i^*},\ov{c^*}\in\text{Ker}(\Delta^*)$; see, for example,~\cite[Example 4.38]{Far2}. Using Macdonald's relations $(a_i^*)^2=(b_i^*)^2=0$, we get that $(\ov{a_i^*})^2=2(a_i^*\otimes a_i^*)\ne 0$ and $(\ov{b_i^*})^2=2(b_i^*\otimes b_i^*)\ne 0$. 
%Similarly, we see that $(\ov{c^*})^2$ contains $-2(c^*\otimes c^*)\ne 0$.

When $n\le g$, we have from Proposition~\ref{2g-torus} (1) that $a_1^*b_1^*\cdots a_n^*b_n^*\ne 0$. Hence,
\[
(\ov{a_1^*})^2(\ov{b_1^*})^2\cdots (\ov{a_n^*})^2(\ov{b_n^*})^2 = 2^{2n} (a_1^*\otimes a_1^*) (b_1^*\otimes b_1^*) \cdots (a_n^*\otimes a_n^*) (b_n^*\otimes b_n^*)
\]
\[
= 2^{2n} \left(a_1^*b_1^*\cdots a_n^* b_n^*\right)\otimes \left(a_1^*b_1^*\cdots a_n^* b_n^*\right) \ne 0.
\]
Therefore, we get $4n\le zc\ell_{\Q}(SP^n(M_g))\le \TC(SP^n(M_g))\le 2\cat(SP^n(M_g))=4n$ for $n\le g$ in view of Theorem~\ref{farthm}.

When $n>g$, we have from Proposition~\ref{2g-torus} (2) that $a_1^*b_1^*\cdots a_g^*b_g^*(c^*)^{n-g}\ne 0$. Hence, as before, we obtain
\[
(\ov{a_1^*})^2(\ov{b_1^*})^2\cdots (\ov{a_g^*})^2(\ov{b_g^*})^2 = 2^{2g} \left(a_1^*b_1^*\cdots a_g^* b_g^*\right)\otimes \left(a_1^*b_1^*\cdots a_g^* b_g^*\right) \ne 0.
\]
Furthermore, the cup product $(\ov{c^*})^{2n-2g}$ contains the term
\[
(-1)^{n-g}{2n-2g \choose n-g} (c^*)^{n-g}\otimes (c^*)^{n-g} \ne 0.
\]
It can be deduced from Macdonald's relations (Theorem~\ref{Mc}), and independently from Theorem~\ref{lscat}, that $a_1^*b_1^*\cdots a_g^*b_g^*(c^*)^{n-g+k}=0$ for each $k\ge 1$. Thus, the cup product $(\ov{a_1^*})^2(\ov{b_1^*})^2\cdots (\ov{a_g^*})^2(\ov{b_g^*})^2(\ov{c^*})^{2n-2g}$ is equal to the term
\[
(-1)^{n-g}\hspace{0.5mm}2^{2g}{2n-2g \choose n-g} \left(a_1^*b_1^*\cdots a_g^* b_g^*(c^*)^{n-g}\right)\otimes \left(a_1^*b_1^*\cdots a_g^* b_g^*(c^*)^{n-g}\right) \ne 0.
\]
Hence, $2n+2g\le zc\ell_{\Q}(SP^n(M_g))\le \TC(SP^n(M_g))\le 2\cat(SP^n(M_g))=2n+2g$ for $n>g$ in view of Theorem~\ref{farthm}. This completes the proof.
\end{proof}

\begin{rem}
In~\cite{DJ} (and independently in~\cite{KW}), a probabilistic version of the  LS-category and the topological complexity, denoted by $\dcat$ and $\dTC$, respectively, was introduced. It turns out that for symmetric products $SP^n(M_g)$, these probabilistic invariants agree with their classical counterparts,~\cite{Ja}.
\end{rem}

\section{Macroscopic dimensions of universal Riemannian covers}\label{Cmacroscopic}

In this section, we study the interactions between curvatures and two distinct notions of macroscopic dimension. In particular, we address the question of when these two macroscopic dimensions agree for the universal covers of closed smooth manifolds. 

\begin{defn}
Given metric spaces $X$ and $Y$, we say that $f:X\to Y$ is \emph{uniformly cobounded} if there exists $C>0$ such that $\text{diam}(f^{-1}(y))<C$ for all $y\in Y$.
\end{defn}

The macroscopic dimension, $\dim_{mc}$, was defined by Gromov as follows.

\begin{defn}[\cite{Gr2}]\label{dmc}
For a Riemannian manifold $X$, its \emph{macroscopic dimension}, denoted $\dim_{mc}X$, is the smallest integer $n\ge 0$ such that there is a uniformly cobounded continuous map $f:X\to K^n$ to an $n$-dimensional simplicial complex $K^n$. 
\end{defn}

A modification of macroscopic dimension, denoted $\dim_{MC}$, was introduced in~\cite{Dr2} as follows.

\begin{defn}\label{dMC}
For a Riemannian manifold $X$, $\dim_{MC}(X)\le n$ if there is a Lipschitz uniformly cobounded proper map $g:X\to K^n$ to an $n$-dimensional simplicial complex given a uniform metric.
\end{defn}

We have a chain of inequalities:
\begin{equation}\label{basicinequality}
\dim_{mc}(X)\le\dim_{MC}(X)\le \dim(X).
\end{equation}

\begin{remark}
It was proven in~\cite[Proposition 2.1]{Dr3} that for any proper metric space $X$, the inequality $\dim_{mc}X\le k$ implies the existence of a proper continuous map $f:X\to K$ to a locally finite $k$-dimensional simplicial complex $K$.  Thus, the \emph{proper condition} can be added to the map $f$ in  Definition~\ref{dmc} when we talk about the macroscopic dimensions of universal covers of closed manifolds. Here, for a closed manifold $M$ supplied with a geodesic metric, we consider the lifted metric on its universal cover $\Wi M$.
\end{remark}

We now address the following tantalizing question.

\begin{question}\label{impques}
Let $M$ be a closed Riemannian manifold and let $\Wi{M}$ be the universal Riemannian cover of $M$. Under which conditions on $M$ do we have the equality $\dim_{mc}(\Wi{M})=\dim_{MC}(\Wi{M})$?
\end{question}

In particular, we will be uniquely concerned with the macroscopic dimensions of universal Riemannian covers of closed Riemannian manifolds.

\subsection{Relation with curvature}
In this section, we will answer Question~\ref{impques} by finding curvature conditions on $M$ which ensure $\dim_{mc}\Wi M=\dim_{MC}\Wi M$. We show that this holds true for large classes of Riemannian manifolds.

Let us start by recalling the following result.

%For a CW complex $Y$ with fundamental group $\pi$, we denote by $u_Y:Y\to B\pi$ any map that induces an isomorphism of the fundamental groups. Let $p:E\pi\to B\pi$ be the universal cover of $B\pi$. Let $\Wi u_Y:\Wi Y\to E\pi$ denote a lift of $u_Y$. For a metric space $Y$ supplied with a geodesic metric, we consider the lifted metric on $\Wi Y$.

\begin{thm}[\protect{\cite[Theorem 5.4]{Dr3}}]\label{small}
Let $M$ be a closed orientable $n$-manifold $M$ with fundamental group $\pi=\pi_1(M)$, a classifying map $u_M:M\to B\pi$, and a lift $\Wi u_M:\Wi M\to E\pi$ of $u_M$ to the universal covers. Then the following statements are equivalent.
\begin{enumerate}
\item $\dim_{mc}\Wi M<n$.
\item $(\Wi u_M)_*([\Wi M])=0$ in $H^{lf}_n(E\pi;\mathbb Z)$, where $[\Wi M]\in H^{lf}_n(\Wi M;\mathbb Z)$ is the fundamental class of $\Wi M$.
\end{enumerate}
\end{thm}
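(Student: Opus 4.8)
The plan is to prove the equivalence by relating both conditions to the behaviour of the top-dimensional locally finite homology of $\Wi M$ under the lifted classifying map, using the fact that a closed orientable $n$-manifold whose universal cover is non-compact (the only interesting case, else $\pi$ is finite and both statements are readily checked) has $H^{lf}_n(\Wi M;\Z)\cong\Z$ generated by the fundamental class $[\Wi M]$, and $H_k^{lf}(\Wi M;\Z)=0$ for $k>n$. First I would establish the direction $(2)\Rightarrow(1)$, which is the more substantive half. Assuming $(\Wi u_M)_*([\Wi M])=0$ in $H_n^{lf}(E\pi;\Z)$, I would pass to a proper cellular model and use an obstruction-theoretic / skeleton argument: the vanishing of the image of the fundamental class means the proper map $\Wi u_M:\Wi M\to E\pi$ can be properly deformed off the top cells, hence (after compressing into a lower skeleton, which for $E\pi$ with the $\pi$-action descends to a statement about $B\pi$) one obtains a proper map $\Wi M\to E\pi^{(n-1)}$ to the $(n-1)$-skeleton; composing with the identity this is a proper map to an $(n-1)$-dimensional complex. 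The key point is that such a map is automatically uniformly cobounded when $\Wi M$ carries the lifted geodesic metric and the target is given a uniform metric, because properness together with the cocompact $\pi$-action forces bounded point-preimages; this gives $\dim_{mc}\Wi M\le n-1<n$, invoking the remark after Theorem~\ref{small} on the existence of proper maps realizing $\dim_{mc}$.

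For the converse $(1)\Rightarrow(2)$, I would argue contrapositively: if $(\Wi u_M)_*([\Wi M])\neq 0$ in $H_n^{lf}(E\pi;\Z)$, then any proper map $f:\Wi M\to K$ to an $n$-dimensional (equivalently, $\le(n-1)$-dimensional if we are trying to show $\dim_{mc}<n$ fails) locally finite complex must have $f_*([\Wi M])\neq 0$ in $H_n^{lf}(K;\Z)$, since $f$ factors the non-trivial class through $K$ up to the comparison map $K\to E\pi$ induced on $\pi_1$ — more precisely, the classifying map of $\Wi M$ factors through $f$ up to proper homotopy because $E\pi$ is aspherical, so a proper nullhomotopy or compression of $f$ into an $(n-1)$-complex would kill $(\Wi u_M)_*([\Wi M])$. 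Hence no such compression exists and $\dim_{mc}\Wi M\ge n$; combined with $\dim_{mc}\Wi M\le\dim\Wi M=n$ this gives equality, contradicting $(1)$.

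The main obstacle I anticipate is the careful handling of the \emph{locally finite} (Borel--Moore) homology and \emph{proper} homotopy theory: ordinary obstruction theory and the Hopf-type classification of maps into $B\pi$ must be upgraded to their proper versions, and one must check that the cocompactness of the $\pi$-action on $\Wi M$ (coming from $M$ closed) makes properness and uniform coboundedness interchangeable in exactly the way needed. This is where I would lean on the cited results, in particular the equivalence of the proper and non-proper formulations of $\dim_{mc}$ from \cite[Proposition 2.1]{Dr3} quoted in the remark above, and on standard facts that for a locally finite complex of dimension $n-1$ the group $H_n^{lf}$ vanishes, so that any proper map into such a complex automatically witnesses $(\Wi u_M)_*([\Wi M])=0$. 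A secondary technical point is the reduction to the non-compact case: if $\pi$ is finite, $\Wi M$ is a closed manifold, $H_n^{lf}=H_n$, and $\dim_{mc}\Wi M = n$ always (a closed $n$-manifold has macroscopic dimension $n$), so both (1) and (2) are false and the equivalence holds vacuously in the appropriate sense; I would dispatch this at the outset.
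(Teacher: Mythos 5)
First, note that the paper does not prove this statement at all: it is quoted verbatim from \cite[Theorem 5.4]{Dr3}, so there is no internal proof to compare with; your proposal has to stand on its own, and it has a genuine gap in the main direction $(2)\Rightarrow(1)$. Your ``key point'' --- that a \emph{proper} map $\Wi M\to E\pi^{(n-1)}$ is automatically uniformly cobounded because of properness together with the cocompact $\pi$-action on $\Wi M$ --- is false. The compressed map is not $\pi$-equivariant, and properness gives compact point-preimages with no uniform diameter bound; in fact the distance function $x\mapsto d(x,x_0)$ is a proper map from $\Wi M$ to the $1$-dimensional complex $[0,\infty)$ for \emph{any} $\Wi M$, so ``proper map to a low-dimensional complex'' carries no macroscopic-dimension information whatsoever. (The cited \cite[Proposition 2.1]{Dr3} goes the other way: uniform coboundedness lets you add properness, not conversely.) The actual content of the theorem is precisely the step you skip: converting the vanishing of $(\Wi u_M)_*([\Wi M])$ in $H^{lf}_n(E\pi)$ into a compression of $\Wi u_M$ into $E\pi^{(n-1)}$ that stays at \emph{finite distance} from $\Wi u_M$ (equivalently, a bounded homotopy, as in Theorem~\ref{0}); it is this finite-distance control, inherited from the equivariant map $\Wi u_M$, that yields uniform coboundedness, and producing it requires a controlled (boundedly controlled/equivariant) obstruction argument, not just proper homotopy theory. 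Your converse direction is essentially right in outline, but it too should be routed through Theorem~\ref{0} (a map to $E\pi^{(n-1)}$ at finite distance from $\Wi u_M$ is proper and properly homotopic to it, and $H^{lf}_n$ of an $(n-1)$-dimensional locally finite complex vanishes), rather than through an unproved claim that an arbitrary uniformly cobounded map to an $(n-1)$-complex can be compared to $\Wi u_M$ by asphericity alone, since asphericity gives homotopies with no properness or boundedness control.

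A secondary but telling error is the finite-$\pi$ reduction: a compact manifold has $\dim_{mc}=0$ (the map to a point is uniformly cobounded), not $n$, and when $\Wi M$ is compact the class $(\Wi u_M)_*([\Wi M])$ lies in the image of ordinary homology $H_n(E\pi)=0$ inside $H^{lf}_n(E\pi)$, so in that case both (1) and (2) are \emph{true}, not both false as you assert. The equivalence still holds there, but your justification inverts the definition of macroscopic dimension, which is the same misunderstanding (large-scale versus local behaviour) that undermines the properness-implies-coboundedness step above.
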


Here, for a CW complex $X$, $H_*^{lf}(X;\Z)$ denotes the integral homology of $X$ defined by locally finite chains.

\begin{cor}\label{nondim}
If $M$ is a closed orientable aspherical $n$-manifold, then 
\[
\dim_{mc}\Wi M=\dim_{MC}\Wi M=n.
\]
\end{cor}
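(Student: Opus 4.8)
The plan is to deduce this corollary directly from Theorem~\ref{small}, using the fact that for an aspherical manifold the universal cover is contractible, so locally finite homology behaves well. First I would note that since $M$ is aspherical, $B\pi$ may be taken to be $M$ itself and $E\pi = \Wi M$, with the classifying map $u_M$ being the identity (up to homotopy) and its lift $\Wi u_M:\Wi M\to E\pi = \Wi M$ the identity as well. Consequently, $(\Wi u_M)_*([\Wi M]) = [\Wi M]\in H^{lf}_n(\Wi M;\Z)$. Since $\Wi M$ is a connected (non-compact, as $\pi$ is infinite — which holds because a closed aspherical manifold of positive dimension has infinite fundamental group) orientable $n$-manifold, its top locally finite homology $H^{lf}_n(\Wi M;\Z)\cong\Z$ is generated precisely by the fundamental class $[\Wi M]$, which is therefore nonzero. (The case $n=0$ is trivial and the case where $\pi$ is finite does not occur for closed aspherical manifolds of positive dimension, since then $\Wi M$ would be a closed aspherical manifold with finite, hence trivial after passing to a further cover — actually trivial — fundamental group, forcing $\Wi M$ to be contractible and closed, impossible unless $n=0$.)

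Given this, condition (2) of Theorem~\ref{small} fails, so by the equivalence, condition (1) fails, i.e., $\dim_{mc}\Wi M \not< n$. Combined with the trivial upper bound $\dim_{mc}\Wi M\le\dim\Wi M = n$, this yields $\dim_{mc}\Wi M = n$. Then the chain of inequalities~\eqref{basicinequality}, namely $\dim_{mc}(\Wi M)\le\dim_{MC}(\Wi M)\le\dim(\Wi M) = n$, forces $\dim_{MC}\Wi M = n$ as well, completing the proof.

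The only genuine subtlety — the ``main obstacle,'' though it is minor — is making sure the identification $H^{lf}_n(\Wi M;\Z)\cong\Z$ with generator $[\Wi M]$ is justified: this is the standard Poincar\'e duality statement for connected orientable $n$-manifolds, $H^{lf}_n(\Wi M;\Z)\cong H^0(\Wi M;\Z)\cong\Z$, where the fundamental class is the canonical generator, and one must simply observe that $\Wi M$ inherits an orientation from $M$. One should also remark why $u_M$ and its lift can be taken so concretely: asphericity of $M$ means $M\simeq B\pi_1(M)$, and the identity map $M\to M$ classifies the universal cover, with the identity of $\Wi M$ as its canonical lift. With these observations in place the argument is immediate.
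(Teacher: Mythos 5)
Your argument is correct and is exactly the route the paper intends: asphericity lets you take $B\pi=M$, $E\pi=\Wi M$, and $\Wi u_M=\mathrm{id}$, so $(\Wi u_M)_*([\Wi M])=[\Wi M]\ne 0$ in $H^{lf}_n(E\pi;\Z)$, whence Theorem~\ref{small} gives $\dim_{mc}\Wi M=n$ and the chain~\eqref{basicinequality} forces $\dim_{MC}\Wi M=n$. The paper states the corollary as an immediate consequence of Theorem~\ref{small}, and your write-up (including the justification that $H^{lf}_n(\Wi M;\Z)\cong\Z$ is generated by the fundamental class) fills in precisely that intended argument.
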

More generally, we have the following. 
\begin{prop}\label{Dranishnikov}
Let $M$ be an orientable closed $n$-manifold. Suppose that a degree one map between $f:N\to M$ induces an isomorphism of the fundamental groups and $\dim_{mc}\Wi M=n$. Then $\dim_{mc}\Wi N=\dim_{MC}\Wi N=n$.
\end{prop}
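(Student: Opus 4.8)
The statement to prove is: if $f\colon N\to M$ is a degree-one map between closed orientable $n$-manifolds inducing an isomorphism on $\pi_1$, and $\dim_{mc}\widetilde M=n$, then $\dim_{mc}\widetilde N=\dim_{MC}\widetilde N=n$. The plan is to reduce everything to the characterization in Theorem~\ref{small} together with the chain of inequalities~\eqref{basicinequality}. By~\eqref{basicinequality} we always have $\dim_{mc}\widetilde N\le\dim_{MC}\widetilde N\le n$, so it suffices to prove the single inequality $\dim_{mc}\widetilde N\ge n$, equivalently (by the equivalence (1)$\Leftrightarrow$(2) in Theorem~\ref{small}) to show that $(\widetilde u_N)_*([\widetilde N])\ne 0$ in $H^{lf}_n(E\pi;\mathbb Z)$, where $\pi=\pi_1(N)\cong\pi_1(M)$ and $\widetilde u_N$ is the lift of a classifying map.

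\textbf{Key steps.} First I would fix a classifying map $u_M\colon M\to B\pi$; since $f$ induces an isomorphism on $\pi_1$, the composite $u_M\circ f\colon N\to B\pi$ is a classifying map $u_N$ for $N$. Passing to universal covers, we get a commutative square in which $\widetilde f\colon\widetilde N\to\widetilde M$ is the induced $\pi$-equivariant lift and $\widetilde u_N=\widetilde u_M\circ\widetilde f$. Next, the crucial point is that $\widetilde f$ is a \emph{proper} map of degree one: since $f$ has degree one and induces a $\pi_1$-isomorphism, $\widetilde f$ is properly homotopic to a map that is degree one at the level of locally finite homology, so $\widetilde f_*([\widetilde N])=[\widetilde M]$ in $H^{lf}_n(\widetilde M;\mathbb Z)$. (This is the standard fact that a degree-one map covering a $\pi_1$-isomorphism lifts to a proper degree-one map between universal covers; properness holds because both covers have the same deck group $\pi$ acting cocompactly.) Then functoriality of locally finite homology under proper maps gives
\[
(\widetilde u_N)_*([\widetilde N])=(\widetilde u_M)_*\bigl(\widetilde f_*([\widetilde N])\bigr)=(\widetilde u_M)_*([\widetilde M]).
\]
Finally, since $\dim_{mc}\widetilde M=n$, the equivalence in Theorem~\ref{small} (the failure of (1) forces the failure of (2)) yields $(\widetilde u_M)_*([\widetilde M])\ne 0$ in $H^{lf}_n(E\pi;\mathbb Z)$, hence $(\widetilde u_N)_*([\widetilde N])\ne 0$, hence $\dim_{mc}\widetilde N=n$ by Theorem~\ref{small} again. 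Combined with~\eqref{basicinequality}, this forces $\dim_{mc}\widetilde N=\dim_{MC}\widetilde N=n$.

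\textbf{Main obstacle.} The only nontrivial point is justifying that $\widetilde f$ is proper and carries the (locally finite) fundamental class of $\widetilde N$ to that of $\widetilde M$. Properness is automatic from cocompactness of the $\pi$-action on both covers together with the fact that $f$ is a map of compact manifolds covering a $\pi_1$-isomorphism; one checks preimages of compact sets are compact by looking at a fundamental domain. The degree-one statement in $H^{lf}_n$ then follows from the ordinary degree-one statement downstairs by a transfer/naturality argument, or directly because $\widetilde f$ restricted to a fundamental domain of $\widetilde N$ maps with degree one onto a fundamental domain of $\widetilde M$ up to the $\pi$-action. I would state this as a short lemma (or cite the analogous argument already used implicitly around Theorem~\ref{small}) rather than grind through the chain-level verification. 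Corollary~\ref{nondim} is then the special case $M$ aspherical, $N=M$, $f=\mathrm{id}$, since an aspherical closed orientable $n$-manifold has $\dim_{mc}\widetilde M=n$ (its classifying map to $B\pi=M$ is a homotopy equivalence, so condition (2) of Theorem~\ref{small} fails).
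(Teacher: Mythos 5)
Your proposal is correct and follows essentially the same route as the paper: compose the classifying map with $f$, lift to universal covers, and use Theorem~\ref{small} together with the fact that $\Wi f$ induces a (degree-one) isomorphism $H^{lf}_n(\Wi N)\to H^{lf}_n(\Wi M)$ to conclude $(\Wi u_N)_*([\Wi N])\ne 0$, then finish with the chain $\dim_{mc}\le\dim_{MC}\le\dim$. The only difference is that you spell out the properness/degree-one-lift justification that the paper leaves implicit, which is a reasonable addition but not a different argument.
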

\begin{proof}
Let $u_M:M\to B\pi$ be a classifying map for $M$. Clearly, the composition $u_N=u_M\circ f$ is a classifying map for $N$.
Then in view of the equality $\dim_{mc}\Wi M=n$, we obtain $\Wi{u_M}_*([\Wi M])\ne 0$ by Theorem~\ref{small}. Since $\Wi f_*:H_n^{lf}(\Wi N)\to H^{lf}_n(\Wi M)$ is an isomorphism of groups isomorphic to $\mathbb Z$, we obtain $$\Wi{u_N}_*([\Wi N])=(\Wi u_M)_*(\Wi f_*([\Wi N]))\ne 0.$$ Hence, $\dim_{mc}\Wi N=n$. Since $\dim_{mc}\Wi N\leq \dim_{MC}\Wi N\leq \dim \Wi N=n$, we obtain the equality $\dim_{MC}\Wi N=n$ as well.
\end{proof}

\begin{cor}\label{newproof}
    For each $n\ge 1$,
    \[
\dim_{mc}\rwt{SP^n(M_n)}=\dim_{MC}\rwt{SP^n(M_n)}=2n.
    \]
\end{cor}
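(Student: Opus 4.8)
The plan is to deduce Corollary~\ref{newproof} from Proposition~\ref{Dranishnikov} by exhibiting a degree one map $f:N\to SP^n(M_n)$ that induces a $\pi_1$-isomorphism, with $N$ a manifold whose universal cover is already known to have macroscopic dimension $2n$. The natural candidate for $N$ is the Jacobian torus $J=T^{2n}$, via the Abel--Jacobi map $\mu_n:SP^n(M_n)\to J$; but note the direction is wrong, since $\mu_n$ goes \emph{from} $SP^n(M_n)$, not \emph{to} it. So instead I would look for a degree one map in the opposite direction. By Jacobi's theorem (Theorem~\ref{Jacobi}), $\mu_n:SP^n(M_n)\to J$ is a surjective birational morphism, hence a degree one map $SP^n(M_n)\to T^{2n}$; but we want a degree one map \emph{into} $SP^n(M_n)$. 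The cleanest fix is to observe that a birational morphism of smooth projective varieties of the same dimension admits no section in general, so the relevant statement of Proposition~\ref{Dranishnikov} must be applied with $M=SP^n(M_n)$ itself and $N=SP^n(M_n)$, which is vacuous. Therefore the actual route is: use Proposition~\ref{Dranishnikov} with $M=T^{2n}$ and $N=SP^n(M_n)$, which requires a degree one map $f:SP^n(M_n)\to T^{2n}$ inducing a $\pi_1$-isomorphism — and that is exactly $\mu_n$.

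Concretely, the steps are as follows. First, recall that $T^{2n}$ is a closed orientable aspherical $2n$-manifold, so by Corollary~\ref{nondim} we have $\dim_{mc}\rwt{T^{2n}}=2n$. Second, invoke Theorem~\ref{Jacobi} (Jacobi): for $M_n$ a curve of genus $n$, the Abel--Jacobi map $\mu_n:SP^n(M_n)\to J\cong T^{2n}$ is surjective and generically one-to-one, hence a morphism of degree one between closed oriented $2n$-manifolds. Third, by Corollary~\ref{mu}, $\mu_n$ induces an isomorphism on fundamental groups. Fourth, apply Proposition~\ref{Dranishnikov} with ``$M$'' $=T^{2n}$, ``$N$'' $=SP^n(M_n)$, and ``$f$'' $=\mu_n$: the hypotheses are that $f$ is degree one, induces a $\pi_1$-isomorphism, and $\dim_{mc}\rwt{T^{2n}}=2n$, all of which now hold. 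The conclusion is precisely $\dim_{mc}\rwt{SP^n(M_n)}=\dim_{MC}\rwt{SP^n(M_n)}=2n$, as desired.

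The only delicate point — and the one I would be most careful about — is the orientation/degree issue: one must know that a surjective generically-one-to-one holomorphic map between smooth projective varieties of the same complex dimension has topological degree $+1$ with respect to the complex orientations. This follows because the map is biholomorphic over a Zariski-dense open set and the complex orientations are respected there, so the local degree at a generic point is $+1$; since degree is constant, $\deg\mu_n=1$. (Equivalently, $\mu_n$ being birational, $(\mu_n)_*[SP^n(M_n)]=[T^{2n}]$ in $H_{2n}$.) One should also double-check that the edge case $n=1$ is covered: there $SP^1(M_1)=M_1=T^2$ and $\mu_1$ is the identity, so the statement is immediate, consistent with the general argument.

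Alternatively — and this sidesteps Proposition~\ref{Dranishnikov} entirely — one could give a direct argument: the lift $\Wi{\mu_n}:\rwt{SP^n(M_n)}\to\C^{2n}$ is a proper map (being a lift of a map of closed manifolds along compatible covers) of degree one, so $(\Wi{\mu_n})_*[\rwt{SP^n(M_n)}]=[\C^{2n}]\ne 0$ in $H^{lf}_{2n}(\C^{2n})\cong H^{lf}_{2n}(E\Z^{2n})$, whence $\dim_{mc}\rwt{SP^n(M_n)}=2n$ by Theorem~\ref{small}; the chain~\eqref{basicinequality} then forces $\dim_{MC}$ to equal $2n$ as well. Either way the substance is the same, and I would present the short version via Proposition~\ref{Dranishnikov}.
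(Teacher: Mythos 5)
Your proof is correct and is essentially the paper's own argument: Corollary~\ref{nondim} gives $\dim_{mc}\rwt{T^{2n}}=2n$, and Proposition~\ref{Dranishnikov} is applied with target $T^{2n}$ and source $SP^n(M_n)$ via the degree-one, $\pi_1$-isomorphism-inducing Abel--Jacobi map $\mu_n$ (Theorem~\ref{Jacobi}, Corollary~\ref{mu}). After the initial back-and-forth about which manifold plays which role, you land on exactly the paper's application, and your extra checks (that a surjective generically one-to-one holomorphic map has topological degree $+1$, and the $n=1$ case) merely make explicit what the paper leaves implicit.
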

\begin{proof}
    By Corollary~\ref{nondim}, $\dim_{mc} \Wi{T^{2n}} = 2n$. Therefore, this follows from Proposition~\ref{Dranishnikov} in view of the degree one Abel--Jacobi map $\mu_n:SP^n(M_n)\to T^{2n}$. 
\end{proof}

\begin{remark}\label{carthad}
By the Cartan--Hadamard theorem (\emph{e.g.},~\cite[Theorem 6.2.2]{Petersen}), we know that if $M$ admits a metric with non-positive sectional curvature, then $\Wi{M}$ is diffeomorphic to $\R^n$ and therefore contractible. By Corollary~\ref{nondim}, we conclude that the  $\dim_{mc}\Wi{M} = \dim_{MC}\Wi{M}$ for such spaces. Interestingly, in Section~\ref{macdimsec}, we will show that this fact  does \emph{not} generalize to K\"ahler manifolds with non-positive holomorphic sectional curvature. Indeed, we will provide examples of closed K\"ahler manifolds $(M^k, g_\omega)$ of complex dimension $k$ for any $k\geq 2$ having non-positive holomorphic sectional curvature such that 
\[
\dim_{mc}\Wi{M} < \dim_{MC}\Wi{M}.
\]
\end{remark}

On the opposite spectrum of curvature, we observe the following.

\begin{prop}\label{PRdim}
If $M$ is a closed $n$-manifold that admits a Riemannian metric of non-negative Ricci curvature, then $\dim_{mc}\Wi{M} = \dim_{MC}\Wi{M}\leq n$. Moreover, the inequality is saturated if and only if $M$ is a flat Riemannian $n$-manifold.
\end{prop}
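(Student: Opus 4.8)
The plan is to invoke the Cheeger--Gromoll structure theory for manifolds of non-negative Ricci curvature together with \theoref{small}. First I would recall that, by the splitting theorem of Cheeger and Gromoll, if $M$ carries a metric of non-negative Ricci curvature, then the universal Riemannian cover splits isometrically as $\Wi M = N^{k}\times\R^{m}$, where $N$ is a compact simply connected manifold (with non-negative Ricci curvature) and $m$ is the rank of the maximal free abelian subgroup of $\pi_1(M)$ captured by the splitting; moreover, by~\cite[Theorem 9.2]{CG72}, there is a finite regular cover $\varphi\colon M'\to M$ with $M'$ diffeomorphic to $N\times T^{m}$. The key point is that the $\R^m$-factor is exactly the ``large'' part of $\Wi M$: projecting $\Wi M = N\times\R^m$ onto $\R^m$ is a Lipschitz, proper, uniformly cobounded map (the fibers $N\times\{x\}$ have uniformly bounded diameter since $N$ is compact), so $\dim_{MC}\Wi M\le m$. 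Combined with the general inequality~\eqref{basicinequality}, it suffices to prove the reverse bound $\dim_{mc}\Wi M\ge m$.

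For the lower bound I would argue that $\Wi M = N\times\R^m$ retracts properly onto $\R^m$, hence $\dim_{mc}\Wi M\ge\dim_{mc}\R^m=m$; alternatively, and more in the spirit of \theoref{small}, pass to the finite cover $M'$ diffeomorphic to $N\times T^m$, note that the classifying map $M'\to B\pi_1(M')=T^m$ is (up to homotopy) the projection $N\times T^m\to T^m$ which is degree one onto $T^m$ when $\dim N=0$, and in general detects a non-trivial locally finite fundamental class upstairs because $\Wi{M'} = N\times\R^m$ maps onto $\R^m$ by a proper map of non-zero degree in top locally finite homology of the $\R^m$ factor. Since $\dim_{mc}$ is a quasi-isometry invariant and $\Wi M$ and $\Wi{M'}$ are quasi-isometric (indeed isometric after the splitting), this gives $\dim_{mc}\Wi M = m$, and with the paragraph above, $\dim_{mc}\Wi M=\dim_{MC}\Wi M=m\le n$.

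It remains to analyze the equality case $m=n$. If $m=n=\dim M$, then the compact factor $N$ is $0$-dimensional, i.e.\ a point, so $\Wi M = \R^n$ isometrically; equivalently $M$ is finitely covered by the flat torus $T^n$, and then, since a compact manifold isometrically covered by $\R^n$ is flat, $M$ is itself a flat Riemannian $n$-manifold. Conversely, if $M$ is flat then $\Wi M=\R^n$ and $\dim_{mc}\Wi M=\dim_{MC}\Wi M=n$ by \corref{nondim} (or directly). The main obstacle I anticipate is bookkeeping the precise statement of the Cheeger--Gromoll splitting in the presence of a non-virtually-abelian $\pi_1(M)$: one must be careful that the Euclidean factor extracted by the splitting theorem has dimension equal to the number one needs, and that the compact factor $N$ is genuinely compact (this uses that $M$ is closed and the deck group acts cocompactly). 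Once that structural input is pinned down, the macroscopic dimension computation via projection to $\R^m$ and \theoref{small} is routine.
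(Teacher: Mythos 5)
Your proposal is correct and follows essentially the same route as the paper: the Cheeger--Gromoll splitting $\Wi M = N\times\R^l$ with $N$ compact and simply connected, the projection to the Euclidean factor giving $\dim_{MC}\Wi M\le l$, a lower bound coming from the flat slices $\{pt\}\times\R^l$ together with $\dim_{mc}\R^l=l$ (via \corref{nondim}), and the observation that $l=n$ forces $N$ to be a point, i.e.\ $M$ flat. The only caveat is that the lower bound should be justified, as in the paper, by restricting a hypothetical uniformly cobounded map on $\Wi M$ to an isometrically embedded slice $\{pt\}\times\R^l$ --- neither the proper ``retraction'' onto $\R^l$ nor your alternative appeal to \theoref{small} (which only detects the top dimension $n$, not the intermediate value $l$) does this work --- but this is a matter of phrasing rather than substance.
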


\begin{proof}
It follows from the splitting theorem of Cheeger and Gromoll~\cite[Theorem 3]{CG71} that the universal Riemannian cover $\Wi{M}$ splits \emph{isometrically} as
\[
\Wi{M}= N^{n-l}\times \R^l,
\]
where $N^{n-l}$ is a simply connected compact manifold with non-negative Ricci curvature. The projection onto the Euclidean factor is a uniformly cobounded continuous, Lipschitz proper map, so that $\dim_{mc}\le\dim_{MC}\Wi{M}\leq l$. If we assume that $\dim_{mc}\Wi{M}< l$ (resp. $\dim_{MC}\Wi{M}< l$), then there is a continuous (resp. Lipschitz) uniformly cobounded proper map
\[
\varphi: \Wi{M}\to K^s
\]
to an $s$-dimensional simplicial complex $K^{s}$ with $s<l$. The restriction of such a map to any of the embedded totally geodesic flat Euclidean spaces $n\times \R^l$, where $n\in N^{n-l}$, gives that $\dim_{mc}\R^l\leq s<l$. This contradicts Corollary~\ref{nondim}. Hence, we conclude that
\[
\dim_{mc}\Wi{M} = \dim_{MC}\Wi{M}=l.
\]
Moreover, it is easy to see that $l=n$ if and only if $M$ is a flat $n$-manifold.
\end{proof}

Therefore, if $\dim_{mc}\neq \dim_{MC}$ on the universal cover of a closed Riemannian manifold $M$, then $M$ cannot support a Riemannian metric of non-negative Ricci curvature. 
%In particular, if $M$ is a rationally essential $n$-manifold such that $\pi_1(M)$ is a geometrically finite amenable duality group and $\cd(\pi_1(M))>n$, then $M$ cannot support a Riemannian metric of non-negative Ricci curvature in view of Proposition~\ref{PRdim} and~\cite[Theorem 6.3]{Dr3}.

\subsection{Behavior on connected sums}\label{on connected sums}
In this section, we investigate the behavior of macroscopic dimensions on universal Riemannian covers of connected sums of Riemannian manifolds to find out more classes of Riemannian manifolds where the two notions of macroscopic dimensions agree.

We begin by noting that the remark after the proof of~\cite[Theorem 2.2]{Dr3} spells out the following theorem.

\begin{thm}[\protect{\cite{Dr3}}]\label{0}
Let $M$ be a finite CW complex with a geometrically finite fundamental group $\pi=\pi_1(M)$, a classifying map $u_M:M\to B\pi$, and a lift $\Wi u_M:\Wi M\to E\pi$ of $u_M$ to the universal covers. Then the following statements are equivalent for any $k\ge 0$.
\begin{enumerate}
\item $\dim_{mc}\Wi M\le k$.
    
\item There is a continuous map $f:\Wi M\to E\pi^{(k)}$ with $\textup{dist}(f,\Wi u_M)<\infty$. 
\item There is a bounded homotopy $H:\Wi M\times[0,1]\to E\pi$ of $\Wi u_M:\Wi{M}\to E\pi$ to a map $f:\Wi M\to E\pi^{(k)}$. 
\end{enumerate}
\end{thm}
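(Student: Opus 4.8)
This is precisely \cite[Theorem~2.2]{Dr3} together with the remark following its proof, so the plan is to recall the structure of that argument. Fix a $K(\pi,1)=B\pi$ with finite skeleta (possible since $\pi$ is geometrically finite) and put on $\Wi M$ and on each skeleton $E\pi^{(j)}$ the geodesic metric coming from the cocompact free $\pi$-action. By the Milnor--\v{S}varc lemma all of these spaces are quasi-isometric to $\pi$, the $\pi$-equivariant lift $\Wi u_M$ is a coarse equivalence, each inclusion $E\pi^{(k)}\hookrightarrow E\pi$ with $k\ge 1$ is a coarse equivalence, and $E\pi$ is \emph{uniformly contractible}: for each $R$ there is $R'=R'(R)$ so that every subset of diameter $\le R$ is contractible inside a subset of diameter $\le R'$ (this follows from contractibility of $E\pi$ together with cocompactness of its skeleta; for the arguments below one always works inside a fixed cocompact skeleton containing the image of $\Wi u_M$ and the relevant cells). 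With this in hand, $(3)\Rightarrow(2)$ is immediate, since a bounded homotopy has tracks of uniformly bounded diameter and hence joins maps at finite distance. For $(2)\Rightarrow(1)$: if $\dist(f,\Wi u_M)\le C$ with $f(\Wi M)\subseteq E\pi^{(k)}$, then $f^{-1}(y)$ lies in the $\Wi u_M$-preimage of the $C$-ball about $y$, and since $\Wi u_M$ is a coarse equivalence such preimages have diameter bounded in terms of $C$ only; thus $f$ is uniformly cobounded into the $k$-dimensional complex $E\pi^{(k)}$, giving $\dim_{mc}\Wi M\le k$.

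The substantial implication is $(1)\Rightarrow(2)$. By \cite[Proposition~2.1]{Dr3} the hypothesis $\dim_{mc}\Wi M\le k$ yields a proper, uniformly cobounded, continuous map $f\colon\Wi M\to K$ to a locally finite $k$-dimensional simplicial complex, which (using that $\Wi M$ is cocompact and hence of bounded geometry) one may normalize to have bounded geometry. The plan is then: first, replace $f$ by a coarsely Lipschitz, uniformly cobounded map to a possibly different locally finite $k$-complex, making it a morphism in the coarse category; second, using that $\Wi u_M$ is a coarse equivalence and that $E\pi$ is contractible, build a simplicial map $\phi\colon K\to E\pi$ by induction over skeleta — the extension over a $j$-cell with $1\le j\le k$ is possible because $\pi_{j-1}(E\pi)=0$ and is uniformly controlled because $E\pi$ is uniformly contractible and $K$ has bounded geometry; third, observe that a simplicial map out of a $k$-dimensional complex has image in the $k$-skeleton, so that $\phi\circ f\colon\Wi M\to E\pi^{(k)}$, the choices in the second step being arranged so that $\dist(\phi\circ f,\Wi u_M)<\infty$. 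The hard part is the first step and, intertwined with it, the propagation of uniform bounds in the second step; this is exactly where cocompactness of the skeleta of $E\pi$ — i.e.\ geometric finiteness of $\pi$ — is indispensable, since uniform coboundedness controls fibers only over points, not over small subcomplexes, and one must upgrade that to genuine coarse control before mapping $K$ to $E\pi$.

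It remains to upgrade a bounded-distance map to a bounded homotopy, i.e.\ $(2)\Rightarrow(3)$, by controlled obstruction theory over the cocompact complex $\Wi M$. Given $f$ with $\dist(f,\Wi u_M)\le C$, define $H$ on $0$-cells by geodesics in $E\pi$ from $f(v)$ to $\Wi u_M(v)$, of length $\le C$; assuming $H$ has been defined on the $(j-1)$-skeleton with tracks of uniformly bounded diameter, its restriction to $\partial e^{j}\times[0,1]\,\cup\,e^{j}\times\{0,1\}$ is a map of a $j$-sphere of uniformly bounded diameter into $E\pi$, and since $\pi_{j}(E\pi)=0$ and $E\pi$ is uniformly contractible, this sphere bounds a disk of uniformly bounded diameter which we use to extend $H$ over $e^{j}\times[0,1]$. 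As $\Wi M$ has only finitely many $\pi$-orbits of cells and finitely many cell dimensions, all these bounds are uniform, so $H$ is a bounded homotopy from $\Wi u_M$ to $f$. As before, the only genuine issue here is the uniform control, again furnished by the geometric finiteness of $\pi$; the overall main obstacle of the proof remains the coarsening step inside $(1)\Rightarrow(2)$.
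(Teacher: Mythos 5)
The paper offers no proof of this statement at all: it is quoted verbatim from \cite{Dr3}, as "spelled out" by the remark following \cite[Theorem 2.2]{Dr3}, so there is no in-paper argument to compare with. Judged on its own terms, your sketch handles the easy implications correctly ((3)$\Rightarrow$(2) is immediate; (2)$\Rightarrow$(1) via quasi-isometry of $\Wi u_M$ and Definition~\ref{dmc}; (2)$\Rightarrow$(3) by bounded cell-by-cell obstruction theory inside a cocompact, uniformly highly connected skeleton of $E\pi$), but the substantial implication (1)$\Rightarrow$(2) is not proved: you explicitly defer "the hard part" --- upgrading a merely uniformly cobounded proper map $f:\Wi M\to K$ to one with "genuine coarse control" --- and that deferred step is exactly the mathematical content of the theorem. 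Concretely, your construction of $\phi:K\to E\pi$ needs $\Wi u_M(f^{-1}(\sigma))$ to be uniformly bounded over simplices $\sigma$ of $K$ (otherwise a cell-sized image $\phi(\sigma)$ cannot track $\Wi u_M$ over all of $f^{-1}(\sigma)$, and $\dist(\phi\circ f,\Wi u_M)=\infty$); uniform coboundedness only controls fibers over points, and fibers over adjacent vertices of $K$ can drift arbitrarily far apart (already a proper homeomorphism of a ray onto a ray, such as $x\mapsto\log x$, shows this). Nothing in your outline produces that control, and the proposed remedy --- replacing $f$ by a coarsely Lipschitz map to a bounded-geometry $k$-complex while keeping uniform coboundedness and properness --- is asserted, not argued; it is also a delicate thing to hope for, since it sits right at the $\dim_{mc}$ versus $\dim_{MC}$ divide that this very paper shows is genuine (Theorem~\ref{cex}, Remark~\ref{mc and MC}), so any such upgrading must be justified with care.

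The mechanism that actually closes this gap in \cite{Dr3} is different and worth recording: properness of $f$ gives upper semicontinuity of its fibers, so every $y\in K$ has a neighborhood $V_y$ with $f^{-1}(V_y)$ contained in a uniformly bounded neighborhood of $f^{-1}(y)$; refining $\{V_y\}$ to multiplicity $\le k+1$ (possible since $\dim K=k$) and pulling back yields a uniformly bounded open cover of $\Wi M$ of multiplicity $\le k+1$, and one then maps the nerve of this cover into $E\pi$ with bounded control (send the vertex $U$ to a point of $\Wi u_M(U)$ and extend over simplices using uniform contractibility of a cocompact skeleton, finally pushing into $E\pi^{(k)}$ by controlled cellular approximation), exactly in the spirit of the nerve argument used in Theorem~\ref{cattheo}. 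Crucially, this route never needs $f$ itself to be made coarsely Lipschitz --- and indeed the resulting nerve map need not be Lipschitz, which is why the argument proves (2) without accidentally proving $\dim_{MC}\Wi M\le k$. Without this (or an equivalent) controlled-cover step, your proposal does not establish (1)$\Rightarrow$(2).
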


We recall that a group $\pi$ is called {\em geometrically finite} if it admits a finite classifying complex $B\pi$. 

\begin{remark}
If $\pi$ is just finitely presented, we consider a locally finite complex $B\pi$ and a proper metric on it. Then Theorem~\ref{0} holds when one adds the following condition to (2) and (3): the space $pf(\Wi M)$ is contained in a compact subset of $B\pi$, where $p:E\pi \to B\pi$ is the universal cover of $B\pi$.
\end{remark}

Let $M$ and $N$ be closed $n$-manifolds for $n\ge 3$. Let $\Gamma=\pi_1(M)$ and $\Lambda=\pi_1(N)$. Then $\pi_1(M\# N)=\Gamma\ast\Lambda$ and $B(\Gamma\ast\Lambda)=B\Gamma\bigvee B\Lambda$. Let $y_0$ be the wedge point and $x_0\in E(\Gamma\ast\Lambda)$ be its lift. Therefore, $E(\Gamma\ast\Lambda)$ is the union of disjoint copies of $E\Gamma$ indexed by $\Lambda$ and disjoint copies of $E\Lambda$ indexed by $\Gamma$, with the intersection of $E\Gamma$ with $E\Lambda$ being either empty or a singleton point $w(x_0)$, where $w\in\Gamma\ast\Lambda$. Moreover, the nerve of the cover of $E(\Gamma\ast\Lambda)$ by $E\Gamma$ and $E\Lambda$ is a tree.

\begin{thm}\label{top}
For a given $n$-manifold $M$ with $\dim_{mc}\Wi M=n\ge 3$, there is the equality
$$
\dim_{mc}\Wi{M\# N}=\dim_{MC}\Wi{M\# N}=n
$$ 
for any manifold $N$.
\end{thm}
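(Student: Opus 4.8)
The plan is to reduce the statement to the computation of $\dim_{mc}$ via Theorem~\ref{small} (equivalently Theorem~\ref{0}), and to show that a nonzero fundamental class ``pushes forward'' under the inclusion-into-a-summand map even after passing to universal covers of the connected sum. Since $\dim_{mc}\Wi{M\#N}\le \dim_{MC}\Wi{M\#N}\le\dim\Wi{M\#N}=n$ always holds by~\eqref{basicinequality}, it suffices to prove the single inequality $\dim_{mc}\Wi{M\#N}\ge n$; the desired chain of equalities then follows automatically. To do this I will assume for contradiction that $\dim_{mc}\Wi{M\#N}<n$ and derive $\dim_{mc}\Wi M<n$, contradicting the hypothesis.

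\textbf{Key steps.} First I would set $\Gamma=\pi_1(M)$, $\Lambda=\pi_1(N)$, so $\pi_1(M\#N)=\Gamma\ast\Lambda$ and $B(\Gamma\ast\Lambda)=B\Gamma\vee B\Lambda$, and fix a classifying map $u:M\#N\to B\Gamma\vee B\Lambda$ together with a lift $\Wi u:\Wi{M\#N}\to E(\Gamma\ast\Lambda)$. Second, I would use the retraction $r:B\Gamma\vee B\Lambda\to B\Gamma$ that collapses $B\Lambda$ to the wedge point, and observe that $r\circ u$ restricted to the summand $M_0:=M\setminus(\text{open disk})\subset M\#N$ is homotopic to a classifying map for $M$ (filling in the disk is harmless since $B\Gamma$ is aspherical and the boundary sphere, being $(n-1)$-dimensional with $n\ge 3$, can be handled — actually here the cleanest route is the degree-one collapse map $q:M\#N\to M$ which induces $\Gamma\ast\Lambda\twoheadrightarrow\Gamma$ and which I will use instead). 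Third, I would use that $q$ has degree one, so $\Wi q_*:H_n^{lf}(\Wi{M\#N})\to H_n^{lf}(\Wi M)$ sends $[\Wi{M\#N}]$ to $[\Wi M]$; combined with the lifted factorization $\Wi{u_M}\simeq \Wi r\circ\Wi u\circ(\text{section of }\Wi q)$ — or more directly $\Wi{u_M}\circ\Wi q$ is boundedly homotopic to $\Wi r\circ\Wi u$ — and the assumption $\dim_{mc}\Wi{M\#N}<n$, Theorem~\ref{0} gives a bounded homotopy of $\Wi u$ into the $(n-1)$-skeleton $E(\Gamma\ast\Lambda)^{(n-1)}$. Fourth, push this forward: $\Wi r$ sends $E(\Gamma\ast\Lambda)^{(n-1)}$ into $E\Gamma^{(n-1)}$, so we obtain a bounded homotopy of $\Wi{u_M}\circ\Wi q$ into $E\Gamma^{(n-1)}$, hence (using that $\Wi q$ is a proper map and precomposition preserves bounded homotopies) $\dim_{mc}\Wi M<n$ by Theorem~\ref{0} again, contradicting the hypothesis.

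\textbf{Main obstacle.} The delicate point is step three: controlling the behavior of the collapse map $q:M\#N\to M$ and the retraction $r$ \emph{at the level of universal covers with their lifted (uniform) metrics}, i.e. verifying that $\Wi q:\Wi{M\#N}\to\Wi M$ is proper and that composing/factoring through it preserves the ``bounded distance'' condition $\dist(f,\Wi u_M)<\infty$ appearing in Theorem~\ref{0}. Since $\Wi q$ is induced by a cellular map between finite complexes and $\Gamma\ast\Lambda\to\Gamma$ is surjective (so $\Wi q$ is surjective and $1$-Lipschitz up to scaling with uniformly bounded point-preimages in the relevant coarse sense), this is essentially a packaging exercise, but it is where the geometric finiteness of $\Gamma\ast\Lambda$ and the tree structure of the nerve of the cover $\{E\Gamma\text{-copies},E\Lambda\text{-copies}\}$ of $E(\Gamma\ast\Lambda)$ — noted just before the theorem — are genuinely used, to ensure the homotopy stays bounded and the image of $\Wi u$ lands in a controlled region. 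I expect the rest to be routine given Theorems~\ref{small} and~\ref{0}.
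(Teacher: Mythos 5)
Your overall strategy (reduce to the single inequality $\dim_{mc}\Wi{M\#N}\ge n$, argue by contradiction via Theorems~\ref{small} and~\ref{0}, and use the splitting $B(\Gamma\ast\Lambda)=B\Gamma\vee B\Lambda$ together with a retraction onto the $\Gamma$-part) is the same as the paper's, but your third step contains a genuine gap, and it is exactly at the point you dismiss as ``essentially a packaging exercise.'' The lifted collapse map $\Wi q:\Wi{M\# N}\to \Wi M$ is \emph{not} proper, and is not uniformly cobounded either, whenever $\pi_1(N)\ne 1$. Indeed $q$ induces the projection $\rho:\Gamma\ast\Lambda\to\Gamma$, whose kernel (the normal closure of $\Lambda$) is infinite once $\Lambda\neq 1$ and $\Gamma\neq 1$; consequently $\Wi{M\#N}$ contains infinitely many copies of $\Wi M^0$ lying over the same $\rho$-image, all of which $\Wi q$ maps onto the same subset $p_M^{-1}(M^0)\subset\Wi M$, and every copy of $\Wi N^0$ (non-compact if $\Lambda$ is infinite) is crushed into a compact lifted ball. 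So preimages of compact sets are non-compact, the pushforward $\Wi q_*:H^{lf}_n(\Wi{M\#N})\to H^{lf}_n(\Wi M)$ you invoke does not exist, and you cannot apply Theorem~\ref{small} to $\Wi u_M\circ\Wi q$ as if it carried $[\Wi{M\#N}]$ to $[\Wi M]$. For the same reason the bounded-distance criterion of Theorem~\ref{0} cannot be transported back to $\Wi M$: there is no section of $\Wi q$, since the natural copy of $M$-material inside $\Wi{M\#N}$ is only $\Wi M$ with infinitely many balls removed, and the map $f$ into the $(n-1)$-skeleton is uncontrolled on the adjacent $\Wi N$-pieces that fill those balls.

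This missing step is precisely what the paper's proof supplies, and it is not routine. Instead of a global comparison through $\Wi q$, one works locally: take $W=(\Wi u)^{-1}(N_a(E\Gamma_0))$ for $a$ larger than the distance between $f$ and $\Wi u$, choose a submanifold $V\subset W$ with $f(\partial V)\cap E\Gamma_0=\emptyset$ and with each boundary component $S_i$ contained in a single copy of $\Wi M^0$ or $\Wi N^0$, and then \emph{cap off}: each $S_i$ bounds a compact manifold $X_i$ inside $\Wi M$ or $\Wi N$, and $X=V\cup\coprod X_i$ is an open manifold admitting a proper degree-one map $g:X\to\Wi M$. Because $f(\partial V)$ misses $E\Gamma_0$, the nearest-copy retraction $r$ sends $f(\partial V)$ into the discrete orbit $\Gamma x_0$, so $r f|_V$ extends over the caps to a proper map $\hat f:X\to E\Gamma_0^{(n-1)}$; comparing $\Wi u_M\circ g$ (nonzero on $[X]$ by $\dim_{mc}\Wi M=n$ and Theorem~\ref{small}) with $\hat f$ (zero on $[X]$) gives the contradiction. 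Without this capping/transfer construction — or some substitute restoring properness and degree-one control — your argument does not go through. (Your reduction step and the use of the retraction are fine; also note the paper handles $\dim_{MC}$ for free from $\dim_{mc}\le\dim_{MC}\le n$, as you do.)
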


\begin{proof}
Let $S$ be a separating $(n-1)$-sphere in $M\# N$. The collapsing map $$q:M\# N\to (M\# N)/S=M\bigvee N$$ followed by the wedge map
\[
u_M\bigvee u_N:M\bigvee N\to B\Gamma\bigvee B\Lambda
\]
of the classifying maps $u_M$ and $u_N$ is a classifying map $u:M\# N\to B(\Gamma\ast\Lambda)$. Let $p:E(\Gamma\ast\Lambda)\to B\Gamma\bigvee B\Lambda$ be the universal cover. Let $\{y_0\}=B\Gamma\cap B\Lambda$ and $p(x_0)=y_0$. Let $E\Gamma_0$ be a copy of $E\Gamma$ that contains $x_0$. Fix a section $S'\subset\Wi M^0$ of $S$, where $M^0=M\setminus \Int B^n$ and $S=\partial B^n$. We consider a lift 
$$
\Wi u:\Wi{M\# N}\to E(\Gamma\ast\Lambda)
$$ 
of $u$ that takes $S'$ to $x_0$.
We may assume that $u$ is 1-Lipschitz, in which case $\Wi u$ is 1-Lipschitz as well.

Let us assume that $\dim_{mc}\Wi{M\# N}\le n-1$. Then by Theorem~\ref{0}, there is a map 
$$
f:\Wi{M\# N}\to E(\Gamma\ast\Lambda)^{(n-1)}
$$ 
with $\text{dist}(\Wi u, f)<b$ for some $b$. Let $N_a(E\Gamma_0)$ be a closed $a$-neighborhood of $E\Gamma_0$ for some $a>b$ such that $\partial N_a(E\Gamma_0)\cap (\Gamma\ast\Lambda)(x_0)=\emptyset$, where $(\Gamma\ast\Lambda)(x_0)$ is the orbit of $\{x_0\}$ under the action of $\Gamma \ast \Lambda$. Let 
$$
W=(\Wi u)^{-1}(N_a(E\Gamma_0)).
$$ 
Then we have that $f(\partial W)\cap E\Gamma_0=\emptyset$. There is a manifold $V$ with boundary $\pa V$ such that $V\subset W\subset \Wi{M\# N}$ and $f(\partial V)\cap E\Gamma_0=\emptyset.$ We may assume that $\partial V=\coprod S_i$, where each $S_i$ lives in a copy of $\Wi M^0$ or $\Wi N^0$. Each manifold $S_i$ bounds a compact manifold $X_i$  either in $\Wi M$ or $\Wi N$.  We consider an open manifold without boundary $X=V\cup(\coprod X_i)$. It is easy to see that $X$ admits a proper map of degree $1$, say $g:X\to\Wi M$. Next, let $r:E(\Gamma\ast\Lambda)\to E\Gamma_0$ be the natural retraction which maps the complement of $E\Gamma_0$ to $\Gamma x_0$. Then the composition
$rf|_V:V\to E\Gamma$ has image in $E\Gamma_0^{(n-1)}$. Since $f(\partial V)\cap E\Gamma_0=\emptyset$, the map $rf|_V$ extends to a map $\hat f:X\to E\Gamma^{(n-1)}_0$. Thus, we obtain two proper maps $\Wi u_M \circ g:X\to E\Gamma$ and $\hat f:X\to E\Gamma$ which are in a finite distance, and hence, proper homotopic. By Theorem~\ref{small}, for the first map we have $(\tilde u_M)_*(g_*([X]))\ne 0$, whereas for the second map we have $\hat f_*([X])=0$. This is a contradiction.
\end{proof}

Another interesting feature of both macroscopic dimensions is that they coincide in low dimensions.

\begin{thm}\label{lowdim}
If $M$ is a closed $2$- or $3$-manifold, then $\dim_{mc}\Wi{M} = \dim_{MC}\Wi{M}$.
\end{thm}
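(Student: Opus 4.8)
The plan is to handle the two- and three-dimensional cases separately, using the classification of manifolds in these dimensions together with the inequality $\dim_{mc}\Wi M\le\dim_{MC}\Wi M\le\dim M$ from \eqref{basicinequality}. In each case it suffices to rule out the only way the two invariants could differ, namely $\dim_{mc}\Wi M<\dim M$ while $\dim_{MC}\Wi M=\dim M$, or else to pin down both numbers directly.

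For a closed surface $M$, there are three possibilities. If $M=S^2$ or $\R P^2$, then $\Wi M=S^2$ is compact, so both macroscopic dimensions equal $0$ (a compact space maps uniformly coboundedly and properly to a point). If $M$ is aspherical (i.e.\ $M=T^2$ or a higher-genus surface, or the Klein bottle), then Corollary~\ref{nondim} gives $\dim_{mc}\Wi M=\dim_{MC}\Wi M=2$. This exhausts all closed surfaces, so the surface case is complete.

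For a closed $3$-manifold $M$, I would first reduce to the prime case. By Theorem~\ref{top}, if some prime summand $P$ of $M$ has $\dim_{mc}\Wi P=3$, then already $\dim_{mc}\Wi{M}=\dim_{MC}\Wi M=3$ and we are done; so we may assume no prime summand is ``macroscopically $3$-dimensional''. For a prime $3$-manifold $P$, geometrization applies: either $P$ is aspherical (and Corollary~\ref{nondim} gives equality, with value $3$, contradicting our reduction unless we are in the already-settled case), or $\Wi P$ is one of $S^3$, $S^2\times\R$, or has finite fundamental group; in all of these $\Wi P$ is either compact or has $\dim_{mc}\Wi P\le\dim_{MC}\Wi P\le 1$ via projection to the $\R$-factor of $S^2\times\R$. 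More precisely, for $M$ a connected sum none of whose prime pieces is aspherical, one shows $\Wi M$ retracts (uniformly coboundedly, properly, Lipschitz) onto the tree that is the nerve of the cover of $\Wi M$ by lifted prime summands-with-$S^2$'s-attached, so that $\dim_{mc}\Wi M=\dim_{MC}\Wi M\le 1$; and when some prime piece \emph{is} aspherical but none is macroscopically $3$-dimensional, a variant of the argument in Theorem~\ref{top} gives $\dim_{mc}\Wi M=\dim_{MC}\Wi M=2$.

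The main obstacle I anticipate is the bookkeeping for the general connected sum of $3$-manifolds: one must verify that the \emph{proper} and \emph{Lipschitz} refinements needed for $\dim_{MC}$ (not just $\dim_{mc}$) survive the passage to the universal cover of a connected sum, using the tree structure of $E(\Gamma_1\ast\cdots\ast\Gamma_k)$ described before Theorem~\ref{top}, and that the estimates are uniform across the infinitely many lifted summands. The homotopy-theoretic input (essentiality of aspherical pieces, Theorem~\ref{small}, Theorem~\ref{0}) is already in hand; the work is in organizing the geometric retraction carefully so that all three conditions—cobounded, proper, Lipschitz—hold simultaneously, which is exactly what forces $\dim_{mc}$ and $\dim_{MC}$ to coincide.
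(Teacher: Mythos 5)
Your dimension-2 argument and your treatment of 3-manifolds containing an aspherical prime summand (Corollary~\ref{nondim} plus Theorem~\ref{top}) follow the same route as the paper and are fine. The gap is in the remaining case, which is where the real content lies: a closed orientable $3$-manifold with infinite fundamental group and no aspherical prime summand, i.e.\ a connected sum of copies of $S^2\times S^1$ and spherical space forms $S^3/\Lambda_i$. You assert that $\Wi M$ retracts uniformly coboundedly, properly and Lipschitz onto the nerve tree of the cover by lifted prime pieces, but you explicitly defer the construction (``the work is in organizing the geometric retraction carefully''), so this case is not actually proved. Moreover, as stated the construction does not work: the lifted piece over an $S^2\times S^1$ summand is a copy of $S^2\times\R$ minus a $\Z$-orbit of balls, which is \emph{unbounded}, so the projection to the nerve of that particular cover is not uniformly cobounded. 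One must first chop these pieces into uniformly bounded blocks (equivalently, use that $\pi_1(M)$ is virtually free, hence quasi-isometric to a tree, and then promote a quasi-isometry to a continuous Lipschitz, proper, uniformly cobounded map); that refinement is the actual content, not bookkeeping. The paper bypasses all of this by quoting \cite[Corollary 10.11]{GL}, which provides a continuous distance non-increasing map from such an $M$ to a metric graph; lifting to universal covers immediately yields a uniformly cobounded, proper, Lipschitz map from $\Wi M$ to a metric tree, hence $\dim_{MC}\Wi M\le 1$.

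Two smaller points. First, even granting your retraction, you only obtain that both invariants are $\le 1$; to conclude they are \emph{equal} you still need the lower bound $\dim_{mc}\Wi M\ge 1$, which holds because $\Wi M$ is connected and noncompact (this is how the paper pins both values at $1$). Second, your final clause --- ``when some prime piece is aspherical but none is macroscopically $3$-dimensional \dots\ gives $=2$'' --- is vacuous and unsupported: by Corollary~\ref{nondim}, an aspherical closed $3$-manifold always has $\dim_{mc}$ of its universal cover equal to $3$, so this case never occurs, and the claimed value $2$ has no justification. The clean trichotomy is: finite $\pi_1$ (both invariants $0$), some aspherical prime summand (both $3$, via Theorem~\ref{top}), or a connected sum of $S^2\times S^1$'s and spherical space forms (both $1$).
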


\begin{proof}
In the $2$-dimensional case, because of the classification theorem for Riemann surfaces, it suffices to consider surfaces of genus $g\geq 0$ denoted by $M_g$. When $g=0$, we have $M_0=S^2$, and therefore
\[
\dim_{mc}\Wi{M_0} = \dim_{MC}\Wi{M_0}=0.
\]
For $g\geq 1$, $M_g$ can be equipped with either a flat or hyperbolic Riemannian metric, and by Corollary~\ref{nondim} (see also Remark~\ref{carthad}), we have
\[
\dim_{mc}\Wi{M_g} = \dim_{MC}\Wi{M_g}=2.
\]
In the $3$-dimensional case, we follow the same philosophy by going through all possible closed orientable $3$-manifolds in the list provided as a by-product of Perelman's proof of Thurston's and Poincar\'e's conjectures. We refer to~\cite{Lott} for the relevant background on the geometry and topology of closed $3$-manifolds and the details of Perelman's proof. Now, if $M$ is a closed $3$-manifold with finite $\pi_1$, then $\Wi{M}=S^3$, so that 
\[
\dim_{mc}\Wi{M} = \dim_{MC}\Wi{M}=0.
\]
If $\pi_1$ is infinite, $M$ is either aspherical, has an aspherical component in its prime decomposition as a connected sum, or is a finite connected sum of $S^2\times S^1$'s and spherical space forms, i.e., we have 
\begin{equation}\label{positive3}
M=(S^2\times S^1)\#\cdots\#(S^2\times S^1)\# S^3/\Lambda_1\#\cdots\# S^3/\Lambda_j
\end{equation}
in the last case. If $M$ is aspherical, we know from Corollary~\ref{nondim} that
\[
\dim_{mc}\Wi{M}=\dim_{MC}\Wi{M}=3.
\]
If $M$ has an aspherical component in its prime decomposition, then we again have $\dim_{mc}\Wi{M}=\dim_{MC}\Wi M=3$ because of Theorem~\ref{top}. The remaining $3$-manifolds are as in \eqref{positive3}. For such a $3$-manifold $M$, it follows from~\cite[Corollary 10.11]{GL} that one can construct a continuous \emph{distance non-increasing} map from $M$ to a metric graph. By lifting this map to the universal covers, we obtain a continuous uniformly cobounded distance non-increasing proper map from $\Wi{M}$ to a metric tree. Since distance non-increasing maps are Lipschitz, we conclude that both macroscopic dimensions are at most one. Since $\Wi{M}$ is not compact, none of these macroscopic dimensions can be zero, and we conclude that the $3$-manifolds as in \eqref{positive3} with infinite $\pi_1$ satisfy the equality
\[
\dim_{mc}\Wi{M}=\dim_{MC}\Wi{M}=1.
\]
This concludes the proof.
\end{proof}

\begin{remark}
The equality $\dim_{mc}\Wi M=\dim_{MC}\Wi M=3$ for any closed Riemannian $3$-manifold $M$ having an aspherical summand in its prime decomposition can also be proved directly. Since $\pi_1(M)$ is infinite, we must have $1\leq\dim_{mc}\Wi{M}\leq 3$. It follows from~\cite{Bolotov3} that $\dim_{mc}\Wi{M}\neq 2$. Since $\pi_2=0$ for any $1$-dimensional simplicial complex, if we assume $\dim_{mc}\Wi{M}=1$, we can easily construct a uniformly cobounded continuous proper map from the universal cover of the aspherical component of $M$ to a $1$-dimensional simplicial complex. This contradicts Corollary~\ref{nondim}, and we obtain $\dim_{mc}\Wi{M}=\dim_{MC}\Wi{M}=3$.
\end{remark}

The connected sum formula for LS-category 
\[
\cat(M\# N)=\max\{\cat(M),\cat (N)\}
\]
was proven in~\cite{DS}. This and Theorem~\ref{cattheo} imply that
\begin{equation}\label{obvimpl}
    \dim_{mc}\Wi{M\# N}\le \max\{\cat(M),\cat(N)\}
\end{equation}
for closed $n$-manifolds $M$ and $N$. Note also that Theorem~\ref{top} gives the inequality 
\[
\dim_{mc}\Wi{M\# N}\ge\max\{\dim_{mc} \Wi M,\dim_{mc} \Wi N\}
\]
whenever the right-hand side equals $n$. 

For $n>2$, we prove the above inequality in the other direction in full generality, thereby improving the upper bound 
from~\eqref{obvimpl}.

\begin{prop}\label{5}
Let $n>2$. Then for $n$-manifolds $M$ and $N$,
\[
\dim_{mc}\Wi{M\# N}\le \max\{\dim_{mc} \Wi M,\dim_{mc}\Wi N\}.
\]
\end{prop}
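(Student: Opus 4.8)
The plan is to realize $\Wi{M\# N}$ as a tree of spaces over the Bass--Serre tree of $\Gamma\ast\Lambda$, where $\Gamma=\pi_1(M)$ and $\Lambda=\pi_1(N)$ and $\pi_1(M\# N)=\Gamma\ast\Lambda$ (this free product decomposition is where the hypothesis $n>2$ is used), to transport the uniformly cobounded maps furnished by $\dim_{mc}\Wi M\le k$ and $\dim_{mc}\Wi N\le k$ onto the vertex pieces, and to splice them into a single uniformly cobounded map onto a tree of $k$-dimensional complexes; here $k:=\max\{\dim_{mc}\Wi M,\dim_{mc}\Wi N\}$, which we may assume is $\ge 1$.

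First I would replace $M\# N$ by a convenient homotopy equivalent finite complex. Write $M\# N=M^0\cup_S N^0$ for a separating sphere $S\cong S^{n-1}$, with $M^0=M\setminus\Int B^n$ and $N^0=N\setminus\Int B^n$, and let $W$ be $M$ and $N$ joined by a unit interval, a finite complex with $\pi_1(W)=\Gamma\ast\Lambda$. There is a Lipschitz map $r\colon M\# N\to W$, inducing an isomorphism on $\pi_1$, that crushes a collar $S\times[-1,1]$ of $S$ onto the interval and maps the remainders of $M^0$ and $N^0$ onto $M$ and $N$; all of its point-preimages are points or copies of $S^{n-1}$. The lift $\Wi r\colon\Wi{M\# N}\to\Wi W$ is $(\Gamma\ast\Lambda)$-equivariant for the deck actions, and since those actions are geometric, $\Wi r$ is a quasi-isometry; in particular preimages under $\Wi r$ of bounded sets are bounded, so precomposition with $\Wi r$ turns uniformly cobounded maps into uniformly cobounded maps. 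It then suffices to build a uniformly cobounded continuous map from $\Wi W$ to a $k$-dimensional simplicial complex.

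Next I would exploit the tree-of-spaces structure of $\Wi W$ over the Bass--Serre tree $T$: each $\Gamma$-vertex carries an isometric copy of $\Wi M$, each $\Lambda$-vertex a copy of $\Wi N$, and each edge a unit-interval lift joining the two relevant basepoint lifts in the adjacent vertex copies; the basepoint lifts lying in a fixed copy of $\Wi M$ form a single orbit there and are therefore uniformly discrete. By Definition~\ref{dmc}, fix uniformly cobounded continuous maps $\phi_M\colon\Wi M\to K_M$ and $\phi_N\colon\Wi N\to K_N$ with $\dim K_M,\dim K_N\le k$. I would then assemble the target $K$ from one disjoint copy of $K_M$ (resp.\ $K_N$) per vertex of $T$, joined, for each edge $e=(v,w)$, by a unit interval $L_e$ from the $\phi_M$-image of the relevant basepoint lift in the copy over $v$ to the analogous point of the copy over $w$. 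Since the vertex complexes remain pairwise disjoint inside $K$, $K$ is (after a harmless subdivision) a $k$-dimensional simplicial complex. Define $\Phi\colon\Wi W\to K$ to be $\phi_M$ (resp.\ $\phi_N$) on each vertex piece via the chosen isometry and the evident homeomorphism on each $L_e$; the vertex pieces together with the interval lifts form a locally finite closed cover of $\Wi W$, on which these local definitions agree at the junction points, so $\Phi$ is continuous by the pasting lemma.

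The hard part will be verifying that $\Phi$ is uniformly cobounded, and this is exactly what the interval spacers $L_e$ buy us. With them in place, every $y\in K$ lies either in a single vertex complex $K_v$ --- where $\Phi^{-1}(y)=\phi_v^{-1}(y)$ has diameter bounded by the (uniformly bounded) fibers of $\phi_M$ or $\phi_N$ --- or in the interior of a single $L_e$, where $\Phi^{-1}(y)$ is a single point. Had I instead glued the copies of $K_M$ and $K_N$ directly at the basepoint images, a single point of $K$ could be carried, through accidental coincidences among the values of $\phi_M$ and $\phi_N$, across arbitrarily many vertex pieces along a path of $T$, and $\Phi^{-1}(y)$ could fail to be bounded; threading in the intervals $L_e$ severs these chains. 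Granting this, $\Phi$, and hence $\Phi\circ\Wi r\colon\Wi{M\# N}\to K$, is uniformly cobounded and continuous, so $\dim_{mc}\Wi{M\# N}\le\dim K=k$ by Definition~\ref{dmc}. The remaining ingredients --- the tree-of-spaces picture for $\Wi W$, uniform discreteness of basepoint orbits, and the fact that an equivariant map between geometric actions is a quasi-isometry --- are standard.
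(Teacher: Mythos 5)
Your argument is correct in its essentials, but it takes a genuinely different route from the paper. The paper never builds an explicit target: it invokes Theorem~\ref{0} to convert the hypotheses into maps $f_M:\Wi M\to E\Gamma^{(k)}$ and $f_N:\Wi N\to E\Lambda^{(k)}$ at finite distance from the lifted classifying maps, normalizes them so that the lifts of the connect-sum balls go to the orbit of the point $x_0$, glues them over $\Wi{M\cup_B N}$ into $E(\Gamma\ast\Lambda)^{(k)}$, and restricts to $\Wi{M\# N}$; uniform coboundedness is never checked by hand because it is packaged into the equivalence of Theorem~\ref{0}. You instead work straight from Definition~\ref{dmc}: you pass from $\Wi{M\# N}$ to the tree of spaces $\Wi W$ over the Bass--Serre tree (the quasi-isometry of the equivariant lift $\Wi r$ between two geometric actions is indeed standard), and you splice the two witnesses $\phi_M,\phi_N$ into a map onto a tree of copies of $K_M$ and $K_N$ separated by interval spacers; your explicit verification that the spacers stop fibers from chaining across vertex pieces is precisely the point that Theorem~\ref{0} conceals. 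Your version is more self-contained (no classifying spaces, no geometric-finiteness bookkeeping), at the price of having to control the target by hand.

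That hand control is where the one real gap sits: the claim that $K$ is, ``after a harmless subdivision,'' a $k$-dimensional simplicial complex. The attaching points in a fixed copy of $K_M$ are the $\phi_M$-images of a whole $\Gamma$-orbit of basepoint lifts, and Definition~\ref{dmc} does not make $\phi_M$ proper; when $\Gamma$ is infinite this image can accumulate inside $K_M$, and then no subdivision makes these points vertices: whisker-attachment points fail to have the local structure of any non-vertex point of a complex, so in any triangulation of $K$ they would all be vertices, while the vertex set of a simplicial complex is closed and discrete and cannot contain a convergent sequence of distinct points. So without an extra input your $K$ need not be (homeomorphic to) a simplicial complex at all. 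The repair is already in the paper: by the remark following Definition~\ref{dMC}, i.e., \cite[Proposition 2.1]{Dr3}, you may take $\phi_M$ and $\phi_N$ to be proper maps to locally finite complexes; then each attaching set meets every compact subset of $K_M$ (resp.\ $K_N$) in a finite set and the subdivision really is harmless. Finally, ``we may assume $k\ge 1$'' is not a reduction you can actually perform: when both fundamental groups are nontrivial and finite the stated inequality fails (for $M=N=\R P^3$ one has $\dim_{mc}\Wi M=\dim_{mc}\Wi N=0$, while $\Wi{M\# N}$ is connected and unbounded, so $\dim_{mc}\Wi{M\# N}\ge 1$). This is a defect of the statement that the paper's own proof shares --- its normalization $f_M(B_\gamma)=\gamma x_0$ is likewise impossible into a $0$-skeleton when $\Gamma\ne 1$ --- so treat it as a caveat on the proposition rather than a flaw peculiar to your approach.
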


\begin{proof}
We use the notations from the paragraph preceding Theorem~\ref{top}. Suppose that $\dim_{mc}\Wi M\le\dim_{mc}\Wi N= k$. Let $B_M\subset M$ and $B_N\subset N$ be the $n$-balls such that they have the common boundary $S$ in $M\# N$. We may assume that $u_M(B_M)=y_0\in B\Gamma$ and $u_N(B_N)=y_0\in B\Lambda$. Recall that $x_0$ is a lift of $y_0$ in $E\Gamma$ (resp. $E\Lambda$) via a path $\gamma$ (resp. $\lambda$). Let $B_\gamma$ denote the translation of the ball $B_M$ by $\gamma$ in the universal cover $\Wi M$. Then $\Wi u_M(B_\gamma)=\gamma x_0$.
%and $\Wi u_N(B_\lambda)=\lambda x_0$. 
Due to Theorem~\ref{0}, there is a map $f_M:\Wi M\to E\Gamma^{(k)}$ in a finite distance to $\Wi u_M$. Hence, we may assume that $f_M(B_\gamma)=\gamma x_0$. Similarly, we can find a map $f_N:\Wi N\to E\Lambda^{(k)}$ that takes $B_\lambda$ to $\lambda x_0$. The union of these maps defines a map  
\[
\bar f:\Wi{M\cup_BN}\to E(\Gamma\ast\Lambda)^{(k)}.
\]
Here, $M\cup_BN$ is formed by identifying $B_M$ and $B_N$ in $M\cup N$. The restriction of $\bar f$ to $\Wi{M\#N}\subset \Wi{M\cup_BN}$ is a map that demonstrates the inequality $\dim_{mc}\Wi{M\# N}\le k$.
\end{proof}

\begin{rem}
We note that in dimension $n=2$, the proofs of Theorem~\ref{top} and Proposition~\ref{5} do not work because for closed $2$-manifolds $M$ and $N$, the fundamental groups $\pi_1(M\# N)$ and $\pi_1(M)\ast \pi_1(N)$ need not be isomorphic. In fact, Proposition~\ref{5} is not true in dimension $2$. Indeed, 
\[
2=\dim_{mc}(\rwt{\mathbb RP^2\#\mathbb RP^2})>\dim_{mc}\rwt{\mathbb RP^2}=0.
\]
\end{rem}

%For any given closed $n$-manifold $M$, it was stated on Page of~\cite{Dr2} that $\dim_{MC}\Wi {M\# M}=n$ implies $\dim_{MC}\Wi M=n$. Proposition~\ref{5} gives us an analog of this implication for Gromov's macroscopic dimension for $n\ge 3$.

We conclude this section by showing another interesting property of macroscopic dimensions.
%generalities concerning the macroscopic dimension of connected sums and other facts that will be used later. \\ 

\begin{thm}\label{cattheo}
For a finite CW complex $Y$,
$$
\dim_{mc}\Wi Y\le \dim_{MC}\Wi Y\le \cat (Y).$$
\end{thm}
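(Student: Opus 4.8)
The plan is to prove the two inequalities $\dim_{mc}\Wi Y\le\dim_{MC}\Wi Y$ and $\dim_{MC}\Wi Y\le\cat(Y)$ separately. The first inequality is essentially \eqref{basicinequality}, or rather the relevant half of it: a Lipschitz uniformly cobounded proper map witnessing $\dim_{MC}\Wi Y\le n$ is in particular a (continuous) uniformly cobounded map, so it witnesses $\dim_{mc}\Wi Y\le n$. So the substance is the second inequality, $\dim_{MC}\Wi Y\le\cat(Y)$.

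For $\dim_{MC}\Wi Y\le\cat(Y)$, let $k=\cat(Y)$ and take an open cover $\{U_0,\dots,U_k\}$ of $Y$ by sets each contractible in $Y$. First I would pass to a CW/PL model and replace this open cover by a closed cover $\{A_0,\dots,A_k\}$ subordinate to it, with each $A_i$ a subcomplex and with the property that the $A_i$ can be collapsed inside $Y$ — using that being contractible-in-$Y$ means the inclusion $A_i\hookrightarrow Y$ is null-homotopic, hence lifts (after choosing basepoints in each path-component of the preimage) to a \emph{single point} of $\Wi Y$ on each component of $p^{-1}(A_i)$, where $p:\Wi Y\to Y$ is the covering. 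The standard categorical-cover argument then produces a map $Y\to K^k$ to a $k$-dimensional complex — concretely, take the nerve-type map built from a partition of unity subordinate to $\{U_i\}$, whose image lies in the $k$-skeleton of the standard $(k{+}1)$-simplex; but to control preimages we instead lift the picture upstairs. The key point: over each $U_i$ the map $U_i\to Y$ being null-homotopic means $p^{-1}(U_i)$ is a disjoint union of copies of $U_i$, indexed by $\pi_1(Y)/\mathrm{im}(\pi_1 U_i)$, and the nerve map lifted to $\Wi Y$ sends each such sheet into a bounded region of the lifted $k$-complex. Assembling the lifted local maps via the partition of unity gives a map $g:\Wi Y\to L^k$ to a $k$-dimensional simplicial complex (a "blown up" cover of $K^k$) that is $\pi_1$-equivariant, hence automatically proper, and is uniformly cobounded because the preimage of any point meets only finitely many sheets of finitely many $p^{-1}(U_i)$, each of bounded diameter in the lifted metric. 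Arranging $g$ to be Lipschitz is a matter of choosing the partition of unity and the simplicial structure with bounded geometry, using that $Y$ is a finite complex so everything downstairs is uniformly controlled and the lifted metric has bounded geometry.

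\textbf{The main obstacle} I expect is making the uniform coboundedness and Lipschitz bounds on $g:\Wi Y\to L^k$ genuinely uniform over all of $\Wi Y$: the naive nerve map is fine on a compact fundamental domain, but one must check that the "bounded diameter of each sheet" and "finitely many sheets near any point" estimates do not degrade as one moves out to infinity in $\Wi Y$. This is where finiteness of $Y$ is used crucially — the cover $\{U_i\}$, the partition of unity, the contracting homotopies, and the resulting local lifts are all pulled back from a compact base, so the relevant constants (Lebesgue number, Lipschitz constants, bounds on diameters of lifted pieces) are global. Once that is in hand, $g$ witnesses $\dim_{MC}\Wi Y\le k=\cat(Y)$, and combined with the trivial first inequality we obtain
\[
\dim_{mc}\Wi Y\le\dim_{MC}\Wi Y\le\cat(Y).
\]
An alternative, perhaps cleaner route for the second inequality: use the classifying map $u_Y:Y\to B\pi_1(Y)$ together with the fact (e.g.\ from \cite{Dr4} or the general theory) that $\cat(Y)$ bounds the dimension of a complex through which $u_Y$ — and hence, upstairs, the map $\Wi u_Y:\Wi Y\to E\pi$ — can be compressed up to bounded homotopy, then invoke Theorem~\ref{0} in the direction $(2)\Rightarrow(1)$ together with the proper-metric refinement to upgrade $\dim_{mc}$ to $\dim_{MC}$; but this requires $\pi_1(Y)$ to be at least finitely presented and some care with the locally finite model, so I would present the direct nerve-lift argument as the primary proof.
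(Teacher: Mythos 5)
Your proposal is correct and follows essentially the same route as the paper: the first inequality is immediate from the definitions, and for the second the paper also lifts the categorical cover to $\Wi Y$, observes that the components of each $p_Y^{-1}(U_i)$ are isometric and uniformly bounded (by lifting the contracting homotopy), and takes the projection to the nerve of this lifted cover, which is a $\le k$-dimensional complex receiving a Lipschitz, uniformly cobounded (and proper) map. Your "blown up cover of $K^k$" is exactly that nerve, so the arguments coincide.
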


\begin{proof}
Let $\cat(Y)= k$, and let $\mathcal U=\{U_0,\ldots, U_k\}$ be a cover of $Y$ by open sets $U_i$ contractible in $Y$. We may assume that all $U_i$ are connected. Let $p_Y:\Wi Y\to Y$ denote the universal cover. Note that for each $i$, the preimage $p^{-1}(U_i)=\coprod_{\gamma\in\pi}C_\gamma^i$ is the disjoint union of components indexed by elements of the fundamental group $\pi=\pi_1(Y)$. Furthermore, all components are isometric and bounded. This follows from a lifting of a deformation of $U_i$ in $Y$ to a point.
Let us now consider the following open cover of $\Wi Y$:
\[
\mathcal W=\left\{C^i_\gamma \hspace{1mm} \middle | \hspace{1mm} i=0,\ldots, k, \hspace{2mm}  \gamma\in\pi\right\}.
\]
Let $f:\Wi Y\to N=N(\mathcal W)$ be the projection to the nerve. Clearly, $\dim(N)\le k$ and $f$ is a uniformly cobounded Lipschitz map. Hence, $\dim_{MC}\Wi Y\le k$. 
\end{proof}

In Section~\ref{lstc} and in this section, we considered the following numerical topological invariants for a closed manifold $M$: the LS-category $\cat(M)$, the topological complexity $\TC(M)$, and the macroscopic dimensions of the universal cover $\dim_{mc}\Wi M$ and $\dim_{MC} \Wi M$. Yuli~Rudyak proposed the following conjecture for any topological (homotopy) numerical invariant.
\vspace{2.5mm}
\begin{conjec}[Rudyak Conjecture]~\label{Rudyak} 
For a degree one map $f : M \to N$ between orientable $n$-manifolds, there is the inequality $\mathbbm n(M) \ge \mathbbm n(N) $ for any numerical homotopy invariant $\mathbbm n$.
\end{conjec}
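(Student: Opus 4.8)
The plan is first to disentangle the genuinely substantive content of this conjecture from its heuristic phrasing. As literally stated --- for an \emph{arbitrary} integer-valued homotopy invariant $\mathbbm n$ --- it is false: for $n\ge 2$ take $\mathbbm n(X):=-b_1(X)$, so that the degree-one collapse $T^n\to S^n$ gives $\mathbbm n(T^n)=-n<0=\mathbbm n(S^n)$, contradicting $\mathbbm n(M)\ge\mathbbm n(N)$. Thus the conjecture is meaningful only for \emph{natural}, Lusternik--Schnirelmann-type invariants, and the first step of any serious attack is to fix such an invariant; the relevant ones here are $\mathbbm n\in\{\cat,\TC,\dim_{mc}\Wi{(-)},\dim_{MC}\Wi{(-)}\}$. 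A useful general input is that a degree-one map $f\colon M\to N$ between closed orientable $n$-manifolds is $\pi_1$-surjective and induces an injection $f^*\colon H^*(N;R)\hookrightarrow H^*(M;R)$ for every ring $R$ (by naturality of the cap product together with Poincar\'e duality), hence $\smilelength(N;R)\le\smilelength(M;R)$ and, dually, $f_*$ is surjective on homology.

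For $\mathbbm n=\dim_{mc}\Wi{(-)}$ I would proceed as in Proposition~\ref{Dranishnikov}. Assume first $f$ is a $\pi_1$-isomorphism, write $\pi=\pi_1(M)=\pi_1(N)$, and take $u_N\colon N\to B\pi$ classifying, so that $u_M=u_N\circ f$ classifies $M$. If $\dim_{mc}\Wi N=n$, then Theorem~\ref{small} gives $(\Wi u_N)_*([\Wi N])\ne 0$ in $H^{lf}_n(E\pi)$; lifting $f_*[M]=[N]$ to $\Wi f_*[\Wi M]=[\Wi N]$ in locally finite homology yields $(\Wi u_M)_*([\Wi M])\ne 0$, so $\dim_{mc}\Wi M=n\ge\dim_{mc}\Wi N$. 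This already confirms the conjecture for $\dim_{mc}$ whenever the target is ``macroscopically full''. For a general target, the natural attempt is to use the homotopy characterization of Theorem~\ref{0}: given $\dim_{mc}\Wi M\le k$, bound-homotope $\Wi u_M$ into $E\pi^{(k)}$ and try to transport this compression across $f$ to bound-homotope $\Wi u_N$ into $E\pi^{(k)}$. One would then combine this with the connected-sum results (Theorem~\ref{top}, Proposition~\ref{5}) to reduce $N$ to its prime pieces, and with Theorem~\ref{cattheo} and the cup-length bounds behind Theorem~\ref{lscat} to pin down the endpoints in the cases where $\cat$ and $\dim_{mc}$ coincide --- in particular for the symmetric-product examples above, where both sides are computed by explicit Macdonald-relation cup products.

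The hard part --- and the reason this is a conjecture rather than a theorem --- is exactly that last transport step: a degree-one map has no homotopy section, so there is no map $\Wi N\to\Wi M$ along which to push a skeletal compression of the classifying map, and the Poincar\'e-duality surjectivity of $f_*$ controls \emph{homology} but not the homotopy-theoretic ``skeletal compressibility'' measured by $\cat$ and $\dim_{mc}$. Concretely, for $\mathbbm n=\cat$ the statement \emph{is} the original Rudyak conjecture, which is open; and since $\cat$ and $\dim_{mc}\Wi{(-)}$ are related only by one-sided inequalities (Theorem~\ref{cattheo}, together with the bounds used in Theorem~\ref{lscat}), a proof for one invariant would not automatically yield the other. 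A realistic goal is therefore a \emph{conditional} confirmation --- for aspherical targets (Corollary~\ref{nondim}), for connected sums with an essential summand (Theorem~\ref{top}), and for the explicit families treated in this paper --- rather than a proof in full generality.
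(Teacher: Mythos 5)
You have not proved the statement, but neither does the paper: Conjecture~\ref{Rudyak} is recorded there precisely as an open problem (``the conjecture is open for all four invariants listed above''), so there is no proof of it in the paper to compare yours against, and your proposal is right not to claim one. Your accompanying analysis is essentially sound and consistent with how the paper treats the conjecture. The observation that the literal quantification over ``any numerical homotopy invariant'' is vacuously false (e.g.\ $\mathbbm n=-b_1$, or $-\cat$) is correct and explains why the conjecture must be read as being about the specific cat-type invariants $\cat$, $\TC$, $\dim_{mc}\Wi{(-)}$, $\dim_{MC}\Wi{(-)}$; the paper says as much by tying the name to $\cat$ and by invoking the conjecture only for $\dim_{mc}$ in the birational setting of Theorem~\ref{uniruledT}. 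Your partial result for $\dim_{mc}$ --- degree one, $\pi_1$-isomorphism, target with $\dim_{mc}\Wi N=n$, argued via Theorem~\ref{small} and the lift of $f_*[M]=[N]$ to locally finite homology --- is exactly the paper's Proposition~\ref{Dranishnikov}, and your diagnosis of the obstruction (a degree-one map admits no homotopy section, so skeletal compressions of classifying maps, in the sense of Theorem~\ref{0}, cannot be transported from $M$ to $N$; surjectivity of $f_*$ on homology is too weak) is an accurate account of why the general case, including the original $\cat$ version, remains open. One caveat worth keeping in mind: a general degree-one map is only $\pi_1$-surjective, so even the ``macroscopically full target'' case you settle requires the extra $\pi_1$-isomorphism hypothesis (which you do state, and which Proposition~\ref{Dranishnikov} also assumes); dropping it would force you to compare the universal cover of $M$ with the cover classified by $u_N\circ f$, and that step is not addressed by your sketch.
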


Now the name Rudyak Conjecture is mostly associated with the LS-category, $\mathbbm n(M) =\cat(M)$,~\cite{Ru}. The conjecture is open for all four invariants listed above, though there are some partial results in the case of cat. In this paper, we are interested in this conjecture for macroscopic dimensions in the case when the degree one map is a birational map between projective varieties (see Theorem~\ref{uniruledT}).

\section{Gromov--Lawson and Gromov conjectures for K\"ahler metrics, and $SP^n(M_n)$}\label{glsection}

In this section, we draw some connections between the theories of the minimal model, positivity in complex algebraic geometry, and macroscopic dimensions. 

The first result we present provides some support to the following long-standing conjecture.

\begin{conjec}[Gromov--Lawson]\label{GLC}
A closed aspherical $n$-manifold cannot support a Riemannian metric of positive scalar curvature. 
\end{conjec}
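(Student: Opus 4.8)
The final statement is the Gromov--Lawson conjecture, which is open in general; the plan below is therefore a strategy organized around the two classical lines of attack, together with the point at which the methods of this paper become unconditional. The route most in keeping with the present work is to deduce Conjecture~\ref{GLC} from Gromov's conjecture on macroscopic dimension (stated below as Conjecture~\ref{G}): a closed $n$-manifold admitting a metric of positive scalar curvature has $\dim_{mc}\Wi M\le n-2$. If $M^n$ is closed and aspherical, then $\Wi M$ is contractible and $\corref{nondim}$ gives $\dim_{mc}\Wi M=n$, so a metric of positive scalar curvature on $M$ would violate Gromov's bound. Thus it suffices to prove the macroscopic-dimension statement; by $\theoref{small}$ this amounts to showing that for a closed $M$ with positive scalar curvature the image of $[\Wi M]$ in $H^{lf}_n(E\pi_1(M);\Z)$ vanishes, equivalently (by $\theoref{0}$) that the classifying lift $\Wi u_M$ can be boundedly deformed into the $(n-2)$-skeleton of $E\pi_1(M)$.

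Unconditionally, the plan is the minimal-hypersurface induction of Schoen--Yau, in the sharpened form using Gromov's $\mu$-bubbles and the slicing arguments of Chodosh--Li. Given a closed aspherical $(M^n,g)$ with positive scalar curvature, one represents a nonzero class in the codimension-one homology of $M$ detected by $\pi_1(M)$ by a two-sided embedded hypersurface $\Sigma^{n-1}$ and, using a $\mu$-bubble to absorb the positive scalar curvature as a lower-order term, equips $\Sigma$ with a metric whose scalar curvature is again positive. Asphericity of $M$ forces $\Sigma$ to remain \emph{essential}: the invariant detecting essentiality --- a nonzero cup product of one-dimensional classes in the torus-like situation of $\propref{2g-torus}(1)$, or the locally finite fundamental class of $\theoref{small}$ in general --- is inherited by the slice. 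Iterating $n-2$ times produces a closed essential surface, necessarily of genus $\ge1$, carrying a metric of positive scalar curvature, in contradiction with Gauss--Bonnet.

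Two obstacles keep the conjecture open, and I expect the second to be the genuine barrier. The first is \emph{regularity}: an area-minimizing hypersurface in $M^n$ can be singular once $n\ge8$, so the descent must be run through Schoen--Yau's minimal slicings with generic regularity, or through Lohkamp's desingularization, while checking that positivity of scalar curvature and essentiality both survive the smoothing. The second, and the real crux, is \emph{topological control of the slices}: one needs the hypersurface chosen at each stage to stay essential, i.e.\ ``largeness'' of $\pi_1$ must descend to codimension-one slices --- this is precisely what Chodosh--Li and Gromov establish for $n\le5$ but what is unknown for $n\ge6$. For \emph{spin} $M$ there is a complementary index-theoretic route in which the obstruction is the Rosenberg index in $KO_*(C^*_r\pi_1(M))$, nonzero for closed aspherical spin manifolds whenever $\pi_1(M)$ satisfies the strong Novikov conjecture; patching the spin and totally non-spin cases by a codimension-$\le2$ argument \`a la Gromov--Lawson would then close the gap, modulo Novikov. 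We do not remove either obstacle in full generality; rather, in $\secref{glsection}$ we establish the conjecture --- in fact a strengthening of it --- for K\"ahler metrics on projective manifolds admitting an aspherical smooth minimal model, where the minimal model program furnishes the geometric descent that is missing in the Riemannian setting.
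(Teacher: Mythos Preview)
The statement you were asked to prove is labeled \emph{Conjecture} in the paper, and the paper does not claim to prove it; it merely states it as motivation before establishing the partial result Theorem~\ref{asphericalK} in the K\"ahler projective setting. You correctly recognize this at the outset and present a survey of strategies rather than a purported proof, so there is no ``paper's proof'' to compare against, and your closing paragraph accurately identifies where the paper's actual contribution lies.

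That said, a few points of your strategic outline deserve comment. Your first reduction---deducing Conjecture~\ref{GLC} from Conjecture~\ref{G} via Corollary~\ref{nondim}---is entirely sound and is indeed how the paper frames the relationship between the two conjectures. Your Schoen--Yau/$\mu$-bubble sketch is a reasonable summary of the state of the art, though the claim that ``asphericity of $M$ forces $\Sigma$ to remain essential'' is precisely the step that is not known in general and should not be stated as if it were automatic; you do flag this later as the ``real crux,'' but the earlier phrasing is misleading. Finally, your remark that the minimal model program ``furnishes the geometric descent that is missing in the Riemannian setting'' overstates what Theorem~\ref{asphericalK} actually does: the proof there is not an inductive descent but a direct argument using pseudo-effectivity of $K_M$ and positivity of currents, with no analogue of slicing. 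So while your proposal is an honest and well-informed account of why the conjecture is open, it is not a proof, and the paper does not supply one either.
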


More precisely, we prove a strengthening of Conjecture~\ref{GLC} for the scalar curvature associated with a K\"ahler metric. Finally, we apply this result to the symmetric product of curves $SP^n(M_n)$.

Recall that given a K\"ahler manifold $(M, g_{\omega})$, in a holomorphic chart, the K\"ahler metric can be expressed as  $g_\omega=\sum_{i, j}g_{i\bar{j}}dz_i\otimes dz_{\bar{j}}$. We also know that the closed $(1, 1)$-form (the Ricci form) defined locally as
\[
Ric_{g_{\omega}}:=-\sqrt{-1}\partial \bar{\partial}\log\left(\det{g_{i\bar{j}}}\right)
\]
satisfies the following cohomological identity in $H^2(M; \C)$:
\begin{equation}\label{Ricci}
[Ric_{g_{\omega}}]=2\pi c_{1}(M)=-2\pi c_{1}(K_{M}),
\end{equation}
where $c_{1}(M)$ is the first Chern class of the underlying manifold and $c_{1}(K_{M})$ is the first Chern class of the canonical line bundle $\pi: K_{M}\to M$. For these facts, we refer to~\cite[Chapter I]{GH}.

Our result asserts that a strengthening of Gromov and Lawson's Conjecture~\ref{GLC} is true if we restrict our attention to varieties that admit an aspherical smooth minimal model and are equipped with K\"ahler metrics. Along the way, we also compute the macroscopic dimension of such spaces.

\begin{thm}\label{asphericalK}
A smooth projective $n$-variety $M$ with a birational morphism onto an aspherical smooth projective $n$-variety $N$ cannot support a K\"ahler metric of positive scalar curvature. Moreover, we have $\dim_{mc}\Wi{M}=\dim_{MC}\Wi{M}=2n$.
\end{thm}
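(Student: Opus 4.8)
The plan is to break the statement into its two assertions: first, the non-existence of a K\"ahler metric of positive scalar curvature on $M$; second, the computation $\dim_{mc}\Wi{M}=\dim_{MC}\Wi{M}=2n$. Let $f:M\to N$ denote the birational morphism onto the aspherical smooth projective $n$-variety $N$. The key observation is that a birational morphism between smooth projective varieties of the same dimension has degree one, so $f$ is a degree-one map, and moreover it induces an isomorphism on fundamental groups: this is a standard fact since the exceptional locus of a birational morphism has complex codimension $\ge 1$, hence real codimension $\ge 2$, so removing it does not change $\pi_1$, and $N$ minus a proper subvariety still surjects onto $\pi_1(N)$ with the right kernel. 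Thus $\pi_1(M)\cong\pi_1(N)$ and, since $N$ is aspherical, a classifying map for $M$ can be taken to be $u_N\circ f$ where $u_N:N\to B\pi_1(N)=N$ is (homotopic to) the identity.

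For the curvature statement, I would argue via the birational invariance of plurigenera combined with Yau's theorem used already in the proof of Proposition~\ref{avgscalar}. Since $N$ is aspherical of real dimension $2n$, it is in particular not rationally connected; more to the point, an aspherical projective manifold cannot be uniruled (a uniruled variety has $\pi_2\ne 0$ coming from the rational curves, or one invokes that uniruled varieties have Kodaira dimension $-\infty$ while we can instead argue that $N$ minimal and aspherical forces $K_N$ nef). Actually the cleanest route: $N$ contains no rational curves (an aspherical manifold has $\pi_2=0$, so any map $\C P^1\to N$ is null-homotopic, and by a theorem on the topology of rational curves on projective manifolds — or simply because a non-constant rational curve would give a nonzero class in $\pi_2$ after passing to the smooth model — $N$ has no rational curves); hence by Mori's Cone Theorem $K_N$ is nef, and since $N$ is aspherical of general type is not automatic, but we do get that $N$ is minimal. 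The decisive input is that the plurigenera $P_m(M)=P_m(N)$ are birational invariants, and I claim $P_m(N)$ does not vanish for all $m$ and $c_1(N)$ is not torsion: indeed if all plurigenera vanished then $\kappa(N)=-\infty$ and $N$ would be uniruled, contradicting asphericity; and $c_1(N)$ torsion would make $N$ (a finite cover of) a torus, which is consistent with asphericity but then $P_m=1\ne 0$, so in the torsion case Yau's dichotomy is still violated only if... — here I need to be careful. The correct application: by Yau~\cite[Corollary 2]{Yau74}, if $\int_M s_{g_\omega}d\mu_{g_\omega}\ge 0$ for a K\"ahler metric then either all $P_m$ vanish or $c_1(M)$ is torsion. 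Since $P_m(M)=P_m(N)$ and asphericity rules out uniruledness hence $P_m(N)\ne0$ for some (in fact all large) $m$, we are forced into the torsion case. But a projective manifold with $c_1$ torsion has, by the Beauville--Bogomolov decomposition, a finite \'etale cover that is a product of a torus, Calabi--Yau, and hyperk\"ahler factors; asphericity of $M$ (equivalently of this cover up to the Calabi--Yau/hyperk\"ahler factors being aspherical, which they are not unless trivial since they are simply connected of positive dimension) forces the cover to be a torus, so $M$ is finitely covered by a torus, hence $M$ itself is aspherical with virtually abelian — in fact, a torus is its own minimal model, and then $M\to N$ with both aspherical and $N$ minimal... in this sub-case $M=N$ is a torus and carries no psc metric by Gromov--Lawson--Schoen--Yau (or by the fact that a scalar-flat K\"ahler metric on a torus is flat and a strictly positive average is impossible). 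In all cases positive scalar curvature is excluded.

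For the macroscopic dimension statement, I would invoke Proposition~\ref{Dranishnikov} with the degree-one map $f:M\to N$ inducing a $\pi_1$-isomorphism: it suffices to show $\dim_{mc}\Wi{N}=2n$. But $N$ is a closed orientable aspherical $2n$-manifold, so this is exactly Corollary~\ref{nondim}, which gives $\dim_{mc}\Wi{N}=\dim_{MC}\Wi{N}=2n$. Then Proposition~\ref{Dranishnikov} yields $\dim_{mc}\Wi{M}=\dim_{MC}\Wi{M}=2n$ directly. This half of the theorem is essentially immediate from the machinery already set up.

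The main obstacle I anticipate is making the step "$N$ aspherical $\Rightarrow$ $N$ is not uniruled, hence some plurigenus is nonzero" fully rigorous without circularity, and handling the borderline case where $c_1$ is torsion: one must carefully exclude psc on a manifold finitely covered by a torus, and confirm that asphericity plus the Beauville--Bogomolov structure theorem really does force the torus case. An alternative cleaner argument for the curvature part, which I would pursue in parallel, is to bypass Yau's dichotomy and instead use that a birational morphism onto an aspherical minimal model gives $\dim_{mc}\Wi{M}=2n=\dim M$, so $M$ is essential, and then combine with the Schoen--Yau/Gromov--Lawson obstruction for essential manifolds in the K\"ahler/spin setting — but this requires $M$ to be spin or to invoke the enlargeability route, which may not be available, so I expect the Yau-theoretic argument above to be the one that actually goes through. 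Either way, once non-existence of psc is established, the macroscopic dimension computation follows with no further work from Propositions~\ref{Dranishnikov} and Corollary~\ref{nondim}.
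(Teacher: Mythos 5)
Your second half (the computation of the macroscopic dimensions) is exactly the paper's argument: the birational morphism is a degree-one map inducing an isomorphism on $\pi_1$, Corollary~\ref{nondim} gives $\dim_{mc}\Wi{N}=\dim_{MC}\Wi{N}=2n$, and Proposition~\ref{Dranishnikov} transfers this to $M$. That part is fine.

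The curvature half, however, has a genuine gap at the step ``asphericity rules out uniruledness, hence $P_m(N)\ne 0$ for some $m$.'' What asphericity gives you (no rational curves, hence, by \cite{Paun} or Mori's cone theorem~\cite[Theorem 1.24]{Mori}, that $K_N$ is nef/pseudo-effective) does \emph{not} yield a nonvanishing plurigenus: the implication ``$K$ pseudo-effective (equivalently, not uniruled) $\Rightarrow$ $\kappa\ge 0$'' is the nonvanishing conjecture, open in dimension $\ge 4$, and it is not known even under the asphericity hypothesis. So Yau's dichotomy from~\cite[Corollary 2]{Yau74} cannot be closed the way you propose: you cannot exclude the branch ``all plurigenera vanish'' for a general aspherical $N$, and the whole argument (including the delicate torsion-case analysis via Beauville--Bogomolov, which is itself salvageable but unnecessary) collapses at that point. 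The paper avoids plurigenera entirely: from no rational curves on $N$ it gets $K_N$ nef, then $K_M=\pi^*K_N+R$ with $R$ effective is pseudo-effective, so $2\pi c_1(K_M)$ contains a closed \emph{positive} $(1,1)$-current $T$ (see~\cite[Sections 6.A, 6.C]{Demailly}); pairing $T$ with $\omega^{n-1}$ gives $\int_M Ric_{g_\omega}\wedge\omega^{n-1}\le 0$, while the pointwise identity $Ric_{g_\omega}\wedge\omega^{n-1}=\tfrac{2}{n}s_{g_\omega}\omega^n$ makes this integral strictly positive under positive scalar curvature --- a direct contradiction needing only pseudo-effectivity, not effectivity, of $K_M$. (A correct alternative along the lines you gesture at would be to use~\cite[Theorem 1]{Heier}: a K\"ahler metric of positive total scalar curvature forces $M$ to be uniruled, and rational curves on $M$ not contracted by $\pi$ would give rational curves on the aspherical $N$, which is impossible; but as written your plurigenera route does not go through.)
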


\begin{proof}
We begin assuming that $N=M$, so that $M$ is an aspherical smooth projective variety of complex dimension $n$. It is well-known that the universal cover $\Wi{M}$ does not contain any positive-dimensional complex subvariety; see, for example,~\cite[Proposition 6.7]{LMW21}. In particular, $M$ does not support any rational curves. Then, by Mori's Cone theorem~\cite[Theorem 1.24]{Mori}, we have that the canonical line bundle $K_{M}$ is nef. Nef divisors are in the closure of the ample cone, and, in particular, they are pseudo-effective. We refer to~\cite[Chapter 6]{Demailly} for the general definitions and, in particular, to ~\cite[Section 6.C]{Demailly} for the detailed description of these positive cones from both an algebraic and analytical point of view. Now, since the line bundle $K_M$ is pseudo-effective, it can be equipped with a singular Hermitian metric $h=e^{-\psi}$ whose curvature $i\Theta_h=\sqrt{-1}\partial\bar{\partial}\psi$ is a closed \emph{positive} current $T$ representing the cohomology class $2\pi c_{1}(K_{M})$; see, for example, \cite[Sections 6.A and 6.C]{Demailly}. Let $\omega$ be the K\"ahler $(1, 1)$-form associated to $g_{\omega}$ and consider the closed $(n-1, n-1)$-form $\omega^{n-1}$. Since $T$ is a $(1, 1)$-current, it is an element in the dual space of smooth (compactly supported) forms of degree $(n-1, n-1)$. Because of positivity, by testing $T$ on $\omega^{n-1}$, we have that
\begin{equation}\label{Tpositivity}
\langle T, \omega^{n-1}\rangle=\int_{M}i\Theta_h\wedge \omega^{n-1}\geq 0.
\end{equation}
On the other hand, a standard curvature computation gives the pointwise equality
\[
Ric_{g_{\omega}}\wedge \omega^{n-1}=\frac{2}{n}s_{g_{\omega}}\omega^n
\]
on $M$, where $s_{g_{\omega}}: M\to \R$ is the scalar curvature function on the K\"ahler metric $g_{\omega}$. Since the Riemannian volume element $d\mu_{g_{\omega}}$ is a positive multiple of the $(n, n)$-form $\omega^n$, we conclude that the positivity of the scalar curvature implies that
\[
\int_{M}Ric_{g_{\omega}}\wedge \omega^{n-1}>0.
\]
Because of the cohomological identity in~\eqref{Ricci}, we have
\[
\langle T, \omega^{n-1}\rangle=-\int_{M}Ric_{g_{\omega}}\wedge \omega^{n-1}<0,
\]
which contradicts the non-negativity of the current $T$ in~\eqref{Tpositivity}.  Finally, we know by Corollary~\ref{nondim} that $\dim_{mc}\Wi{M}=\dim_{MC}\Wi{M}=2n$, and the proof is complete in this particular case. 

In the general case when $M$ is not necessarily aspherical, we have a birational morphism $\pi: M\to N$ which is not the identity. The canonical line bundles of $M$ and $N$ are related by the formula
\[
K_{M}=\pi^*K_N+R,
\]
where $R$ is an effective divisor on $M$. In particular, since we observed that $K_{N}$ is nef and hence pseudo-effective, we have that $K_{M}$ is pseudo-effective as well. Following line-by-line the proof we described in the aspherical case, we conclude that $M$ cannot admit a K\"ahler metric of positive scalar curvature. Since the fundamental group is a birational invariant, the map $\pi: M\to N$ classifies the universal cover $\Wi{M}$ and $N=B\pi_1(M)$. Both $M$ and $N$ are complex manifolds and, therefore, orientable. Also, $\pi$ maps the fundamental class of $M$ to the fundamental class of $N$. This implies that $M$ is essential. By Proposition~\ref{Dranishnikov}, we deduce that
\[
\dim_{mc}\Wi{M}=\dim_{MC}\Wi{M}=\dim_{mc}\Wi{N}=\dim_{MC}\Wi{N}=2n.
\]
\end{proof}

We conclude this section by applying the machinery we developed to $SP^n(M_n)$.

\begin{cor}\label{nequalg}
For any $n\geq 2$, $SP^n(M_n)$ cannot support a K\"ahler metric of positive scalar curvature. Moreover, $SP^n(M_n)$ is essential and Corollary~\ref{newproof} is obtained.
\end{cor}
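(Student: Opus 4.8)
The plan is to obtain this corollary as an immediate application of Theorem~\ref{asphericalK}, taking $M=SP^n(M_n)$ and $N=J$, the Jacobian of $M_n$. First I would recall that $SP^n(M_n)$ is a smooth projective variety of complex dimension $n$ (see Section~\ref{two}), and that $J$ is a complex torus of complex dimension $g=n$; its universal cover is $\C^n\cong\R^{2n}$, which is contractible, so $J$ is an aspherical smooth projective $n$-variety.

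Next I would observe that the Abel--Jacobi map $\mu_n:SP^n(M_n)\to J$ is a holomorphic (hence regular) map between projective varieties of the same dimension, and by Jacobi's Theorem~\ref{Jacobi} it is surjective and generically one-to-one for $n\ge 2$; therefore $\mu_n$ is a birational morphism (and, as already noted in the proof of Proposition~\ref{avgscalar}, it is not an isomorphism, so we are genuinely in the general case of the proof of Theorem~\ref{asphericalK}). Thus all hypotheses of Theorem~\ref{asphericalK} are met with aspherical target $N=J$, and we conclude simultaneously that $SP^n(M_n)$ admits no K\"ahler metric of positive scalar curvature and that $\dim_{mc}\rwt{SP^n(M_n)}=\dim_{MC}\rwt{SP^n(M_n)}=2n$, which is precisely the content of Corollary~\ref{newproof}.

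For essentiality, I would invoke the same mechanism: in the non-aspherical case of the proof of Theorem~\ref{asphericalK}, a birational morphism $\pi:M\to N$ onto an aspherical target classifies the universal cover of $M$ and sends $[M]$ to $[N]\ne 0$, which forces $M$ to be essential. Applying this with $\pi=\mu_n$ shows $SP^n(M_n)$ is essential; alternatively, this also follows from Proposition~\ref{essential} in the case $g=n$, or directly from $\dim_{mc}\rwt{SP^n(M_n)}=2n$ via Theorem~\ref{small}. I do not expect any real obstacle here, since all the analytic input is already packaged in Theorem~\ref{asphericalK}; the only points to verify are that $\mu_n$ is genuinely a morphism and that Jacobi's theorem applies in the range $n=g\ge 2$, which is exactly the hypothesis $n\ge 2$ of the corollary.
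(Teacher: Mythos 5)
Your proposal is correct and follows essentially the same route as the paper: invoke Jacobi's Theorem~\ref{Jacobi} to see that the Abel--Jacobi map $\mu_n:SP^n(M_n)\to J$ is a birational morphism onto the aspherical abelian variety $J$, then apply Theorem~\ref{asphericalK}, whose proof already delivers the non-existence of a positive scalar curvature K\"ahler metric, the essentiality of $SP^n(M_n)$, and the macroscopic dimension equalities of Corollary~\ref{newproof}. The extra verifications you include (asphericality of $J$, $\mu_n$ being a morphism, the alternative essentiality arguments) are consistent with, and merely elaborate on, the paper's two-line proof.
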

\begin{proof}
By Theorem~\ref{Jacobi} (Jacobi's theorem), the map
\[
\mu_{n}: SP^n(M_n)\to J
\]
is a birational morphism onto the $n$-dimensional Abelian variety $J$ (topologically a $2n$-torus). The conclusions are now direct consequences of Theorem~\ref{asphericalK}.
\end{proof}

Our next result addresses the following conjecture in the K\"ahler setting.

\begin{conjec}[Gromov]\label{G}
Let $(M^n, g)$ be a closed Riemannian $n$-manifold with positive scalar curvature. Then  we have $\dim_{mc}\Wi{M}\leq n-2$. 
\end{conjec}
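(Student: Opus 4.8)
The plan is to attack Conjecture~\ref{G} through the descent philosophy relating positive scalar curvature to lower-dimensional geometry, packaged via the homological reformulation of macroscopic dimension in Theorem~\ref{small} and Theorem~\ref{0}. Write $\pi=\pi_1(M)$ and fix a lift $\Wi u_M:\Wi M\to E\pi$ of a classifying map. By Theorem~\ref{0} (when $\pi$ is geometrically finite) and the locally finite skeletal refinement of Theorem~\ref{small} in general, the inequality $\dim_{mc}\Wi M\leq n-2$ is equivalent to boundedly deforming $\Wi u_M$ into the $(n-2)$-skeleton $E\pi^{(n-2)}$ --- equivalently, to killing the image of the locally finite fundamental class $[\Wi M]\in H_n^{lf}(\Wi M)$ in $H_n^{lf}(E\pi^{(n-2)})$. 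The sufficient geometric input is thus a codimension-$2$ ``macroscopic slicing'' of $\Wi M$ that is uniformly cobounded in the two normal directions.

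First I would manufacture such a slicing from the bound $s_g\geq\sigma>0$ using stable minimal hypersurfaces and $\mu$-bubbles in the style of Schoen--Yau, Gromov--Lawson, and Gromov. A $\mu$-bubble $\Sigma^{n-1}\subset\Wi M$ carries a conformally induced metric of positive scalar curvature together with a band-width estimate ($\leq C/\sqrt{\sigma}$) confining $\Wi M$ near $\Sigma$ in the normal direction; iterating the construction on $\Sigma$ yields a codimension-$2$ slice $\Sigma^{n-2}$ of positive scalar curvature with bounded collapse in \emph{both} normal directions. This is exactly where the constant ``$2$'' enters: positive scalar curvature is weaker than positive Ricci curvature and is already carried by $S^2$, so a single descent yields only $\dim_{mc}\Wi M\leq n-1$ --- the bound underlying Gromov and Lawson's Conjecture~\ref{GLC} --- and two descents are needed for $n-2$, matching the sharp model $S^2\times T^{n-2}$ whose universal cover $S^2\times\R^{n-2}$ has macroscopic dimension $n-2$. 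Assembling the infinitely many slices into a single proper uniformly cobounded Lipschitz map $\Wi M\to K^{n-2}$ then yields the bound, and the homological reformulation above shows it is sharp when $M$ is in addition essential. An alternative route would replace $\mu$-bubbles by the higher index of the Dirac operator and the vanishing of the Rosenberg index, deducing the skeletal deformation from a codimension-$2$ enlargeability obstruction in the spirit of Hanke and Schick.

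The hard part --- and the reason the conjecture remains open --- is making the band-width control \emph{uniform over the entire universal cover} and gluing infinitely many slices coherently: minimal-hypersurface regularity fails for $n\geq 9$, and for infinite non-amenable $\pi$ there is no clean induction over skeleta. Consequently I expect that an unconditional proof of Conjecture~\ref{G} as stated is out of reach; the version actually attainable --- and the one the paper establishes --- replaces the minimal-hypersurface descent by the Mori cone theorem and Demailly's theory of positive currents, exactly as in the proof of Theorem~\ref{asphericalK}, to control the positive-dimensional complex subvarieties of $\Wi M$ directly, giving the weak K\"ahler version of Conjecture~\ref{G} announced in the introduction.
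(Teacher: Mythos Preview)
The statement is a \emph{conjecture}, not a theorem: the paper does not prove it and explicitly labels it as open. You recognize this yourself in the final paragraph, so your proposal is not really a proof but a heuristic sketch followed by an honest admission that the argument does not close. That is fine as far as it goes, but there is no ``paper's own proof'' to compare against.

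What the paper actually proves toward Conjecture~\ref{G} is Theorem~\ref{uniruledT}: for smooth projective $n$-varieties $M$ with a K\"ahler metric of positive scalar curvature, $\dim_{mc}\Wi M\le 2n-1$ (and the full codimension-$2$ bound for complex surfaces). Your description of that argument is inaccurate. You say it ``replaces the minimal-hypersurface descent by the Mori cone theorem and Demailly's theory of positive currents, exactly as in the proof of Theorem~\ref{asphericalK}.'' That is the machinery behind the \emph{Gromov--Lawson} direction (Theorem~\ref{asphericalK}), where one shows $K_M$ is nef and derives a contradiction via a positive current. The weak Gromov direction (Theorem~\ref{uniruledT}) runs the opposite way: Heier--Wong plus BDPP give that positive K\"ahler scalar curvature forces $M$ to be uniruled, hence $K_M$ is \emph{not} pseudo-effective; then Birkar--Cascini--Hacon--McKernan produce a Mori fiber space $Y\to Z$ with $\dim_{\mathbb C}Z\le n-1$ and simply connected fibers (Takayama, Hacon--McKernan), so after resolving indeterminacy one gets a classifying map factoring through a $(2n-2)$-complex. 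The macroscopic-dimension bound is then read off from that fibration, and the loss of one dimension (giving $2n-1$ rather than $2n-2$) comes from passing through a birational modification $M'\to M$ together with Proposition~\ref{Dranishnikov}. No positive currents and no Mori cone theorem appear in that argument.
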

There is a weak version of Gromov's conjecture that states in the above setting that $\dim_{mc}\Wi{M}\leq n-1$. It was stated in~\cite{Gr1} in a different language. Here we prove the weak Gromov Conjecture for smooth projective varieties that admit a K\"ahler metric with positive scalar curvature. Before stating and proving this result, we highlight the following. We get full Conjecture~\ref{G} for totally non-spin manifolds $M$  (see Definition~\ref{totnonspin}). Also, we prove it for all manifolds $M$ \emph{modulo} Rudyak’s Conjecture~\ref{Rudyak} for $\dim_{mc}$. Finally, with the aid of the Kodaira--Enriques classification, we prove an optimal statement for all complex surfaces.

We begin by recalling the following definition.

\begin{defn}\label{uniruled}
A smooth projective $n$-variety $M^n$ is said to be \emph{uniruled} if through any point $p\in M$, there exists a rational curve $C$ passing through $p$.
\end{defn}

\begin{thm}\label{uniruledT}
Let $M$ be a smooth projective $n$-variety that supports a K\"ahler metric with positive scalar curvature. We have $\dim_{mc}\Wi{M}\leq \dim_{MC}\Wi{M}\leq2n-1$. In the case $n=2$, we have $\dim_{mc}\Wi{M}=\dim_{MC}\Wi{M}\leq 2$
\end{thm}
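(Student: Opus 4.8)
The strategy is to reduce the positive–scalar–curvature hypothesis to a statement about the birational geometry of $M$, then feed that into the macroscopic–dimension machinery developed in Sections~\ref{Cmacroscopic} and~\ref{glsection}. First I would run the contrapositive of the argument in Theorem~\ref{asphericalK}: if $K_M$ were pseudo-effective, then by equipping it with a singular Hermitian metric whose curvature is a closed positive current $T$ representing $2\pi c_1(K_M)$, pairing $T$ with $\omega^{n-1}$, and using the pointwise identity $Ric_{g_\omega}\wedge\omega^{n-1}=\tfrac{2}{n}s_{g_\omega}\,\omega^n$ together with~\eqref{Ricci}, we would obtain $0\le\langle T,\omega^{n-1}\rangle=-\int_M Ric_{g_\omega}\wedge\omega^{n-1}<0$, a contradiction. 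Hence a smooth projective $M$ carrying a K\"ahler metric of positive scalar curvature must have $K_M$ \emph{not} pseudo-effective. By the now-standard consequence of the minimal model program (Boucksom--Demailly--P\u aun--Peternell, and also available in Mori-theoretic form for the results cited from~\cite{Mori}), $K_M$ not pseudo-effective is equivalent to $M$ being uniruled in the sense of Definition~\ref{uniruled}.

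**The dimension estimate.** Once $M$ is known to be uniruled, I would produce a degree-one map witnessing the macroscopic-dimension bound. Uniruledness gives, through a general point, a rational curve; running an MRC-type fibration (or, more elementarily, using that the fibers of the maximal rationally connected quotient are positive-dimensional and rationally connected hence simply connected) produces a birational model $M'$ of $M$ admitting a fibration over a base of dimension $<n$ with rationally connected fibers. Passing to universal covers, the rationally connected (simply connected) fibers lift homeomorphically, so the composition $\Wi{M'}\to\Wi{\text{base}}$ is uniformly cobounded onto a space of real dimension $\le 2n-2$, and one can promote this to a proper Lipschitz map onto a $(2n-1)$-complex; this shows $\dim_{MC}\Wi{M'}\le 2n-1$. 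Finally, since the fundamental group is a birational invariant and a birational morphism is a degree-one map, I would invoke the Rudyak-type behavior of $\dim_{mc}$ under degree-one maps — or, more robustly here, simply transport the explicit cobounded map along the birational correspondence as in the proof of Theorem~\ref{top} — to conclude $\dim_{mc}\Wi{M}\le\dim_{MC}\Wi{M}\le 2n-1$, using~\eqref{basicinequality} for the first inequality.

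**The surface case.** For $n=2$ the bound $2n-1=3$ must be improved to $2$, and here I would use the Kodaira--Enriques classification. A smooth projective surface carrying a K\"ahler metric of positive scalar curvature is uniruled, hence (being a surface) rational or birationally ruled over a curve $C$. If the surface is rational, $\pi_1=0$, so $\Wi{M}=M$ is compact and $\dim_{mc}\Wi{M}=0\le 2$. If it is ruled over a curve $C$ of genus $h$, then $\pi_1(M)\cong\pi_1(C)$, the universal cover fibers over $\Wi{C}$ (a point, $\R^2$, or the disk) with $\C P^1$ fibers, and projecting to the base — which has real dimension $\le 2$ — gives a uniformly cobounded proper Lipschitz map, so $\dim_{mc}\Wi{M}=\dim_{MC}\Wi{M}\le 2$. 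Combining the cases yields $\dim_{mc}\Wi{M}=\dim_{MC}\Wi{M}\le 2$, and in fact equality of the two invariants throughout because the base is either a point or a non-positively curved surface, where Corollary~\ref{nondim} and Remark~\ref{carthad} apply.

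**Expected main obstacle.** The delicate point is the passage from ``$K_M$ not pseudo-effective'' to a geometric fibration with simply connected fibers that is compatible with the metric/coboundedness bookkeeping on universal covers: one must be careful that the rationally connected fibers are genuinely simply connected (true, by a theorem of Campana and of Koll\'ar--Miyaoka--Mori) so that they lift to compact pieces upstairs, and that the resulting map to the base can be made proper and Lipschitz for the lifted geodesic metric. The birational modification also momentarily leaves the category of the given metric, so the final transfer of the coboundedness estimate back to $M$ itself — rather than to a birational model — is exactly where either a Rudyak-type input for $\dim_{mc}$ or the hands-on surgery argument of Theorem~\ref{top} is needed; I expect this to be the part requiring the most care.
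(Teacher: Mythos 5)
Your overall architecture coincides with the paper's: positive scalar curvature forces $K_M$ to be non-pseudo-effective (your contrapositive reading of the current argument in Theorem~\ref{asphericalK} is a legitimate, essentially equivalent substitute for the paper's citation of Heier--Wong, which itself rests on Boucksom--Demailly--P\u aun--Peternell), the minimal model machinery then produces a smooth birational model $M'$ fibered over a base of complex dimension at most $n-1$ with simply connected fibers and $\pi_1(M')\cong\pi_1(\text{base})$ (the paper runs a Mori fiber space via BCHM together with Takayama and Hacon--McKernan, rather than your MRC quotient, but the two devices play the same role), and your $n=2$ argument via Yau and the Kodaira--Enriques classification is essentially identical to the paper's.

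The genuine gap is the final transfer from $\Wi{M'}$ back to $\Wi{M}$, and the two mechanisms you offer do not close it. Rudyak-type monotonicity of $\dim_{mc}$ under degree-one maps is precisely Conjecture~\ref{Rudyak}, which the paper records as open (the remark following the theorem even notes that having it for birational maps would upgrade the conclusion to the full Gromov conjecture), so it cannot be invoked; and the argument of Theorem~\ref{top} is a connected-sum argument that establishes a \emph{lower} bound via the homological criterion of Theorem~\ref{small} --- it gives no way to push a uniformly cobounded map forward along the degree-one map $M'\to M$, which points in the wrong direction for such a transport. What the paper actually uses is the contrapositive of Proposition~\ref{Dranishnikov} (equivalently, Theorem~\ref{small}): since $\dim_{MC}\Wi{M'}\le 2n-2<2n$ and $f:M'\to M$ has degree one and induces an isomorphism on $\pi_1$, the top-dimensional locally finite fundamental class of $\Wi{M}$ cannot survive in $H^{lf}_{2n}(E\pi)$, whence $\dim_{mc}\Wi{M}<2n$. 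This is also exactly why the bound degrades from $2n-2$ on the model to $2n-1$ on $M$, a point your write-up obscures: your own construction yields $\dim_{MC}\Wi{M'}\le 2n-2$, yet you state the intermediate bound as $2n-1$ and then transfer it unchanged to $M$, which is precisely the unjustified Rudyak-type step. You correctly identified this transfer as the main obstacle, but the tools you name do not resolve it; the resolving step available within the paper is Proposition~\ref{Dranishnikov}.
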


\begin{proof}
By~\cite[Theorem 1]{Heier}, if $(M^n, g_{\omega})$ has positive scalar curvature, then $M^n$ is \emph{uniruled}. This result crucially relies on the breakthrough of Boucksom \emph{et al.}~\cite{Paun} characterizing uniruled varieties as the ones having canonical class not pseudo-effective.  Since $K_{M}$ is not pseudo-effective, by~\cite[Corollary 1.3.3]{Birkar}, we obtain that $M$ is birational to a Mori fiber space $\eta: Y\to Z$. Recall that by definition, such a space has terminal singularities and the generic fiber is Fano, say $F$. By~\cite[Theorem 1.2]{Takayama}, we obtain that $\pi_1(Y)=\pi_1(Z)$. By resolving the locus of indeterminacy of the birational map from $M$ to $Y$, we can assume that there exists $M^\prime$ smooth and birational to $M$ and a fibration
\[
\varphi: M^{\prime} \to  Z^{\prime},
\] 
over a smooth base $Z^\prime$ with $\dim_{\C}{Z^\prime}\leq n-1$ with generic fiber $F^\prime$. Moreover, by~\cite[Corollary 1.3]{Hacon}, we have that
\[
\pi_1(\varphi^{-1}(p))=0
\]  
for any $p\in Z^\prime$.
This construction yields
\[
\pi_1(M)=\pi_1(M^\prime)=\pi_1(Z)=\pi_1(Z^\prime).
\]
Since $\dim_{\R} Z^\prime\le 2n-2$, the classifying map $u^\prime:Z^\prime\to B\pi$ of its universal cover lands in the $(2n-2)$-dimensional skeleton $B\pi^{(2n-2)}$.
Then $u^\prime \phi: M^\prime\to B\pi^{(n-2)}$ is the classifying map for $\rwt{M^\prime}$, where $\phi: M^\prime \to Z^\prime$ is the obvious map. We may assume that $u^\prime\phi$ is Lipschitz. Then the map 
\[
\rwt{u^\prime\circ \phi}:\rwt{M^\prime}\to E\pi^{(n-2)}
\]
is a uniformly cobounded Lipschitz map. This implies that $\dim_{MC}M^\prime \le 2n-2$.  Thus, we have constructed a degree one map $f:M^\prime\to M$ that induces an isomorphism of the fundamental groups and $\dim_{MC}\rwt M^\prime\le 2n-2$. By Proposition~\ref{Dranishnikov}, we obtain $\dim_{mc}\Wi M\le\dim_{MC} \Wi M \le 2n-1$.

In the case of complex surfaces, we proceed as follows. Assume $(M^2, g_{\omega})$ is such that $s_{g_{\omega}}>0$. As proved by S.-T.~Yau~\cite[Theorem 2]{Yau74}, the Kodaira--Enriques classification (see, for example,~\cite[Chapter IV]{GH}) gives that $M^2$ is either a \emph{ruled} surface or $\C P^2$. For $\C P^2$, the statement is trivial because we have $\dim_{mc}\C P^2=\dim_{MC}\C P^2=0$. For the remaining cases, recall that a complex surface is ruled if and only if it is the blow-up (possibly at multiple points) of a holomorphic $\C P^1$-bundle over $M_g$. Thus, we have a holomorphic map
\[
\varphi: M^2 \to M_g
\]
with simply connected fibers, which induces an isomorphism on $\pi_1$. By pulling back this map to the universal covers, we conclude that $\dim_{mc}\Wi{M^2}=\dim_{MC}\Wi{M^2}=0$ if and only if $M_g=\C P^1$, and in the remaining cases of genus $g\geq 1$, we obtain $\dim_{mc}\Wi{M^2}=\dim_{MC}\Wi{M^2}=2$. 
\end{proof}
\begin{rem}
In Theorem~\ref{uniruledT}, we would get the full Gromov Conjecture~\ref{G} if Rudyak's Conjecture~\ref{Rudyak} for $\dim_{mc}$ were true for birational maps. Also, in view of the main theorem in~\cite{BD2}, the inequality $\dim_{mc}\Wi{M}\leq2n-2$ holds in Theorem~\ref{uniruledT} for totally non-spin manifolds $M$.
\end{rem}

\begin{rem}
As a by-product of the proof of Theorem~\ref{uniruledT}, we have the equality $\dim_{mc}=\dim_{MC}$ for projective surfaces admitting a K\"ahler metric with positive scalar curvature. Our discovery that certain symmetric squares of surfaces have $\dim_{mc}\neq\dim_{MC}$ (see Theorem~\ref{cex}) implies that this result does not extend to higher dimensions starting with threefolds. More precisely, the projective threefolds
\[
SP^2(M_g)\times \C P^1, \quad g\geq 3,
\]
admit K\"ahler metrics with positive scalar curvature but satisfy $\dim_{mc}\neq \dim_{MC}$.
\end{rem}

\section{Spin structures}\label{spinstr}
The goal of this section is to determine the existence and non-existence of spin structures on symmetric products of surfaces and their universal covers. 

We begin by recalling that an orientable manifold $M$ has a spin structure if and only if its second Stiefel--Whitney class $w_2(M)\in H^2(M; \Z_2)$ vanishes, see~\cite[Theorem 2.1, Page 86]{SpinG}. Clearly, $w_2(M)=0$ if and only if its evaluation on any $2$-dimensional $\Z_2$-homology class of $M$ of is zero. We recall that complex projective space $\mathbb CP^m$ is spin if and only if $m$ is odd.

By the Hurewicz theorem, the universal cover $\wt M$ has a spin structure if and only if $w_2(M)$ vanishes on every spherical $\Z_2$-homology class in $M$. 

\begin{definition}\label{totnonspin}
An orientable manifold $M$ is said to be \emph{totally non-spin} if both $M$ and its universal cover $\Wi{M}$ do not support a spin structure. 
\end{definition}

\begin{ex}\label{ex1}
 In view of the fiber bundle $SP^n(M_g)\to T^{2g}$ with the fiber $\mathbb CP^{n-g}$ for $n\ge 2g-1$, the universal cover of $SP^n(M_g)$ is diffeomorphic to the product $\mathbb CP^{n-g}\times\mathbb R^{2g}$. Therefore, for $n\ge 2g-1$, the manifold $SP^n(M_g)$ is totally non-spin if and only if $n-g$ is even. 
\end{ex}
\begin{ex}\label{ex2}
The manifold $SP^2(M_g)$ is not spin for any $g\ge 0$.  This follows from the fact that every closed spin $4$-manifold has an even intersection form (see~\cite[Theorem 2.10]{SpinG} for a proof). We note that  $(c^*)^2=1$ for $c^*\in H^2(SP^n(M_g))$ in the notations of Section~\ref{cohomo}.
\end{ex}

In this section, we show that the restriction $n\ge 2g-1$  in Example~\ref{ex1} can be dropped and that the conclusion of Example~\ref{ex2} holds for all $n$.

\subsection{The spherical homology class}\label{subsectsph}
We consider the base point inclusion map $\xi:M_g\to SP^n(M_g)$. Let $\phi:\partial D^2\to\bigvee^{2g}S^1$ be the attaching map for the standard CW complex structure on 
\[
M_g=\bigvee^{2g} S^1\cup_\phi D^2.
\]
Since $\pi_1(SP^n(M_g))=\mathbb Z^{2g}$ is abelian, the composition $\xi\circ\phi$ is null-homotopic. Moreover, the null-homotopy can be chosen in $SP^n(\bigvee^{2g}S^1)\subset SP^n(M_g)$. Such a null-homotopy, together with the characteristic map $\psi:D^2\to M_g$, defines 
%\ekansh{how?}
a map of a $2$-sphere $\alpha:S^2\to SP^n(M_g)$. We denote the corresponding spherical homology class by $u$.

\begin{prop}\label{s-class}
$u=c-\sum a_i\cdot b_i$.
\end{prop}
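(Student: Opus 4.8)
The idea is to pin down $u\in H_2(SP^n(M_g))$ by testing it against the Abel--Jacobi map $\mu_n\colon SP^n(M_g)\to J$ and against the collapse $\bar q\colon SP^n(M_g)\to SP^n(S^2)=\C P^n$. First I would record what $\mu_n$ does on low-dimensional homology. The composite $\mu_n\circ\xi$ equals $\mu_1$ up to a translation of $J$ (which is homotopic to the identity), and $\mu_1=\psi\circ\Wi q$ collapses the $g$-holed sphere and then sends each $T_i$ onto a coordinate subtorus of $J=\prod_i T_i$; hence $(\mu_n)_*(c)=(\mu_1)_*[M_g]=\sum_{i=1}^g a_i\cdot b_i$. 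Moreover $\mu_n$ intertwines the additions on symmetric products with the group law of $J$, so it preserves Pontryagin products; combined with Proposition~\ref{2-homology}(1), which identifies $(\mu_n)_*$ on $H_1$ with the identity in the matched notation, this gives $(\mu_n)_*(a_i\cdot a_j)=a_i\cdot a_j$, $(\mu_n)_*(a_i\cdot b_j)=a_i\cdot b_j$, and $(\mu_n)_*(b_i\cdot b_j)=b_i\cdot b_j$ in $H_2(J)=\Lambda^2(\mathbb Z^{2g})$.

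Next I would observe that $(\mu_n)_*(u)=0$, since the composite $\mu_n\circ\alpha\colon S^2\to J$ is null-homotopic because $J$ is aspherical, so $\pi_2(J)=0$. Writing $u=\lambda c+(\text{a }\mathbb Z\text{-combination of the Pontryagin products }a_i\cdot a_j,\ b_i\cdot b_j\ (i<j),\ a_i\cdot b_j)$ as permitted by Proposition~\ref{2-homology}(2), the computation above shows that $(\mu_n)_*(u)=0$ forces every Pontryagin coefficient in $u$ to vanish except the coefficient of $a_i\cdot b_i$, which must equal $-\lambda$ for each $i$. Therefore $u=\lambda(c-\sum_{i=1}^g a_i\cdot b_i)$ for some $\lambda\in\mathbb Z$, and it remains to show $\lambda=1$.

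To pin down $\lambda$ I would compute $\bar q_*(u)\in H_2(\C P^n)\cong\mathbb Z$ directly from the construction of $\alpha$. Recall that $\alpha$ is the union of two $2$-disks: on the first, $\alpha$ is $\xi\circ\psi$, the characteristic map of the top cell of $M_g$ pushed into $SP^n(M_g)$; on the second it is a null-homotopy $G$ of $\xi\circ\phi$ with image in $SP^n(\vee^{2g}S^1)$. Applying $\bar q=SP^n(q)$: since $q$ collapses $\vee^{2g}S^1$ to the basepoint, $\bar q\circ G$ is constant; and by naturality of the basepoint inclusion, $\bar q\circ\xi\circ\psi=\xi_0\circ(q\circ\psi)$, where $q\circ\psi\colon D^2\to S^2$ is the standard degree-one quotient $D^2/\partial D^2\cong S^2$ and $\xi_0\colon S^2=SP^1(S^2)\hookrightarrow\C P^n$ is the basepoint inclusion. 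Hence $\bar q\circ\alpha$ is homotopic to $\xi_0$ precomposed with a degree-one self-map of $S^2$, so $\bar q_*(u)=(\xi_0)_*[S^2]$. On the other hand $(\xi_0)_*[S^2]=(\xi_0)_*q_*[M_g]=\bar q_*(\xi_*[M_g])=\bar q_*(c)$, while $\bar q_*(c-\sum_i a_i\cdot b_i)=\bar q_*(c)$ because $H_1(\C P^n)=0$ kills $\bar q_*(a_i\cdot b_i)=\bar q_*(a_i)\cdot\bar q_*(b_i)$. Comparing the two expressions for $\bar q_*(u)$ and using that $\bar q_*(c)$ generates the torsion-free group $H_2(\C P^n)$, we get $\lambda=1$, i.e.\ $u=c-\sum_{i=1}^g a_i\cdot b_i$.

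The step I expect to require the most care is the direct evaluation in the last paragraph: one must verify cleanly that $SP^n(q)$ annihilates the null-homotopy half of $\alpha$ while carrying its characteristic-map half onto the standard generator of $H_2(\C P^1)\subseteq H_2(\C P^n)$, together with the companion identity $(\mu_n)_*(c)=\sum_i a_i\cdot b_i$. Everything else follows routinely from Proposition~\ref{2-homology}, Corollary~\ref{mu}, and the topological description of the Abel--Jacobi map in Section~\ref{two}.
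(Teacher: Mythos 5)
Your proof is correct, but it takes a genuinely different route from the paper's. The paper argues directly at the level of the two hemispheres of the spheroid: it chooses the null-homotopy of $\xi\circ\phi$ so that the second $2$-cell is attached inside $SP^n(\vee_{2g}S^1)$ again by the product of commutators, observes that $SP^n(\vee_{2g}S^1)$ contains the tori $T_i=S^1_{a_i}\times S^1_{b_i}$, and identifies the class of that hemisphere as $\sum_i[T_i]=\sum_i a_i\cdot b_i$, so that $u=c-\sum_i a_i\cdot b_i$ falls out cell by cell (the formal version of this identification is the cited Lemma 5 of Kallel). You instead treat $u$ as an unknown element of $H_2(SP^n(M_g))$, expand it in the basis of Proposition~\ref{2-homology}(2), and determine it by pushing forward along two maps: $(\mu_n)_*u=0$ because $J$ is aspherical, together with $(\mu_n)_*(c)=\sum_i a_i\wedge b_i$ and the compatibility of $\mu_n$ with Pontryagin products, forces $u=\lambda\bigl(c-\sum_i a_i\cdot b_i\bigr)$; then the cellular computation of $\bar q_*(u)=\bar q_*(c)\neq 0$ in $H_2(\C P^n)$ pins $\lambda=1$. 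What your approach buys: it never needs to identify the homology class carried by the null-homotopy hemisphere, so the conclusion holds for \emph{any} null-homotopy with image in $SP^n(\vee_{2g}S^1)$, not just the specific one chosen in the paper; the cost is that it leans on the computation of $H_2$ and on the behavior of $\mu_n$ and $\bar q$ on it (arguments that the paper only deploys afterwards, in Corollary~\ref{image of u} and Proposition~\ref{0 on spherical}), whereas the paper's proof is self-contained at the chain level. Both your key steps check out: the degree-one identification $\bar q\circ\xi\circ\psi=\xi_0\circ(q\circ\psi)$ with the second hemisphere collapsed is exactly parallel to the paper's later proof of Corollary~\ref{image of u}, and the vanishing $\bar q_*(a_i\cdot b_i)=0$ can alternatively be justified by the support argument (the cycle lies in $SP^n(\vee_{2g}S^1)$, which $\bar q$ sends to a point) rather than through multiplicativity and $H_1(\C P^n)=0$.
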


\begin{proof}
We can view the spheroid $\alpha(S^n)$ as two $2$-cells attached to $\bigvee_{2g}S^1$ with the same attaching map. The first cell is attached by means of the attaching map in $M_g\subset SP^n(M_g)$, which is the product of commutators. Let us identify $SP^n(\bigvee_{2g}S^1)$ with $SP^n(\bigvee_{i=1}^g(S^1_{a_i}\bigvee S^1_{b_i}))$. Then, see that the space $SP^n(\bigvee_{2g}S^1)$ contains the $2$-torus $T_i$ for each $i\le g$ as follows:
\[
T_i^2=S^1_{a_i}\times S^1_{b_i}\subset SP^2\bigl(S^1_{a_i}\bigvee S^1_{b_i}\bigr)\subset SP^n\bigl(S^1_{a_i}\bigvee S^1_{b_i}\bigr)\subset SP^n\bigl(\bigvee_{i=1}^g\bigl(S^1_{a_i}\bigvee S^1_{b_i}\bigr)\bigr).
\]
We defined the second $2$-cell to be attached in $SP^n(\bigvee_{2g}S^1)$ again by the product of the commutators. Thus, the homology class defined by the second cell is the sum of the fundamental classes of tori $[T_i]=a_i\cdot b_i$. This implies the formula.  
\end{proof}

A more formal proof of Proposition~\ref{s-class} can be found in~\cite[Lemma 5]{Ka2}. 
\begin{cor}\label{image of u}
For the map $\bar q: SP^n(M_g)\to \mathbb CP^n$ defined in Section~\ref{cohomo}, we have that $\bar q_*(u)=\bar q_*(c)=[S^2]\in H_2(\mathbb CP^n)$.
\end{cor}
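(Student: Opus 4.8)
The plan is to combine Proposition~\ref{s-class}, which gives $u=c-\sum_i a_i\cdot b_i$ in $H_2(SP^n(M_g))$, with the naturality of the symmetric product construction. First I would observe that the map $q:M_g\to S^2$ by definition collapses the $1$-skeleton $\vee_{2g}S^1$ to a point; hence the induced map $\bar q=SP^n(q):SP^n(M_g)\to SP^n(S^2)=\mathbb CP^n$ carries the subspace $SP^n(\vee_{2g}S^1)$ to the single point $SP^n(\mathrm{pt})$. Since each class $a_i\cdot b_i$ is the fundamental class $[T_i]$ of the torus $T_i\subset SP^n(\vee_{2g}S^1)$ exhibited in the proof of Proposition~\ref{s-class}, this immediately yields $\bar q_*(a_i\cdot b_i)=0$ for every $i$. (Alternatively, one can invoke that $SP^\infty(q)$ is a map of topological monoids, hence a homomorphism of Pontryagin rings, and that $q_*:H_1(M_g)\to H_1(S^2)=0$ already kills $a_i$ and $b_i$.)

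The second step is to identify $\bar q_*(c)$. Recall that $c$ is, by definition, the image of the fundamental class $[M_g]$ under the basepoint inclusion $\xi:M_g\hookrightarrow SP^n(M_g)$. The square relating $q$, $\bar q=SP^n(q)$, and the two basepoint inclusions commutes, so $\bar q_*(c)=\bar q_*(\xi_*[M_g])=\xi'_*(q_*[M_g])$, where $\xi':S^2\hookrightarrow SP^n(S^2)=\mathbb CP^n$. Since $q$ sends the fundamental class of $M_g$ to that of $S^2$ (as already used in Section~\ref{cohomo}) and $\xi'_*$ sends $[S^2]$ to the generator of $H_2(\mathbb CP^n)=\mathbb Z$, we get $\bar q_*(c)=[S^2]$.

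Putting the two steps together, $\bar q_*(u)=\bar q_*(c)-\sum_i\bar q_*(a_i\cdot b_i)=[S^2]-0=[S^2]=\bar q_*(c)$, which is the assertion. I do not anticipate any real obstacle here; the only point requiring a little care is the functoriality used in the first step — either the naturality of the Pontryagin product under $SP^\infty$, or equivalently the bookkeeping of exactly which subcomplex of $SP^n(M_g)$ supports the classes $a_i\cdot b_i$ — but this is already built into the framework set up in Section~\ref{chh}.
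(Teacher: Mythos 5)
Your argument is correct and is essentially the paper's proof: both hinge on the fact that the cycles $a_i\cdot b_i$ are supported in $SP^n(\vee_{2g}S^1)$, which $\bar q$ collapses, so $\bar q_*(a_i\cdot b_i)=0$, and then apply Proposition~\ref{s-class} together with the fact (already recorded in Section~\ref{cohomo}) that $q$ carries $[M_g]$ to $[S^2]$. Your extra remarks on the commuting square of basepoint inclusions and the Pontryagin-ring alternative are just more explicit versions of the same bookkeeping.
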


\begin{proof}
 Since the support of the cycle $a_i\cdot b_i$ lies in $SP^n(\bigvee_{2g}S^1)$, we have $\bar q_*(a_i\cdot b_i)=0$ for each $i$. Thus, we use Proposition~\ref{s-class} to get $\bar q_*(u)=\bar q_*(c)=[S^2]$.
\end{proof}

\begin{prop}\label{generator}
    For $n\ge 3$, $\pi_2(SP^n(M_g))$ is generated by $\alpha:S^2\to SP^n(M_g)$.
\end{prop}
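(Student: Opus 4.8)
The plan is to identify $\pi_2(SP^n(M_g))$ with $\pi_2(\mathbb{CP}^\infty) = \mathbb{Z}$ via the arguments already in place, and then show that the class $u$ of the spheroid $\alpha$ is a generator of this $\mathbb{Z}$. By Proposition~\ref{pi2}, for $n\ge 3$ we have $\pi_2(SP^n(M_g)) = \mathbb{Z}$, and from the proof of that proposition this isomorphism is realized on the universal cover by the inclusion of the $3$-skeleton $Y^{(3)} = X^{(3)}$, where $X \simeq \mathbb{CP}^\infty$ is the universal cover of $\overline{SP}^\infty(M_g)$. So the generator of $\pi_2$ corresponds to the generator of $H_2(\mathbb{CP}^\infty) = \mathbb{Z}$, which maps to $[S^2] \in H_2(\mathbb{CP}^n)$ under the composition down to $\mathbb{CP}^n$. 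The key point is therefore to compute the image of $u = h(\alpha) \in H_2(SP^n(M_g))$ under the map $\bar q_* : H_2(SP^n(M_g)) \to H_2(\mathbb{CP}^n)$ and check that it is $\pm[S^2]$, the generator.

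First I would invoke Corollary~\ref{image of u}, which states precisely that $\bar q_*(u) = [S^2] \in H_2(\mathbb{CP}^n)$. Next I would argue that $\bar q_*$ detects the generator of $\pi_2$: since $\bar q = SP^n(q) : SP^n(M_g) \to \mathbb{CP}^n$ is compatible with base-point inclusions and with the map $\overline{SP}^\infty(M_g) \to \mathbb{CP}^\infty$ induced by $q$, and since $q$ sends the fundamental class of $M_g$ to the fundamental class of $S^2$ (as recalled in Section~\ref{cohomo}), the induced map on $\pi_2$ fits into the commuting square with the identifications $\pi_2(SP^n(M_g)) = \pi_2(X) = \pi_2(\mathbb{CP}^\infty)$ and $\pi_2(\mathbb{CP}^n) = \pi_2(\mathbb{CP}^\infty)$. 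Under these identifications $\bar q_*$ on $\pi_2$ is the identity $\mathbb{Z} \to \mathbb{Z}$, because on the universal cover $\bar q$ restricted to the $3$-skeleton is (homotopic to) the inclusion $\mathbb{CP}^1 \hookrightarrow \mathbb{CP}^\infty$ up to the relevant identifications. Hence an element of $\pi_2(SP^n(M_g))$ is a generator if and only if its Hurewicz image maps to $\pm[S^2]$ under $\bar q_*$.

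Combining these two observations: $\alpha$ represents an element of $\pi_2(SP^n(M_g)) \cong \mathbb{Z}$ whose Hurewicz image $u$ satisfies $\bar q_*(u) = [S^2]$, a generator of $H_2(\mathbb{CP}^n) = \mathbb{Z}$. Since the Hurewicz map $\pi_2 \to H_2$ need not be injective in general, I must be slightly careful here: I would instead argue directly on $\pi_2$ using that $\bar q_* : \pi_2(SP^n(M_g)) \to \pi_2(\mathbb{CP}^n)$ is an isomorphism $\mathbb{Z} \to \mathbb{Z}$ (from the skeletal description above) and that $\bar q_* [\alpha] = [\,q \circ \alpha : S^2 \to \mathbb{CP}^n\,]$ generates $\pi_2(\mathbb{CP}^n)$ — the latter because its Hurewicz image is $[S^2]$ and the Hurewicz map is an isomorphism for the simply connected space $\mathbb{CP}^n$ in degree $2$. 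Therefore $[\alpha]$ generates $\pi_2(SP^n(M_g))$.

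The main obstacle I anticipate is making rigorous the claim that $\bar q_*$ is an isomorphism on $\pi_2$, i.e. that under the identifications coming from the Kallel--Salvatore model and the Dold--Thom/universal-cover analysis of Proposition~\ref{pi2}, the map $SP^n(q)$ corresponds to the standard inclusion $\mathbb{CP}^1 \hookrightarrow \mathbb{CP}^\infty$ on $3$-skeleta. This requires checking naturality of the whole setup (the base-point inclusions $\overline{SP}^n \hookrightarrow \overline{SP}^\infty$, the pullback-of-universal-cover construction, and the identification $SP^\infty(S^2) = \mathbb{CP}^\infty$) with respect to the cellular map $q : M_g \to S^2$ — all of which is formal but needs to be spelled out. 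Once that naturality is in hand, the statement follows immediately from Corollary~\ref{image of u}.
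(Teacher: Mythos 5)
Your argument is correct in substance but follows a genuinely different route from the paper. The paper also starts from Proposition~\ref{pi2}, but then works entirely inside $H_2(SP^n(M_g))$: if $[\alpha]=m\gamma$ for a generator $\gamma$ of $\pi_2\cong\Z$, then $u=h([\alpha])$ is divisible by $m$, and divisibility is ruled out by showing $u$ is primitive, using the explicit formula $u=c-\sum_i a_i\cdot b_i$ of Proposition~\ref{s-class} together with the free basis of $H_2(SP^n(M_g))$ from Proposition~\ref{2-homology}. You instead detect primitivity externally, pushing forward along $\bar q=SP^n(q)$ to $\C P^n$ and combining Corollary~\ref{image of u} with the Hurewicz isomorphism for the simply connected space $\C P^n$. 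Your route avoids any appeal to the additive structure of $H_2(SP^n(M_g))$, at the price of the naturality bookkeeping you flag; for that step, the cleanest formulation is the commutative square comparing the stabilization maps $\pi_2(SP^n(-))\to\pi_2(SP^\infty(-))$ for $M_g$ and for $S^2$ (the former an isomorphism by the proof of Proposition~\ref{pi2}, the latter the standard inclusion $\C P^n\subset\C P^\infty$) together with naturality of the Dold--Thom identification, which gives $q_*:H_2(M_g)\to H_2(S^2)$ on the bottom row; this is less delicate than matching $3$-skeleta of universal covers. In fact you can shorten further: you never need $\bar q_*$ to be an isomorphism on $\pi_2$. If $[\alpha]=m\gamma$ in $\pi_2(SP^n(M_g))\cong\Z$, then by naturality of the Hurewicz map $[S^2]=\bar q_*(u)=m\,\bar q_*(h(\gamma))$ would be divisible by $m$ in $H_2(\C P^n)\cong\Z$, forcing $m=\pm 1$; this uses only Proposition~\ref{pi2}, Corollary~\ref{image of u}, and Hurewicz naturality, and is exactly the same divisibility trick as the paper's, carried out in $\C P^n$ instead of in $SP^n(M_g)$.
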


\begin{proof}
By Proposition~\ref{pi2}, $\pi_2(SP^n(M_g))=\mathbb Z$. Since the homology class $u$ belongs to the image of the Hurewicz homomorphism $h:\pi_2(SP^n(M_g))\to H_2(SP^n(M_g))$, it suffices to show that $u$ is not divisible in $H_2(SP^n(M_g))$.
%\ekansh{Because we need \alpha to have infinite order.}
Assume that $u=kv$ for some $v\in H_2(SP^n(M_g))$. Since $u=c-\sum_ia_i\cdot b_i$, we get by Proposition~\ref{2-homology} that 
\[
c-\sum_{i=1}^g a_i\cdot b_i=k\left(r c+\sum_{i<j}m_{i,j}\hspace{1mm}a_i\cdot a_j+\sum_{i<j}n_{i,j}\hspace{1mm}b_i\cdot b_j+\sum_{i, j}\ell_{i,j}\hspace{1mm}a_i\cdot b_j\right).
\]
Then, we obtain the equality
\[
(kr-1)c+k\left(\sum_{i<j}m_{i,j}\hspace{1mm}a_i\cdot a_j+\sum_{i<j}n_{i,j}\hspace{1mm}b_i\cdot b_j+\sum_{i\ne j}\ell_{i,j}\hspace{1mm}b_i\cdot a_j\right)+\sum_{i=1}^g (k\ell_{i,i}+1)\hspace{1mm}a_i\cdot b_i=0,
\]
which implies that $kr=1$. Hence, we get $k=\pm 1$.
\end{proof}

\begin{prop}\label{0 on spherical}
    Let $n\ge 2$ and $g>0$. Then the evaluation of the class $\sum_ia_i^*b_i^*$ on any spherical homology class $v\in H_2(SP^n(M_g))$ is zero. 
\end{prop}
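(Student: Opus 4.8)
The plan is to push everything forward along the Abel--Jacobi map $\mu_n:SP^n(M_g)\to J$ and exploit that $J$, being a $2g$-torus, is aspherical. First recall from Section~\ref{cohomo} that the classes $a_i^*,b_i^*\in H^1(SP^n(M_g))$ are, by definition, the $\mu_n^*$-images of the standard degree-one generators of $H^1(J)=\Lambda(a_1^*,b_1^*,\dots,a_g^*,b_g^*)$; equivalently $\mu_n^*$ is an isomorphism on $H^1$, Poincar\'e-dual to the isomorphism $(\mu_n)_*$ on $H_1$ from Proposition~\ref{2-homology}(1). Consequently $\sum_{i=1}^g a_i^*b_i^*=\mu_n^*(\vartheta)$, where $\vartheta:=\sum_{i=1}^g a_i^*b_i^*\in H^2(J)$.

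Now let $v\in H_2(SP^n(M_g);\Z)$ be spherical, say $v=\alpha_*[S^2]$ for some map $\alpha:S^2\to SP^n(M_g)$. Since $J$ is a $K(\Z^{2g},1)$, we have $\pi_2(J)=0$, so $\mu_n\circ\alpha:S^2\to J$ is null-homotopic; hence $(\mu_n)_*(v)=(\mu_n\circ\alpha)_*[S^2]=0$ in $H_2(J)$. By naturality of the Kronecker pairing,
\[
\Big\langle \sum_{i=1}^g a_i^*b_i^*,\ v\Big\rangle=\big\langle \mu_n^*(\vartheta),\ v\big\rangle=\big\langle \vartheta,\ (\mu_n)_*v\big\rangle=0,
\]
which is the assertion. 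This argument is uniform in $n\ge 1$ and in $g\ge 1$, and I do not anticipate a genuine obstacle; the only point needing care is the identification of $\sum a_i^*b_i^*$ as a pullback class, which is immediate from the definitions in Section~\ref{cohomo}.

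For $n\ge 3$ one could instead avoid $\mu_n$ altogether and combine Propositions~\ref{pi2}, \ref{generator}, and~\ref{s-class}: every spherical class is then an integer multiple of $u=c-\sum_i a_i\cdot b_i$, and a direct evaluation using $\langle a_i^*b_i^*,c\rangle=1$ (the intersection number $a_i\cdot b_i$ on $M_g$, transported along the base point inclusion $\xi:M_g\hookrightarrow SP^n(M_g)$) together with $\langle a_i^*b_i^*,a_j\cdot b_j\rangle=\delta_{ij}$ yields $\langle\sum_i a_i^*b_i^*,u\rangle=g-g=0$. The case $n=2$ would still require the single module generator of $\pi_2(SP^2(M_g))$ from Proposition~\ref{pi2 not 0}, namely an Abel--Jacobi-contracted rational curve, which again has zero image in $H_2(J)$. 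So in every case the two-line argument above is the most economical, and that is the one I would present.
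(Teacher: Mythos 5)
Your proof is correct and follows essentially the same route as the paper: both identify $\sum_i a_i^*b_i^*$ as $\mu_n^*$ of a class on the Jacobian, note that any sphere mapped to the aspherical torus $J$ is null-homotopic so $(\mu_n)_*v=0$, and then conclude by naturality (the paper phrases this via the projection formula for the cap product and the $H_0$-isomorphism, which is just your Kronecker-pairing naturality written out).
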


\begin{proof}
    From Proposition~\ref{2-homology}, we have for each $1\le i \le g$ that $a_i^*=\mu_n^*(\alpha_i)$ and $b_i^*=\mu_n^*(\beta_i)$ for some $\alpha_i,\beta_i\in H^1(J)$, where 
    \[
    \mu_n:SP^n(M_g)\to J
    \]
    is the Abel--Jacobi map (in Section~\ref{cohomo}, we used the same notation for $a_i^*$ and $\alpha_i$, and for $b_i^*$ and $\beta_i$). Let $\gamma:S^2\to SP^n(M_g)$ be the map corresponding to $v$. Since any map from a $2$-sphere to a torus is null-homotopic, the composition $\mu_n\circ\gamma:S^2\to J$ is null-homotopic. Hence, $(\mu_n)_*(v)=0$. Thus, in the zeroth homology, we have that
\[
(\mu_n)_*\left(v\frown \sum_{i=1}^ga_i^*b_i^*\right)=(\mu_n)_*\left(v\frown \mu_n^*\left(\sum_{i=1}^g\alpha_i\beta_i\right)\right)=(\mu_n)_*(v)\frown \sum_{i=1}^g\alpha_i\beta_i=0.
\]
The conclusion now follows upon observing that $\mu_n$ induces an isomorphism of the zeroth homology groups due to Corollary~\ref{mu}. 
\end{proof}

\subsection{Determining the spin}
Recall that for any fixed $n$ and $g$, the second Stiefel--Whitney class of $SP^n(M_g)$, denoted $w_2$, is the $\Z_2$-cohomology class
\[
w_2=\left((n-g+1)c^*-\sum_{i=1}^ga_i^*b_i^*\right) \text{ mod } 2.
\]

\begin{prop}\label{odd-even}
    Let $n-g$ be odd. Then the universal cover of $SP^n(M_g)$ is spin.
\end{prop}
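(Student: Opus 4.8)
The plan is to reduce the statement to a parity observation combined with Proposition~\ref{0 on spherical}. Recall from the beginning of this section that the universal cover of $SP^n(M_g)$ is spin if and only if $w_2 = w_2(SP^n(M_g))$ evaluates to zero on every spherical $\Z_2$-homology class of $SP^n(M_g)$. Now $w_2$ is the mod-$2$ reduction of the integral first Chern class $c_1 = (n-g+1)c^* - \sum_{i=1}^g a_i^*b_i^*$, and $H_2(SP^n(M_g);\Z)$ is torsion-free by Macdonald's Theorem~\ref{Mc}; since the $\Z_2$-Hurewicz map factors through the integral one composed with coefficient reduction, every spherical $\Z_2$-class is the reduction of some spherical integral class $v\in H_2(SP^n(M_g);\Z)$, and $\langle w_2,\overline v\rangle \equiv \langle c_1,v\rangle \bmod 2$. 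So it suffices to show that $\langle c_1, v\rangle$ is an \emph{even} integer for every spherical $v$.

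First I would dispose of the degenerate cases: if $g=0$ then $SP^n(M_0)=\C P^n$ is its own universal cover and $w_2 = (n+1)c^*\bmod 2$, which vanishes because $n-g=n$ is odd; and if $n=1$ then $SP^1(M_g)=M_g$ is an orientable surface, hence spin together with its universal cover. So I may assume $n\ge 2$ and $g\ge 1$, which is precisely the range covered by Proposition~\ref{0 on spherical}. That proposition gives $\big\langle \sum_{i=1}^g a_i^*b_i^*,\, v\big\rangle = 0$ for every spherical $v$, whence
\[
\langle c_1, v\rangle = (n-g+1)\langle c^*, v\rangle - \Big\langle \sum_{i=1}^g a_i^*b_i^*,\, v\Big\rangle = (n-g+1)\langle c^*, v\rangle .
\]
Since $n-g$ is odd, $n-g+1$ is even, so the right-hand side is even regardless of the value of the integer $\langle c^*, v\rangle$. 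Therefore $\langle w_2,\overline v\rangle = 0$ for every spherical $\Z_2$-homology class $\overline v$, and the universal cover of $SP^n(M_g)$ is spin.

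I do not expect a genuine obstacle here: once Proposition~\ref{0 on spherical} is in hand, the parity of $n-g+1$ does all the work, so this proposition is essentially a corollary of it. The only point deserving a moment's care is the passage from $\Z_2$-spherical classes to integral spherical classes, which is legitimate exactly because the integral homology is torsion-free. Note that the finer information about the explicit spherical generator of $\pi_2(SP^n(M_g))$ supplied by Propositions~\ref{s-class} and~\ref{generator} is \emph{not} needed in the odd case — it will instead be the crucial ingredient for the complementary case, where one must evaluate $w_2$ on the class $u = c - \sum a_i\cdot b_i$ and detect that it is nonzero.
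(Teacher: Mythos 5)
Your proof is correct and follows essentially the same route as the paper: both arguments observe that when $n-g$ is odd the $c^*$-term of $w_2$ contributes nothing (since $n-g+1$ is even), and then invoke Proposition~\ref{0 on spherical} to kill the $\sum_i a_i^*b_i^*$ term on spherical classes, with the $g=0$ case handled separately via $\C P^n$. Your extra care about lifting $\Z_2$-spherical classes to integral ones is a fine point the paper leaves implicit (and in fact needs no torsion-freeness, since the $\Z_2$-Hurewicz map always factors through the integral one), so there is no substantive difference.
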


\begin{proof}
    First, let $g>0$. Since $n-g$ is odd, the Stiefel--Whitney class of $SP^n(M_g)$ equals $w_2=\sum_ia_i^*b_i^*$. By Proposition~\ref{0 on spherical}, the evaluation of $w_2$ on any spherical class $v\in H_2(SP^n(M_g))$ is trivial for each $g>0$. Finally, we mention that for $g=0$ and $n$ odd, $\C P^n=SP^n(M_0)$ is spin. Hence, the conclusion follows.
\end{proof}

\begin{prop}\label{odd2}
    Let $n-g$ be odd. Then $SP^n(M_g)$ is spin if and only if $g=0$. 
\end{prop}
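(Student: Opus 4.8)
The plan is to start from the formula $w_2=\big((n-g+1)c^*-\sum_{i=1}^{g}a_i^*b_i^*\big)\bmod 2$ recorded at the beginning of this subsection. Since $n-g$ is odd, $n-g+1$ is even, so this collapses to
\[
w_2=\sum_{i=1}^{g}a_i^*b_i^*\in H^2\big(SP^n(M_g);\Z_2\big).
\]
For the implication $g=0\Rightarrow$ spin, the sum is then empty, so $w_2=0$; as $SP^n(M_g)$ is orientable this produces a spin structure. (Equivalently, $SP^n(M_0)=\C P^n$ with $n=n-g$ odd, which is spin.) This direction is immediate.

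For the converse I would assume $g\ge1$ and exhibit a $\Z_2$-homology class on which $w_2$ evaluates nontrivially. The natural candidate is the Pontryagin product $a_1\cdot b_1\in H_2(SP^n(M_g);\Z)$ from Proposition~\ref{2-homology}(2) which, as recalled in Section~\ref{subsectsph}, is the image of the fundamental class of the embedded $2$-torus $T_1=S^1_{a_1}\times S^1_{b_1}\subset SP^n(\vee_{2g}S^1)\subset SP^n(M_g)$. Writing $i:T_1\hookrightarrow SP^n(M_g)$ for the inclusion, $i$ sends the two circle generators of $H_1(T_1)$ to $a_1$ and $b_1$; dually $i^*a_j^*=i^*b_j^*=0$ for $j\ne1$, while $i^*a_1^*$ and $i^*b_1^*$ are the two generators of $H^1(T_1;\Z_2)$, so $i^*\big(\sum_i a_i^*b_i^*\big)=i^*(a_1^*b_1^*)$ is the generator of $H^2(T_1;\Z_2)=\Z_2$. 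By naturality of the Kronecker pairing,
\[
\langle w_2,\,a_1\cdot b_1\rangle=\langle i^*w_2,\,[T_1]\rangle=1\ne0\in\Z_2,
\]
whence $w_2\ne0$ and $SP^n(M_g)$ is not spin. An equivalent route avoiding $T_1$: since $a_i^*=\mu_n^*\alpha_i$ and $b_i^*=\mu_n^*\beta_i$ for a symplectic basis $\alpha_i,\beta_i$ of $H^1(J)$ dual to $a_i,b_i$, and the Abel--Jacobi map is additive and hence a ring homomorphism for Pontryagin products, Proposition~\ref{2-homology} gives $(\mu_n)_*(a_1\cdot b_1)=a_1\wedge b_1\in H_2(J)$, so $\langle\sum_i a_i^*b_i^*,\,a_1\cdot b_1\rangle=\langle\sum_i\alpha_i\beta_i,\,a_1\wedge b_1\rangle=1$.

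I do not expect a serious obstacle here. The one point requiring care is that the class $a_1\cdot b_1$ (equivalently $[T_1]$) is genuinely present in $H_2(SP^n(M_g))$, which needs $n\ge2$ — the standing assumption in this discussion; for $n=1$ the statement degenerates, as $SP^1(M_g)=M_g$ is a surface and hence always spin. One must also remember that the relevant class is the \emph{ambient} $w_2$ restricted along $i$, not the intrinsic Stiefel--Whitney class of the parallelizable torus $T_1$; the small computation that $i^*(a_1^*b_1^*)$ generates $H^2(T_1;\Z_2)$ follows formally from the description of $T_1\hookrightarrow SP^n(M_g)$ on $H_1$ already available in Section~\ref{subsectsph}.
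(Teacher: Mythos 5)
Your proposal is correct, and it targets exactly the same homology class as the paper: after observing that $n-g+1$ is even, so $w_2=\sum_{i=1}^g a_i^*b_i^*$ mod $2$, both arguments reduce the converse direction ($g\ge 1$) to showing that $w_2$ pairs non-trivially with the Pontryagin product $a_1\cdot b_1$. Where you differ is in how that pairing is computed. The paper stays inside Macdonald's cohomology ring: it uses the Kronecker-duality formula $(a_i\cdot b_i)^*=a_i^*b_i^*-c^*$ from Kallel--Salvatore together with the vanishing $(a_i\cdot b_i)\frown c^*=0$ (proved via the map $\bar q$ to $\C P^n$), and then computes $(a_1\cdot b_1)\frown\sum_i a_i^*b_i^*=1$ mod $2$. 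You instead restrict along the embedded torus $T_1=S^1_{a_1}\times S^1_{b_1}\subset SP^n(\vee_{2g}S^1)\subset SP^n(M_g)$ whose fundamental class represents $a_1\cdot b_1$ (as in the proof of Proposition~\ref{s-class}), note that $i^*a_j^*=i^*b_j^*=0$ for $j\ne 1$ while $i^*a_1^*,i^*b_1^*$ generate $H^1(T_1;\Z_2)$ because $i_*$ sends the circle generators to $a_1,b_1$, and conclude by naturality of the Kronecker pairing; your second route through $(\mu_n)_*(a_1\cdot b_1)=a_1\wedge b_1$ in $H_2(J)$ is likewise valid given the compatibility of the Abel--Jacobi maps with the addition maps. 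Your version is slightly more geometric and self-contained (it avoids citing the formula from~\cite{KS} and the auxiliary map $\bar q$), while the paper's version is a purely algebraic manipulation in the ring $H^*(SP^n(M_g))$ that reuses ingredients already set up for Propositions~\ref{odd-even} and~\ref{even2}. Your cautionary remarks are also on point: the standing assumption $n\ge 2$ is needed (for $n=1$ the statement fails, as $M_g$ is always spin), and it is indeed the restriction of the ambient class $w_2$ to $T_1$, not the tangential Stiefel--Whitney class of the torus, that is being evaluated.
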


\begin{proof}
    When $g=0$, $SP^n(M_0)=\C P^n$ is spin for odd $n$. So, let us assume $g>0$. We show that the evaluation of $w_2=\sum_ia_i^*b_i^*$ on the Pontryagin product $a_1\cdot b_1$ is non-trivial. 
    
    We recall that $\bar q_*(a_i\cdot b_i)=0$ because the support of the cycle $a_i\cdot b_i$ lies in $SP^n(\bigvee_{2g}S^1)$. See that for each $i$,
    \[
    \bar{q}_*\left((a_i\cdot b_i)\frown c^*\right)=\bar{q}_*\left(a_i\cdot b_i\right)\frown c^*=0.
    \]
    Since $\bar{q}$ induces an isomorphism on zeroth homology, we have $(a_i\cdot b_i)\frown c^*=0$ for all $i$. Then using the formula $(a_i\cdot b_i)^*=a_i^*b_i^*-c^*$ from~\cite{KS}, we obtain the following equality mod $2$:
\[
(a_1\cdot b_1)\frown\sum_{i=1}^ga_i^*b_i^*=\left((a_1\cdot b_1)\frown\sum_{i=1}^g(a_i\cdot b_i)^*\right)-g\left((a_1\cdot b_1)\frown c^*\right)=1.
\]
\end{proof}

\begin{prop}\label{even2}
    Let $n-g$ be even. Then the manifold $SP^n(M_g)$ is totally non-spin for all $g$.
\end{prop}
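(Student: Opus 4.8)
The plan is to compute $w_2\big(SP^n(M_g)\big)$ modulo $2$ and show it does not vanish on a \emph{spherical} homology class; by the two criteria recalled at the start of this section, this simultaneously rules out a spin structure on $SP^n(M_g)$ and on its universal cover. Since $n-g$ is even, $n-g+1$ is odd, so the Stiefel--Whitney formula recalled just before Proposition~\ref{odd-even} reduces to
\[
w_2 = \Big(c^* + \sum_{i=1}^{g} a_i^* b_i^*\Big)\ \mathrm{mod}\ 2 \quad \in H^2\big(SP^n(M_g);\Z_2\big).
\]
(We take $n\ge 2$ throughout, as in the rest of this section; for $n=1$ the space $SP^1(M_g)=M_g$ is a surface.) It therefore suffices to produce one spherical $\Z_2$-class on which $w_2$ is nonzero.

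The natural candidate is the spherical class $u = c-\sum_i a_i\cdot b_i$ of Proposition~\ref{s-class}. Pairing $w_2$ with $u$ and splitting the sum, the contribution $\big\langle \sum_i a_i^* b_i^*,\, u\big\rangle$ vanishes: each $a_i^* b_i^*$ lies in the image of $\mu_n^*\colon H^2(J)\to H^2(SP^n(M_g))$ by Proposition~\ref{2-homology}(1), so this is exactly Proposition~\ref{0 on spherical} applied to the spherical class $u$ (and for $g=0$ the sum is empty anyway). For the remaining term I would use that $c^*=\bar q^*(x)$, where $x\in H^2(\C P^n)$ is a generator, together with $\bar q_*(u)=[S^2]$ from Corollary~\ref{image of u}:
\[
\big\langle c^*,\, u\big\rangle = \big\langle \bar q^*(x),\, u\big\rangle = \big\langle x,\, \bar q_*(u)\big\rangle = \big\langle x,\, [S^2]\big\rangle = 1 .
\]
Hence $\langle w_2, u\rangle = 1 \neq 0$ in $\Z_2$.

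From this the conclusion is immediate: $w_2(SP^n(M_g))\neq 0$, so $SP^n(M_g)$ has no spin structure; and $w_2$ is nonzero on the \emph{spherical} class $u$, so by the Hurewicz-type criterion the universal cover of $SP^n(M_g)$ has no spin structure either. Thus $SP^n(M_g)$ is totally non-spin. (When $g=0$ this merely re-derives the classical statement that $\C P^n=SP^n(M_0)$ is non-spin, hence totally non-spin, for $n$ even.) There is no real obstacle here; the only points to watch are that $u$ is genuinely spherical — which is precisely how it was constructed in Subsection~\ref{subsectsph}, so that Proposition~\ref{0 on spherical} applies — and that the pairing $\langle \bar q^*(x), u\rangle = \langle x, \bar q_*(u)\rangle$ is read off with the duality conventions for $\bar q$ and $c^*$ fixed in Section~\ref{cohomo}.
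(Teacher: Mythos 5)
Your proposal is correct and follows essentially the same route as the paper: both evaluate $w_2=c^*+\sum_i a_i^*b_i^*$ (mod $2$) on the spherical class $u$ of Proposition~\ref{s-class}, kill the $\sum_i a_i^*b_i^*$ term via Proposition~\ref{0 on spherical}, and extract the $c^*$ term via $\bar q_*(u)=[S^2]$ from Corollary~\ref{image of u} (the paper phrases this with the cap-product projection formula rather than your $\langle\bar q^*(x),u\rangle=\langle x,\bar q_*(u)\rangle$, which is the same computation). The only cosmetic difference is that the paper treats $g=0$ separately as $\C P^n$, while you note the argument subsumes it with an empty sum.
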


\begin{proof}
For $g=0$, it is well-known that $SP^n(M_0)=\C P^n$ is not spin if $n$ is even. So, we let $g>0$. For $n-g$ even, the second Stiefel--Whitney class of $SP^n(M_g)$ equals $w_2=c^*+\sum_ia_i^*b_i^*$. We show that the evaluation of $w_2$ on $u$ is non-trivial when $n-g$ is even and $g>0$. In view of Corollary~\ref{image of u}, 
\[
\bar q_*(u\frown c^*)=\bar q_*(u)\frown c^*=[S^2]\frown c^*=1.
\]
Since $\bar q_*$ is an isomorphism of $0$-dimensional homology, we have $u\frown c^*\ne 0$. Then using Proposition~\ref{0 on spherical}, we obtain
\[
u\frown w_2=\left(u\frown c^*\right)+\left(u\frown \sum_{i=1}^ga_i^*b_i^*\right)=u\frown c^*\ne 0.
\]
Therefore, $SP^n(M_g)$ and its universal cover are both not spin for each $g$.
\end{proof}

In this section, we completely determined the existence of spin structures on $SP^n(M_g)$ and its universal cover for each $n\ge 2$ and $g\ge 0$.

\begin{thm}\label{main2}
\begin{enumerate}
    \item The manifolds $SP^n(M_g)$ are never spin for $g>0$. 
    \item The universal covering of $SP^n(M_g)$ is spin if and only if $n-g$ is odd.
\end{enumerate}
\end{thm}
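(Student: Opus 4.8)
The plan is to assemble Theorem~\ref{main2} from the three preceding propositions, which between them already cover every residue class of $n-g$ modulo $2$ and all relevant values of $g$. The key organizing principle is to split into the cases $n-g$ odd and $n-g$ even, since the mod-$2$ reduction of $w_2=(n-g+1)c^*-\sum_i a_i^*b_i^*$ changes shape with the parity of $n-g$: it equals $\sum_i a_i^*b_i^*$ when $n-g$ is odd and $c^*+\sum_i a_i^*b_i^*$ when $n-g$ is even. So the whole proof reduces to a case analysis plus a citation of the appropriate earlier result in each case.

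For part (1), I would argue as follows. If $n-g$ is even, then Proposition~\ref{even2} already says $SP^n(M_g)$ is totally non-spin for every $g$, hence in particular not spin. If $n-g$ is odd, then Proposition~\ref{odd2} says $SP^n(M_g)$ is spin if and only if $g=0$; so for $g>0$ it is not spin. Combining the two cases, $SP^n(M_g)$ is never spin once $g>0$, which is statement (1).

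For part (2), the same case split applies. If $n-g$ is odd, Proposition~\ref{odd-even} gives that the universal cover of $SP^n(M_g)$ is spin (the obstruction $w_2=\sum_i a_i^*b_i^*$ evaluates to zero on every spherical class by Proposition~\ref{0 on spherical}, together with the classical fact that $\C P^n$ is spin for $n$ odd to handle the case $g=0$). If $n-g$ is even, Proposition~\ref{even2} shows $SP^n(M_g)$ is totally non-spin, so its universal cover is not spin. Therefore the universal cover is spin precisely when $n-g$ is odd, which is exactly statement (2).

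Since all the analytic content — the computation of $w_2$ in terms of the Macdonald generators, the identification of the spherical class $u=c-\sum_i a_i\cdot b_i$ via Proposition~\ref{s-class}, and the evaluations of $w_2$ on $u$ and on the Pontryagin products $a_i\cdot b_i$ using the map $\bar q\colon SP^n(M_g)\to\C P^n$ — has been carried out in Propositions~\ref{odd-even},~\ref{odd2}, and~\ref{even2}, the proof of the theorem itself is pure bookkeeping and there is no genuine obstacle left. If one wanted a self-contained derivation instead, the main work would be exactly those evaluations: one needs $\bar q_*(u\frown c^*)=1$ (so that $u\frown c^*\neq 0$) and $(a_1\cdot b_1)\frown\sum_i a_i^*b_i^*=1$ mod $2$, both of which rest on $\bar q_*$ being an isomorphism on $H_0$ and on the relation $(a_i\cdot b_i)^*=a_i^*b_i^*-c^*$ from~\cite{KS}.
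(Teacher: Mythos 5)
Your proposal is correct and matches the paper exactly: in the paper, Theorem~\ref{main2} is stated without a separate proof precisely because it is the summary of Propositions~\ref{odd-even},~\ref{odd2}, and~\ref{even2}, assembled by the same parity case analysis on $n-g$ that you describe. Nothing is missing; the only substantive content is in those three propositions, which you correctly identify and cite.
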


%\begin{proof}
 %   For $n=2$, part (A) follows from Proposition~\ref{dim2}. For $n \ge 3$, parts (A) and (B) follow from Propositions~\ref{odd-even},~\ref{odd2}, and~\ref{even2}. In part (B) when $n-g$ is odd, we need to consider the case $n=2$ separately. The image under the Hurewicz homomorphism $h:\pi_2(SP^2(M_g))\to H_2(SP^2(M_g))$ is isomorphic to $\Z$ generated by $u$. So, $\pi_2(SP^2(M_g))$ is a $\pi_1(SP^2(M_g))$-module generated by $u$,~\cite[Proposition~4]{Ka2}. We note that this module can be non-trivial. From Lemma~\ref{u on prod2}, the evaluation of $w_2$ on $u$ is $0$. Hence, the evaluation of $w_2$ on any sphere in the universal cover is also zero. This completes the proof.
%\end{proof}

The backward direction of part (2) of Theorem~\ref{main2} can be strengthened to the following.

\begin{thm}\label{spin cover}
Let $n-g$ be odd. Then the manifold $SP^n(M_g)$ admits a $2^g$-sheeted spin covering $p:N'\to SP^n(M_g)$. Moreover, $p$ is the projection onto the orbit space of a free $\mathbb Z_{2^g}$-action.
\end{thm}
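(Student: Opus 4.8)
The plan is to produce the covering by hand, as the one associated with an explicit index-$2^g$ subgroup $H\le\pi_1\big(SP^n(M_g)\big)=\mathbb Z^{2g}$, and then to verify it is spin by a short computation with the pulled-back second Stiefel--Whitney class.

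\emph{Constructing the cover.} I would identify $\pi_1\big(SP^n(M_g)\big)$ with $\mathbb Z^{2g}=\langle a_1,b_1,\dots,a_g,b_g\rangle$ (Section~\ref{cohomo}, Corollary~\ref{mu}) and fix the surjection $\varphi\colon\mathbb Z^{2g}\to\mathbb Z_{2^g}:=\bigoplus_{i=1}^g\mathbb Z_2$ given by $\varphi(a_i)=e_i$, $\varphi(b_i)=0$. Set $H=\ker\varphi=\langle 2a_1,\dots,2a_g,b_1,\dots,b_g\rangle$, an index-$2^g$ subgroup, and let $p\colon N'\to SP^n(M_g)$ be the connected covering with $p_*\pi_1(N')=H$. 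Since $\pi_1\big(SP^n(M_g)\big)$ is abelian, $H$ is normal, so $p$ is regular with deck transformation group $\mathbb Z^{2g}/H\cong\mathbb Z_{2^g}$ acting freely and with orbit space $SP^n(M_g)$; this already yields the ``moreover'' statement and the sheet count. (Equivalently, since $\mu_n$ is an isomorphism on $\pi_1$ by Corollary~\ref{mu}, $N'$ may be described as the pullback along $\mu_n$ of the $2^g$-fold covering $\prod_i T_i\to\prod_i T_i=J$ that is the degree-two self-map on each $a_i$-circle and the identity on each $b_i$-circle; this pullback is connected because $(\mu_n)_*$ is onto.)

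\emph{Verifying spinness.} As a covering map is a local diffeomorphism, $TN'\cong p^*T\big(SP^n(M_g)\big)$, so it suffices to check $p^*w_2\big(SP^n(M_g)\big)=0$. Recall from Section~\ref{spinstr} that $w_2\big(SP^n(M_g)\big)=\big((n-g+1)c^*-\sum_{i=1}^g a_i^*b_i^*\big)\bmod 2$; as $n-g$ is odd, $n-g+1$ is even and this equals $\sum_{i=1}^g a_i^*b_i^*\bmod 2$. Under the natural isomorphism $H^1(X;\mathbb Z_2)\cong\Hom\big(H_1(X;\mathbb Z),\mathbb Z_2\big)$, the homomorphism $p^*$ on $H^1(-;\mathbb Z_2)$ is restriction along $p_*\colon H_1(N';\mathbb Z)=H\hookrightarrow\mathbb Z^{2g}$. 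The mod-$2$ reduction of $a_i^*$ is the functional sending $a_j\mapsto\delta_{ij}$ and $b_j\mapsto 0$; it vanishes on all of $H$ because $a_i^*(2a_j)=2\delta_{ij}\equiv 0$ and $a_i^*(b_j)=0$. Hence $p^*(a_i^*)=0$ for every $i$, so $p^*w_2\big(SP^n(M_g)\big)=\sum_i p^*(a_i^*)\,p^*(b_i^*)=0$, and $N'$ is spin.

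\emph{The point requiring care.} I do not foresee an essential obstacle: the $w_2$ formula, the structure of $\pi_1$ and of the classes $a_i^*,b_i^*$, and the $\pi_1$-isomorphism $\mu_n$ are all already available. The only thing that must be arranged carefully is the bookkeeping -- choosing $\varphi$ so that simultaneously the cover is regular and exactly $2^g$-sheeted and each $a_i^*$ restricts to $0$ on $H$ modulo $2$ -- together with checking connectivity of the pullback model, which uses surjectivity of $(\mu_n)_*$. The case $g=0$ is degenerate: then $H=0$ and $N'=SP^n(M_0)=\mathbb C P^n$ with $n$ odd, which is already spin, and the same formulas apply trivially.
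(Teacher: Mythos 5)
Your proposal is correct and follows essentially the same route as the paper: you take the same index-$2^g$ subgroup $\langle 2a_1,\dots,2a_g,b_1,\dots,b_g\rangle$ (equivalently, the pullback along $\mu_n$ of the corresponding cover of the torus $J$) and show $p^*w_2=0$ using that $w_2\equiv\sum_i a_i^*b_i^*$ when $n-g$ is odd; your verification by restricting the classes $a_i^*$ to $H_1(N')\cong H$ is just a direct rephrasing of the paper's computation $(p')^*(a_i^*)=2a_i^*\equiv 0 \pmod 2$. The regularity/free-action remark and the identification of the deck group as the order-$2^g$ group $(\Z_2)^g$ match the paper's ``moreover'' clause.
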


\begin{proof}
Let $p':T^{2g}\to T^{2g}$ be a covering map that corresponds to the subgroup $$\mathbb Z\langle2a_1,\dots,2a_g,b_1,\dots,b_g\rangle\subset \mathbb Z\langle a_1,\dots,a_g,b_1,\dots,b_g\rangle=H_1(T^{2g})=\pi_1(T^{2g}).$$ We define $p:N'\to SP^n(M_g)$ to be the pullback of $p'$ with respect to the Abel--Jacobi map 
\[
\mu_n:SP^n(M_g)\to T^{2g}.
\]
We claim that $N'$ is spin. Note that $w_2(N')=p^*(w_2(SP^n(M_g)))$.
Recall that when $n-g$ is odd, we have the mod 2 equality 
$w_2(SP^n(M_g))=\sum_ia_i^*b^*_i$. Thus, in mod $2$, we have
\[
w_2(N')=p^*(w_2(SP^n(M_g)))=(\mu_n')^*(p')^*\left(\sum_{i=1}^ga_i^*b^*_i\right)=(\mu_n')^*\left(\sum_{i=1}^g2a_i^*b^*_i\right)=0,
\]
%\ekansh{Here, \mu p = p'\mu'. So, why is p^* = (\mu')^*(p')^* ? See explanation of images, pre-images, and same notation in proof of Proposition~\ref{0 on spherical}.}
where $\mu_n':N'\to T^{2g}$ is the map parallel to $\mu_n$ in the pullback diagram. 
\end{proof}

\section{Macroscopic dimension and positive scalar curvature}\label{dimandpsc}

The aim of this section is to study the interplay between Gromov's macroscopic dimension and the existence of metrics of positive scalar curvature on symmetric products of surfaces.

\subsection{Macroscopic dimensions of symmetric products of surfaces}\label{macdimsec}

Below, we estimate the macroscopic dimensions of the symmetric products of surfaces. 
\begin{thm}\label{cex}
For $n\ge g$, $\dim_{mc}\rwt{SP^n(M_g)}=\dim_{MC}\rwt{SP^n(M_g)}=2g$. For $n<g$, 
\begin{enumerate}
\item $\dim_{MC}\rwt{SP^n(M_g)}=2n$; 
\item $\dim_{mc}\rwt{SP^n(M_g)}\le 2n-1$;
\item  $\dim_{mc}\rwt{SP^n(M_g)}\le 2n-2$ when $g-n$ is even.
\end{enumerate}
\end{thm}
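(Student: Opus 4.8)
The plan is to prove both equalities by the sandwich $\dim_{mc}\le\dim_{MC}\le\dim$ from \eqref{basicinequality}, treating the ranges $n\ge g$ and $n<g$ by quite different mechanisms.

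\smallskip
\emph{The range $n\ge g$.} Here the Abel--Jacobi map $\mu_n\colon SP^n(M_g)\to J=T^{2g}$ is surjective (every divisor class of degree $n\ge g$ is effective, by Riemann--Roch) with compact fibres, the projective spaces $\mathbb C P^{\,h^0(D)-1}$. Since $\mu_n$ is a $\pi_1$-isomorphism (Corollary~\ref{mu}), it lifts to a proper, Lipschitz, uniformly cobounded map $\widetilde\mu_n\colon\widetilde{SP^n(M_g)}\to\mathbb R^{2g}$, whence $\dim_{MC}\widetilde{SP^n(M_g)}\le 2g$. For the matching lower bound I would restrict to a good submanifold. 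The basepoint inclusion $SP^g(M_g)\hookrightarrow SP^n(M_g)$ is a closed embedding inducing a $\pi_1$-isomorphism, so it lifts to a closed, quasi-isometrically embedded copy of $\widetilde{SP^g(M_g)}$ in $\widetilde{SP^n(M_g)}$ (both covers carry cocompact $\mathbb Z^{2g}$-actions, so the equivariant lift is a quasi-isometry by Milnor--\v Svarc). Restricting a hypothetical proper, uniformly cobounded, continuous map $\widetilde{SP^n(M_g)}\to K^k$ to this submanifold produces such a map on $\widetilde{SP^g(M_g)}$, whose macroscopic dimension is $2g$ by Corollary~\ref{newproof}; hence $k\ge 2g$ and $\dim_{mc}\widetilde{SP^n(M_g)}\ge 2g$. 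The sandwich gives $\dim_{mc}=\dim_{MC}=2g$.

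\smallskip
\emph{The range $n<g$, statements (2) and (3).} Statement (2) is immediate from Theorem~\ref{small}: with $\pi=\mathbb Z^{2g}$, $E\pi=\mathbb R^{2g}$, and classifying map $\mu_n$, the class $(\widetilde\mu_n)_*[\widetilde{SP^n(M_g)}]$ lies in $H^{lf}_{2n}(\mathbb R^{2g};\mathbb Z)=0$ because $2n<2g$, so $\dim_{mc}\widetilde{SP^n(M_g)}<2n$, i.e. $\le 2n-1$. For statement (3), when $g-n$ is even the manifold $SP^n(M_g)$ is totally non-spin by Proposition~\ref{even2} (note $g>0$ since $n<g$) and has amenable fundamental group $\mathbb Z^{2g}$; applying the main theorem of \cite{BD2} — the same improvement invoked in the remark after Theorem~\ref{uniruledT}, which excludes the value $\dim M-1$ for the macroscopic dimension of the universal cover of a totally non-spin manifold with amenable $\pi_1$ — upgrades (2) to $\dim_{mc}\widetilde{SP^n(M_g)}\le 2n-2$.

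\smallskip
\emph{The range $n<g$, statement (1).} The bound $\dim_{MC}\widetilde{SP^n(M_g)}\le 2n$ is the trivial instance of \eqref{basicinequality}. For the lower bound I would use essentiality: the proof of Proposition~\ref{essential} shows $(\mu_n)_*[SP^n(M_g)]\ne0$ already with integral coefficients, so $SP^n(M_g)$ is a closed orientable essential $2n$-manifold, and an essential closed $m$-manifold has $\dim_{MC}$ of its universal cover equal to $m$. This last fact is the Lipschitz-proper ($\dim_{MC}$) counterpart of Theorem~\ref{0}: a hypothetical proper, uniformly cobounded, Lipschitz map into the $(2n-1)$-skeleton of $E\pi$ can, unlike the merely continuous map furnished by Theorem~\ref{small}, be straightened to an equivariant one, which would deform the classifying map $SP^n(M_g)\to B\pi$ off the top cell of $B\pi$, contradicting essentiality. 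Hence $\dim_{MC}\widetilde{SP^n(M_g)}=2n$.

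\smallskip
The main obstacle is precisely this lower bound in (1): one must show that although $\dim_{mc}$ drops strictly below the dimension of the manifold (part (2)), $\dim_{MC}$ remains maximal — this sharp discrepancy is the whole content of the theorem. It cannot be extracted from coarse geometry, since $\widetilde{SP^n(M_g)}$ is quasi-isometric to $\mathbb R^{2g}$ by Milnor--\v Svarc and neither macroscopic dimension is a quasi-isometry invariant; it must instead come from the homological/equivariant criterion that separates essential manifolds, now applied in the Lipschitz-proper category rather than the purely topological one. A secondary point requiring care is verifying in (3) that the precise hypotheses of \cite{BD2} (total non-spinness together with amenability of $\pi_1$) are met by $SP^n(M_g)$ exactly when $g-n$ is even and $n<g$.
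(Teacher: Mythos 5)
Your treatment of the range $n\ge g$ coincides with the paper's (the lifted Abel--Jacobi map gives the upper bound $\dim_{MC}\le 2g$, and restriction to the embedded copy of $\rwt{SP^g(M_g)}$ together with Corollary~\ref{newproof} gives the lower bound), and your proof of (2) via Theorem~\ref{small} and the vanishing $H^{lf}_{2n}(\R^{2g};\Z)=0$ for $2n<2g$ is a valid and arguably more direct alternative to the paper's appeal to \cite[Theorem 6.3]{Dr3}. The genuine gap is exactly where you flag the main obstacle: the lower bound $\dim_{MC}\rwt{SP^n(M_g)}\ge 2n$ in (1). The principle you assert --- that every closed orientable (integrally) essential $m$-manifold has $\dim_{MC}$ of its universal cover equal to $m$ --- is not established by your sketch and is not available in that generality. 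Your ``straightening to an equivariant map'' step is precisely the missing content: if a uniformly cobounded Lipschitz proper map of $\Wi M$ into an abstract $(2n-1)$-dimensional complex could be traded for an equivariant map to $E\pi^{(2n-1)}$, i.e.\ for a deformation of the classifying map into $B\pi^{(2n-1)}$, then the same scheme applied to merely continuous maps would prove Gromov's Rational Inessentiality Conjecture~\ref{grconj2}, which is false --- indeed parts (2) and (3) of this very theorem produce essential manifolds with $\dim_{mc}\Wi M<2n$. The input that actually makes the Lipschitz case work is the amenability of $\pi_1=\Z^{2g}$: the paper quotes Dranishnikov's theorem \cite[Theorem 7.2]{Dr1}, that a rationally essential $k$-manifold with amenable fundamental group satisfies $\dim_{MC}\Wi M=k$ (and the paper's closing remark about \cite[Theorem 5.2]{Dr2} indicates one should not expect this beyond amenable groups). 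So you must either cite that theorem or reproduce its averaging/uniformly-finite-homology argument; ironically, you invoke amenability in (3), where it is not the relevant hypothesis, but not in (1), where it is indispensable.

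A secondary gap is in (3): the result of \cite{BD2} you use is, as quoted in the paper, for totally non-spin $m$-manifolds with $m\ge 5$, so it upgrades (2) only for $n\ge 3$. The $4$-dimensional case $n=2$ (i.e.\ $SP^2(M_g)$ with $g\ge 4$ even) is not covered by it and is handled in the paper by \cite[Theorem 1.2]{DD}, using that $\Z^{2g}$ is residually finite; your proposal omits this case, and the amenability you mention is not a substitute for the dimension restriction.
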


\begin{proof}
By Corollary~\ref{newproof}, we note that $\dim_{mc}\rwt{SP^g(M_g)}=2g$. For $n\ge g$, the universal cover 
\[
\Wi\mu_n:\rwt{SP^n(M_g)}\to\Wi{J}=\mathbb R^{2g}
\]
of the Abel--Jacobi map $\mu_n:SP^n(M_g)\to J$ is uniformly cobouded and Lipschitz. Therefore, $\dim_{MC}\rwt{SP^n(M_g)}\le 2g$. Since  $\rwt{SP^g(M_g)}$ is a subspace of $\rwt{SP^n(M_g)}$, we obtain that $2g=\dim_{mc}\rwt{SP^g(M_g)}\le\dim_{mc}\rwt{SP^n(M_g)}$ for all $n\ge g$. This implies the first string of equalities.

Note that $\pi_1(SP^n(M_g))=\Z^{2g}$ is amenable. By Theorem~\ref{essential}, $SP^n(M_g)$ is rationally essential for $n\le g$. Thus, the second equality follows from~\cite[Theorem 7.2]{Dr1}, which states that $\dim_{MC}\Wi{M} = k$ for a closed rationally essential $k$-manifold $M$ with $\pi_1(M)$ amenable.

Theorem 6.3 in~\cite{Dr3} states that $\dim_{mc} \Wi{M} < k$ for a closed rationally essential $k$-manifold $M$ if $\pi_1(M)$ is a geometrically finite amenable duality group with $\cd(\pi_1(M))>k$. Since all these conditions are satisfied for $SP^n(M_g)$ in the case $n<g$, we obtain $\dim_{mc}\rwt{SP^n(M_g)}\le 2n-1$ in this case.

The last inequality follows from the third, Theorem~\ref{main2}, and the main result of~\cite{BD2}, which states that for a closed totally non-spin $m$-manifold for $m\ge 5$, the inequality $\dim_{mc}\Wi{X}\le m-1$ implies the inequality $\dim_{mc}\Wi{X}\le m-2$. This covers the case $n\ge 3$. Since $\pi_1(SP^2(M_g))$ is residually finite, the case $n=2$ in the last inequality is covered by~\cite[Theorem 1.2]{DD}.
\end{proof}

In~\cite{Gr2}, Gromov conjectured the following.

\begin{conjec}[\protect{Rational Inessentiality Conjecture}]\label{grconj2}
If $M$ is a closed orientable $n$-manifold with $\dim_{mc}\wt{M}<n$, then $M$ is not rationally essential. 
\end{conjec}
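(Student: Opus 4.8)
This is Gromov's \emph{Rational Inessentiality Conjecture}, and the results already assembled above show that it is in fact \emph{false}; the plan is to exhibit explicit counterexamples among the symmetric products studied in this paper. I would take $M=SP^n(M_g)$ with $2\le n<g$. Recall from Section~\ref{two} that $SP^n(M_g)$ is a closed orientable manifold of real dimension $2n$, with $\pi_1=\Z^{2g}$. By Proposition~\ref{essential}, $M$ is rationally essential, since $n\le g$. On the other hand, Theorem~\ref{cex}(2) gives $\dim_{mc}\rwt{SP^n(M_g)}\le 2n-1<2n=\dim M$. Hence $M$ satisfies the hypothesis $\dim_{mc}\wt M<\dim M$ of Conjecture~\ref{grconj2} while violating its conclusion, so the conjecture fails already for manifolds with free abelian fundamental group.

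The point worth stressing is \emph{why} this is not in tension with any theorem: the failure is a feature of Gromov's invariant $\dim_{mc}$, not of $\dim_{MC}$. Since $\pi_1(SP^n(M_g))=\Z^{2g}$ is amenable and $SP^n(M_g)$ is rationally essential, the result invoked in the proof of Theorem~\ref{cex}(1) gives $\dim_{MC}\rwt{SP^n(M_g)}=2n=\dim M$; that is, the $\dim_{MC}$-analogue of the conjecture does hold for these spaces. So the counterexamples live precisely in the gap $\dim_{mc}<\dim_{MC}$, the phenomenon this paper is organized around. They can moreover be made as sharp as possible: by Theorem~\ref{cex}(3), when $g-n$ is even the gap $\dim M-\dim_{mc}\rwt M$ is at least $2$, and the smallest instances are the symmetric squares $SP^2(M_g)$ with $g\ge 3$, i.e.\ closed orientable $4$-manifolds, which is striking given how constrained projective surfaces are.

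I do not expect a genuine obstacle here: the refutation follows immediately from Proposition~\ref{essential} and Theorem~\ref{cex}. All the real work sits inside Theorem~\ref{cex}, which rests on the cohomological-dimension estimates of~\cite{Dr3} for rationally essential manifolds whose fundamental group is an amenable duality group (and, in the even case, on the total non-spin input of~\cite{BD2} and~\cite{DD} together with the spin classification of Theorem~\ref{main2}). The one thing to check carefully is that the hypotheses of those cited results genuinely apply to $SP^n(M_g)$ with $n<g$: that $\Z^{2g}$ is a geometrically finite amenable Poincar\'e duality group of cohomological dimension $2g>2n$, and that $SP^n(M_g)$ is rationally essential — which is exactly Proposition~\ref{essential} together with standard facts about $T^{2g}$. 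I would close by observing that this is consistent with Theorem~\ref{totallyconverse}: any usable converse to Gromov's conjecture must be formulated with $\dim_{MC}$ in place of Gromov's original $\dim_{mc}$.
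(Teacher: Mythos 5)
Your refutation is correct and is exactly the paper's own treatment: immediately after stating Conjecture~\ref{grconj2}, the paper observes that the manifolds $SP^n(M_g)$ with $g>n$ are counterexamples, combining Proposition~\ref{essential} (rational essentiality for $n\le g$) with Theorem~\ref{cex} ($\dim_{mc}\rwt{SP^n(M_g)}\le 2n-1$), and it likewise notes the sharpness via $SP^2(M_g)$, $g\ge 3$, and the fact that the $\dim_{MC}$-analogue holds since $\Z^{2g}$ is amenable. No gaps; your emphasis that the failure lives in the gap $\dim_{mc}<\dim_{MC}$ matches Remark~\ref{mc and MC}.
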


First counterexamples to Conjecture~\ref{grconj2} in dimensions $n\ge 4$ were constructed in~\cite{Dr2}. Theorem~\ref{cex} produces more counterexamples to Conjecture~\ref{grconj2}. Namely, the $2n$-dimensional manifolds $SP^n(M_g)$ are counterexamples to Gromov's conjecture for all $g > n$. To get examples in odd dimensions $\ge 5$, we take products of these manifolds with $S^1$. Notably, unlike the counterexamples of ~\cite{Dr2}, the ones we produce here have amenable fundamental groups.

We note that Gromov's conjecture holds in lower dimensions.

\begin{prop}\label{lowdim2}
In dimension $3$, Conjecture~\ref{grconj2} is true.
\end{prop}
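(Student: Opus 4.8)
The plan is to run through the geometrization-based classification of closed orientable $3$-manifolds, exactly as in the proof of Theorem~\ref{lowdim}, and to observe that in each resulting class either $M$ fails to be rationally essential for purely homological reasons or the hypothesis $\dim_{mc}\Wi M<3$ cannot hold. Equivalently, I would prove the contrapositive: a rationally essential closed orientable $3$-manifold $M$ satisfies $\dim_{mc}\Wi M=3$.

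First I would dispose of the case where $\pi_1(M)$ is finite. Then $B\pi_1(M)$ has vanishing rational homology in every positive degree, so $u_*([M])=0$ in $H_3(B\pi_1(M);\Q)$ and $M$ is not rationally essential (consistently, here $\Wi M=S^3$ and $\dim_{mc}\Wi M=0$). Next, assume $\pi_1(M)$ is infinite. Then Perelman's resolution of the geometrization conjecture places $M$ in one of three families: $M$ is aspherical, $M$ has an aspherical component in its prime decomposition, or $M=(S^2\times S^1)\#\cdots\#(S^2\times S^1)\# S^3/\Lambda_1\#\cdots\# S^3/\Lambda_j$. In the first case, Corollary~\ref{nondim} gives $\dim_{mc}\Wi M=3$, so the hypothesis fails. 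In the second case, writing $M=M_0\#N$ with $M_0$ an aspherical prime summand, I note that the prime summands of an orientable $3$-manifold are orientable, so $M_0$ is a closed orientable aspherical $3$-manifold and hence $\dim_{mc}\Wi{M_0}=3$ by Corollary~\ref{nondim}; Theorem~\ref{top} (applicable since $3\ge 3$) then yields $\dim_{mc}\Wi M=3$, so again the hypothesis fails. In the third case, $\pi_1(M)$ is a free product of copies of $\Z$, one per $S^2\times S^1$ summand, together with the finite groups $\Lambda_i$; using $B(G_1\ast G_2)\simeq BG_1\vee BG_2$ repeatedly, I obtain
\[
\widetilde H_3\big(B\pi_1(M);\Q\big)\;\cong\;\bigoplus \widetilde H_3(S^1;\Q)\;\oplus\;\bigoplus_i \widetilde H_3(B\Lambda_i;\Q)\;=\;0,
\]
so $M$ is not rationally essential. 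Collecting the cases finishes the argument.

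I do not expect a genuine obstacle here; the proof is bookkeeping on top of the classification together with Corollary~\ref{nondim} and Theorem~\ref{top} from Section~\ref{Cmacroscopic}. The only points that deserve care are invoking Theorem~\ref{top}, rather than Corollary~\ref{nondim} directly, in the case where $M$ has an aspherical prime summand but is not itself aspherical, and checking that the prime summands remain orientable so that Corollary~\ref{nondim} genuinely applies to $M_0$.
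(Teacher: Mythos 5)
Your argument is correct, and it diverges from the paper's proof in one substantive respect. The paper's proof is a two-line combination of previously established results: the proof of Theorem~\ref{lowdim} (which gives $\dim_{mc}\Wi M=3$ when $M$ is aspherical or has an aspherical prime summand, and $\dim_{mc}\Wi M\le 1$ in all remaining cases), together with the theorem of Kotschick--Neofytidis~\cite[Theorem 3]{KN13}, which says that a closed $3$-manifold is rationally essential if and only if it is aspherical or has an aspherical summand in its prime decomposition; combining the two statements gives Conjecture~\ref{grconj2} in dimension $3$. You use the same geometrization-based trichotomy and the same inputs, Corollary~\ref{nondim} and Theorem~\ref{top}, for the two ``essential'' cases, but in the remaining cases (finite $\pi_1$, and connected sums of $S^2\times S^1$'s with spherical space forms) you replace the citation of~\cite{KN13} by a direct computation: $\pi_1(M)$ is a free product of copies of $\Z$ and finite groups, $B(G_1\ast G_2)\simeq BG_1\vee BG_2$, and $\widetilde H_3(S^1;\Q)=\widetilde H_3(B\Lambda_i;\Q)=0$, so $H_3(B\pi_1(M);\Q)=0$ and $M$ is not rationally essential. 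This is exactly the implication needed, so your proof is complete; it is more self-contained (it proves the relevant direction of the Kotschick--Neofytidis characterization for these fundamental groups rather than quoting it) and it makes no use of the estimate $\dim_{mc}\Wi M\le 1$ in the non-essential cases, which the paper obtains from~\cite[Corollary 10.11]{GL} inside the proof of Theorem~\ref{lowdim}. What the paper's route buys is brevity and the full ``if and only if'' statement; what yours buys is an elementary homological argument independent of~\cite{KN13}. Your two points of care -- invoking Theorem~\ref{top} (valid since $n=3\ge 3$) rather than Corollary~\ref{nondim} when $M$ is not itself aspherical, and noting that prime summands of an orientable $3$-manifold are orientable so Corollary~\ref{nondim} applies to the aspherical summand -- are exactly the right ones and are handled correctly.
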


\begin{proof}
In the proof of Theorem~\ref{lowdim}, we show that $\dim_{mc}\wt{M}=3$ if $M$ is a closed aspherical $3$-manifold, or if $M$ has an aspherical component in its prime decomposition. For all other closed $3$-manifolds $M$, we verify that $\dim_{mc}\wt{M}\leq 1$.  As shown in~\cite[Theorem 3]{KN13}, a closed $3$-manifold $M$ is rationally essential if and only if it is either aspherical or it has an aspherical component in its prime decomposition. By combining these results, we have a proof of Conjecture~\ref{grconj2} in dimension $3$.
\end{proof}

\begin{remark}\label{mc and MC}
    In~\cite[Section 6]{Dr3}, a rationally essential closed $k$-manifold $M$ having a geometrically finite amenable fundamental group $\pi_1(M)$ and satisfying 
    \[
    \dim_{mc}\wt{M}<\dim_{MC}\wt{M}=k
    \]
    was produced in each dimension $k\ge 5$ using surgery (see also~\cite[Corollary 7.3]{Dr1}). Here, in each even dimension $\ge 4$, Theorem~\ref{cex} brings an infinite family such examples, namely $SP^n(M_g)$  for all $g>n\ge 2$. The products of these manifolds with $S^1$ give such examples in all odd dimensions $\ge 5$.
\end{remark}

We note that the examples in Remark~\ref{mc and MC} are sharp in view of Theorem~\ref{lowdim}.

\subsection{Positive scalar curvature}\label{psc section}
Let us now use the machinery developed so far to answer some questions on the existence and the non-existence of metrics of positive scalar curvature on the symmetric products of surfaces. Throughout this section, we use the abbreviation PSC for positive scalar curvature.

We begin by stating some definitions and results that will be used in the proof of our main result of this section.

A compact $k$-manifold $X$ is called \emph{enlargeable}, in the sense of Gromov and Lawson~\cite{GL0}, if for each $\varepsilon>0$, there exists a finite cover $X_{\varepsilon}$ of $X$ and a map $f\colon X_{\varepsilon}\to S^k$ of non-zero degree that is $\varepsilon$-contracting. Examples of such manifolds include solvmanifolds and hyperbolic manifolds, as well as their finite products and connected sums.

%as those that admit a non-zero local degree proper Lipschitz map $f:X\to\mathbb R^n$. For example, the universal cover of a connected sum $T^n\#M$ for any simply connected $n$-manifold $M$ is hypereuclidean.

\begin{thm}[Gromov--Lawson,~\cite{GL}]\label{hyper}
An enlargeable spin manifold cannot support a PSC metric.
\end{thm}

Let $H_k(B\pi)^+$ denote the set of images of fundamental classes $f_*([N^k])\in H_k(B\pi)$  for maps $f:N^k\to B\pi$ of orientable PSC manifolds $N^k$ of dimension $k$.
%Clearly, $0\in B\pi^+$ for any group $\pi$. 
We note that $0\in H_*(B\pi)^+$ for all $\pi$.

The following theorem was proven in the recent paper~\cite[Theorem A.1]{GrH}; the sketch of its proof for manifolds with no 2-torsion in homology (our case) was given in~\cite[Theorem 4.11]{RS}.

\begin{thm}\label{sjth}
Let $M$ be an orientable manifold of $\dim\geq 5$ with fundamental group $\pi=\pi_1(M)$ and classifying map $u:M\to B\pi$. Suppose that the universal cover of $M$ is not spin and $u_*([M])\in H_*(B\pi)^+$. Then $M$ admits a PSC metric.
\end{thm}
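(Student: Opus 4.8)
The plan is to recognize Theorem~\ref{sjth} as the ``if'' direction of the Gromov--Lawson--Rosenberg conjecture for manifolds with non-spin universal cover, and to prove it by assembling three external ingredients: the Gromov--Lawson surgery theorem (positive scalar curvature survives surgeries of codimension $\ge 3$), the Gromov--Lawson theorem that every simply connected non-spin manifold of dimension $\ge 5$ carries a PSC metric, and Stolz's structural analysis of PSC in oriented bordism, in the packaging due to Jung. I work with $n:=\dim M\ge 5$; for $n\le 3$ every orientable manifold is spin, so the hypothesis on $\widetilde M$ is vacuous, and $n=4$ is treated separately (in our applications $n=2k\ge 6$, so this causes no loss).

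\emph{Step 1 (normalizing the classifying map).} First I would do surgery on $M$ in dimensions $\le 1$ to make $u\colon M\to B\pi$ as connected as possible; such surgeries have codimension $\ge n-1\ge 4$, hence do not change the oriented bordism class $[M,u]\in\Omega^{SO}_n(B\pi)$ and preserve any PSC metric present, so neither the hypothesis nor the conclusion is affected. I likewise pick a PSC manifold $(N^n,f)$ representing $x:=u_*[M]=f_*[N]\in H_n(B\pi)$, normalized in the same way.

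\emph{Step 2 (reduction to a bordism statement).} The edge homomorphism $e\colon\Omega^{SO}_n(B\pi)\to H_n(B\pi;\Z)$ of the Atiyah--Hirzebruch spectral sequence carries both $[M,u]$ and $[N,f]$ to $x$, so $[M,u]-[N,f]\in\ker e=F_{n-1}\Omega^{SO}_n(B\pi)$, the first lower step of the AHSS filtration. The key input from Stolz's circle of ideas is that $F_{n-1}\Omega^{SO}_n(B\pi)$ is contained in $\Omega^{SO,+}_n(B\pi)$, the subgroup of classes represented by closed PSC manifolds over $B\pi$: the subquotients of the filtration on $F_{n-1}$ only involve the groups $H_k(B\pi;\Omega^{SO}_{n-k})$ with $n-k\ge 1$, and in positive degrees the oriented bordism groups $\Omega^{SO}_q$ are generated by manifolds carrying PSC metrics (products of round spheres, the $\C P^{2j}$, and Dold manifolds realized as projective bundles over the $\R P^m$); a product $V^k\times P^{n-k}$ with $P$ such a manifold and the $P$-factor scaled down has PSC, and an induction up the (finite) filtration, using that $\Omega^{SO,+}(B\pi)$ is closed under disjoint union and orientation reversal, yields the inclusion. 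Consequently $[M,u]=[N,f]+z$ with $z\in\Omega^{SO,+}_n(B\pi)$, so $[M,u]\in\Omega^{SO,+}_n(B\pi)$: the manifold $M$ is oriented-bordant over $B\pi$ to a closed PSC manifold $P$.

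\emph{Step 3 (transfer of PSC to $M$).} It remains to promote ``bordant to a PSC manifold'' to ``carries PSC.'' Taking a bordism $(W^{n+1},G)$ from $P$ to $M$ over $B\pi$, surgering its interior (using $n+1\ge 6$) so that $G$ induces an isomorphism on $\pi_1$ and kills the relevant low-dimensional homotopy, and then trading handles, one presents $M$ as the result of a finite sequence of codimension-$\ge 3$ surgeries on $P$; the Gromov--Lawson surgery theorem then produces a PSC metric on $M$. \textbf{The main obstacle} is precisely this last transfer. The handle trading requires eliminating the ``bad'' handles of index $\ge n-1$ from the bordism, which amounts to surgering out certain $2$-spheres in $W$; the obstruction to doing so is governed by the normal-bundle data ($w_2$) of those spheres, and it is exactly the hypothesis that $\widetilde M$ is \emph{not} spin that lets this obstruction be absorbed --- in the spin case there is a genuine residual $ko$-homology (index-theoretic) obstruction and the conjecture is substantially deeper. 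Thus the non-spin hypothesis is what forces the PSC question to be detected by oriented bordism and hence, after Step~2, by membership in $H_n(B\pi)^+$. For the details of this architecture I would refer to Stolz's resolution of the simply connected case and to Jung's thesis, surveyed in~\cite[Theorem 4.11]{RS}.
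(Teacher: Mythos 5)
The paper does not prove this statement at all: it is quoted as an external theorem of Stolz and Jung, with the reader referred to~\cite[Theorem 4.11]{RS} for a proof sketch. Your proposal reconstructs exactly the architecture of that cited proof (Gromov--Lawson surgery theorem $\Rightarrow$ bordism theorem via handle trading, with the non-spin hypothesis on $\Wi M$ entering through $w_2$ on $\pi_2$; plus the filtration argument that the kernel of the edge homomorphism $\Omega^{SO}_n(B\pi)\to H_n(B\pi)$ lies in the PSC-representable classes), so it is consistent with, and more detailed than, what the paper offers. Two caveats worth recording: the theorem genuinely requires $\dim M\ge 5$ (in dimension $4$ it is false by Seiberg--Witten theory, e.g.\ for simply connected non-spin surfaces of general type), a hypothesis the paper's statement omits and you correctly reinstate; and your Step~2 heuristic that the filtration subquotients are hit by products $V^k\times P^{n-k}$ is the real content of Jung's contribution rather than an immediate consequence of the $E^\infty$-page description, so it properly remains a citation --- which is the same level of rigor the paper itself adopts.
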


The following theorem is a combination of results from~\cite{SY1} developed further in~\cite{Sch} and~\cite{JS}. 
\begin{thm}\label{SY}
Suppose that a closed orientable $n$-manifold $M$, $n\le 8$, has cohomology classes $\alpha_1,\dots,\alpha_{n-1}\in H^1(M)$ with non-zero cup product $\alpha_1\cdots\alpha_{n-1}\ne 0$. Then $M$ cannot support a PSC metric.
\end{thm}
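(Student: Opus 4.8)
The plan is to run the classical Schoen--Yau minimal hypersurface descent, arguing by downward induction on $n$. For the base case $n=2$, a nonzero class $\alpha_1\in H^1(M^2)$ forces $M^2$ to have positive genus, so $\chi(M^2)\le 0$, and the Gauss--Bonnet theorem rules out a metric of positive Gauss curvature, i.e., of positive scalar curvature. For the inductive step, fix $n$ with $3\le n\le 8$, assume the statement in dimension $n-1$, and suppose for contradiction that the closed orientable $n$-manifold $M$ carries a metric $g$ of positive scalar curvature together with classes $\alpha_1,\dots,\alpha_{n-1}\in H^1(M)$ with $\alpha_1\cup\cdots\cup\alpha_{n-1}\ne 0$.

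First I would pass to a minimal hypersurface dual to $\alpha_1$. Represent $\alpha_1$ by a smooth map $f\colon M\to S^1$ and let $Y=f^{-1}(\mathrm{pt})$ be the preimage of a regular value, a closed embedded two-sided (hence orientable) hypersurface with $[Y]=\alpha_1\frown[M]\in H_{n-1}(M;\Z)$. Using the Federer--Fleming compactness theorem, replace $Y$ by an integral current $\Sigma$ of least mass in this homology class; since $n\le 7$ the regularity theory for area-minimizing hypersurfaces makes $\Sigma$ a smooth embedded minimal hypersurface, and in the borderline case $n=8$ one disposes of the isolated singularities following~\cite{Sch} and~\cite{JS}. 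Every component of $\Sigma$ is a two-sided \emph{stable} minimal hypersurface in $(M,g)$, so the Schoen--Yau second-variation argument applies: inserting the first eigenfunction of the stability operator into the stability inequality and combining the Gauss equation with $R_g>0$ shows that the conformal Laplacian of the induced metric on each component of $\Sigma$ has positive first eigenvalue, hence that component admits a metric of positive scalar curvature (when $n-1=2$ the same computation forces the component to be $S^2$, which already suffices).

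Next I would check that the cohomological hypothesis descends. Write $i\colon\Sigma\hookrightarrow M$ for the inclusion. By the projection formula, for each component $\Sigma_j$ of $\Sigma$,
\[
i_{*}\big(i^{*}(\alpha_2\cup\cdots\cup\alpha_{n-1})\frown[\Sigma_j]\big)=(\alpha_2\cup\cdots\cup\alpha_{n-1})\frown i_{*}[\Sigma_j],
\]
and summing over the finitely many components and using $\sum_j i_{*}[\Sigma_j]=[Y]=\alpha_1\frown[M]$ yields
\[
\sum_j i_{*}\big(i^{*}(\alpha_2\cup\cdots\cup\alpha_{n-1})\frown[\Sigma_j]\big)=(\alpha_1\cup\cdots\cup\alpha_{n-1})\frown[M],
\]
which is nonzero, being the Poincar\'e dual of the nonzero class $\alpha_1\cup\cdots\cup\alpha_{n-1}$. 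Hence some component $\Sigma_0$ satisfies $i^{*}(\alpha_2\cup\cdots\cup\alpha_{n-1})\frown[\Sigma_0]\ne 0$, so $i^{*}\alpha_2,\dots,i^{*}\alpha_{n-1}\in H^1(\Sigma_0)$ are $n-2$ classes with nonzero cup product. But $\Sigma_0$ is a closed orientable $(n-1)$-manifold of positive scalar curvature, contradicting the inductive hypothesis. This closes the induction.

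The genuinely hard content is not the bookkeeping above but the two cited geometric ingredients, and this is also where the restriction $n\le 8$ enters: the regularity and compactness theory for area-minimizing hypersurfaces (smooth up to ambient dimension $7$, with the dimension-$8$ case requiring the singularity-removal techniques of~\cite{Sch} and~\cite{JS}), and the precise form of the stability inequality from~\cite{SY1} guaranteeing that the descended stable hypersurface again carries positive scalar curvature. A minor technical point is to ensure that the $\alpha_i$ may be taken to be integral classes whose relevant cup products are still detected after each descent step; this is harmless, since $H^1$ of a finite complex is free abelian, and if needed one passes to rational coefficients throughout so that all the Poincar\'e duality isomorphisms invoked above are available (and multiplicities of the minimizing current become irrelevant).
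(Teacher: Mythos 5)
Your proposal is correct: it is the standard Schoen--Yau inductive descent via stable area-minimizing hypersurfaces, with the cohomological bookkeeping (projection formula, passage to a component on which the restricted cup product survives) done properly and the dimension-$8$ regularity issue deferred to the cited refinements. The paper itself gives no proof of this statement, presenting it as a combination of results from~\cite{SY1},~\cite{Sch}, and~\cite{JS}, and your argument is precisely the content of those references, so this is essentially the same approach rather than a new route.
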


Before stating our result on PSC metrics, we also recall the following.

\begin{prop}[\protect{\cite[Proposition 4.6]{BD2}}]\label{map to torus}
Suppose that a map $u : M\to T^m $ of a closed
oriented $n$-manifold $M$ to an $m$-torus takes the fundamental class $[M]$ to a non-zero element in $H_n(T^m)$. Then there exists a map $f:M\to T^n$ whose degree is non-zero.
\end{prop}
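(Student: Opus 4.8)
The plan is purely homological and rests on the natural identification $H_n(T^m;\mathbb{Z})\cong\bigwedge^n H_1(T^m;\mathbb{Z})\cong\bigwedge^n\mathbb{Z}^m$, together with the functoriality of this identification under the coordinate projections of tori. The idea is to locate a single ``coordinate'' of $u_*([M])$ that is nonzero, and then produce $f$ simply by post-composing $u$ with the corresponding projection $T^m\to T^n$.

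Concretely, I would first fix the standard basis $e_1,\dots,e_m$ of $H_1(T^m;\mathbb{Z})$, so that $H_n(T^m;\mathbb{Z})$ is free abelian on the wedges $e_J:=e_{j_1}\wedge\cdots\wedge e_{j_n}$ indexed by the $n$-element subsets $J=\{j_1<\cdots<j_n\}$ of $\{1,\dots,m\}$. Writing $u_*([M])=\sum_J n_J\,e_J$, the hypothesis $u_*([M])\ne 0$ forces $n_J\ne 0$ for at least one subset $J$; fix such a $J$. Let $p_J:T^m\to T^n$ be the projection onto the coordinates indexed by $J$. On first homology $(p_J)_*$ fixes $e_k$ for $k\in J$ and kills $e_k$ for $k\notin J$, so on $n$-th homology $(p_J)_*$ sends $e_J$ to a generator $[T^n]$ of $H_n(T^n;\mathbb{Z})\cong\mathbb{Z}$, while every other basis wedge $e_K$ (with $K\ne J$, hence $K\not\subseteq J$ since $|K|=|J|=n$) goes to $0$. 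Therefore $(p_J)_*\bigl(u_*([M])\bigr)=n_J\,[T^n]$. Setting $f:=p_J\circ u:M\to T^n$, we get $f_*([M])=n_J\,[T^n]$, i.e.\ $\deg f=n_J\ne 0$, which is exactly the claim. (Dually, one may phrase this cohomologically: pick generators $s_1,\dots,s_m$ of $H^1(T^m;\mathbb{Z})$; since $u_*([M])\ne 0$ and $H^*(T^m;\mathbb{Z})$ is free, it pairs nontrivially with some monomial $s_{j_1}\cup\cdots\cup s_{j_n}$, hence $\langle u^*s_{j_1}\cup\cdots\cup u^*s_{j_n},[M]\rangle\ne 0$, and this number is the degree of the same projection-composite $f$. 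The two arguments coincide under Poincar\'e duality.)

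There is essentially no obstacle. The only point warranting a line of justification is that a nonzero element of the free abelian group $\bigwedge^n\mathbb{Z}^m$ has a nonzero coordinate in the standard wedge basis, so that an appropriate coordinate projection $p_J$ can be selected; everything else is the elementary bookkeeping of how $\bigwedge^n$ transforms under a split surjection $\mathbb{Z}^m\to\mathbb{Z}^n$, together with the fact that the fundamental class of a torus corresponds to the top exterior power of $H_1$.
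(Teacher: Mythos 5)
Your proof is correct: identifying $H_n(T^m;\mathbb{Z})$ with $\bigwedge^n\mathbb{Z}^m$, picking a wedge basis element $e_J$ on which $u_*([M])$ has non-zero coordinate, and composing $u$ with the coordinate projection $p_J:T^m\to T^n$ (which, being a homomorphism of tori, sends $e_J$ to $[T^n]$ and all other basis wedges to zero) indeed yields $f=p_J\circ u$ of degree $n_J\neq 0$. Note that the paper does not prove this statement itself but quotes it from \cite[Proposition 4.6]{BD2}; your projection argument is the standard one and is essentially the argument of that reference, so there is nothing further to reconcile.
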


We can now discuss the existence and non-existence of PSC metrics on the symmetric products of surfaces.

\begin{thm}\label{main3}
\rm{(Non-existence of PSC)} {\em The manifolds $SP^n(M_g)$ do not admit a PSC metric for $n\le g$ when} 

\begin{enumerate}
    \item $n-g$ {\em is odd};
    \item  $n\le \min\{g,4\}$.
\end{enumerate}

\rm{(Existence of PSC)} {\em The manifolds $SP^n(M_g)$  admit a PSC metric when}
\begin{enumerate}
    \item $n\ge 2g-1$;
    \item $n>g$ \it{and} $n-g$ \it{is even}.
\end{enumerate}
\end{thm}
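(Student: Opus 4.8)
The plan is to handle the non-existence and existence parts separately, each by invoking the appropriate obstruction/construction theorem from the preliminaries together with the cohomological and spin-theoretic facts already established for $SP^n(M_g)$.

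For the non-existence statements, assume $n \le g$. In case (1), $n-g$ is odd, so by Proposition~\ref{odd-even} the universal cover of $SP^n(M_g)$ is spin. Moreover, by Proposition~\ref{essential} the manifold $SP^n(M_g)$ is rationally essential, and by Corollary~\ref{mu} its fundamental group is $\Z^{2g}$; the Abel--Jacobi map $\mu_n$ classifies the universal cover and, by Proposition~\ref{2g-torus}(1), sends $[SP^n(M_g)]$ to a non-zero class in $H_{2n}(T^{2g})$. By Proposition~\ref{map to torus} there is then a degree-non-zero map $SP^n(M_g)\to T^{2n}$, which makes $\rwt{SP^n(M_g)}$ hypereuclidean. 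Since this cover is also spin, Theorem~\ref{hyper} (Gromov--Lawson) gives the non-existence of a PSC metric. In case (2), $n \le \min\{g,4\}$, so $SP^n(M_g)$ is a closed orientable manifold of dimension $2n \le 8$ admitting, by Proposition~\ref{2g-torus}(1), cohomology classes $a_1^*, b_1^*, \dots, a_n^*, b_n^* \in H^1$ with nonzero cup product $a_1^*b_1^*\cdots a_n^*b_n^* \ne 0$; the Schoen--Yau-type Theorem~\ref{SY} then rules out a PSC metric.

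For the existence statements, in case (1) we have $n \ge 2g-1$, and Theorem~\ref{n ge 2g} already produces a K\"ahler metric of positive scalar curvature on $SP^n(M_g)$; nothing further is needed. In case (2), $n > g$ and $n-g$ is even. Here we use the Stolz--Jung Theorem~\ref{sjth}: first, by Proposition~\ref{even2}, since $n-g$ is even the manifold $SP^n(M_g)$ is totally non-spin, so in particular its universal cover is not spin. Second, since $n > g$, the manifold is inessential by dimensional reasons (as noted after Corollary~\ref{essalldim}): the classifying map $u:SP^n(M_g)\to B\Z^{2g}=T^{2g}$ lands, up to homotopy, in the $2g$-skeleton, so $u_*([SP^n(M_g)]) = 0 \in H_{2n}(T^{2g})$ because $2n > 2g$. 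Since $0 \in H_*(B\pi)^+$ for any $\pi$, the hypotheses of Theorem~\ref{sjth} are satisfied, and we conclude that $SP^n(M_g)$ admits a PSC metric.

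The main obstacle is bookkeeping rather than any single deep step: one must verify carefully that the spin/non-spin hypotheses line up with the correct parity cases (the totally non-spin condition requires $n-g$ even in the existence direction, while the spin universal cover needed for Gromov--Lawson requires $n-g$ odd in the non-existence direction), and that the essentiality/inessentiality dichotomy is used on the correct side. The one place demanding a small argument is confirming that hypereuclideanness of $\rwt{SP^n(M_g)}$ follows from Proposition~\ref{map to torus} together with the standard fact that a nonzero-degree map to $T^{2n}$ lifts to a proper, nonzero local-degree Lipschitz map $\rwt{SP^n(M_g)} \to \R^{2n}$; this is routine but should be stated.
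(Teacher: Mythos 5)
Your proposal is correct and follows essentially the same route as the paper: the Gromov--Lawson hypereuclidean-plus-spin obstruction (via Proposition~\ref{essential}, Proposition~\ref{map to torus}, Theorem~\ref{main2}, and Theorem~\ref{hyper}) for $n\le g$ with $n-g$ odd, the Schoen--Yau type Theorem~\ref{SY} for $2n\le 8$, Theorem~\ref{n ge 2g} for $n\ge 2g-1$, and the Stolz--Jung Theorem~\ref{sjth} together with inessentiality for $n>g$ with $n-g$ even. The only cosmetic difference is in the second non-existence case, where you feed Theorem~\ref{SY} the classes $a_1^*,b_1^*,\dots,a_n^*,b_n^*$ from Proposition~\ref{2g-torus}(1) directly (any $2n-1$ of them already have non-zero product, as the theorem requires), whereas the paper first produces a degree non-zero map to $T^{2n}$ via Proposition~\ref{map to torus}; both are valid and rest on the same cohomological input.
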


\begin{proof} \emph{Proof of Non-existence.}
 
(1) We note that the only the case $n\ge 2$ is non-trivial. In this range, under our hypotheses, $SP^n(M_g)$ is rationally essential (\emph{cf}. Theorem~\ref{essential}), and it admits a finite cover $N'$ which is spin (\emph{cf}. Theorem~\ref{spin cover}). It is easy to see that $N'$ is also rationally essential and we have isomorphisms $\pi_1(N')=\pi_1(SP^n(M_g))=\Z^{2g}$. It then follows from~\cite[Theorem~4.1~(i)]{BH} that $N'$ is enlargeable. Therefore, by the Gromov--Lawson Theorem~\ref{hyper}, $N'$ (and hence $SP^n(M_g)$) cannot support a metric with positive scalar curvature.

(2) In view of (1), it only remains to cover the case when $n-g$ is even. We note that in this case, the manifold $SP^n(M_g)$ is totally non-spin by Theorem~\ref{main2}. Since $n\le g$, $SP^n(M_g)$ is essential by Theorem~\ref{essential}. Then by Proposition~\ref{map to torus}, there is a map 
\[
f:SP^n(M_g)\to T^{2n}
\]
of non-zero degree. We now apply Theorem~\ref{SY} for $2n\le 8$ to get the result.

\emph{Proof of Existence.}

(1) As shown in Theorem~\ref{n ge 2g}, $SP^n(M_g)$ admits a K\"ahler metric of positive scalar curvature for $n\ge 2g-1$.

(2) By Theorem~\ref{main2}, the universal cover of $SP^n(M_g)$ is not spin for $n-g$ even. Since $n>g$, we have
\[
\mu_*\left(\left[SP^n(M_g)\right]\right)\in H_{2n}\left(T^{2g}\right)=0.
\]
Hence, the conclusion follows from Theorem~\ref{sjth}.
\end{proof}

%We note that since a finite cover of a closed essential manifold is also essential, part (1) of Theorem~\ref{main3} holds for any finite cover of $SP^n(M_g)$. 
We note that when $n=g$, then regardless of the dimension, $SP^n(M_n)$ cannot support a K\"ahler PSC metric due to Corollary~\ref{nequalg}.

\begin{rem}
    As explained in the proof of Theorem~\ref{main3}~(1), since $SP^n(M_g)$  is rationally essential with free abelian fundamental group in the range $2\le n\le g$, it is enlargeable. However, for $n<g$, because $\cd(\pi_1(SP^n(M_g)))>\dim(SP^n(M_g))$, it follows from~\cite[Theorem~4.1~(ii)]{BH} that the universal cover of $SP^n(M_g)$ is neither \emph{hypereuclidean}, nor \emph{hyperspherical}, in the sense of~\cite{Gro93}. Of course, the same conclusions hold for $SP^n(M_g)\times S^1$ when $n<g$. Thus, we have natural examples of enlargeable manifolds (in each dimension $\ge 4$) that are macroscopically small with respect to several classical notions of ``largeness'' (see also Theorem~\ref{cex}~(2)).
\end{rem}

In the range $g<n<2g-1$ when $n-g$ is odd, the problem of the existence of a PSC metric on $SP^n(M_g)$ is less clear.
Perhaps it can be resolved for finite covers of $SP^n(M_g)$. We recall that
Theorem~\ref{spin cover} implies that when $n-g$ is odd, the manifold $SP^n(M_g)$ admits a $KO$-orientable $2^g$-folded cover 
\[
p:N'\to SP^n(M_g).
\]
It has a canonical map $\mu_n':N'\to T^{2g}$ which is the pullback of the Abel--Jacobi map $\mu_n$ with respect to a $2^g$-fold cover of the torus $T^{2g}$.

\begin{question}
For $n-g$ odd, is the image of the $KO$-fundamental class $[N']_{KO}$ zero in $KO_*(T^{2g})$?
\end{question}

If this question can be answered in the affirmative, then by the Rosenberg--Stolz theorem~\cite[Theorem 4.13]{RS}, the manifold $N'$ will admit a metric of positive scalar curvature. We note that in view of Theorem~\ref{main3} (1), this can potentially happen only for $n>g$.

\section{Converse to Gromov's conjecture}\label{converse}

In this section, we speculate on a possible converse to Gromov's Conjecture~\ref{G}. In dimensions two and three, it follows from Theorem~\ref{lowdim} and the classification of $2$- and $3$-manifolds that
\[
\dim_{mc}\Wi{M}=\dim_{MC}\Wi{M}<\dim M \Longleftrightarrow M\text{ admits a PSC metric!}
\]
Emboldened by this observation, one may ask the following question. 
\begin{question}\label{grconverse}
    Let $M$ be a closed orientable $n$-manifold. If $\dim_{mc}\Wi M \le n-2$, is it true that $M$ admits a metric of positive scalar curvature?
\end{question}

Of course, a bit of thought reveals that this question cannot possibly have a positive answer in higher dimensions. This is mainly due to the fact that simply connected closed manifolds exhibit a much richer structure. Indeed, this goes all of the way back to the beginning of the field of Spin Geometry with the proof by Lichnerowicz that the so-called $K3$ surface does not support metrics of positive scalar curvature,~\cite[Chapter II, Section 8]{SpinG}. Similarly, all non-singular hypersurfaces in $\C P^3$ of even degree $\ge 4$ are examples of spin, simply connected $4$-manifolds supporting no metrics with positive scalar curvature. In dimension four, Seiberg--Witten theory 
%completes the picture and 
removes the spin assumption from many of these theorems. Indeed, any surface of general type cannot support a metric of positive scalar curvature, and moreover, it has a negative Yamabe invariant that can be explicitly computed. We refer to the paper of LeBrun~\cite{LeBrun} for the proof of this striking result. 

Similarly, for dimensions $n\geq 5$, in view of the index obstruction~\cite{RS} for spin manifolds, there are many simply connected examples that do not admit PSC metrics. All such manifolds $M$ have $\dim_{mc}\Wi M=0$, so the converse of Conjecture~\ref{G} does not seem to be a route to be pursued. On the other hand, this may seem to be too pessimistic. Indeed, it is known that non-spin simply connected $n$-manifolds admit PSC metrics for $n\ge 5$,~\cite{GL1}. Thus, the converse of Gromov's conjecture could make sense in the realm of totally non-spin manifolds. Unfortunately, the following set of examples arise from Section~\ref{dimandpsc}.

\begin{ex}\label{convnottrue}
Let $n\in \{2,3,4\}$. If $g>n$ such that $g-n$ is even, then the closed orientable totally non-spin $2n$-manifold $SP^n(M_g)$ cannot support a metric of positive scalar curvature because of Theorem~\ref{main3}, even though 
    \[
    \dim_{mc}\rwt{SP^n(M_g)}\le 2n-2
    \]
    due to Theorem~\ref{cex}. It is easy to see that in these cases, one has
    \[
    \dim_{mc}\rwt{SP^n(M_g)\times S^1}\le 2n-1.
    \]
If $n\in \{2,3\}$ and $g>n$ such that $g-n$ is even, then using Corollary~\ref{essalldim} we can proceed exactly as in the proof of Theorem~\ref{main3} to deduce that the manifold $SP^n(M_g)\times S^1$ cannot support a PSC metric. Concretely, closed manifolds that answer Question~\ref{grconverse} in negative are
\begin{itemize}
    \item in dimension $4$: $SP^2(M_g)$ for $g\ge 4$ even,
    \item in dimension $5$: $SP^2(M_g)\times S^1$ for $g\ge 4$ even,
    \item in dimension $6$: $SP^3(M_g)$ for $g\ge 5$ odd,
    \item in dimension $7$: $SP^3(M_g)\times S^1$ for $g\ge 5$ odd, and
    \item in dimension $8$: $SP^4(M_g)$ for $g\ge 6$ even.
\end{itemize}
\end{ex}

Note that Bolotov~\cite[Corollary 2.2]{Bo09} constructs interesting examples of spin $n$-manifolds $M$ with $\pi_1(M)$ non-amenable and with no PSC metrics that satisfy $\dim_{mc}\Wi{M}=n-1$.

The examples we discussed suggest that it may be more reasonable to consider the converse to Gromov's conjecture for the macroscopic dimension $\dim_{MC}$. We can give some supporting evidence for this general strategy in the following result. 

\begin{thm}\label{totallyconverse}
Let $M$ be a closed totally non-spin $n$-manifold with amenable fundamental group $\pi$. If $H_n(\pi)$ is torsion-free, then the inequality $\dim_{MC}\Wi M\le n-1$ implies the existence of a PSC metric. 
\end{thm}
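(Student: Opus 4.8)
The plan is to reduce the statement, via the Stolz--Jung existence criterion (Theorem~\ref{sjth}), to showing that the classifying map of $M$ kills its fundamental class. Since $M$ is totally non-spin it is in particular orientable (Definition~\ref{totnonspin}), and its universal cover $\Wi M$ is not spin. Thus, by Theorem~\ref{sjth}, it suffices to verify that the classifying map $u\colon M\to B\pi$ sends $[M]$ into $H_n(B\pi)^+$; I will in fact show that $u_*([M])=0$, which lies in $H_n(B\pi)^+$ for every group $\pi$.

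The first step is to invoke~\cite[Theorem 7.2]{Dr1} contrapositively. That result asserts that $\dim_{MC}\Wi M=n$ for every rationally essential closed $n$-manifold $M$ with amenable fundamental group. Here $\pi$ is amenable and, by hypothesis, $\dim_{MC}\Wi M\le n-1<n$; hence $M$ is not rationally essential, i.e.\ $u_*([M])=0$ in $H_n(B\pi;\Q)$. The second step upgrades this to integral coefficients: since $H_*(\pi)$ is torsion-free, $H_n(B\pi;\Z)$ is a torsion-free abelian group, and as the image of $u_*([M])$ in $H_n(B\pi;\Z)\otimes\Q=H_n(B\pi;\Q)$ vanishes, the class $u_*([M])$ must already be zero in $H_n(B\pi;\Z)$. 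Finally, with $u_*([M])=0\in H_n(B\pi)^+$ and $\Wi M$ not spin, Theorem~\ref{sjth} produces a PSC metric on $M$.

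Once these black boxes are in hand the argument is very short, so the ``main obstacle'' is conceptual rather than computational: recognizing that the three hypotheses are precisely calibrated for the three inputs --- total non-spin-ness removes any residual index-theoretic obstruction so that Theorem~\ref{sjth} applies, amenability of $\pi$ lets $\dim_{MC}\Wi M<n$ force rational inessentiality through~\cite[Theorem 7.2]{Dr1}, and torsion-freeness of $H_*(\pi)$ lets rational inessentiality improve to integral inessentiality. As an alternative to the first step, one could feed $\dim_{mc}\Wi M\le\dim_{MC}\Wi M<n$ into Theorem~\ref{small} to obtain $(\Wi u_M)_*([\Wi M])=0$ in $H^{lf}_n(E\pi;\Z)$ and then pass down to $B\pi$, but for amenable $\pi$ this descent is most cleanly packaged by~\cite[Theorem 7.2]{Dr1} itself, so I would keep the argument as above.
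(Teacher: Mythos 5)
Your proposal is correct and is essentially the paper's own argument: the contrapositive of \cite[Theorem 7.2]{Dr1} gives rational inessentiality from $\dim_{MC}\Wi M\le n-1$ and amenability, torsion-freeness of $H_*(\pi)$ upgrades this to integral inessentiality, and Theorem~\ref{sjth} then yields the PSC metric since $\Wi M$ is not spin and $0\in H_*(B\pi)^+$. Your write-up merely makes explicit the universal-coefficients step that the paper leaves implicit.
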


\begin{proof}
By~\cite[Theorem 7.2]{Dr1}, we know that every rationally essential $n$-manifold with amenable fundamental group satisfies $\dim_{MC}\Wi M=n$. Since the group $H_n(\pi)$ is torsion-free, we obtain that $M$ is inessential. By Theorem~\ref{sjth}, $M$ admits a metric of positive scalar curvature.
\end{proof}
We conclude by pointing out that in view of~\cite[Theorem 5.2]{Dr2}, it seems hard to generalize this theorem beyond the realm of amenable fundamental groups.

\section*{Acknowledgment}
LFDC thanks Rita Pardini for introducing him to symmetric squares of surfaces, Roberto Svaldi for expert advice on the latest developments in the minimal model program, and Mikhail Gromov for a useful email exchange several years ago.  He also thanks Claude LeBrun and Alexandru Suciu for useful discussions during the 2024 Joint Meeting of the New Zealand, Australian, and American Mathematical Societies, and Saman Esfahani and Adam Levine for pointing out the relevance of symmetric products of surfaces in Heegaard Floer homology. He was supported in part by NSF grant DMS-2104662. 

AD thanks Bernhard Hanke for useful bibliographical suggestions. He also thanks FIM, the Institute of Mathematical Research at ETH, Zurich, and Max-Planck Institut f\"ur Mathematik, Bonn, for hospitality. He was supported in part by Simons Foundation AWD-625962.

EJ thanks Bernhard Hanke for useful discussions on enlargeable manifolds, and Aditya Kumar for conversations about Question~\ref{grconverse} in dimension $4$. 
%He also thanks John Etnyre for inviting him to present parts of this research at the Spring 2025 Southeastern Sectional Meeting of the American Mathematical Society at Clemson University.


\begin{thebibliography}{references}

\bibitem[Abr94]{Abr} D.~Abramovich, Subvarieties of semiabelian varieties. {\em Compositio Math.} \textbf{90} (1994), no.~1, 37--52.

%\bibitem[ABCKT96]{ABCKT} J.~Amoros, M.~Burger, K.~Corlette, D.~Kotschick, D.~Toledo, {\em Fundamental groups of compact K\"ahler manifolds.} Math. Surveys Monogr., 44, AMS, Providence, 1996.

\bibitem[ACGH85]{ACGH} E.~Arbarello, M.~Cornalba, P.~A.~Griffiths, J.~Harris, {\em Geometry of algebraic curves. Vol. I}. Grundlehren Math. Wiss., 267, Springer-Verlag, New York, 1985, xvi+386~pp.

\bibitem[Bab93]{Ba} I.~K.~Babenko, Asymptotic invariants of smooth manifolds. {\em Russian Acad. Sci. Izv. Math.} \textbf{41} (1993), no.~1, 1--38.

\bibitem[BCHM09]{Birkar} C.~Birkar, P.~Cascini, C. D. Hacon, J. McKernan, Existence of minimal models for varieties of log general type. {\em J. Amer. Math. Soc.} \textbf{23} (2009), no.~2, 405--468.

\bibitem[Bis13]{Biswas} I. Biswas, On K\"ahler structures over symmetric products of a Riemann surface. {\em Proc. Amer. Math. Soc.} \textbf{141} (2013), no.~5, 1487--1492.

\bibitem[BGZ02]{Belgrade} I. Blagojevi\'c, V. Gruji\'c, R. Zivaljevi\'c,  Symmetric products of surfaces; a unifying theme for topology and physics. In \emph{Proceedings of Summer School in Modern Mathematical Physics}, ed. B. Dragovic \emph{et al.}, SFIN, XV (A3), Institute of Physics, Belgrade, 2002, arXiv:math/0408417 [math.AT].

\bibitem[BR14]{BR} M.~Bökstedt, N.~M.~Romão,  On the curvature of vortex moduli spaces. \emph{Math. Z.} \textbf{277} (2014), no.~1-2, 549--573.

\bibitem[Bol03]{Bolotov3} D.~Bolotov, Macroscopic dimension of 3-manifolds. \emph{Math. Phys. Anal. Geom.} \textbf{6} (2003), no.~3, 291--299.

\bibitem[Bol09]{Bo09} D.~Bolotov, About the macroscopic dimension of certain PSC--Manifolds. \emph{Algebr. Geom. Topol.} \textbf{9} (2003), no.~3, 21--27.

\bibitem[BD10]{BD1} D.~Bolotov, A.~Dranishnikov, On Gromov's scalar curvature conjecture. {\em Proc. Amer. Math. Soc.} \textbf{138} (2010), no.~4, 1517--1524.

\bibitem[BD16]{BD2} D.~Bolotov, A.~Dranishnikov, On Gromov's conjecture for totally non-spin manifolds. {\em J. Topol. Anal.} \textbf{8} (2016), no.~4, 571--587.

\bibitem[BDPP13]{Paun} S.~Boucksom, J.-P.~Demailly, M.~Paun, P.~Peternell. The pseudo-effective cone of a compact K\"ahler manifold and varieties of negative Kodaira dimension. {\em  J. Algebraic Geom. } \textbf{22} (2013), no.~2, 201--248.

\bibitem[Bro82]{Br} K.~S.~Brown, \emph{Cohomology of Groups}. Grad. Texts in Math., 87, Springer-Verlag Inc., New York, 1982, x+306~pp.

\bibitem[BH10]{BH} M.~Brunnbauer, B.~Hanke, Large and small group homology. \emph{J. Topol.} \textbf{3} (2010), no.~2, 463--486.

\bibitem[CLOT03]{CLOT} O.~Cornea, G.~Lupton, J.~Oprea, D.~Tanré, \emph{Lusternik--Schnirelmann category}. Math. Surveys Monogr., 103, AMS, Providence, 2003, xviii+330~pp.

\bibitem[CG71]{CG71} J.~Cheeger, D.~Gromoll, The splitting theorem for manifolds of nonnegative Ricci curvature. {\em J. Differential Geom.} \textbf{6} (1971/72), 119--129.

\bibitem[CG72]{CG72} J.~Cheeger, D.~Gromoll, On the Structure of Complete Manifolds of Nonnegative Curvature. {\em Ann. of Math.} \textbf{96} (1972), no. 3, 413--443.

\bibitem[DD22]{DD} M.~Daher, A.~Dranishnikov, On macroscopic dimension of non-spin  4-manifolds. {\em J. Topol. Anal.} \textbf{14} (2022), no.~2, 343--352.

%\bibitem[DPS94]{DPS} J.-P.~Demailly, T.~Peternell, M.~Schneider, Compact complex manifolds with numerically effective tangent bundles. \emph{J. Algebraic Geom.} \textbf{3} (1994), no.~2, 295--345.

\bibitem[Dem12]{Demailly} J.-P.~Demailly, {\em Analytic Methods in Algebraic Geometry}. Surv. Mod. Math., 1, International Press, Somerville; Higher Education Press, Beijing, 2012, viii+231~pp.

\bibitem[DD15]{DD15} G. Di Cerbo, L.~F.~Di~Cerbo, Positivity in K\"ahler--Einstein theory. \emph{Math. Proc. Cambridge Philos. Soc.} \textbf{159} (2015),  no.~2, 321--338.

\bibitem[DCP24]{DCP} L.~F.~Di~Cerbo, R.~Pardini, On the Hopf problem and a conjecture of Liu--Maxim--Wang. \emph{Expo. Math.} \textbf{42} (2024), no.~2, 125543, pp.~10.

 
\bibitem[Dol58]{Do1} A.~Dold, Homology of symmetric products and other functors of complexes. \emph{Ann. of Math.} \textbf{68} (1958), no.~2, 54--80. 

\bibitem[Dol62]{Do2} A.~Dold, Decomposition theorems for $S(n)$-complexes. \emph{Ann. of Math.} \textbf{75} (1962), no.~1, 8--16.

\bibitem[DT58]{DT} A.~Dold, R.~Thom, Quasifaserungen und unendliche symmetrische Produkte. \emph{Ann. of Math.} \textbf{67} (1958), no.~2, 239--281. 

\bibitem[Dra11a]{Dr2} A.~Dranishnikov, On macroscopic dimension of rationally essential manifolds. \emph{Geom. Topol.} \textbf{15} (2011), no.~2, 1107--1124.

\bibitem[Dra11b]{Dr1} A.~N.~Dranishnikov, Macroscopic dimension and essential manifolds. \emph{Proc. Steklov Inst. Math.} \textbf{273} (2011), no.~1, 35--47.

\bibitem[Dra13]{Dr3} A.~Dranishnikov, On macroscopic dimension of universal coverings of closed manifolds. {\em Trans. Moscow Math. Soc.} \textbf{74} (2013), no.~2, 229--244.

\bibitem[Dra19]{Dr4} A.~Dranishnikov, An upper bound on the LS category in presence of the fundamental group.
{\em Algebr. Geom. Topol.} \textbf{19} (2019), no.~7, 3601--3614.

\bibitem[DJ24]{DJ} A.~Dranishnikov, E.~Jauhari, Distributional topological complexity and LS-category. In \emph{Topology and AI}, ed. M.~Farber \emph{et al.}, EMS Ser. Ind. Appl. Math., 4, EMS Press, Berlin, 2024, pp. 363–385. 

\bibitem[DS20]{DS} A.~Dranishnikov, R.~Sadykov, The Lusternik--Schniremann category of connected sum. \emph{Fund. Math.} \textbf{251} (2020), no.~3, 317--328.

\bibitem[Far03]{Far1} M.~Farber, Topological complexity of motion planning. \emph{Discrete Comput. Geom.} \textbf{29} (2003), no.~2, 211--221.

\bibitem[Far08]{Far2} M.~Farber, \emph{Invitation to topological robotics}. Zur. Lect. Adv. Math., EMS, Z\"urich, 2008, x+133~pp.

\bibitem[For17]{Forstneric} F. Forstneri\v{c}, \emph{Stein manifolds and holomorphic mappings}. Ergeb. Math. Grenzgeb. (3), 56, Springer, Cham, 2017, xiv+562~pp.

\bibitem[GK67]{Goldberg} S.~I.~Goldberg, S.~Kobayashi, Holomorphic bisectional curvature. \emph{J. Differential Geom.} \textbf{1} (1967), 225--233.

%\bibitem[Go95]{Go1} R.~E.~Gompf, A new construction of symplectic manifolds. \emph{Ann. of Math.} \textbf{142} (1995), 527--595.

\bibitem[Gom98]{Go2} R.~E.~Gompf, Symplectially aspherical manifolds with non-trivial $\pi_2$. \emph{Math. Res. Lett.} \textbf{5} (1998), no.~5, 599--603.

\bibitem[GH78]{GH} P.~Griffiths, J.~Harris, {\em Principles of Algebraic Geometry.} Pure Appl. Math., Wiley-Interscience, New York, 1978, xii+813~pp.

\bibitem[GrH24]{GrH} M.~Gromov, B.~Hanke, Torsion Obstructions to Positive Scalar Curvature, {\em SIGMA Symmetry Integrability Geom. Methods Appl.} \textbf{20} (2024), 69, pp.~22.

\bibitem[Gro83]{Gr1} M.~Gromov, Filling Riemannian manifolds. {\em J. Differential Geom.} \textbf{18} (1983), no.~1, 1--147.

\bibitem[Gro91]{Gr91} M. Gromov, K\"ahler hyperbolicity and $L_2$-Hodge theory. \emph{J. Differential Geom.} \textbf{33} (1991), no.~1, 263--292.

\bibitem[Gro93]{Gro93} M.~Gromov, Asymptotic invariants of infinite Groups. In \emph{Geometric group theory. Vol. 2.}, ed. G.~A.~Niblo \emph{et al.}, London Math. Soc. Lecture Note Ser., 182, Cambridge University Press, Cambridge, 1993, pp.~1--295.

\bibitem[Gro96]{Gr2} M.~Gromov, Positive curvature, macroscopic dimension, spectral gaps and higher signatures. In {\em Functional analysis on the eve of the 21st century. Vol. II}, ed. S.~Gindikin \emph{et al.}, Progr. Math., 132, Birkhäuser Boston, Inc., Boston, 1996, pp. 1--213.

\bibitem[Gro07]{Gr3} M.~Gromov, {\em Metric structures for Riemannian and non-Riemannian spaces}. Mod. Birkhäuser Class, Birkhäuser Boston, Inc., Boston, 2007, xx+585~pp.

\bibitem[GL80a]{GL0} M.~Gromov, H.~B.~Lawson, Spin and scalar curvature in the presence of a fundamental group. I. \emph{Ann. of Math. (2)} \textbf{111} (1980), no.~2, 209--230.

\bibitem[GL80b]{GL1} M.~Gromov, H.~B.~Lawson, The classification of simply connected manifolds of positive scalar curvature, \emph{Ann. of Math. (2)} \textbf{111} (1980), no.~3, 423-434.

\bibitem[GL83]{GL} M.~Gromov, H.~B.~Lawson, Positive scalar curvature and the Dirac operator on complete Riemannian manifolds. {\em Inst. Hautes \'Etudes Sci. Publ. Math.} No.~58, (1983), 83--196.

%\bibitem[GS]{GS} M.~Gromov, R.~Schoen, Harmonic maps into singular spaces and $p$-adic superrigidity for lattices in groups of rank one. \emph{Publ. Math. I.H.E.S.} \textbf{76} (1992), 165--246.



\bibitem[HM03]{Hacon} C.~D. Hacon J.~McKernan, On Shokurov's rational connectedness conjecture. {\em Duke Math. J.} \textbf{138} (2003), no.~1, 119--136.


\bibitem[Hat02]{Ha} A.~Hatcher, {\em Algebraic Topology}. Cambridge University Press, Cambridge, 2002, xii+544~pp.

\bibitem[HW12]{Heier} G.~Heier, B.~Wong, Scalar curvature and uniruledness on projective manifolds. \emph{ Comm. Anal. Geom.} \textbf{20} (2012), no.~4, 751--764.

\bibitem[Hit74]{Hit74} N. Hitchin, Compact four-dimensional Einstein manifolds. \emph{J. Differential Geom.} \textbf{9} (1974), 435--441.



%\bibitem[Iw03]{Iw} N.~Iwase, Lusternik--Schnirelmann category of a sphere-bundle over a sphere. \emph{Topology} \textbf{42} (2003), no.~3, 701--713.




%\bibitem[IKRT04]{IKRT} R.~Ibáñez, J.~Kedra, Yu.~Rudyak, A.~Tralle, On fundamental groups of symplectically aspherical manifolds. \emph{Math. Z.} \textbf{248} (2004), 805--826. 

\bibitem[Jau25]{Ja} E. Jauhari, LS-category and sequential topological complexity of symmetric products. \emph{J. Appl. Comput. Topol.} \textbf{9} (2025), no.~4, 25, pp.~31.

\bibitem[JS00]{JS} M.~Joachim,  T.~Schick, Positive and negative results concerning the Gromov--Lawson--Rosenberg conjecture. In {\em Geometry and topology: Aarhus (1998)}, ed. K.~Grove {\em et al.}, Contemp. Math., 258, AMS, Providence, 2000, pp. 213--226.

\bibitem[Kal98]{Ka1} S.~Kallel, Divisor spaces on punctured Riemann surfaces. {\em Trans. Amer. Math. Soc.} \textbf{350} (1998), no.~1, 135--164.

\bibitem[Kal04]{Ka2} S.~Kallel, Some remarks on symmetric products of curves. Preprint, arXiv:math/0402267 [math AT] (2004), pp.~13.

\bibitem[KS06]{KS} S.~Kallel, P.~Salvatore, Symmetric products of two dimensional complexes. In {\em Recent developments in algebraic topology}, ed. A.~Ádem {\em et al.}, Contemp. Math., 407, AMS, Providence, 2006, pp. 147--161.

%\bibitem[KT13]{KT} S. Kallel, W. Taamallah, The geometry and fundamental group of permutation products and fat diagonals. {\em Canad. J. Math.} \textbf{65} (2013), no.~3, 575--599.

\bibitem[KR06]{KR} M.~Katz, Y.~B.~Rudyak, Lusternik--Schnirelmann category and systolic category of low-dimensional manifolds. \emph{Comm. Pure Appl. Math.} \textbf{59} (2006), no.~10, 1433--1456.

%\bibitem[KRT07]{KRT1} J.~Kedra, Yu.~Rudyak, A.~Tralle, On fundamental groups of symplectically aspherical manifolds II: Abelian groups. \emph{Math. Z. }\textbf{256} (2007), 825--835. 

\bibitem[KRT08]{KRT2} J.~Kedra, Yu.~Rudyak, A.~Tralle, Symplectically aspherical manifolds. \emph{J. Fixed Point Theory Appl. }\textbf{3} (2008), no.~1, 1--21. 

\bibitem[KW24]{KW} B.~Knudsen, S.~Weinberger, Analog category and complexity. \emph{SIAM J. Appl. Algebra Geom.} \textbf{8} (2024), no.~3, 713--732.

\bibitem[KM98]{Mori} J. Koll\'ar, S. Mori, \emph{Birational geometry of algebraic varieties}. Cambridge Tracts in Math., 134, Cambridge University Press, Cambridge, 1998,  viii+254~pp. 

\bibitem[KN13]{KN13} D.~Kotschick, C.~Neofytidis, On three-manifolds dominated by circle bundles. {\em Math. Z.} \textbf{274} (2013), no.~1-2, 21--32.

\bibitem[Kut12]{Ku} S.~Kutsak, Essential manifolds with extra structures. \emph{Topology Appl.} \textbf{159} (2012), no.~10-11, 2635--2641.

\bibitem[LM89]{SpinG} H.~B.~Lawson, Jr., M.-L. Michelsohn, {\em Spin Geometry}. Princeton Math. Ser., 38, Princeton University Press, Princeton, 1989, xii+427~pp.

\bibitem[Laz04]{Laz} R.~Lazarsfeld, {\em Positivity in algebraic geometry. I}. Ergeb. Math. Grenzgeb. (3), 48, Springer-Verlag, Berlin, 2004, xviii+387~pp.
 
\bibitem[LeB99]{LeBrun} C. LeBrun, Kodaira dimension and the Yamabe problem. \emph{Comm. Anal. Geom.} \textbf{7} (1999), no.~1, 133--156.

\bibitem[LMW21]{LMW21} Y.~Liu, L.~Maxim, B. Wang, Aspherical manifolds, Mellin transformation and a question of Bobadilla--Koll\'ar. {\em J. Reine Angew. Math.} \textbf{781} (2021), 1--18.

\bibitem[LK08]{Lott} J.~Lott, B.~Kleiner, Notes on Perelman’s papers. {\em Geom. Topol. } \textbf{12} (2008), no.~5, 2587--2855.

\bibitem[LS34]{LS} L.~Lusternik, L.~Schnirelmann, \emph{Méthodes topologiques dans les problèmes variationnels}. Hermann, Paris, 1934.

\bibitem[Mac62]{Mac} I.~G.~Macdonald, Symmetric products of an algebraic curve. \emph{Topology} \textbf{1} (1962), 319--343.

\bibitem[Mat61a]{Mat1} A.~Mattuck, Picard bundles. {\em Illinois J. Math.} \textbf{5} (1961), 550--564.

\bibitem[Mat61b]{Mat2} A. Mattuck, Symmetric products and Jacobians. \emph{Amer. J. Math.} \textbf{83} (1961), 189--206.

\bibitem[Mil69]{Mi} R.~James~Milgram, The homology of symmetric products. {\em Trans. Amer. Mat. Soc.} \textbf{138} (1969), 251--265.

\bibitem[Nak57]{Na} M.~Nakaoka, Cohomology of symmetric products. \emph{J. Inst. Polytech. Osaka City Univ. Ser. A} \textbf{8} (1957), 121--145.

%\bibitem[PT95]{PT} R.~Pardini, F.~Tovena, On the fundamental group of an abelian cover. \emph{Internat. J. Math.} \textbf{6} (1995), no.~5, 767--789. 

\bibitem[OS06]{Szabo} P.~Ozsv\'ath, Z. Szab\'o. An introduction to Heegaard Floer homology. In \emph{Floer homology, gauge theory, and low-dimensional topology}, ed. D.~A.~Ellwood \emph{et al.}, Clay Math. Proc., 5, AMS, Providence, 2003, pp.~3--27.

\bibitem[Pet16]{Petersen} P. Petersen,  \emph{Riemannian Geometry}. Grad. Texts in Math., 171, Springer, Cham, 2016, xviii+499~pp.

%\bibitem[Pu]{Pu} A.~Putman, Rochlin's theorem on signatures of spin 4-manifolds via algebraic topology. Preprint, \url{https://www3.nd.edu/~andyp/notes/Rochlin.pdf}, pp. 16.

\bibitem[RS01]{RS}  J.~Rosenberg, S.~Stolz, Metrics of positive scalar curvature and connection with surgery. In \emph{Survey on Surgery Theory, Vol. 2}, ed. S.~Cappell {\em et al.}, Ann. of Math. Stud., 149, Princeton University Press, Princeton, 2001, pp. 353--386.

\bibitem[RO99]{RO} Y.~B.~Rudyak, J.~Oprea, On the Lusternik–Schnirelmann category of symplectic manifolds and the Arnold conjecture. \emph{Math. Z.} \textbf{230} (1999), no.~4, 673--678.

\bibitem[RT99]{RT} Y.~Rudyak, A.~Tralle, On symplectic manifolds with aspherical symplectic form. \emph{Topol. Methods Nonlinear Anal.} \textbf{14} (1999), no.~2, 353--362.

\bibitem[Rud99]{Ru}Y.~ Rudyak, On category weight and its applications. \emph{Topology} \textbf{38} (1999) no. 1, 37--55.

\bibitem[Sch98]{Sch} T.~Schick, A counterexample to the (unstable) Gromov--Lawson--Rosenberg conjecture. \emph{Topology} \textbf{37} (1998), no.~6, 1165--1168.

\bibitem[SY79]{SY1} R.~Schoen and S.-T.~Yau, On the structure of manifolds with positive scalar curvature. {\em Manuscripta Math.} \textbf{28} (1979), no.~1--3, 159--183.

\bibitem[Tak03]{Takayama} S.~Takayama, Local simple connectedness of resolutions of log-terminal singularities. {\em Int. J. Math.} \textbf{14} (2003), no.~8, 825--836.

\bibitem[Yau74]{Yau74} S.-T.~Yau, On the curvature of compact Hermitian manifolds. \emph{Invent. Math.} \textbf{25} (1974), 213--239.

\bibitem[Yau78]{Yau} S.-T.~Yau, On the Ricci curvature of a compact K\"ahler manifold and the complex Monge--Amp\`ere equation. \emph{Comm. Pure Appl. Math.} \textbf{31} (1978), no.~3, 339--411.
\end{thebibliography}
\end{document}